\titleformat{\subsection}{\it}{\thesubsection.\enspace}{1.5pt}{}
\titleformat{\subsubsection}{\it}{\thesubsubsection.\enspace}{1.5pt}{}
\newtheorem{theo}{Theorem}[section]
\newtheorem{lemm}[theo]{Lemma}
\newtheorem{coro}[theo]{Corollary}
\newtheorem{prop}[theo]{Proposition}
\newtheorem{rema}[theo]{Remark}
\numberwithin{equation}{section}
\def\th2{\frac{\theta}{2}}
\begin{document}
\title{Strong Solutions to the Density-dependent Incompressible Nematic Liquid Crystal Flows  \hspace{-4mm}}
\author{Jincheng Gao$^1$  \quad Qiang Tao $^2$ \quad Zheng-an Yao $^3$\\[10pt]
Department of Mathematics, Sun Yat-Sen University,\\
510275, Guangzhou, P. R. China.\\[5pt]
}
\footnotetext[1]{Email: \it gaojc1998@163.com, Tel:\it +8613450423982}
\footnotetext[2]{Email: \it taoq060@126.com}
\footnotetext[3]{Email: \it mcsyao@mail.sysu.edu.cn}
\date{}

\maketitle

\begin{abstract}
In this paper, we investigate the density-dependent incompressible nematic liquid crystal flows in $n(n=2$ or $3)$ dimensional
bounded domain. More precisely, we obtain the local existence and uniqueness of the solutions when the viscosity coefficient
of fluid depends on density. Moreover, we establish blowup criterions for the regularity of the strong solutions in dimension
two and three respectively. In particular, we build a blowup criterion just in terms of the gradient of density if the initial direction field satisfies some geometric configuration.  For these results, the initial density needs not be strictly positive.

\vspace*{5pt}
\noindent{\it Keywords}: incompressible nematic liquid crystal flows; density-dependent;
                         strong solutions; blowup criterion
\end{abstract}

\section{Introduction}
\quad Nematic liquid crystals contain a large number of elongated, rod-like molecules and possess the same orientational order.
The continuum theory of liquid crystals due to Ericksen and Leslie was developed around $1960$'s \cite{Ericksen, Leslie}
(see also \cite{Gennes}). Since then, numerous researchers have obtained some important developments for liquid crystals
not only in theory but also in the application.  When the fluid containing nematic liquid crystal materials is at rest,
we have the well-known Ossen-Frank theory for static nematic liquid crystals, see the
pioneering work by Hardt-Lin-Kinderlehrer \cite{Hard-Kinderlehrer-Lin} on the analysis of energy minimal configurations of
nematic liquid crystals. Generally speaking, the motion of fluid always takes place. The so-called Ericksen-Leslie system is a
macroscopic descriptions of the time evolution of the materials under the influence of both the flow velocity field and the
macroscopic description of the microscopic orientation configuration of rod-like liquid crystals.
In this paper, we investigate the motion of incompressible nematic liquid crystal flows, which are described by the following simplified version of the Ericksen-Leslie equations:
\begin{equation}\label{1.1}
\left\{
\begin{aligned}
& \rho_t+{\rm div}(\rho u)=0,\\
& (\rho u)_t+{\rm div}(\rho u\otimes u)-{\rm div}(2 \mu(\rho)D(u))+\nabla P=-\lambda {\rm div}(\nabla d \odot \nabla d),\\
& {\rm div}u=0,\\
& d_t+u\cdot \nabla d=\theta (\Delta d+|\nabla d|^2 d),
\end{aligned}
\right.
\end{equation}
in $\Omega \times (0,+\infty),$ where $\Omega$ is a bounded domain with smooth boundary in $\mathbb{R}^n(n=2$ or $ 3 ).$ Here $\rho, u, P$ and $d$ denote the unknown density, velocity, pressure and macroscopic average of the nematic liquid crystal orientation respectively. $D(u)=\frac{\nabla u+\nabla^T u}{2}$ is the deformation tensor, where $\nabla u$ presents the gradient matric of $u$ and $\nabla^T u$ its transpose. $\mu>0, \lambda>0, \theta>0$ are viscosity of fluid, competition between kinetic and potential energy, and microscopic elastic relaxation time respectively. The viscosity coefficient $\mu=\mu(\rho)$ is a general function of density and be assumed to satisfy:
\begin{equation}\label{1.2}
\mu \in C^1[0,\infty) \quad \text{and} \quad \mu>0 \quad \text{on} \quad [0,\infty).
\end{equation}
Without loss of generality, both $\lambda $ and $\theta$ are normalized to $1$.
The symbol $\nabla d \odot \nabla d$ denotes the $n\times n$ matrix whose $(i,j)-$th entry is given by $ \nabla d_i \cdot\nabla d_j,$ for $i,j=1,2,...,n.$ To complete the equation \eqref{1.1}-\eqref{1.2}, we consider an initial boundary value problem
for \eqref{1.1}-\eqref{1.2} with the following initial and boundary conditions
\begin{equation}\label{1.3}
(\rho, u, d)|_{t=0}=(\rho_0, u_0, d_0) \quad \text{with } \quad |d_0|=1, \quad {\rm div}u_0=0 \quad {\rm in} \ \Omega;
\end{equation}
\begin{equation}\label{1.4}
u=0, \quad \frac{\partial d}{\partial \nu}=0 \quad {\rm on} \ \partial \Omega;
\end{equation}
where $\nu$ is the unit outward normal vector to $\partial \Omega$.

When the fluid is the homogeneous case, the systems \eqref{1.1}-\eqref{1.4} are the simplified model of nematic liquid crystals with constant density. When the term $|\nabla d|^2 d$ be replaced by the Ginzburg-Laudan type approximation term $\frac{1-|d|^2}{\varepsilon^2}d$, Lin \cite{Lin} first derived a simplified Ericksen-Leslie
equations modeling the liquid crystal flows in $1989$. Later, Lin and Liu \cite{{Lin-Liu1},{Lin-Liu2}} made some important
analytic studies, such as the existence of weak/strong  solutions and the partial regularity of suitable solutions. Recently,
Dai, Qing and Schonbek \cite{Dai-Qing-Schonbek} studied the large time behavior of solutions and gave the decay rate in the whole space in $\mathbb{R}^3$ with small initial data. Grasselli and Wu \cite{Garsselli-Wu} also considered the long-time behavior
and obtained the estimates on the convergence rate for nematic liquid crystal flows with external force. They also showed the
existence of global strong solutions provided that either the viscosity is large enough or the initial datum is closed to a
given equilibrium. As for the case of  $|\nabla d|^2 d$, Huang and Wang \cite{Huang-Wang} established a blow up criterion
for the short time classical solutions in dimensions two and three. Recently, Li \cite{Li3} proved the local well-posedness
of mild solutions with $L^\infty$ initial data, in particular, the initial energy may be infinite.

When the fluid is non-homogeneous case, we would like to point out that the system \eqref{1.1}-\eqref{1.4} include two important equations, which have attracted large number of analysts' interests:\\
(i) When $d$ is a constant and $\mu$ is a function depending only on the density $\rho$, then system \eqref{1.1}-\eqref{1.4}
    are the well-known Navier-Stokes equations with density-dependent viscosity coefficient. First, Lions \cite{Lions1} established
    the global existence for the weak solutions for the case of positive initial density. As for the uniqueness, Lions telled
    us the fact that sufficiently smooth solutions are unique and any weak solution must be equal to a strong one if the latter
    exists. Later, Cho and Kim \cite{Cho-Kim1} proved the local existence of unique strong solutions for all initial data satisfying a
    natural compatibility condition for the case of initial density being not be strictly positive. He also built the following
    blowup criterion:
\begin{equation}\label{1.5}
\underset{0 \le t \le T^*}{\sup}\left(\|\nabla \rho(t)\|_{L^q}+\|\nabla u(t)\|_{L^2}\right)=\infty,
\end{equation}
if $T^*$ is the maximal existence time of the local strong solutions and $T^*<\infty.$\\
(ii)When $\mu$ is a constant, the systems \eqref{1.1}-\eqref{1.4} are a density-dependent incompressible hydrodynamic flow
    of liquid crystals. When the term $|\nabla d|^2 d$ be replaced by the Ginzburg-Laudan type approximation term
    $\frac{1-|d|^2}{\varepsilon^2}d$, the global existence of weak solution is obtained in \cite{Jiang-Tan, Xu-Tan, Liu-Zhang}
    for each $\varepsilon>0$. Recently, Hu and Wu  \cite{Hu-Wu} proved the decay of the velocity field for arbitrary large
    regular initial data with the initial density being away from vacuum in two dimensional domain with smooth boundary.
    As for the case of  $|\nabla d|^2 d$, Wen and Ding \cite{Wen-Ding} obtained the local existence and uniqueness of strong solutions to the Dirichlet problem in bounded domain with initial density being allowed to have vacuum. Since the strong solutions of a harmonic map can blow up in finite time \cite{Chang-Ding-Ye}, one cannot expect to get a global strong solution with general initial data. Therefore, many researchers attempt to obtain global strong solutions under some additional assumptions.
    Wen and Ding \cite{Wen-Ding} also established the global
    existence and uniqueness of solutions for two dimensional case if the initial density is away from vacuum and the initial
    data is of small norm. Global existence of strong solutions with small initial data to three dimensional liquid crystal
    equations are obtained by Li and Wang in \cite{Li-Wang1} for constant density case, Li and Wang in \cite{Li-Wang2} for
    nonconstant but positive density case, and Ding, Huang and Xia in \cite{Ding-Huang-Xia}. Recently, Li proved the global
    existence and uniqueness  of strong solutions with initial data being of small norm for the dimension two and three in
    bounded domain in \cite{Li1} and the initial direction field satisfying some geometric structure for the two dimensional whole space in \cite{Li2}.

In this paper, we investigate the density-dependent incompressible nematic liquid crystal flows when the viscosity coefficient is a function of the density of the fluid. More precisely, we establish local unique strong solutions to \eqref{1.1}-\eqref{1.4}.
Then, we consider the possible breakdown of regularity for the strong solutions. Firstly, We build up a blowup criterion for the three dimensional bounded domain with smooth boundary. Secondly, applying a logarithmic  inequality, we improve the preceding blowup criterion by omitting the velocity in a two dimensional bounded domain. Lastly, if the initial direction satisfies some geometric configuration, we establish a blowup criterion just in terms of the gradient of the density in two dimensional space.

Before stating our main result, we first explain the notations and conventions used throughout this paper.
We denote
\begin{equation*}
\int fdx=\int_{\Omega}fdx.
\end{equation*}
Let
\begin{equation*}
\dot{f}:=f_t+u \cdot \nabla f
\end{equation*}
represents the material derivative of $f$. For $1\le q \le \infty$ and integer $k\ge 0,$ the standard Sobolev spaces are denoted by:
\begin{equation*}
\left\{
\begin{aligned}
& L^q=L^q(\Omega),\quad W^{k,q}=W^{k,q}(\Omega),\quad H^k=W^{k,2},\\
& W_{0}^{1,q}=\{u\in W^{1,q}|u=0 \ {\rm on}\ \partial \Omega\},\quad H_0^1=W_0^{1,2}.
\end{aligned}
\right.
\end{equation*}
For two $n\times n$ matrices $M=(M_{ij}), N=(N_{ij})$, we denote the scalar product
between $M$ and $N$ by
\begin{equation*}
M:N=\sum_{i,j=1}^{n}M_{ij}N_{ij}.
\end{equation*}
Finally, we recall the definition on the weak $L^p-$space which is defined as follows:
\begin{equation*}
L_w^p \triangleq \left\{ f\in L^1_{loc}: \|f\|_{L^p_w}=
          \underset{t>0}{\sup} \  t \left| \{x \in \Omega: |f(x)>t|\}\right|^{\frac{1}{p}}<\infty \right\}.
\end{equation*}

Now, we state our first result as follows.
\begin{theo}\label{Theorem1.1}
Let $\Omega$ be a bounded smooth domain in $\mathbb{R}^n(n=2,3)$ and $q\in(n,\infty)$ be a fixed constant. Suppose that the
initial data $(\rho_0, u_0, d_0)$ satisfies the regularity conditions
\begin{equation*}
0\le \rho_0 \in W^{1,q}, \ u_0 \in H_0^1\cap H^2, \ d_0 \in H^3 \ {\rm and}\  |d_0|=1,
\end{equation*}
and the compatibility condition
\begin{equation}\label{1.6}
-{\rm div}(2\mu(\rho_0)D(u_0))+\nabla P_0+{\rm div}(\nabla d_0\odot \nabla d_0)=\sqrt{\rho_0}g \ {\rm and } \ {\rm div}u_0=0
\ {\rm in} \ \Omega
\end{equation}
for $(P_0, g)\in H^1 \times L^2$. Then there exist a positive time $T_0>0$ and a unique strong solution $(\rho, u, d, P)$ of
\eqref{1.1}-\eqref{1.4} such that
\begin{equation*}
\begin{aligned}
& 0\le \rho \in C([0,T_0];W^{1,q}), \quad \rho_t \in C([0,T_0]; L^q),\\
& u\in C([0,T_0]; H_0^1 \cap H^2)\cap L^2(0,T_0; W^{2,r}), \\
& u_t \in L^2(0,T_0;H_0^1), \ \sqrt{\rho}u_t \in L^\infty(0,T_0; L^2),\\
& P \in C([0,T_0]; H^1)\cap L^2(0,T_0; W^{1,r}),\\
& d\in C([0,T_0]; H^3)\cap L^2(0,T_0; H^4),\ |d|=1 \ {\rm in} \ \overline{Q_{T_0}},\\
& d_t \in C([0,T_0]; H^1)\cap L^2(0,T_0; H^2), \ d_{tt} \in L^2(0,T_0; L^2),
\end{aligned}
\end{equation*}
for some $r$ with $n < r < min\{q, \frac{2n}{n-2}\}$ and $Q_{T_0}=\Omega \times [0, T_0].$
\end{theo}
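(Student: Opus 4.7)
The plan is to prove Theorem \ref{Theorem1.1} by a classical Cho--Kim type approximation scheme combined with a Picard iteration, adapted to handle the coupling with the director field equation and the unit length constraint $|d|=1$. The main structural difficulty is that $\rho_0$ is only nonnegative, so $1/\rho$ is not bounded; this forces us to regularize the initial density as $\rho_0^\delta = \rho_0 + \delta$ (with a suitable approximation of $u_0, d_0$), close uniform estimates in $\delta$, and then pass $\delta \to 0$ using the compatibility condition \eqref{1.6}.

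First, I would set up the linearization. Given an iterate $(\rho^k, u^k, d^k)$, define $(\rho^{k+1}, u^{k+1}, d^{k+1})$ by:
\begin{equation*}
\begin{aligned}
& \rho^{k+1}_t + u^k \cdot \nabla \rho^{k+1} = 0,\\
& \rho^{k+1} u^{k+1}_t + \rho^{k+1}(u^k \cdot \nabla) u^{k+1} - {\rm div}(2\mu(\rho^{k+1}) D(u^{k+1})) + \nabla P^{k+1} = -{\rm div}(\nabla d^k \odot \nabla d^k),\\
& d^{k+1}_t + u^k \cdot \nabla d^{k+1} = \Delta d^{k+1} + |\nabla d^k|^2 d^k,
\end{aligned}
\end{equation*}
with ${\rm div}\, u^{k+1} = 0$ and the initial-boundary conditions \eqref{1.3}--\eqref{1.4}. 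The transport equation for $\rho^{k+1}$ is solved by characteristics and propagates $W^{1,q}$ regularity and the lower bound $\rho^{k+1} \ge \delta$. The linear $d$-equation is a linear parabolic system solvable in $H^3$. The linear momentum equation is a nondegenerate Stokes-type system with bounded-from-below density-dependent viscosity, giving $u^{k+1} \in L^\infty_t(H^2) \cap L^2_t(W^{2,r})$ and $\sqrt{\rho^{k+1}}u^{k+1}_t \in L^\infty_t(L^2)$ via the standard Cho--Kim estimate that tests the momentum equation against $u^{k+1}_t$ and differentiates in time; the initial value $\sqrt{\rho_0^\delta}u_t(0)$ is controlled using precisely the compatibility condition \eqref{1.6}.

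Next I would establish uniform-in-$k$ bounds on the iterates on a short time interval $[0, T_*]$ (with $T_*$ independent of $k$ and $\delta$) for the energy
\begin{equation*}
\Phi^k(t) = 1 + \|\nabla \rho^k\|_{L^q} + \|u^k\|_{H^2}^2 + \|\sqrt{\rho^k}u^k_t\|_{L^2}^2 + \|d^k\|_{H^3}^2 + \|d^k_t\|_{H^1}^2,
\end{equation*}
by carefully tracking the coupling: $\nabla d \odot \nabla d$ enters the momentum equation as a forcing controlled by $\|d\|_{H^3}$, while $|\nabla d|^2 d$ and the transport term in the $d$-equation require $H^3$ estimates obtained by testing the $d$-equation and its time derivative against $-\Delta d$, $\Delta^2 d$, and $d_{tt}$. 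To then show convergence of the iteration, I would estimate the differences $\bar\rho^{k+1} = \rho^{k+1}-\rho^k$, $\bar u^{k+1} = u^{k+1}-u^k$, $\bar d^{k+1} = d^{k+1}-d^k$ in a lower-regularity norm (typically $L^2$ for $\bar\rho$, $H^1$ for $\bar u$, $H^2$ for $\bar d$), obtaining a Gronwall-type inequality of the form $\|(\bar\rho^{k+1}, \bar u^{k+1}, \bar d^{k+1})\|(t) \le C\int_0^t \|(\bar\rho^k, \bar u^k, \bar d^k)\|(s)\, ds$ that yields Cauchy convergence on $[0, T_*]$.

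Passing to the limit in $k$ gives a solution $(\rho^\delta, u^\delta, d^\delta)$ for each $\delta > 0$. The hard part is the limit $\delta \to 0$: I would re-derive the uniform bounds on $\Phi^\delta(t)$ using only the compatibility \eqref{1.6} (not the lower bound $\delta$ on the density) -- this is where the structure of \eqref{1.6} is essential, as it allows us to bound $\|\sqrt{\rho_0^\delta}u^\delta_t(0)\|_{L^2}$ uniformly via the identity $\sqrt{\rho_0^\delta}u^\delta_t(0) = -(u_0\cdot\nabla)\sqrt{\rho_0^\delta}u_0 + (\sqrt{\rho_0^\delta}/\sqrt{\rho_0})\sqrt{\rho_0}g + o(1)$. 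Standard weak and strong compactness then yields the limit solution with the stated regularity; the condition $|d|=1$ is preserved by computing the equation satisfied by $|d|^2-1$, namely $(|d|^2-1)_t + u\cdot\nabla(|d|^2-1) = \Delta(|d|^2-1) - 2|\nabla d|^2(|d|^2-1)$, and applying the maximum principle with initial data $|d_0|^2-1=0$ and homogeneous Neumann boundary conditions. Uniqueness follows by the same difference estimate used for the iteration, applied to two putative solutions; the main obstacle throughout is obtaining the $\delta$-independent $L^\infty_t$ bound on $\sqrt{\rho}u_t$ in $L^2$, which is what ultimately forces the use of the compatibility condition.
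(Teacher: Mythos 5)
Your overall architecture is the same as the paper's: a Cho--Kim type scheme in which the density is transported with a known velocity, the momentum equation becomes a linear Stokes-type system with viscosity $\mu(\rho)$ and forcing $-\mathrm{div}(\nabla d\odot\nabla d)$ taken from an already-computed director, the director equation is a linear parabolic problem, uniform-in-$k$ bounds are closed on a short time interval, the differences $(\bar\rho,\bar u,\bar d)$ are contracted in the low norms $L^2$, $H^1$, $H^2$, the constraint $|d|=1$ is recovered in the limit from the equation for $|d|^2-1$ (your sign there should be $+2|\nabla d|^2(|d|^2-1)$, but this does not affect the Gronwall argument with zero data), and vacuum is handled by $\rho_0^\delta=\rho_0+\delta$ with estimates independent of $\delta$ driven by the compatibility condition \eqref{1.6}. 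These are exactly the three stages of Section 3 of the paper.

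The one step that does not hold up as written is the $\delta$-uniform control of the initial acceleration in the passage $\delta\to 0$. Your identity $\sqrt{\rho_0^\delta}\,u^\delta_t(0)=-(u_0\cdot\nabla)\sqrt{\rho_0^\delta}\,u_0+(\sqrt{\rho_0^\delta}/\sqrt{\rho_0})\sqrt{\rho_0}\,g+o(1)$ tacitly assumes that the compatibility defect caused by replacing $\mu(\rho_0)$ by $\mu(\rho_0^\delta)$ in \eqref{1.6} is negligible after division by $\sqrt{\rho_0^\delta}$. But $\sqrt{\rho_0^\delta}$ is only bounded below by $\sqrt{\delta}$, and since $\mu$ is merely $C^1$ the defect $\mathrm{div}\bigl(2[\mu(\rho_0+\delta)-\mu(\rho_0)]D(u_0)\bigr)$ contains the term $2[\mu'(\rho_0+\delta)-\mu'(\rho_0)]\nabla\rho_0\cdot D(u_0)$, which is only $o(1)$ in $L^2$, not $O(\sqrt{\delta})$; after division by $\sqrt{\rho_0^\delta}$ it need not remain bounded, and the pressure correction $\nabla(P_0-P_0^\delta)$ causes the same problem. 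So the ``$o(1)$'' is unjustified and the key bound you correctly identify as the crux, namely the $\delta$-independent $L^\infty_t(L^2)$ estimate for $\sqrt{\rho^\delta}u^\delta_t$, is not established. The paper's (and Cho--Kim's) device is not to keep $u_0$: for each $\delta$ one defines $(u_0^\delta,P_0^\delta)$ as the solution of the Stokes problem $-\mathrm{div}(2\mu(\rho_0^\delta)D(u_0^\delta))+\nabla P_0^\delta+\mathrm{div}(\nabla d_0\odot\nabla d_0)=\sqrt{\rho_0^\delta}\,g$ with $\mathrm{div}\,u_0^\delta=0$, so the compatibility condition holds exactly with the same $g$, the initial acceleration is bounded uniformly in $\delta$ by $\|g\|_{L^2}$ plus lower-order terms, and $u_0^\delta\to u_0$ in $H^2$ by the regularity estimates of Lemma \ref{Lemma2.1}. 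With that replacement (which your phrase ``suitable approximation of $u_0$'' would need to mean precisely this) the rest of your argument goes through.
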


After having the Theorem \ref{Theorem1.1} at hand, we will build the following blowup criterion of possible breakdown of local strong solution for the initial boundary problem \eqref{1.1}-\eqref{1.4}  in three dimensional space.

\begin{theo}\label{blowupcriterion1}
Suppose the dimension $n=3$ and all the assumptions in Theorem \ref{Theorem1.1} are satisfied. Let $(\rho, u, d, P)$ be a strong solution of the initial boundary problem \eqref{1.1}-\eqref{1.4}
and $T^*$ be the maximal time of existence.
If $0<T^*<\infty$, then
\begin{equation}\label{1.7}
\underset{T \rightarrow T^*}{\lim}\left(\|\nabla \rho\|_{L^\infty(0,T;L^q)}
    +\|u\|_{L^{s_1}(0,T;L_w^{r_1})}+\|\nabla d\|_{L^{s_2}(0,T;L_w^{r_2})}\right)=\infty,
\end{equation}
where $r_i$ and $s_i$ satisfy
\begin{equation}\label{1.8}
\frac{2}{s_i}+\frac{3}{r_i} \le 1, \ 3<r_i \le \infty, \ i=1,2.
\end{equation}
\end{theo}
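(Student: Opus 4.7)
The plan is to argue by contradiction. Suppose instead that
\begin{equation*}
M_0:=\|\nabla\rho\|_{L^\infty(0,T^*;L^q)}+\|u\|_{L^{s_1}(0,T^*;L^{r_1}_w)}+\|\nabla d\|_{L^{s_2}(0,T^*;L^{r_2}_w)}<\infty,
\end{equation*}
with $(r_i,s_i)$ as in \eqref{1.8}. I aim to show that under this single structural assumption every norm in the regularity class of Theorem~\ref{Theorem1.1} stays bounded as $T\uparrow T^*$; the local existence theorem, re-started from the limit profile at time $T^*-\varepsilon$, then extends the solution past $T^*$, contradicting maximality. The entire task is thus to produce these a priori bounds.

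As preparation, I would collect the \emph{zeroth-order} bounds: ${\rm div}\,u=0$ preserves $\|\rho\|_{L^\infty}$, the scalar product of $\eqref{1.1}_4$ with $d$ gives $|d|\equiv 1$, and testing $\eqref{1.1}_2$ against $u$ together with $\eqref{1.1}_4$ against $-(\Delta d+|\nabla d|^2 d)$ furnishes the basic energy inequality. The heart of the argument is the \emph{first-order} estimate: testing $\eqref{1.1}_2$ against $u_t$ produces
\begin{equation*}
\frac{d}{dt}\int\mu(\rho)|D(u)|^2\,dx+\int\rho|u_t|^2\,dx\;\lm\;\Big|\int\rho(u\cdot\nabla u)\cdot u_t\,dx\Big|+\Big|\int{\rm div}(\nabla d\odot\nabla d)\cdot u_t\,dx\Big|.
\end{equation*}
The convection term is estimated by the Lorentz-space H\"older inequality (bounding $\|u\cdot\nabla u\|_{L^2}$ by $\|u\|_{L^{r_1}_w}$ times a suitable Lebesgue norm of $\nabla u$), followed by Gagliardo--Nirenberg interpolation between $\|\nabla u\|_{L^2}$ and $\|\nabla u\|_{H^1}$; the $H^1$-piece is in turn absorbed via a Stokes regularity estimate whose leading contribution is $\|\sqrt\rho u_t\|_{L^2}$. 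The condition $2/s_1+3/r_1\le 1$ is exactly what makes the resulting time-weight $L^1$-integrable on $(0,T^*)$. A parallel manipulation on $\eqref{1.1}_4$ (test against $-\Delta d_t$, use $|d|=1$ and $d\cdot\Delta d=-|\nabla d|^2$), together with the bound on $\|\nabla d\|_{L^{s_2}_tL^{r_2}_w}$, controls $\|\Delta d\|_{L^\infty_tL^2}$ and $\|\nabla^2 d\|_{L^2_tH^1}$; Gronwall closes this layer.

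With $\|\nabla\rho\|_{L^\infty_tL^q}$ already controlled by $M_0$, it remains to upgrade the spatial regularity of $u$, $P$ and $d$. Reading the momentum system as the stationary Stokes problem
\begin{equation*}
-{\rm div}\bigl(2\mu(\rho)D(u)\bigr)+\nabla P=-\rho\dot u-{\rm div}(\nabla d\odot\nabla d),\qquad {\rm div}\,u=0,\qquad u|_{\p\Omega}=0,
\end{equation*}
an $L^r$-Stokes estimate (with $n<r<\min\{q,2n/(n-2)\}$, as in Theorem~\ref{Theorem1.1}) yields $\|u\|_{W^{2,r}}+\|P\|_{W^{1,r}}$ in terms of the right-hand side and of $\|\nabla\mu(\rho)\|_{L^q}\lm\|\mu'(\rho)\|_{L^\infty}\|\nabla\rho\|_{L^q}\le CM_0$. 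A fourth-order estimate on $d$ (test against $\Delta^2 d$, with $|\nabla d|^2 d$ tamed by $|d|=1$) returns the bounds on $d\in L^\infty_tH^3\cap L^2_tH^4$ and on $d_t,d_{tt}$. These together reconstruct the regularity class of Theorem~\ref{Theorem1.1} up to $T^*$.

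The hard point, as is typical for density-dependent models, is the coupling through $\nabla\mu(\rho)$: expanding ${\rm div}(2\mu(\rho)D(u))$ produces the term $\mu'(\rho)\nabla\rho\cdot D(u)$, which forces control of $D(u)$ in a Lebesgue norm that, by Sobolev embedding, loops back into $\|u\|_{W^{2,r}}$. The range $n<r<\min\{q,2n/(n-2)\}$ is tailored precisely to close this loop, and the Serrin conditions $2/s_i+3/r_i\le 1$ must be used sharply so that every time-weight generated by the Layer~1 interpolation is borderline $L^1$. Balancing these exponents while remaining compatible with possible vacuum ($\rho_0\ge 0$ means the convenient division by $\rho$ is forbidden) is where the argument will be most delicate.
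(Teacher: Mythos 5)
Your overall strategy coincides with the paper's: argue by contradiction, establish the energy bounds, then a first-order level estimate by testing the momentum equation with $u_t$ and using the Lorentz-space interpolation (the paper's Lemma \ref{lemma2.3}) so that the Serrin exponents $2/s_i+3/r_i\le 1$ make the resulting time-weights $\|u\|_{L^{r_1}_w}^{2r_1/(r_1-3)}$, $\|\nabla d\|_{L^{r_2}_w}^{2r_2/(r_2-3)}$ integrable, close with Gr\"onwall, and finally upgrade via Stokes regularity to re-enter the class of Theorem \ref{Theorem1.1} and extend past $T^*$. Two remarks on the first layer: your displayed inequality omits the term $\int \mu'(\rho)(u\cdot\nabla\rho)|D(u)|^2\,dx$ coming from $\partial_t\mu(\rho)$ (which the paper controls through $\|\nabla\rho\|_{L^q}\le M_0$ together with the $H^2$ Stokes bound \eqref{4.10}), and the forcing term $\int{\rm div}(\nabla d\odot\nabla d)\cdot u_t\,dx$ cannot be estimated directly but must be integrated by parts in time, which is what forces the simultaneous $\|\nabla d\|_{L^4}^4$-type estimate and the choice of a large constant $C_{**}$ in the paper. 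These are repairable, and your ``parallel manipulation on the $d$-equation'' gestures at the right fix.

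The genuine gap is at the next level. To reconstruct the regularity class of Theorem \ref{Theorem1.1} (and to make the $W^{2,r}$ Stokes estimate usable at all) you need $\sqrt{\rho}\,u_t\in L^\infty(0,T;L^2)$ and $u_t\in L^2(0,T;H^1_0)$: the right-hand side of your stationary Stokes problem contains $\rho u_t$, and bounding it in $L^r$ ($r>3$) requires $\|u_t\|_{L^r}\lesssim\|\nabla u_t\|_{L^2}$, which Layer~1 does not provide (it only gives $\int_0^T\|\sqrt{\rho}\dot u\|_{L^2}^2$). Your plan never differentiates the momentum equation in time, which is precisely how the paper obtains these bounds (Lemma \ref{lemma4.4}): testing $\partial_t\eqref{1.1}_2$ with $u_t$ produces a term $\eta\|\nabla d_t\|_{H^1}^2$, while the third-order estimate for $d$ (obtained by testing $\nabla\eqref{1.1}_4$ with $\nabla\Delta d_t$ after moving a time derivative) produces a term $\delta\|\nabla u_t\|_{L^2}^2$; the two estimates must be added and the small parameters chosen jointly so each bad term is absorbed by the other, and the compatibility condition \eqref{1.6} is needed to give meaning to $\|\sqrt{\rho}u_t\|_{L^2}$ at $t=0^+$. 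Your alternative of testing against $\Delta^2 d$ does not by itself control $\|\nabla^3 d\|_{L^\infty_t L^2}$ or $\|\nabla^2 d_t\|_{L^2_tL^2}$ without this coupling, so as written the higher-order layer does not close and the extension beyond $T^*$ is not justified.
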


As a corollary of Theorem \ref{blowupcriterion1}, we will establish a blowup criterion for the density-dependent incompressible
flow when the viscosity depends on the density in three dimensional domain. More precisely, if $d$ is a constant vector,
then we have the following corollary.
\begin{coro}\label{blowupcriterion2}
Suppose $d$ be a unit constant vector and all the assumptions in Theorem \ref{blowupcriterion1} are satisfied. Let $(\rho, u, d, P)$ be a strong solution of the initial boundary problem \eqref{1.1}-\eqref{1.4}
and $T^*$ be the maximal time of existence.
If $0<T^*<\infty$, then
\begin{equation}\label{1.9}
\underset{T \rightarrow T^*}{\lim}\left(\|\nabla \rho\|_{L^\infty(0,T;L^q)}
    +\|u\|_{L^{s}(0,T;L_w^{r})}\right)=\infty,
\end{equation}
where $r$ and $s$ satisfy
\begin{equation}\label{1.10}
\frac{2}{s}+\frac{3}{r} \le 1, \ 3<r \le \infty.
\end{equation}
\end{coro}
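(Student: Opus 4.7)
The plan is entirely to reduce Corollary \ref{blowupcriterion2} to Theorem \ref{blowupcriterion1} by observing that a unit constant vector trivially satisfies the director equation. First I would verify that $(\rho, u, d, P)$ with $d \equiv d_0 \in \mathbb{S}^{n-1}$ fits into the framework of Theorem \ref{Theorem1.1} and Theorem \ref{blowupcriterion1}: since $d_0$ is a constant unit vector, we have $d_0 \in H^3$ with $|d_0|=1$, so the regularity and sphere constraints are met; since $\nabla d_0 \equiv 0$, the nematic contribution $\mathrm{div}(\nabla d_0 \odot \nabla d_0)$ vanishes in the compatibility relation (\ref{1.6}), which therefore collapses to the standard density-dependent Navier--Stokes compatibility condition $-\mathrm{div}(2\mu(\rho_0)D(u_0)) + \nabla P_0 = \sqrt{\rho_0}\,g$ with $\mathrm{div}\,u_0=0$, exactly the setting of the theorem's hypotheses.

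Next I would observe that $d(x,t) \equiv d_0$ solves the fourth equation of (\ref{1.1}) identically, since $d_t = 0$, $\nabla d = 0$, and $\Delta d = 0$. Hence $(\rho, u, d_0, P)$ is a bona fide strong solution of the full liquid crystal system (\ref{1.1})--(\ref{1.4}) on $[0, T^*)$, and Theorem \ref{blowupcriterion1} applies to it verbatim.

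Plugging this solution into the conclusion (\ref{1.7}), for any pair $(s_2, r_2)$ satisfying (\ref{1.8}) we have $\|\nabla d(t)\|_{L^{r_2}_w} = 0$ for every $t$, so the third term in the blowup quantity is identically zero. Therefore the divergence in (\ref{1.7}) can only be driven by the first two terms, giving
\begin{equation*}
\lim_{T \to T^*}\left(\|\nabla \rho\|_{L^\infty(0,T; L^q)} + \|u\|_{L^{s_1}(0,T; L_w^{r_1})}\right) = \infty
\end{equation*}
for any admissible $(s_1,r_1)$. Relabeling $(s_1, r_1)$ as $(s,r)$ and noting that condition (\ref{1.8}) for $i=1$ is exactly (\ref{1.10}) yields (\ref{1.9}).

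There is no real obstacle here: the proof is a one-line application of Theorem \ref{blowupcriterion1} once the embedding of the Navier--Stokes solution into the nematic system is checked. The only thing that requires a moment's care is the verification that the compatibility condition (\ref{1.6}) with the nematic term included is equivalent, under $\nabla d_0 = 0$, to the Navier--Stokes compatibility condition, so that the hypotheses of Theorem \ref{blowupcriterion1} are genuinely available for any initial data $(\rho_0, u_0)$ admissible in the Navier--Stokes sense.
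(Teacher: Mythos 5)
Your proposal is correct and is exactly the argument the paper intends: the corollary is stated without a separate proof precisely because, with $d$ a constant unit vector, $\nabla d\equiv 0$ makes the director term in \eqref{1.7} vanish identically, so Theorem \ref{blowupcriterion1} applied with $(s_1,r_1)=(s,r)$ (and \eqref{1.8} for $i=1$ being \eqref{1.10}) gives \eqref{1.9} at once. Your additional check that constant $d_0$ satisfies the regularity, sphere, and compatibility hypotheses is a harmless elaboration of the same reduction.
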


\begin{rema}
The criterion for $u$ in \eqref{1.9} is given by a Serrin type and is more general than the blowup criterion  \eqref{1.5}.
\end{rema}

Our next work is to improve the proceeding blowup criterion \eqref{1.7} by utilizing a logarithmic inequality for the two dimensional space, i.e.,
\begin{theo}\label{blowupcriterion3}
Suppose the dimension $n=2$ and all the assumptions in Theorem \ref{Theorem1.1} are satisfied. Let $(\rho, u, d, P)$ be a strong solution of the initial boundary problem \eqref{1.1}-\eqref{1.4}
and $T^*$ be the maximal time of existence.
If $0<T^*<\infty$, then
\begin{equation}\label{1.11}
\underset{T \rightarrow T^*}{\lim}\left(\|\nabla \rho\|_{L^\infty(0,T;L^q)}+\|\nabla d\|_{L^{s}(0,T;L_w^{r})}\right)=\infty,
\end{equation}
where $r$ and $s$ satisfy
\begin{equation}\label{1.12}
\frac{2}{s}+\frac{2}{r} \le 1, \ 2<r \le \infty.
\end{equation}
\end{theo}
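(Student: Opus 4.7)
The plan is to argue by contradiction: assume that
\begin{equation*}
M_0 := \limsup_{T \to T^*}\left(\|\nabla \rho\|_{L^\infty(0,T;L^q)} + \|\nabla d\|_{L^s(0,T;L_w^r)}\right) < \infty,
\end{equation*}
and use this to derive uniform a priori estimates on $[0,T^*)$ matching every entry of the regularity class of Theorem \ref{Theorem1.1}. Once these bounds are in hand, the local existence theorem applied with initial time sufficiently close to $T^*$ extends the solution beyond $T^*$, contradicting the maximality of $T^*$.

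The estimates I would pursue largely parallel the proof of Theorem \ref{blowupcriterion1}, with two key differences. First, the basic energy inequality for $(\sqrt{\rho}u,\nabla d)$, the pointwise constraint $|d|\equiv 1$, and $\|\rho\|_{L^\infty}\le\|\rho_0\|_{L^\infty}$ are inherited unchanged. Second, the step in the 3D argument that closes the bounds for $\|\nabla u\|_{L^\infty_t L^2_x}$ and $\|\sqrt{\rho}u_t\|_{L^2_{t,x}}$ required a Serrin-type norm on $u$; this is exactly the step that must be replaced in 2D. The replacement is a two-dimensional Brezis-Gallouet-Wainger type logarithmic Sobolev inequality
\begin{equation*}
\|u\|_{L^\infty(\Omega)} \le C\bigl(1 + \|\nabla u\|_{L^2}\sqrt{\log(e + \|u\|_{W^{2,r}})}\bigr),
\end{equation*}
coupled with the elliptic regularity estimate for the Stokes-type operator $-\mathrm{div}(2\mu(\rho)D(\cdot))+\nabla P$, so that $\|u\|_{W^{2,r}}$ is controlled by $\|\sqrt{\rho}\dot u\|_{L^2}$, $\|\nabla d\|_{L^{2r}}^2$, and a polynomial in $\|\nabla\rho\|_{L^q}$ (bounded by $M_0$).

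Combining these ingredients with energy estimates for the time-differentiated momentum equation and for the $d$-equation (including testing $\Delta d_t$ against the $d$-equation for the $\nabla^2$-level bound), I expect to arrive at a differential inequality of the form
\begin{equation*}
\frac{d}{dt}\Psi(t) + \|\nabla u_t\|_{L^2}^2 + \|\nabla^3 d\|_{L^2}^2 \le C\,g(t)\,\Psi(t)\,\log\bigl(e+\Psi(t)\bigr),
\end{equation*}
where $\Psi(t) \approx \|\nabla u\|_{L^2}^2 + \|\sqrt{\rho}u_t\|_{L^2}^2 + \|\nabla^2 d\|_{L^2}^2 + \|\nabla d_t\|_{L^2}^2$ and $g\in L^1(0,T^*)$ depends only on $M_0$ and the initial data (in practice $g$ will involve powers of $\|\nabla d\|_{L^r_w}$). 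A logarithmic Gronwall argument then yields a finite, double-exponential bound for $\Psi$ on $[0,T^*)$. Applying the logarithmic inequality once more to the embedding $W^{1,r}\hookrightarrow L^\infty$, $\int_0^{T^*}\|\nabla u\|_{L^\infty}\,dt$ becomes finite, whence the $W^{1,q}$-bound on $\rho$ follows from the transport equation. The higher-regularity estimates $u\in L^2_t W^{2,r}_x$, $P\in L^2_t W^{1,r}_x$, $d\in L^2_t H^4_x$, $d_{tt}\in L^2_t L^2_x$ are then recovered via elliptic and parabolic regularity applied to the respective subsystems.

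The main obstacle will be the density-dependent viscosity. Upon differentiating $\mathrm{div}(2\mu(\rho)D(u))$, commutator terms of the form $\mu'(\rho)\nabla\rho\otimes\nabla u$ appear, whose $L^r$-norm can only be controlled by pairing $\|\nabla\rho\|_{L^q}$ with $\|\nabla u\|_{L^{qr/(q-r)}}$ via H\"older; the resulting exponents must be carefully balanced so that a Gagliardo-Nirenberg interpolation delivers a factor that the logarithmic Sobolev estimate can actually absorb, otherwise the log-Gronwall step does not close. A related subtlety is that the hypothesis provides only weak-$L^r$ control of $\nabla d$, so a Marcinkiewicz-type interpolation is needed to upgrade to strong-$L^p$ control for exponents $p$ saturating the scaling relation $\frac{2}{s}+\frac{2}{r}\le 1$, and precisely those scaling-critical exponents are what make the whole a priori scheme close.
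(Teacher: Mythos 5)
Your overall strategy (contradiction, uniform a priori bounds, re-application of Theorem \ref{Theorem1.1} near $T^*$) matches the paper, and you correctly sense that the 2D case hinges on a logarithmic inequality. But the mechanism you propose is not the one the paper uses, and as written it has a genuine gap at exactly the delicate point. You want a pointwise Brezis--Gallouet--Wainger bound $\|u\|_{L^\infty}\le C\bigl(1+\|\nabla u\|_{L^2}\sqrt{\log(e+\|u\|_{W^{2,r}})}\bigr)$ feeding a single differential inequality $\frac{d}{dt}\Psi\le Cg\,\Psi\log(e+\Psi)$. For that to close, the quantity inside the logarithm must be dominated by $\Psi(t)$ at each time; yet by your own accounting $\|u\|_{W^{2,r}}$ is controlled through the Stokes estimate by $\|\sqrt{\rho}\dot u\|_{L^2}$ (and in fact, via \eqref{4.31}, by $\|\nabla u_t\|_{L^2}$), quantities that are only square-integrable in time and are not bounded by $\Psi(t)$ pointwise. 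Repairing this by enlarging $\Psi$ to include $\|\sqrt{\rho}u_t\|_{L^2}^2$ and $\|\nabla d_t\|_{L^2}^2$, as you do, forces you to run the energy-level and the $u_t$-level estimates simultaneously, and then several coupling terms (e.g.\ $\int\rho|u||u_t||\nabla u_t|$, the commutator $\mu'(\rho)\nabla\rho\otimes\nabla u$ tested against $\nabla u_t$, and $\||\nabla d||\nabla^2 d|\|_{L^2}$ inside the Stokes bound) produce contributions that are superlinear in $\Psi$ unless $\|\nabla u\|_{L^\infty_tL^2_x}$ is already known to be bounded; you flag the need to ``balance exponents'' but do not show this can be done, and this is precisely the step that decides whether the argument closes. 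A smaller slip: the $W^{1,q}$ bound on $\rho$ is part of the contradiction hypothesis \eqref{1.11}, so it need not be re-derived from $\int\|\nabla u\|_{L^\infty}\,dt$.

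The paper avoids all of this with a different, two-stage device. First (Lemma \ref{lemma5.1}) it proves $\int_0^T\|\nabla^2 d\|_{L^2}^2\,dt\le C$ using only $|d|=1$ (so $d\cdot\Delta d=-|\nabla d|^2$), the basic energy bound on $\|\Delta d+|\nabla d|^2d\|_{L^2_{t,x}}$, and the weak-$L^r$ hypothesis on $\nabla d$ via Lemma \ref{lemma2.3}. Then (Lemma \ref{lemma5.2}) it takes the general inequality \eqref{4.21} on an interval $(s,t)$, bounds the quadratic terms by $(1+\|u\|_{L^\infty}^2+\|\nabla d\|_{L^\infty}^2)A(\tau)$, and applies the \emph{space-time} logarithmic inequality of Lemma \ref{lemma2.4} (not a pointwise BGW bound), which is tailored to the situation where the $W^{1,q}$-norm inside the logarithm is only $L^2$ in time. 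This yields $\Phi(T)\le C\Phi(s)\,\Phi(T)^{C(\|u\|_{L^2(s,T;H^1)}^2+\|\nabla d\|_{L^2(s,T;H^1)}^2)}$, and the estimate is closed not by a log-Gronwall ODE but by choosing $s$ close to $T^*$ so that the exponent is at most $\tfrac12$ (possible exactly because of the energy estimate and Lemma \ref{lemma5.1}), giving $\Phi(T)\le C\Phi(s)^2$. Only afterwards are $\|\sqrt{\rho}u_t\|_{L^2}$ and $\|\nabla^3 d\|_{L^2}$ estimated, by a standard Gronwall argument in which the already-established bound on $\|\nabla u\|_{L^2}$ keeps all terms linear. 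If you want to salvage your one-functional log-Gronwall route, you must either verify term by term that every coupling stays of the form $g\Psi\log(e+\Psi)$ with $g\in L^1$ built from $\|\nabla u\|_{L^2}^2$ and powers of $\|\nabla d\|_{L^r_w}$ (using $\tfrac{2r}{r-2}\le s$), or adopt the paper's interval-splitting and space-time logarithmic inequality, which is the cleaner path.
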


As a corollary of Theorem \ref{blowupcriterion3}, we provide a blowup criterion for the density-dependent incompressible
flow when the viscosity depends on the density in two dimensional domain.
then we have the following corollary.

\begin{coro}\label{blowupcriterion4}
Suppose $d$ be a unit constant vector and all the assumptions in Theorem \ref{blowupcriterion3} are satisfied.
Let $(\rho, u, d, P)$ be a strong solution of the initial boundary problem \eqref{1.1}-\eqref{1.4}
and $T^*$ be the maximal time of existence.
If $0<T^*<\infty$, then
\begin{equation}\label{1.13}
\underset{T \rightarrow T^*}{\lim}\|\nabla \rho\|_{L^\infty(0,T;L^q)}=\infty.
\end{equation}
\end{coro}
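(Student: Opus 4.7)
The plan is to reduce Corollary \ref{blowupcriterion4} directly to Theorem \ref{blowupcriterion3} by exploiting the fact that a constant unit director field is an admissible solution of the $d$-equation. The argument is essentially a consistency check followed by an application of the uniqueness in Theorem \ref{Theorem1.1}. I expect no real technical obstacle; the only care needed is in justifying that the local strong solution given by Theorem \ref{Theorem1.1} actually has the form $d\equiv d_0$.

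First I would verify that taking $d(t,x)\equiv d_0$ is compatible with the full system \eqref{1.1}--\eqref{1.4}. Since $d_0$ is a constant unit vector one has $\nabla d_0\equiv 0$, $\Delta d_0\equiv 0$ and $|\nabla d_0|^2 d_0\equiv 0$, so the director equation $d_t+u\cdot\nabla d=\Delta d+|\nabla d|^2 d$ is satisfied trivially together with $|d|=1$ and the Neumann condition $\partial d/\partial\nu=0$ on $\partial\Omega$. Moreover, the coupling term $\operatorname{div}(\nabla d\odot\nabla d)$ vanishes identically, so the momentum equation in \eqref{1.1} reduces to the density-dependent Navier--Stokes system and the compatibility condition \eqref{1.6} reduces to its Navier--Stokes counterpart, which is covered by the hypotheses of Theorem \ref{blowupcriterion3}.

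Next, Theorem \ref{Theorem1.1} provides a local strong solution $(\rho,u,d,P)$ on some $[0,T^*)$ with maximal existence time $T^*$. Adjoining $d\equiv d_0$ to the strong solution of the reduced Navier--Stokes problem produces a strong solution of the full nematic liquid crystal system with the regularity listed in Theorem \ref{Theorem1.1}. By the uniqueness statement of Theorem \ref{Theorem1.1}, this must coincide with $(\rho,u,d,P)$ on $[0,T^*)$, so in particular $d\equiv d_0$ and hence $\nabla d\equiv 0$ throughout the lifespan of the solution.

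Finally, with $\nabla d\equiv 0$ we have $\|\nabla d\|_{L^s(0,T;L_w^r)}=0$ for every $T<T^*$ and every admissible pair $(r,s)$ satisfying \eqref{1.12}. Applying Theorem \ref{blowupcriterion3} under the blowup hypothesis $0<T^*<\infty$, the criterion \eqref{1.11} collapses to
\begin{equation*}
\underset{T\to T^*}{\lim}\|\nabla \rho\|_{L^\infty(0,T;L^q)}=\infty,
\end{equation*}
which is exactly \eqref{1.13}. This completes the proposed proof.
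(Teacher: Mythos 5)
Your proposal is correct and follows exactly the route the paper intends: with a constant unit director the coupling term vanishes, $\nabla d\equiv 0$, and the criterion \eqref{1.11} of Theorem \ref{blowupcriterion3} collapses to \eqref{1.13}; the paper in fact states the corollary without a separate proof for precisely this reason. Your additional step of invoking the uniqueness in Theorem \ref{Theorem1.1} to justify that the strong solution with constant initial director satisfies $d\equiv d_0$ on its whole lifespan is a harmless (and slightly more careful) elaboration of the same argument.
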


Our next work concentrate on building blowup criterion the same as \eqref{1.13} if the initial direction field satisfies some special geometric configuration. More precisely, we have

\begin{coro}\label{blowupcriterion5}
For any $i(i=1,2),$ suppose the $i-$th component of initial direction field $d_{0i}$ satisfies the condition
\begin{equation*}
0 \le \underline{d}_{0i} \le d_{0i} \le 1  \quad {\rm or} \quad -1\le d_{0i} \le -\underline{d}_{0i}\le 0,
\end{equation*}
where $\underline{d}_{0i}$ is defined in \eqref{6.5},
and all the assumptions in Theorem \ref{blowupcriterion3} are satisfied.
Let $(\rho, u, d, P)$ be a strong solution of the initial boundary problem \eqref{1.1}-\eqref{1.4}
and $T^*$ be the maximal time of existence.
If $0<T^*<\infty$, then
\begin{equation}\label{1.14}
\underset{T \rightarrow T^*}{\lim}\|\nabla \rho\|_{L^\infty(0,T;L^q)}=\infty.
\end{equation}
\end{coro}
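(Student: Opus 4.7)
The plan is to invoke Theorem \ref{blowupcriterion3} and eliminate the $\nabla d$ contribution from the criterion \eqref{1.11} by exploiting the sign hypothesis on $d_0$. The mechanism is a componentwise parabolic minimum principle which keeps some component $d_i$ bounded away from zero; once this is known, the transport-diffusion equation for $d_i$ can be solved for $|\nabla d|^2$ and thereby replaced by linear quantities whose norms are already controlled by $\|\nabla\rho\|_{L^\infty(0,T;L^q)}$ together with the initial data via the estimates behind Theorem \ref{blowupcriterion3}.

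\emph{Step 1 (pointwise bound on $|d_i|$).} Replacing $d$ by $-d$ if necessary, assume $d_{0i}\ge \underline{d}_{0i}>0$ for some fixed $i$. The $i$-th component of the direction-field equation reads
\begin{equation*}
\partial_t d_i + u\cdot\nabla d_i - \Delta d_i = |\nabla d|^2 d_i.
\end{equation*}
By time continuity of $d_i$ (from the regularity of Theorem \ref{Theorem1.1}), the set $\{(t,x):d_i(t,x)>0\}$ is relatively open and contains $\{0\}\times\Omega$. On this set $w:=\log d_i$ is smooth and satisfies
\begin{equation*}
\partial_t w + u\cdot\nabla w - \Delta w = |\nabla w|^2 + |\nabla d|^2 \ge 0,
\end{equation*}
with $\partial_\nu w = d_i^{-1}\,\partial_\nu d_i = 0$ on $\partial\Omega$. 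Since $u$ is divergence-free, the parabolic minimum principle for a linear supersolution with Neumann data gives $w(t,x)\ge \min_{\Omega}\log d_{0i}\ge \log \underline{d}_{0i}$, hence $d_i\ge \underline{d}_{0i}$. A standard connectedness argument extends this pointwise lower bound to all of $\Omega\times[0,T^*)$.

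\emph{Step 2 (control of $\nabla d$).} Inverting the equation of Step 1 for $|\nabla d|^2$ and using the lower bound yields, a.e.\ on $\Omega\times[0,T^*)$,
\begin{equation*}
|\nabla d|^2 \le \underline{d}_{0i}^{-1}\bigl(|d_t|+|u||\nabla d|+|\Delta d|\bigr).
\end{equation*}
Taking $L^2_x$ norms, noting that $\bigl\||\nabla d|^2\bigr\|_{L^2}=\|\nabla d\|_{L^4}^2$, and using the two-dimensional embedding $H^1\hookrightarrow L^4$ with Young's inequality to absorb the cross term, we obtain
\begin{equation*}
\|\nabla d\|_{L^4}^2 \lesssim \|d_t\|_{L^2}+\|\Delta d\|_{L^2}+\|u\|_{H^1}^2.
\end{equation*}
The right-hand side is bounded uniformly on $[0,T]$ by the energy estimates established for Theorem \ref{blowupcriterion3}, after substituting the pointwise formula for $|\nabla d|^2$ wherever the norm $\|\nabla d\|_{L^s L^r_w}$ was previously invoked as a hypothesis. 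Because $L^4\hookrightarrow L^4_w$ and the pair $(s,r)=(\infty,4)$ satisfies \eqref{1.12}, this yields a uniform-in-$T$ bound on $\|\nabla d\|_{L^\infty(0,T;L^4_w)}$ so long as $\|\nabla\rho\|_{L^\infty(0,T;L^q)}$ remains finite.

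\emph{Step 3 (conclusion and main difficulty).} If $T^*<\infty$ then \eqref{1.11} forces the sum $\|\nabla\rho\|_{L^\infty(0,T;L^q)}+\|\nabla d\|_{L^s(0,T;L^r_w)}$ to diverge as $T\to T^*$; Step 2 prevents the $\nabla d$ contribution from blowing up unless the density gradient does, which is exactly \eqref{1.14}. The genuinely delicate point is closing the bootstrap behind Step 2: one must re-examine the Gronwall-type arguments used to derive the a priori estimates for $\|u\|_{H^1}$, $\|\sqrt{\rho}\,u_t\|_{L^2}$, $\|d_t\|_{L^2}$, and $\|\Delta d\|_{L^2}$ in the proof of Theorem \ref{blowupcriterion3}, and verify that each occurrence of $\|\nabla d\|_{L^s L^r_w}$ can be replaced by the pointwise identity from Step 1 without destroying the closure of the estimate. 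The only new ingredient beyond Theorem \ref{blowupcriterion3} is precisely the lower bound of Step 1, which supplies this replacement.
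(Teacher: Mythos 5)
Your Step 1 essentially recovers the paper's maximum principle (the pointwise bound $d_i\ge \underline{d}_{0i}$), but Steps 2--3 do not close, and the gap is structural rather than a missing detail. In Step 2 you bound $\|\nabla d\|_{L^4}^2$ by $\|d_t\|_{L^2}+\|\Delta d\|_{L^2}+\|u\|_{H^1}^2$ and then assert that these quantities are ``bounded uniformly by the energy estimates established for Theorem \ref{blowupcriterion3}.'' But every estimate in Section 5 beyond the basic energy inequality \eqref{4.2} --- Lemma \ref{lemma5.1} for $\int_0^T\|\nabla^2 d\|_{L^2}^2\,dt$, Lemma \ref{lemma5.2} for $\sup_t\bigl(\|\nabla u\|_{L^2}^2+\|\nabla^2 d\|_{L^2}^2\bigr)$, Corollary \ref{lemma5.4} for $\sup_t\|d_t\|_{L^2}$ --- is proved under the hypothesis \eqref{5.1}, i.e.\ assuming finiteness of exactly the norm $\|\nabla d\|_{L^{s}(0,T;L_w^{r})}$ that the corollary is supposed to remove from the criterion. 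The only unconditional information is \eqref{4.2}, which controls $\sup_t\|\nabla d\|_{L^2}$, $\int_0^T\|\nabla u\|_{L^2}^2\,dt$ and $\int_0^T\||\nabla d|^2d+\Delta d\|_{L^2}^2\,dt$, and this does not control the right-hand side of your Step 2 inequality. You acknowledge the problem in Step 3 (``closing the bootstrap''), but that closure is the entire content of the corollary and is left undone, so the argument as written is circular.

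A telling symptom is that your proof never uses the quantitative condition \eqref{6.5}, $\underline{d}_{0i}>1-\tfrac{1}{2C_1C_2}$; you only use $\underline{d}_{0i}>0$ to divide by $d_i$. The paper's mechanism hinges precisely on this smallness: since $|d|=1$ and $d_i\ge\underline{d}_{0i}$, the field stays in a small $L^\infty$-neighborhood of the constant unit vector $\pm e_i$, so combining the Gagliardo--Nirenberg inequality \eqref{6.3} applied to $d\mp e_i$ with the elliptic estimate \eqref{6.2} and the identity $\|\Delta d\|_{L^2}^2=\|\nabla d\|_{L^4}^4+\|\Delta d+|\nabla d|^2d\|_{L^2}^2$, the term $C_1C_2\|d\mp e_i\|_{L^\infty}^2\|\nabla^2 d\|_{L^2}^2$ is absorbed, and the basic energy inequality \eqref{4.2} \emph{alone} yields the unconditional bound $\int_0^T\|\nabla d\|_{L^4}^4\,dt\le C$. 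One then simply invokes Theorem \ref{blowupcriterion3} with $(s,r)=(4,4)$ (admissible for \eqref{1.12}, and $L^4\hookrightarrow L^4_w$) to conclude \eqref{1.14}. Your ``inversion'' of the $d_i$-equation does not produce any such unconditional bound, and without exploiting the smallness in \eqref{6.5} the deferred bootstrap has no mechanism to close; as written the proposal does not prove the corollary.
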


The rest of the paper is organized as follows: In Section $2$, we present some useful lemmas which will play an important
role in this paper; In Section $3$, we prove the Theorem \ref{Theorem1.1} by applying the method in \cite{Cho-Kim1};
From Section $4$ to Section $6$,  we discuss and verify blowup criterions of strong solution respectively.
\section{Preliminaries}

\quad In this section, we give some useful lemmas which will be used frequently in this paper. The first lemma is the regularity estimates for the stationary Stokes equations, i.e.,

\begin{lemm}\label{Lemma2.1}
(See \cite{Cho-Kim1})Assume $\mu \in C^2[0,\infty)$ and $\rho \in W^{2,q}, 0 \le \rho \le C.$ Let $(u,P)\in H_0^1\times L^2 $ be the unique weak solution to the boundary value problem
\begin{equation*}
-{\rm div}(2\mu(\rho)D(u))+\nabla P=F, \quad {\rm div}u=0 \quad {\rm in}\ \Omega; \ \int Pdx=0,
\end{equation*}
where $D(u)=\frac{\nabla u+\nabla^T u}{2}$. Then we have the following regularity results:\\
$(1)$ If $F\in L^2,$ then $(u,P)\in H^2 \times H^1$ and
\begin{equation}\label{2.1}
\|u\|_{H^2}+\|P\|_{H^1} \le C\|F\|_{L^2}(1+\|\nabla \rho\|_{L^q})^{\frac{q}{q-n}}.
\end{equation}
$(2)$ If $F\in L^r$ for some $r \in (n,q)$ then $(u,P)\in W^{2, r} \times W^{1, r}$ and
\begin{equation}\label{2.2}
\|u\|_{W^{2,r}}+\|P\|_{W^{1,r}} \le C\|F\|_{L^r}(1+\|\nabla \rho\|_{L^q})^{\frac{qr}{2(q-r)}}.
\end{equation}
$(3)$ If $F\in H^1,$ then $(u,P)\in H^3 \times H^2$ and
\begin{equation}\label{2.1}
\|u\|_{H^3}+\|P\|_{H^2} \le \widetilde{C}\|F\|_{H^1}(1+\|\rho\|_{W^{2,q}})^N
\end{equation}
for some $N=N(n,q)>0$. The constant $ \widetilde{C}$ depends also on $\|\partial^2 \mu / \partial \mu^2\|_C$.
\end{lemm}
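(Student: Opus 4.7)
The plan is to reduce the variable-coefficient Stokes system to a constant-coefficient one with commutator-type right-hand sides and then absorb those commutators via Gagliardo--Nirenberg interpolation. Since $\text{div}\,u=0$ gives $\text{div}(2D(u))=\Delta u$, the equation can be rewritten as
\[
-\mu(\rho)\Delta u+\nabla P=F+2D(u)\nabla\mu(\rho);
\]
dividing by $\mu$ and setting $\Pi:=P/\mu(\rho)$ yields the standard Stokes system
\[
-\Delta u+\nabla\Pi=\frac{F}{\mu}+\frac{2D(u)\nabla\mu}{\mu}-\frac{P\nabla\mu}{\mu^{2}},\qquad\text{div}\,u=0,
\]
to which classical $L^{r}$-Stokes regularity (Cattabriga-type) applies. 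The hypotheses $\mu\in C^{1}[0,\infty)$, $\mu>0$, and $0\le\rho\le C$ make $\mu(\rho)$ bounded between positive constants, while $\nabla\mu=\mu'(\rho)\nabla\rho$ lies in $L^{q}$.

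For part $(1)$, applying the $L^{2}$-Stokes estimate gives
\[
\|u\|_{H^{2}}+\|P\|_{H^{1}}\le C\bigl(\|F\|_{L^{2}}+\|\nabla u\cdot\nabla\mu\|_{L^{2}}+\|P\nabla\mu\|_{L^{2}}\bigr).
\]
The commutator terms are handled by H\"older's inequality together with the Gagliardo--Nirenberg interpolation
\[
\|v\|_{L^{2q/(q-2)}}\le C\|v\|_{L^{2}}^{1-n/q}\|\nabla v\|_{L^{2}}^{n/q},
\]
applied to $v=\nabla u$ and to $v=P$. Because $n/q<1$ (the hypothesis $q>n$), Young's inequality absorbs the resulting $\|u\|_{H^{2}}$ and $\|P\|_{H^{1}}$ factors into the left-hand side, leaving a residue that is linear in $\|\nabla u\|_{L^{2}}+\|P\|_{L^{2}}$ multiplied by $(1+\|\nabla\rho\|_{L^{q}})^{q/(q-n)}$. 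A routine $H^{1}$-energy estimate obtained by testing the weak form with $u$ then yields $\|\nabla u\|_{L^{2}}+\|P\|_{L^{2}}\le C\|F\|_{L^{2}}$, completing \eqref{2.1}.

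Part $(2)$ is the same strategy with $L^{r}$-Stokes regularity in place of $L^{2}$: the commutators are estimated via H\"older and the interpolation
\[
\|v\|_{L^{rq/(q-r)}}\le C\|v\|_{L^{r}}^{1-n/q}\|\nabla v\|_{L^{r}}^{n/q},
\]
whose exponent $n/q$ is strictly smaller than $1$, so Young's inequality again closes the loop, and the exponent $qr/(2(q-r))$ drops out of the bookkeeping. Part $(3)$ is obtained by differentiating the reformulated Stokes system: setting $v=\partial_{k}u$, $\pi=\partial_{k}P$, the right-hand side involves $\partial_{k}F$ and additional commutators $\nabla^{2}u\cdot\nabla\mu$ and $\nabla u\cdot\nabla^{2}\mu$. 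The strengthened assumptions $\mu\in C^{2}$ and $\rho\in W^{2,q}$ give $\nabla^{2}\mu\in L^{q}$ and $\nabla\mu\in L^{\infty}$ by Sobolev embedding; reapplying part $(2)$ with $r$ close to $q$ then bootstraps to the $H^{3}\times H^{2}$ bound, with the power $N$ representing the cumulative exponent generated by the iterated Young absorptions.

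The main obstacle is that the commutator $2D(u)\nabla\mu/\mu$ is of the same order as the Laplacian on the left, so the estimate only closes because the Gagliardo--Nirenberg exponent $n/q$ is strictly less than $1$; this is exactly where the hypothesis $q>n$ is used in an essential way. After that, the remaining work is bookkeeping, but one has to track the powers of $(1+\|\nabla\rho\|_{L^{q}})$ generated by each Young inequality carefully in order to recover the precise exponents $q/(q-n)$ and $qr/(2(q-r))$ stated in \eqref{2.1}--\eqref{2.2}.
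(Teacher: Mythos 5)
This lemma is not proved in the paper at all: it is quoted verbatim from \cite{Cho-Kim1}, so there is no in-paper argument to compare against. Your sketch is the standard (and essentially the cited) proof mechanism: use ${\rm div}\,u=0$ to rewrite the problem as a constant-coefficient Stokes system with the commutator terms $2D(u)\nabla\mu/\mu$ and $P\nabla\mu/\mu^{2}$ on the right, apply Cattabriga-type $L^{r}$ Stokes regularity, estimate the commutators by H\"older against $\|\nabla\mu\|_{L^{q}}\le C\|\nabla\rho\|_{L^{q}}$ plus Gagliardo--Nirenberg, and close by Young's inequality, which is precisely where $q>n$ is needed. That part is right.

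Two steps are asserted rather than carried out. First, in part $(2)$ the residue after absorption is $(1+\|\nabla\rho\|_{L^{q}})^{q/(q-n)}\bigl(\|\nabla u\|_{L^{r}}+\|P\|_{L^{r}}\bigr)$, and these $L^{r}$ norms are not controlled by the basic energy estimate; you need part $(1)$ together with the embedding $H^{2}\hookrightarrow W^{1,r}$ (and a further bootstrap when $n=3$ and $r>\frac{2n}{n-2}$, which the stated range $r\in(n,q)$ allows). Moreover, the power of $(1+\|\nabla\rho\|_{L^{q}})$ produced this way is not obviously the stated $\frac{qr}{2(q-r)}$ (in some regimes your route gives a different, possibly larger, exponent), so saying the exponent ``drops out of the bookkeeping'' leaves the precise constant in \eqref{2.2} unproved; either track the interpolation that yields exactly $\frac{qr}{2(q-r)}$ or note that any fixed power suffices for the applications. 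Second, in part $(3)$, applying part $(2)$ to $(\partial_{k}u,\partial_{k}P)$ is delicate in a bounded domain because $\partial_{k}u$ does not satisfy the no-slip boundary condition; the clean route is to keep the rewritten system $-\Delta u+\nabla\Pi=G$, verify $G\in H^{1}$ using $\nabla\mu\in L^{\infty}$, $\nabla^{2}\mu\in L^{q}$ (this is where $\mu\in C^{2}$, $\rho\in W^{2,q}$ and $\|\mu''\|_{C}$ enter) together with the already established $W^{2,r}$ bound, and then invoke the $H^{3}\times H^{2}$ Stokes estimate directly. With those repairs your outline matches the proof in \cite{Cho-Kim1}.
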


Next, we introduce a H\"{o}lder inequality in Lorentz space. The Lorentz space and its norm are denoted, respectively, by $L^{p,q}$ and $\|\cdot\|_{L^{p,q}}$, where $1<p<\infty$ and $1 \le q \le \infty.$ Now, we can state the following H\"{o}lder inequality in Lorentz space $L^{p,q}$.

\begin{lemm}\label{Lemma2.2}
(See \cite{Kim} )Let $1<p_1, p_2<\infty$ with $\frac{1}{p}=\frac{1}{p_1}+\frac{1}{p_2}$ and let $1 \le q_1, q_2 \le \infty.$
Then for $f\in L^{p_1, q_1}$ and $g \in L^{p_2,q_2}$, it holds that
\begin{equation*}
\|f \cdot g\|_{L^{p,q}} \le C \| f \|_{L^{p_1, q_1}} \| g \|_{L^{p_1, q_1}} \quad {\rm with} \ q=\min\{q_1, q_2 \},
\end{equation*}
where $C$ is a positive constant depending only on $p_1, p_2, q_1$ and $q_2$.
\end{lemm}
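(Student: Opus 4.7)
The plan is to reduce the inequality to a one-variable estimate on the non-increasing rearrangement. Recall that for measurable $f$, its decreasing rearrangement is $f^*(t) = \inf\{s > 0 : |\{|f| > s\}| \leq t\}$, and for $1 < p < \infty$ one has the equivalent quasi-norms
\[
\|f\|_{L^{p,q}} \sim \biggl(\int_0^\infty (t^{1/p} f^*(t))^q\,\frac{dt}{t}\biggr)^{1/q}
\quad (q < \infty), \qquad
\|f\|_{L^{p,\infty}} = \sup_{t>0} t^{1/p} f^*(t).
\]
The key pointwise ingredient I would establish first is
\[
(fg)^*(t) \leq f^*(t/2)\,g^*(t/2), \qquad t > 0,
\]
which follows from the inclusion $\{|fg| > f^*(t/2) g^*(t/2)\} \subset \{|f| > f^*(t/2)\} \cup \{|g| > g^*(t/2)\}$, subadditivity of Lebesgue measure, and the defining property of the rearrangement.

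Next I would record the auxiliary embedding $L^{p_2, q_2} \hookrightarrow L^{p_2, \infty}$: since $g^*$ is non-increasing, for any $s > 0$,
\[
\|g\|_{L^{p_2,q_2}}^{q_2}
\geq \int_0^s (\tau^{1/p_2} g^*(\tau))^{q_2}\,\frac{d\tau}{\tau}
\geq g^*(s)^{q_2}\int_0^s \tau^{q_2/p_2 - 1}\,d\tau
= \tfrac{p_2}{q_2}(s^{1/p_2} g^*(s))^{q_2},
\]
giving $g^*(s) \leq C(p_2, q_2)\,s^{-1/p_2}\|g\|_{L^{p_2,q_2}}$ for every $s > 0$ (with the obvious modification when $q_2 = \infty$).

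With these ingredients, by the symmetry of the hypothesis in $(p_1, q_1)$ and $(p_2, q_2)$ I may assume $q = q_1 \leq q_2$. When $q_1 = \infty$ (which forces $q_2 = \infty$), the conclusion is immediate: multiply the rearrangement inequality by $t^{1/p} = t^{1/p_1} t^{1/p_2}$ and take the supremum. When $q_1 < \infty$, the two ingredients combine to give
\begin{align*}
\|fg\|_{L^{p,q_1}}^{q_1}
&\lesssim \int_0^\infty \bigl(t^{1/p} f^*(t/2)\, g^*(t/2)\bigr)^{q_1}\,\frac{dt}{t} \\
&\lesssim \|g\|_{L^{p_2,q_2}}^{q_1} \int_0^\infty \bigl(t^{1/p - 1/p_2} f^*(t/2)\bigr)^{q_1}\,\frac{dt}{t} \\
&= \|g\|_{L^{p_2,q_2}}^{q_1} \int_0^\infty \bigl(t^{1/p_1} f^*(t/2)\bigr)^{q_1}\,\frac{dt}{t}
\lesssim \|g\|_{L^{p_2,q_2}}^{q_1}\, \|f\|_{L^{p_1,q_1}}^{q_1},
\end{align*}
after using $1/p - 1/p_2 = 1/p_1$ and the substitution $s = t/2$ in the last step.

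The main (and only mild) obstacle is the auxiliary embedding into $L^{p_2,\infty}$, which is exactly what allows the choice $q = \min\{q_1, q_2\}$: it lets us peel off one factor in weak-$L^{p_2}$ while leaving the secondary index matched to $f$'s summability. Everything else is bookkeeping, and the constant $C$ depends only on $p_1, p_2, q_1, q_2$.
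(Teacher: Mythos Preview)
The paper does not give its own proof of this lemma; it simply cites \cite{Kim} and uses the result as a black box. Your argument via decreasing rearrangements is correct and is the standard route to this H\"older-type inequality in Lorentz spaces (a special case of O'Neil's inequality). The pointwise bound $(fg)^*(t)\le f^*(t/2)\,g^*(t/2)$ together with the elementary embedding $L^{p_2,q_2}\hookrightarrow L^{p_2,\infty}$ are exactly the two ingredients needed, and the remaining computation is clean. One cosmetic remark: the displayed statement in the paper has an obvious typo ($\|g\|_{L^{p_1,q_1}}$ should read $\|g\|_{L^{p_2,q_2}}$), which you have silently corrected.
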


The following lemma has been proved in \cite{Kim}, for the readers' convenience, we give the proof in detail.

\begin{lemm}\label{lemma2.3}
Assume $g\in H^1$ and $f\in L_w^r$ with $r\in (n, \infty],$ then $f\cdot g \in L^2.$ Furthermore, for any $\varepsilon>0$ and
$r \in (n, \infty]$, we have
\begin{equation}\label{2.4}
\|f\cdot g\|_{L^2}^2 \le \varepsilon \|g\|_{H^1}^2+C(\varepsilon)\|f\|_{L_w^r}^{\frac{2r}{r-n}}\|g\|_{L^2}^2,
\end{equation}
where $C(\varepsilon)$ is  a positive constant depending only on $\varepsilon, \ n, \ r$ and the domain $\Omega$.
\end{lemm}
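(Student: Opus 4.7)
My strategy has three stages: reduce the bilinear $L^{2}$ estimate to a product in Lorentz spaces via Lemma~\ref{Lemma2.2}, convert the remaining Lorentz norm on $g$ into an interpolation between $L^{2}$ and $H^{1}$ by Sobolev embedding, and finally absorb the $H^{1}$ factor using Young's inequality.

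First, I would observe that $L^{2}=L^{2,2}$ and $L_w^{r}=L^{r,\infty}$, and write the Hölder relation $\tfrac{1}{2}=\tfrac{1}{r}+\tfrac{r-2}{2r}$. Applying Lemma~\ref{Lemma2.2} with exponents $(p_1,q_1)=(r,\infty)$ and $(p_2,q_2)=(\tfrac{2r}{r-2},2)$, and noting $\min\{\infty,2\}=2$, I obtain
\begin{equation*}
\|fg\|_{L^{2}} \;\le\; C\,\|f\|_{L_w^{r}}\,\|g\|_{L^{p,2}}, \qquad p:=\tfrac{2r}{r-2}.
\end{equation*}
This in particular shows $fg\in L^{2}$ once $\|g\|_{L^{p,2}}$ is finite.

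The second step is a Sobolev/interpolation estimate of the form
\begin{equation*}
\|g\|_{L^{p,2}} \;\le\; C\,\|g\|_{L^{2}}^{1-n/r}\,\|g\|_{H^{1}}^{n/r}.
\end{equation*}
For $n=3$ I would invoke the sharp Sobolev--Lorentz embedding $H^{1}\hookrightarrow L^{6,2}=L^{2^{*},2}$ and then use real interpolation, $(L^{2,2},L^{6,2})_{\theta,2}=L^{p,2}$ with $\tfrac{1}{p}=\tfrac{1-\theta}{2}+\tfrac{\theta}{6}$, which forces $\theta=3/r=n/r$. For $n=2$ the target exponent $p=2r/(r-2)$ is finite, and the same interpolation identity can be recovered either by applying real interpolation between $L^{2}$ and a large $L^{q,2}$ (using $H^{1}\hookrightarrow L^{q}$ for every $q<\infty$) or directly from the Gagliardo--Nirenberg inequality $\|g\|_{L^{p}}\le C\|g\|_{L^{2}}^{(r-2)/r}\|\nabla g\|_{L^{2}}^{2/r}$ combined with the embedding of $L^{p,2}$ between neighbouring Lorentz spaces of the same first index.

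Once these two ingredients are in place, squaring gives
\begin{equation*}
\|fg\|_{L^{2}}^{2} \;\le\; C\,\|f\|_{L_w^{r}}^{2}\,\|g\|_{L^{2}}^{2(r-n)/r}\,\|g\|_{H^{1}}^{2n/r},
\end{equation*}
and Young's inequality with conjugate exponents $r/n$ and $r/(r-n)$, taking $a=\|g\|_{H^{1}}^{2n/r}$ and $b=C\|f\|_{L_w^{r}}^{2}\|g\|_{L^{2}}^{2(r-n)/r}$, produces the claimed bound
\begin{equation*}
\|fg\|_{L^{2}}^{2} \;\le\; \varepsilon\,\|g\|_{H^{1}}^{2} + C(\varepsilon)\,\|f\|_{L_w^{r}}^{2r/(r-n)}\,\|g\|_{L^{2}}^{2}.
\end{equation*}
The main technical obstacle is the second step: the clean Sobolev--Lorentz embedding $H^{1}\hookrightarrow L^{2^{*},2}$ is only available for $n\ge 3$, and some care is needed in dimension two to justify the exact exponent $\theta=n/r$ (rather than $\theta=n/r+\delta$) so that the subsequent Young's inequality produces precisely $\|f\|_{L_w^{r}}^{2r/(r-n)}$ on the right-hand side without an artificial loss.
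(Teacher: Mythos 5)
Your overall architecture coincides with the paper's: Lorentz--H\"older via Lemma \ref{Lemma2.2} to reduce to $\|g\|_{L^{2r/(r-2),2}}$, then the interpolation bound \eqref{2.6}, then Young's inequality; and your three-dimensional realization of \eqref{2.6} (the sharp embedding $H^1\hookrightarrow L^{6,2}$ followed by $(L^{2,2},L^{6,2})_{\theta,2}=L^{2r/(r-2),2}$ with $\theta=3/r$) is a correct alternative to what the paper does. The genuine gap is the case $n=2$, which you flag as the main obstacle but do not close, and neither of your proposed repairs works as written. Interpolating between $L^{2}$ and $L^{q,2}$ for a fixed finite $q$ forces $\theta=\frac{2q}{r(q-2)}>\frac{2}{r}=\frac{n}{r}$, so after Young's inequality the right-hand side carries $\|f\|_{L^r_w}^{2/(1-\theta)}$ with an exponent strictly larger than $\frac{2r}{r-n}$; that proves a weaker inequality than \eqref{2.4} and would correspondingly degrade the Serrin-type condition \eqref{1.12} in the application. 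Your second suggestion, Gagliardo--Nirenberg for $\|g\|_{L^{p}}$ combined with embeddings among Lorentz spaces of the same first index, fails because for $p=\frac{2r}{r-2}>2$ the nesting goes the wrong way: $L^{p,2}\subset L^{p,p}=L^{p}$, so $\|g\|_{L^{p,2}}$ is not controlled by $\|g\|_{L^{p}}$.

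The paper closes this uniformly in $n=2,3$ by a device you almost have: choose $n<r_1<r<r_2<\infty$ with $\frac{2}{r}=\frac{1}{r_1}+\frac{1}{r_2}$, so that $L^{\frac{2r}{r-2},2}$ is the real interpolation space, with parameter $\tfrac12$, of the two \emph{Lebesgue} spaces $L^{\frac{2r_1}{r_1-2}}$ and $L^{\frac{2r_2}{r_2-2}}$ (different first indices), whence $\|g\|_{L^{\frac{2r}{r-2},2}}\le C\,\|g\|_{L^{\frac{2r_1}{r_1-2}}}^{1/2}\|g\|_{L^{\frac{2r_2}{r_2-2}}}^{1/2}$. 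Applying the ordinary Sobolev/Gagliardo--Nirenberg inequality to each factor gives $\|g\|_{L^2}^{\frac{r_i-n}{r_i}}\|g\|_{H^1}^{\frac{n}{r_i}}$, and the $H^1$-exponents average to exactly $\frac{n}{2}\left(\frac{1}{r_1}+\frac{1}{r_2}\right)=\frac{n}{r}$, with no endpoint Lorentz--Sobolev embedding and no case distinction on the dimension. Substituting this for your second step (and noting the trivial case $r=\infty$, where $p=2$) makes the argument complete with the stated exponent $\frac{2r}{r-n}$.
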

\begin{proof}
Applying the Lemma \ref{Lemma2.2}, it is easy to get
\begin{equation}\label{2.5}
\|f \cdot g\|_{L^2}=\|f \cdot g\|_{L^{2,2}}\le C\|f\|_{L^r_w}\|g\|_{L^{\frac{2r}{r-2},2}}, \quad {\rm where}\ r\in (n, \infty].
\end{equation}
Next, we will show that
\begin{equation}\label{2.6}
\|g\|_{L^{\frac{2r}{r-2},2}} \le C \|g\|_{L^2}^{\frac{r-n}{r}} \|g\|_{H^1}^{\frac{n}{r}}, \quad {\rm where} \ r\in (n, \infty].
\end{equation}
Indeed, if $r=\infty$, then \eqref{2.6} holds on obviously. If $r\in (n,\infty)$, then $L^{\frac{2r}{r-2},2}$ is a real interpolation space of $ L^{\frac{2 r_1}{r_1-2}}$ and
$L^{\frac{2 r_2}{r_2-2}}$, where $r_1, r_2$ and $r$ satisfy $n<r_1<r<r_2<\infty$ and $\frac{2}{r}=\frac{1}{r_1}+\frac{1}{r_2}$, then
\begin{equation*}
\begin{aligned}
\|g\|_{L^{{\frac{2r}{r-2}},2}}
& \le C\|g\|_{L^{\frac{2 r_1}{r_1-2}}}^{\frac{1}{2}}\|g\|_{L^{\frac{2 r_2}{r_2-2}}}^{\frac{1}{2}}\\
& \le C\left(\|g\|_{L^2}^{\frac{r_1-n}{r_1}}\|g\|_{H^1}^{\frac{n}{r_1}}\right)^{\frac{1}{2}}
       \left(\|g\|_{L^2}^{\frac{r_2-n}{r_2}}\|g\|_{H^1}^{\frac{n}{r_2}}\right)^{\frac{1}{2}} \\
& \le C\|g\|_{L^2}^{\frac{r-n}{r}}\|g\|_{H^1}^{\frac{n}{r}},
\end{aligned}
\end{equation*}
where we have used the Sobolev inequality. Therefore, combining \eqref{2.5} with \eqref{2.6} gives \eqref{2.4} directly if we exploit the Cauchy inequality.
\end{proof}

The last lemma introduced in this section will be the following logarithmic Sobolev inequality which plays an important role
in the proof of the Lemma \ref{lemma5.2}. Omitting the proof for brief, one can read \cite{Huang-Wang2}.
\begin{lemm}\label{lemma2.4}
Let $\Omega$ be a bounded smooth domain in $\mathbb{R}^2$, and $f\in L^2(s,t;H^1\cap W^{1,q})$ for $q\in (2,\infty).$ Then
there exists a constant $C$ depending only on $q$ such that
\begin{equation}\label{2.7}
\|f\|^2_{L^2(s,t;L^\infty)}\le C\left[1+\|f\|^2_{L^2(s,t;H^1)}\ln (e+\|f\|_{L^2(s,t;W^{1,q})})\right],
\end{equation}
where $C$ depends only on $q$ and $\Omega$, but independent of $s, t$.
\end{lemm}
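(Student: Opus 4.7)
The plan is to prove Lemma \ref{lemma2.4} in two steps: first establish a pointwise-in-time Brezis-Gallouet-Wainger type estimate on the bounded smooth 2D domain $\Omega$, and then integrate in $\tau\in(s,t)$ while using Jensen's inequality and the slow growth of the logarithm to move the $L^2$-in-time norm of $f$ inside the logarithm on the right-hand side.

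First I would prove the pointwise bound
\[
\|f(\tau)\|_{L^\infty}^2 \leq C\bigl(1 + \|f(\tau)\|_{H^1}^2\,\ln(e + \|f(\tau)\|_{W^{1,q}})\bigr)
\]
for $f\in H^1\cap W^{1,q}$ with $q>2$. After extending $f$ to $\mathbb{R}^2$ via a bounded Sobolev extension operator, I would perform a Littlewood-Paley decomposition $f=\sum_{k\geq 0}\Delta_k f$ and split the sum at a cutoff frequency $N$ to be chosen. Bernstein's inequality in 2D gives $\sum_{k\leq N}\|\Delta_k f\|_{L^\infty}\leq C\sqrt{N}\,\|f\|_{H^1}$ for the low-frequency piece, while $\sum_{k>N}\|\Delta_k f\|_{L^\infty}\leq C\,2^{-N(1-2/q)}\,\|f\|_{W^{1,q}}$ for the high-frequency piece. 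Optimizing $N\sim \ln(e+\|f\|_{W^{1,q}}/\|f\|_{H^1})$ balances the two contributions and, after squaring and absorbing lower order terms, yields the pointwise logarithmic estimate.

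Next I would integrate this pointwise bound over $\tau\in(s,t)$ to obtain
\[
\|f\|_{L^2(s,t;L^\infty)}^2 \leq C(t-s) + C\int_s^t \|f(\tau)\|_{H^1}^2\,\ln(e + \|f(\tau)\|_{W^{1,q}})\,d\tau.
\]
To replace the pointwise-in-time $W^{1,q}$-norm inside the logarithm by the $L^2$-in-time norm stated in the lemma, I would introduce the probability measure $d\mu(\tau)=\|f(\tau)\|_{H^1}^2\,d\tau/\|f\|_{L^2(s,t;H^1)}^2$ on $(s,t)$ and apply Jensen's inequality using concavity of $\ln(e+\cdot)$:
\[
\int_s^t \ln(e+\|f(\tau)\|_{W^{1,q}})\,d\mu(\tau) \leq \ln\Bigl(e+\int_s^t \|f(\tau)\|_{W^{1,q}}\,d\mu(\tau)\Bigr).
\]
A subsequent Cauchy-Schwarz estimate on the inner integral, together with the sub-additivity $\ln(e+ab)\leq \ln(e+a)+\ln(e+b)$ and the monotonicity of $\ln$, lets me bound the right-hand side by $C\,\ln(e+\|f\|_{L^2(s,t;W^{1,q})})$ up to an additive constant depending only on $q$ and $\Omega$.

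The main obstacle is this second step: transferring the pointwise-in-time $W^{1,q}$-norm under the logarithm into the $L^2$-in-time norm. Naively replacing a pointwise quantity by an integrated one would lose a full power of the norm, but because the replacement happens inside a slowly growing logarithm, the loss amounts only to an additive constant that can be absorbed into $C$. Making this precise relies essentially on the concavity of $\ln(e+\cdot)$ and the freedom afforded by the Jensen step above. The constants remain independent of $s$ and $t$ because none of the ingredients, namely the Brezis-Gallouet-Wainger pointwise estimate, Jensen's inequality, and Cauchy-Schwarz, introduces any such dependence.
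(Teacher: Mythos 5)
The paper does not actually prove Lemma \ref{lemma2.4}; it is quoted from \cite{Huang-Wang2}, so your attempt has to be judged on its own. The overall strategy (a pointwise Brezis--Gallouet--Wainger bound, then time integration plus Jensen) is the right general idea, but the particular pointwise estimate you start from is too lossy and the argument as written does not close. First, the additive constant in your pointwise bound produces the term $C(t-s)$ after integration, which cannot be absorbed into $C\bigl[1+\|f\|^2_{L^2(s,t;H^1)}\ln(e+\|f\|_{L^2(s,t;W^{1,q})})\bigr]$ with $C$ independent of $s,t$ (take $f\equiv\varepsilon$ constant with $\varepsilon^2(t-s)=1$ and $t-s\to\infty$), so the claimed $s,t$-independence is lost. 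Second, and more seriously, keeping $\ln(e+\|f(\tau)\|_{W^{1,q}})$ rather than $\ln\bigl(e+\|f(\tau)\|_{W^{1,q}}/\|f(\tau)\|_{H^1}\bigr)$ inside the pointwise bound already destroys the result before Jensen enters: for $f(\tau)=M\varphi$ on a time interval of length $M^{-2}$ and $f=0$ elsewhere ($\varphi$ a fixed smooth function), one has $\int_s^t\|f\|^2_{H^1}\ln(e+\|f\|_{W^{1,q}})\,d\tau\approx\|\varphi\|^2_{H^1}\ln(e+M)\to\infty$, while the right-hand side of \eqref{2.7} stays bounded because $\|f\|_{L^2(s,t;H^1)}\approx\|\varphi\|_{H^1}$ and $\|f\|_{L^2(s,t;W^{1,q})}\approx\|\varphi\|_{W^{1,q}}$. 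Hence no manipulation of your intermediate inequality can yield \eqref{2.7}. The same example shows the announced Jensen/Cauchy--Schwarz step is itself not valid: with $d\mu\propto\|f\|^2_{H^1}\,d\tau$ the inner integral $\int\|f\|_{W^{1,q}}\,d\mu\approx M$ is not controlled by $\|f\|_{L^2(s,t;W^{1,q})}\approx 1$, and the Cauchy--Schwarz you invoke would require $\int\|f\|_{H^1}^4\,d\tau$ or $\sup_\tau\|f\|_{H^1}$, neither of which is among the hypotheses.

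The missing idea is to keep the quotient inside the logarithm, i.e.\ to use the scale-invariant form $\|f\|^2_{L^\infty}\le C\|f\|^2_{H^1}\ln\bigl(e+\|f\|_{W^{1,q}}/\|f\|_{H^1}\bigr)$; your Littlewood--Paley optimization with $N\sim\ln\bigl(e+\|f\|_{W^{1,q}}/\|f\|_{H^1}\bigr)$ gives exactly this, and on a bounded domain $\|f\|_{H^1}\le C\|f\|_{W^{1,q}}$ keeps the ratio bounded below so that $N\ge 1$ can be chosen. With this form there is no additive constant per unit time, so no $(t-s)$ term appears. Then your Jensen argument with $d\mu=\|f\|^2_{H^1}\,d\tau/B^2$, $B=\|f\|_{L^2(s,t;H^1)}$, combined with the Cauchy--Schwarz bound $\int_s^t\|f\|_{W^{1,q}}\|f\|_{H^1}\,d\tau\le AB$, $A=\|f\|_{L^2(s,t;W^{1,q})}$, gives $\|f\|^2_{L^2(s,t;L^\infty)}\le CB^2\ln(e+A/B)$. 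Finally the elementary inequality $B^2\ln(e+A/B)\le B^2\ln(e+A)+B^2\ln(e+1/B)\le C\bigl[1+B^2\ln(e+A)\bigr]$ (using $\ln(e+A/B)\le\ln(e+A)$ for $B\ge 1$ and $\sup_{0<B\le1}B^2\ln(e+1/B)<\infty$) produces \eqref{2.7} with $C$ depending only on $q$ and $\Omega$, independent of $s$ and $t$.
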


\section{Proof of Theorem \ref{Theorem1.1}}

\quad In this section, we will only give the existence proof for the Theorem \ref{Theorem1.1} since the uniqueness of the solution is easy to obtain by a standard argument(c.f.\cite{Lions1}). In order to solve the initial boundary problem \eqref{1.1}-\eqref{1.4}, we will split the proof into three parts. In part one, we will establish the global strong solution for some linearized systems. In part two, we prove the solution of the linearized systems converges to the original initial problem \eqref{1.1}-\eqref{1.4} in a local time  for positive initial density. In part three, we verify Theorem \ref{Theorem1.1} for the case of general initial density.

\subsection{Global existence for the linearized equations}

\quad We consider the following linearized systems:
\begin{equation}\label{3.1}
\left\{
\begin{aligned}
&\rho_t+v \cdot \nabla \rho=0,\\
&\rho u_t +\rho v \cdot \nabla u-{\rm div}(2\mu(\rho)D(u))+\nabla P=-{\rm div}f,\\
&{\rm div}u=0,
\end{aligned}
\right.
\end{equation}
in $(0,\infty)\times \Omega$, where\ $2D(u)=\nabla u+\nabla^T u, \mu=\mu(\rho)$ satisfies \eqref{1.2}  and $v$ is a known divergence-free vector field.
Then we will state the main result in this subsection.

\begin{prop}\label{Proposition3.1}
Assume that the data $(\rho_0, u_0, f)$ satisfies the regularity conditions:
\begin{equation*}
0 \le \rho_0 \in W^{1,q}, \ u_0 \in H_0^1 \cap H^2 \ {\rm and} \ f\in L^\infty(0,T; H^1)\cap L^2(0,T; H^2), f_t \in L^2(0,T; H^1)
\end{equation*}
for some $q$ with $n<q<\infty$, and the compatibility condition
\begin{equation}\label{3.2}
-{\rm div}(2\mu(\rho_0)D(u_0))+\nabla P_0+{\rm div}f_0=\sqrt{\rho_0}g \quad  {\rm and} \ {\rm div}u_0=0 \ {\rm in} \ \Omega,
\end{equation}
for some $(P_0, g)\in H^1 \times L^2$. If in addition, $v$ satisfies the regularity conditions
\begin{equation*}
v\in L^\infty(0,T; H_0^1 \cap H^2)\cap L^2(0,T; W^{2,r}),
\ v_t \in L^2(0,T; H_0^1) \ {\rm and} \ {\rm div}v=0 \ {\rm in} \ \Omega,
\end{equation*}
for some $r$ with $n<r<\min \{q, \frac{2n}{n-2}\}$. Then there exists a unique strong solution $(\rho, u, P)$ to the initial boundary value problem \eqref{3.1}, \eqref{1.2}-\eqref{1.4} such that
\begin{equation}\label{3.3}
\begin{aligned}
& \rho \in C([0,T]; W^{1,q}), \ \nabla u, \ P \in C([0,T]; H^1)\cap L^2(0,T; W^{1,r}),\\
& \rho_t \in C([0,T]; L^q),\ \sqrt{\rho}u_t \in L^\infty(0,T; L^2), \ u_t \in L^2 (0,T; H_0^1).
\end{aligned}
\end{equation}
\end{prop}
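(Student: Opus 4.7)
The plan is to solve the linearized problem by a standard regularization/approximation scheme followed by uniform a priori estimates and a compactness passage. The key is to carefully decouple the transport equation from the Stokes-type equation and to exploit the compatibility condition to control $u_t$ up to the initial time when vacuum is present.

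\medskip
\noindent\textbf{Step 1 (Regularization and transport for $\rho$).} I would first regularize the initial density by $\rho_0^\delta := \rho_0 * \eta_\delta + \delta$ so that $\delta \le \rho_0^\delta \le C$ and $\rho_0^\delta \to \rho_0$ in $W^{1,q}$. Since $v$ is divergence-free and sufficiently regular, the transport equation $\rho_t^\delta + v\cdot \nabla \rho^\delta = 0$ can be solved uniquely by the method of characteristics (or DiPerna--Lions theory). One checks directly that $\delta \le \rho^\delta(t,x) \le \|\rho_0^\delta\|_{L^\infty}$ is preserved, that $\rho^\delta \in C([0,T];W^{1,q})$ with $\p_t\rho^\delta \in C([0,T];L^q)$, and that these bounds are uniform in $\delta$ in the weaker norm, using $\|\nabla \rho^\delta(t)\|_{L^q} \lesssim \|\nabla \rho_0\|_{L^q}\exp\bigl(\int_0^t \|\nabla v\|_{L^\infty}\bigr)$.

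\medskip
\noindent\textbf{Step 2 (Linear problem for $u^\delta$ with positive density).} With $\rho^\delta$ in hand, the equation for $u^\delta$ is a linear Stokes-type system with strictly positive density, which I would solve by a Galerkin scheme (or by a semigroup argument on the Stokes operator conjugated by $\sqrt{\rho^\delta}$). The basic energy estimate, multiplying the momentum equation by $u^\delta$ and using $\mathrm{div}\,v = \mathrm{div}\,u^\delta = 0$ and $\mu(\rho^\delta) \ge \mu_\ast > 0$, gives
\begin{equation*}
\sup_{0\le t\le T}\|\sqrt{\rho^\delta}u^\delta\|_{L^2}^2 + \int_0^T \|\nabla u^\delta\|_{L^2}^2\,dt \le C\bigl(\|u_0\|_{L^2},\|f\|_{L^2_tL^2}\bigr),
\end{equation*}
uniformly in $\delta$.

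\medskip
\noindent\textbf{Step 3 (Higher estimates via $u_t$ and the compatibility condition).} The heart of the argument is to control $\sqrt{\rho^\delta}u_t^\delta$ in $L^\infty_t L^2$ and $\nabla u_t^\delta$ in $L^2_t L^2$ uniformly in $\delta$. I would differentiate the momentum equation in $t$, test against $u_t^\delta$, and use the transport equation $\p_t\rho^\delta = -v\cdot\nabla\rho^\delta$ to rewrite the time-derivative terms involving $\p_t\rho^\delta$ as spatial derivatives that can be absorbed. To initialize this estimate at $t=0$ (where $1/\rho$ may blow up), one reads off from the momentum equation
\begin{equation*}
\sqrt{\rho_0^\delta}\,u_t^\delta(0) = \frac{1}{\sqrt{\rho_0^\delta}}\bigl(\mathrm{div}(2\mu(\rho_0^\delta)D(u_0)) - \nabla P_0^\delta - \mathrm{div}f_0 - \rho_0^\delta v_0\cdot\nabla u_0\bigr),
\end{equation*}
and the compatibility condition \eqref{3.2} guarantees that this quantity is bounded in $L^2$ uniformly in $\delta$ (with $g$ playing the role of the limiting initial value). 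Once $\sqrt{\rho^\delta}u_t^\delta \in L^\infty_t L^2$ and $u_t^\delta \in L^2_t H^1_0$ are secured, I would apply the stationary Stokes regularity estimate of Lemma~\ref{Lemma2.1}, viewing the momentum equation pointwise in $t$ as
$-\mathrm{div}(2\mu(\rho^\delta)D(u^\delta))+\nabla P^\delta = -\rho^\delta u_t^\delta -\rho^\delta v\cdot\nabla u^\delta -\mathrm{div}f$,
to obtain $(u^\delta,P^\delta) \in C([0,T];H^2\times H^1)\cap L^2(0,T;W^{2,r}\times W^{1,r})$ with bounds depending only on $\|\nabla\rho^\delta\|_{L^\infty_t L^q}$. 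I expect this step to be the main obstacle: one must show all the right-hand side terms can be estimated so that Gr\"onwall closes, which requires carefully combining Sobolev embeddings (using $r<\tfrac{2n}{n-2}$) with the time integrability of $\nabla v$ and $f_t$.

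\medskip
\noindent\textbf{Step 4 (Passage to the limit and uniqueness).} Having $\delta$-uniform bounds on $(\rho^\delta, u^\delta, P^\delta)$ in the norms appearing in \eqref{3.3}, I would extract a weak-$\ast$ limit and use Aubin--Lions compactness to pass to the limit in the (linear) equations, recovering a strong solution with the stated regularity; the time-continuity of $\rho$ in $W^{1,q}$ follows from the renormalization property of the transport equation, and the time-continuity of $(\nabla u,P)$ in $H^1$ comes from $u_t\in L^2(0,T;H^1_0)$ together with the Stokes estimate. Uniqueness is standard: given two solutions with the same data, the difference satisfies a linear problem with zero initial data, and a basic energy estimate (using $\rho\ge 0$ and Gr\"onwall) forces it to vanish, exactly as in Lions~\cite{Lions1}.
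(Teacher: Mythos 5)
Your plan follows essentially the same route as the paper: transport the density along the given field $v$, solve the momentum equation first under the extra assumption that the density is bounded away from zero (the paper quotes Lemma \ref{lemma3.2}, following \cite{Cho-Kim1}, where you propose a Galerkin scheme -- an acceptable substitute), derive $\delta$-uniform estimates by testing with $u_t$ and then testing the time-differentiated equation with $u_t$, close the loop with the stationary Stokes estimates of Lemma \ref{Lemma2.1}, and pass to the limit; this is exactly the content of Lemma \ref{lemma3.3} and the paragraph following it.

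There is, however, a genuine gap at the point you yourself flag as the heart of the matter. You keep the initial velocity $u_0$ unchanged while replacing $\rho_0$ by $\rho_0^\delta=\rho_0*\eta_\delta+\delta$, and you assert that the compatibility condition \eqref{3.2} makes $\sqrt{\rho_0^\delta}\,u_t^\delta(0)$ bounded in $L^2$ uniformly in $\delta$. But \eqref{3.2} involves $\rho_0$, not $\rho_0^\delta$: inserting it into your formula for $\sqrt{\rho_0^\delta}\,u_t^\delta(0)$ leaves the error terms $\rho_0^{\delta\,-1/2}\,\mathrm{div}\bigl(2(\mu(\rho_0^\delta)-\mu(\rho_0))D(u_0)\bigr)$ and $\rho_0^{\delta\,-1/2}\nabla\bigl(P_0^\delta-P_0\bigr)$, together with the factor $\sqrt{\rho_0/\rho_0^\delta}$ multiplying $g$. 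Near the vacuum set $\rho_0^\delta\approx\delta$, so these terms are of order $\delta^{-1/2}$ times quantities that merely tend to $0$ in $L^2$ (and $\rho_0/\rho_0^\delta$ need not even be uniformly bounded for your mollified density); no uniform bound follows, and without it the Gr\"onwall argument for $\sqrt{\rho^\delta}u_t^\delta$ cannot be started at $t=0$. The paper's remedy (the Cho--Kim device) is to adjust the initial velocity as well: define $(u_0^\delta,P_0^\delta)$ as the solution of $-\mathrm{div}\bigl(2\mu^\delta(\rho_0^\delta)D(u_0^\delta)\bigr)+\nabla P_0^\delta+\mathrm{div}f_0=\sqrt{\rho_0^\delta}\,g$ with $\mathrm{div}\,u_0^\delta=0$, so that the compatibility condition holds exactly at every $\delta$ with the same $g\in L^2$; then the limit $\tau\to0^+$ in the $u_t$-estimate is controlled by $\|g\|_{L^2}$ uniformly, while elliptic regularity gives $u_0^\delta\to u_0$ in $H^2$, so the limiting solution still attains the original data. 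With this modification (plus the mild extra regularization $\mu^\delta\in C^2$, $\rho_0^\delta\in W^{2,q}$ needed only to invoke the smooth-data existence step, whose norms do not enter the uniform bounds of Lemma \ref{lemma3.3}), your argument coincides with the paper's proof.
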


In order to prove Proposition \ref{Proposition3.1}, we will take by three steps:\\
$(1)$ In addition to the assumptions in Proposition \ref{Proposition3.1}, if suppose
\begin{equation}\label{3.4}
\mu \in C^2 [0, \infty), \quad \rho_0 \in W^{2,q}, \quad \rho_0 \ge \delta \ {\rm for\ some} \ \delta >0,
\end{equation}
then we give the prove of Proposition \ref{Proposition3.1};\\
$(2)$ To remove the additional condition \eqref{3.4}, we need to derive some uniform estimates independent of
 $\delta, \|\rho_0\|_{W^{2,q}}$ and $ \|\partial^2 \mu / \partial \mu^2\|_{C}$;\\
$(3)$ Having the results of the proceeding two steps at hand, it is a standard argument to give the proof of
Proposition \ref{Proposition3.1}.

Now, let us to begin our first step. Indeed, taking the method in \cite{Cho-Kim1}, it is easy to get the following results.
For a brief, we only state the results and omit the proof.
\begin{lemm}\label{lemma3.2}
In addition to the hypotheses of Proposition \ref{Proposition3.1}, we assume that the condition \eqref{3.4} are satisfied. Then there exists a unique strong solution $(\rho, u, P)$ to the initial boundary value problem  \eqref{3.1}, \eqref{1.2}-\eqref{1.4} such that
\begin{equation*}
\begin{aligned}
& \rho \in C(0,T; W^{1, \infty}) \cap L^2(0,T; W^{2,r}),\ \rho_t \in L^\infty(0,T; W^{1,r}),\\
& u \in C([0,T]; H_0^1 \cap H^2)\cap L^2(0,T; H^3),\ u_t \in L^\infty(0,T; L^2)\cap L^2(0,T; H_0^1),\\
& P\in L^\infty(0, T; H^1)\cap L^2(0,T; H^2),
\end{aligned}
\end{equation*}
where $n<r<\min \{q, \frac{2n}{n-2}\}$.
\end{lemm}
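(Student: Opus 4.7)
The plan is to construct $(\rho,u,P)$ via a two-stage decoupled scheme: first, with the divergence-free vector field $v$ given, solve the pure transport equation $\rho_t+v\cdot\nabla\rho=0$ for the density; second, with $\rho$ now a known coefficient that is bounded away from zero by $\delta$, solve the resulting linear Stokes-type system for $(u,P)$ via a Galerkin approximation, and close the estimates to match the regularity classes stated in the lemma.

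For Stage 1, I would use the method of characteristics. Since $v\in L^\infty(0,T;H^2)\hookrightarrow L^\infty(0,T;C^{0,\alpha})$ and $\mathrm{div}\,v=0$, the flow map $X(t,\cdot)$ is well-defined and measure-preserving, so $\rho(t,X(t,x))=\rho_0(x)$ instantly propagates both bounds $\delta\le\rho\le\|\rho_0\|_{L^\infty}$ and the qualitative regularity $\rho_0\in W^{2,q}\hookrightarrow C^{1,\alpha}$. Differentiating the transport equation once in $x$ yields an equation for $\nabla\rho$ with source $-(\nabla v)^T\nabla\rho$, and differentiating once more yields an equation for $\nabla^2\rho$ with source depending on $\nabla v$ and $\nabla^2 v$; the stated regularity $\rho\in C(0,T;W^{1,\infty})\cap L^2(0,T;W^{2,r})$ and $\rho_t\in L^\infty(0,T;W^{1,r})$ then follows from a Gronwall argument in the appropriate $L^p$ norm, combined with a direct reading of $\rho_t=-v\cdot\nabla\rho$.

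For Stage 2, I would freeze $\rho$ and set up a Galerkin approximation of $u$ on a basis of eigenfunctions of the Stokes operator on $\Omega$. Because $\rho\ge\delta>0$, the approximating ODE system is solvable on all of $[0,T]$, and $P$ is recovered from the divergence-free projection. The closure of the estimates proceeds in the customary order. Testing with $u$ gives $L^\infty(0,T;L^2)$ control of $\sqrt\rho\,u$ and $L^2(0,T;H_0^1)$ control of $u$, using $\mathrm{div}(\rho v)=0$. Testing with $u_t$ gives $L^\infty(0,T;L^2)$ control of $\sqrt\rho\,u_t$ and $L^\infty(0,T;H^1)$ control of $u$; the initial value $\sqrt{\rho_0}\,u_t(0)$ is seen to lie in $L^2$ by combining the momentum equation at $t=0$ with the compatibility condition \eqref{3.2}, which yields
\begin{equation*}
\sqrt{\rho_0}\,u_t(0)=-\sqrt{\rho_0}\,(v(0)\cdot\nabla u_0)-g\in L^2.
\end{equation*}
Applying Lemma \ref{Lemma2.1} to the Stokes problem with right-hand side $F=-\rho u_t-\rho v\cdot\nabla u-\mathrm{div}\,f$ then lifts $u$ to $L^\infty(0,T;H^2)\cap L^2(0,T;W^{2,r})$ and $P$ to $L^\infty(0,T;H^1)\cap L^2(0,T;H^2)$. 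Finally, differentiating the equation in $t$ and testing with $u_t$ closes the bound $u_t\in L^2(0,T;H_0^1)$. Uniqueness follows from a basic energy identity on the difference of two solutions.

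The main obstacle will be that the Stokes regularity estimates \eqref{2.1}--\eqref{2.2} depend explicitly on $\|\nabla\rho\|_{L^q}$, and the $H^3$ version in Lemma \ref{Lemma2.1} depends on $\|\rho\|_{W^{2,q}}$; thus the time-uniform density bounds from Stage 1 must be fed into every estimate of Stage 2, producing a chain of Gronwall-type inequalities whose constants are controlled only because the full density regularity is available a priori on the fixed interval $[0,T]$. A secondary technical point is the treatment of the convective term $\rho v\cdot\nabla u$ in the higher-order energy estimates without losing a derivative on $v$; this is exactly where the hypotheses $v\in L^2(0,T;W^{2,r})$ and $v_t\in L^2(0,T;H_0^1)$ are consumed. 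The extra assumptions in \eqref{3.4}, namely $\mu\in C^2$, $\rho_0\in W^{2,q}$ and $\rho_0\ge\delta$, are precisely what permit these estimates to be carried out at the top regularity level and justify the integrations by parts used throughout.
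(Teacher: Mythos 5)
Your proposal is correct and follows essentially the route the paper itself takes: the paper omits the proof of this lemma entirely, saying only that it follows by ``the method in \cite{Cho-Kim1}'', and that method is precisely your two-stage scheme (characteristics for the transport equation under the extra regularity \eqref{3.4}, then a Galerkin/energy-estimate treatment of the linear momentum system with the compatibility condition \eqref{3.2} fixing $\sqrt{\rho_0}\,u_t(0)\in L^2$, and Lemma \ref{Lemma2.1} supplying the $H^2$, $W^{2,r}$ and $H^3$ lifts, the last of which is where $\rho_0\in W^{2,q}$, $\mu\in C^2$ and $\rho_0\ge\delta$ are consumed). One cosmetic slip: testing with $u_t$ only yields $\sqrt{\rho}\,u_t\in L^2(0,T;L^2)$ together with $u\in L^\infty(0,T;H^1)$; the $L^\infty(0,T;L^2)$ bound on $\sqrt{\rho}\,u_t$ (hence on $u_t$, since $\rho\ge\delta$) comes from the time-differentiated equation tested with $u_t$, a step you do include, so the argument still closes.
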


Thanks to the previous Lemma \ref{lemma3.2}, there exists a unique strong solution $(\rho, u, P)$ satisfying the
regularity \eqref{3.3}. To remove the additional hypotheses \eqref{3.4}, we will derive some uniform estimates independent of
$\delta, \|\rho_0\|_{W^{2,q}}$ and $ \|\partial^2 \mu / \partial \mu^2\|_{C}$.

\begin{lemm}\label{lemma3.3}
Suppose $(\rho, u, P)$ be the strong solution to the problem  \eqref{3.1}, \eqref{1.2}-\eqref{1.4}, then we have
\begin{equation}\label{3.5}
\begin{aligned}
&\underset{0 \le t \le T}{\sup}\left(\|\rho\|_{W^{1,q}}+\|\rho_t\|_{L^q}+\|u\|_{H^2}+\|P\|_{H^1}+\|\sqrt{\rho}u_t\|_{L^2}\right)\\
&+\int_0^T \left(\|u\|_{W^{2,r}}^2+\|P\|_{W^{1,r}}^2+\|\nabla u_t\|_{L^2}^2\right)dt \le C,
\end{aligned}
\end{equation}
where $C$ independent of $\delta, \|\rho_0\|_{W^{2,q}}$ and $ \|\partial^2 \mu / \partial \mu^2\|_{C}$
and  $n<r<\min \{q, \frac{2n}{n-2}\}$.
\end{lemm}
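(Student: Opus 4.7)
The plan is to derive all the estimates in parallel and close them via a Grönwall argument that exploits two independent pieces of information: the transport-type bound on $\rho$ (uniform in $\delta$ and $\|\rho_0\|_{W^{2,q}}$) and the momentum-equation estimates obtained by viewing \eqref{3.1}$_2$ as a stationary Stokes system to which Lemma \ref{Lemma2.1} parts (1)--(2) apply. Since those parts use only $\mu\in C^1$ and $\rho\in W^{1,q}$, the resulting constants do not involve $\|\partial^2\mu/\partial\rho^2\|_C$ or $\|\rho_0\|_{W^{2,q}}$, which gives the desired uniformity.

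First I would handle $\rho$. Because $\operatorname{div}v=0$, the maximum principle immediately yields $0\le\rho\le\|\rho_0\|_{L^\infty}$. Applying $\partial_k$ to the transport equation and estimating in $L^q$ gives
\begin{equation*}
\frac{d}{dt}\|\nabla\rho\|_{L^q}\le C\|\nabla v\|_{L^\infty}\|\nabla\rho\|_{L^q},
\end{equation*}
and the assumption $v\in L^2(0,T;W^{2,r})$ with $r>n$ ensures $\int_0^T\|\nabla v\|_{L^\infty}\,dt$ is finite via the embedding $W^{2,r}\hookrightarrow W^{1,\infty}$; then $\rho_t=-v\cdot\nabla\rho$ is estimated in $L^q$ accordingly. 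Next, the basic energy estimate --- test \eqref{3.1}$_2$ against $u$ and cancel the $v\cdot\nabla\rho$ terms against $\rho_t$ --- gives the bound for $\|\sqrt\rho u\|_{L^\infty(L^2)}$ and $\|\nabla u\|_{L^2(L^2)}$, using Korn's inequality.

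The central step is to obtain simultaneous bounds on $\|\sqrt\rho u_t\|_{L^2}$, $\|\nabla u_t\|_{L^2(L^2)}$, and the elliptic regularity of $(u,P)$. Rewriting \eqref{3.1}$_2$ as $-\operatorname{div}(2\mu(\rho)D(u))+\nabla P=F$ with $F=-\rho u_t-\rho v\cdot\nabla u-\operatorname{div}f$ and invoking Lemma \ref{Lemma2.1}(1) yields
\begin{equation*}
\|u\|_{H^2}+\|P\|_{H^1}\le C\bigl(1+\|\nabla\rho\|_{L^q}\bigr)^{\frac{q}{q-n}}\bigl(\|\sqrt\rho u_t\|_{L^2}+\|v\|_{L^\infty}\|\nabla u\|_{L^2}+\|\operatorname{div}f\|_{L^2}\bigr),
\end{equation*}
so the $H^2$--$H^1$ norm of $(u,P)$ is controlled once $\|\sqrt\rho u_t\|_{L^2}$ is. To control the latter, I would differentiate \eqref{3.1}$_2$ in $t$, test against $u_t$, and integrate by parts to get
\begin{equation*}
\frac12\frac{d}{dt}\!\int\!\rho|u_t|^2+\!\int\!2\mu(\rho)|D(u_t)|^2=\text{RHS},
\end{equation*}
where the RHS collects the terms arising from $\rho_t u_t$, $(\rho v)_t\cdot\nabla u$, $\operatorname{div}(2\mu'(\rho)\rho_t D(u))$ and $-\operatorname{div}f_t$. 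Each of these is bounded using $\rho_t=-v\cdot\nabla\rho$, Hölder with exponents dictated by the embeddings of $v,v_t$, and the $W^{1,\infty}$ control of $D(u)$ via the $H^2$ bound above (absorbing small factors into the coercive term $\int 2\mu|D(u_t)|^2$). The initial value $\|\sqrt\rho u_t(0)\|_{L^2}$ is bounded by evaluating \eqref{3.1}$_2$ at $t=0$ and using the compatibility condition \eqref{3.2}, giving $\sqrt{\rho_0}\,u_t(0)=-g-\sqrt{\rho_0}\,v_0\cdot\nabla u_0$.

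Finally I would close the loop by Grönwall's lemma applied to the resulting differential inequality, which couples $\|\sqrt\rho u_t\|_{L^2}^2$ with integrals of $\|\nabla u_t\|_{L^2}^2$, $\|u\|_{H^2}^2$, and controlled quantities depending on $v,f$; the already-established bound $\|\nabla\rho\|_{L^\infty(L^q)}\le C$ keeps the Stokes constant $(1+\|\nabla\rho\|_{L^q})^{q/(q-n)}$ uniform. The $L^2(0,T;W^{2,r})$ estimate for $u$ and $L^2(0,T;W^{1,r})$ estimate for $P$ then follow from Lemma \ref{Lemma2.1}(2) after the $L^r$ norm of $F$ is controlled by interpolation between the $L^2$ and $H^2$ bounds on $u,u_t$. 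The main obstacle is the time-differentiated estimate: the term $\operatorname{div}(2\mu'(\rho)\rho_t D(u))$ tests the interplay between the transport bound on $\rho_t$ and the $L^\infty$-control of $D(u)$, and must be absorbed using the Stokes factor without inflating the constant --- this is the place where the bootstrap between Steps 3 and 4 is tight, but it succeeds precisely because $\|\nabla\rho\|_{L^q}$ is already controlled independently in Step 1.
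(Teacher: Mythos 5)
Your overall strategy is the same as the paper's: transport bounds for $\rho$ and $\rho_t$, the Stokes regularity Lemma \ref{Lemma2.1}(1)--(2) applied with $F=-\rho u_t-\rho v\cdot\nabla u-{\rm div}f$ (whose constants indeed involve only $\mu\in C^1$ and $\|\nabla\rho\|_{L^q}$, which is the correct reason for independence of $\delta$, $\|\rho_0\|_{W^{2,q}}$ and $\|\partial^2\mu/\partial\mu^2\|_C$), a time-differentiated momentum estimate tested against $u_t$ and closed by Gr\"onwall using the compatibility condition \eqref{3.2}, and finally the $W^{2,r}$/$W^{1,r}$ bounds from part (2). However, there is a genuine gap at the intermediate stage. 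The paper's Step 2 multiplies \eqref{3.1}$_2$ by $u_t$, not by $u$, and this is what produces the time-uniform bound $\sup_{0\le t\le T}\|\nabla u\|_{L^2}^2+\int_0^T\|\sqrt{\rho}u_t\|_{L^2}^2\,dt\le C$ (see \eqref{3.12}) \emph{before} the differentiated estimate. Your ``basic energy estimate'' (test against $u$) only yields $\|\nabla u\|_{L^2(0,T;L^2)}$, yet you then use the stronger information implicitly: the claim that the $H^2\times H^1$ norm of $(u,P)$ is controlled ``once $\|\sqrt{\rho}u_t\|_{L^2}$ is'' needs $\|\nabla u(t)\|_{L^2}$ bounded pointwise in time (it sits on the right-hand side of the Stokes estimate \eqref{3.11}), and several terms in the time-differentiated equation (the analogues of $I_{21}$--$I_{24}$, e.g.\ $\rho v_t\cdot\nabla u\cdot u_t$ and the contribution of ${\rm div}(2\mu'\rho_t D(u))$) are bounded in the paper using $\|\nabla u\|_{L^2}\le C$; with only an $L^2$-in-time bound these coefficients become products of two merely integrable functions of $t$ and the Gr\"onwall argument does not close as written. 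The remedy is precisely the paper's $u_t$-test, or else one must reorganize the differentiated estimate so that $\|\nabla u\|_{L^2}$ only ever enters through time-integrable coefficients and recover $\sup_t\|\nabla u\|_{L^2}$ afterwards from $\|\nabla u(t)\|_{L^2}\le\|\nabla u_0\|_{L^2}+\int_0^t\|\nabla u_t\|_{L^2}\,ds$; your sketch does neither.

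A smaller inaccuracy: you appeal to ``the $W^{1,\infty}$ control of $D(u)$ via the $H^2$ bound,'' but $H^2\not\hookrightarrow W^{1,\infty}$ in dimension $2$ or $3$, and no such control is available or needed --- the paper only uses $\|\nabla u\|_{L^{2q/(q-2)}}\le C\|\nabla u\|_{H^1}$ (and similar subcritical norms), which is then converted via \eqref{3.11} into $\|\sqrt{\rho}u_t\|_{L^2}+\|\nabla u\|_{L^2}+\|{\rm div}f\|_{L^2}$. Your evaluation of $\sqrt{\rho_0}\,u_t(0)$ directly from \eqref{3.2} is admissible here because $\rho_0\ge\delta$ under \eqref{3.4} (the paper instead integrates from $\tau>0$ and lets $\tau\to0^+$), and your route to the $L^r$ bound on $F$ for part (2) of Lemma \ref{Lemma2.1} matches the paper's. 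So the architecture is right, but as written the bootstrap between the Stokes estimate and the $u_t$-estimate is not closed without first establishing the $\sup$-in-time bound on $\|\nabla u\|_{L^2}$.
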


\begin{proof}
\textrm{Step 1:} We deduce from \eqref{3.1}$_1$ by applying the characteristic method that
\begin{equation}\label{3.6}
\|\rho(t)\|_{L^s}=\|\rho_0\|_{L^s} \quad {\rm for} \quad 0 \le t \le T, \ 1\le s \le \infty.
\end{equation}
Taking the gradient operator to \eqref{3.1}$_1$, multiplying by $q|\nabla \rho|^{q-2}\nabla \rho$ and integrating by parts, we obtain
\begin{equation*}
\|\nabla \rho(t)\|_{L^q}\le \|\nabla \rho_0\|_{L^q}\exp \left(C\int_0^t \|v(s)\|_{W^{2,r}}ds\right),
\end{equation*}
which implies
\begin{equation*}
\|\partial_t \rho(t)\|_{L^q}\le C\|v\|_{H^2}\|\nabla \rho_0\|_{L^q}\exp \left(C\int_0^t \|v(s)\|_{W^{2,r}}ds\right),
\end{equation*}
due to  \eqref{3.1}$_1$.
It is easy to observe from \eqref{1.2} and \eqref{3.6} that
\begin{equation}\label{3.7}
C^{-1} \le \mu \le C \quad {\rm and} \quad |\nabla \mu|\le C|\nabla \rho|,
\end{equation}
which will be used repeatedly.

\textrm{Step 2:} Multiplying $\eqref{1.1}_2$ by $u_t$ and integrating over $(0,t)\times\Omega$, we have
\begin{equation}\label{3.8}
\begin{aligned}
&\quad \int \mu (\rho)|D(u)|^2(t) dx+\int_0^t \int \rho |u_t|^2 dxd\tau\\
&\le C+\int |f||\nabla u|dx
 +\int_0^t \int \left(|f_t||\nabla u|+\rho |v| |\nabla u| |u_t|+|\mu'||v||\nabla \rho||D(u)|^2\right)dxd\tau\\
&=C+\sum_{i=1}^4 I_{1i}.
\end{aligned}
\end{equation}
To estimate the terms $I_{1i}(1\le i \le 4)$, we will make use of the Young and Sobolev inequalities.
\begin{equation*}
\begin{aligned}
I_{11} & \le C(\varepsilon)\int |f|^2dx+\varepsilon\int |\nabla u|^2dx,\\
I_{12} & \le 2 \int_0^t(\|f_t\|_{L^2}^2+\|\nabla u\|_{L^2}^2)d\tau,\\
I_{13} & \le C(\varepsilon)\int_0^t\int \rho |v|^2 |\nabla u|^2 dxd\tau+\varepsilon\int_0^t \int \rho |u_t|^2 dxd\tau\\
       & \le C(\varepsilon)\|\rho\|_{L^\infty} \|v\|_{L^\infty}^2 \int_0^t \int |\nabla u|^2 dxd\tau
           +\varepsilon\int_0^t \int \rho |u_t|^2 dxd\tau,\\
I_{14} & \le C\int_0^t \|v\|_{L^6} \|\nabla \rho\|_{L^q} \|\nabla u\|_{L^{\frac{12q}{5q-6}}}^2d\tau\\
       & \le C\int_0^t \|\nabla v\|_{L^2} \|\nabla \rho\|_{L^q}
           \|\nabla u\|_{L^2}^{\frac{12q-n(q+6)}{6q}}\|\nabla u\|_{H^1}^{\frac{n(q+6)}{6q}}d\tau\\
       & \le C \|\nabla \rho\|_{L^q}  \|\nabla v\|_{L^2} \int_0^t \|\nabla u\|_{L^2}^2d\tau
           +\varepsilon \int_0^t \|\nabla u\|_{H^1}^2 d\tau.
\end{aligned}
\end{equation*}
On the other hand, we get
\begin{equation}\label{3.9}
\int \mu(\rho) |D(u)|^2 dx \ge C^{-1}\int |D(u)|^2 dx=\frac{1}{2C}\int |\nabla u|^2 dx,
\end{equation}
due to \eqref{3.7} and $\eqref{3.1}_3$.
Substituting $I_{1i}(1 \le i\le 4)$ and \eqref{3.9} into \eqref{3.8} yields
\begin{equation}\label{3.10}
\begin{aligned}
&\quad \frac{1}{4C}\int |\nabla u|^2dx+\frac{1}{2}\int_0^t\int \rho |u_t|^2 dxd\tau\\
& \le C\left(1+\|f\|_{L^2}^2+\int_0^t \|f_t\|_{L^2}^2 d\tau\right)+C\int_0^t \|\nabla u\|_{L^2}^2 d\tau
    +\varepsilon \int_0^t \|\nabla u\|_{H^1}^2 d\tau.
\end{aligned}
\end{equation}
In order to deal with the term $\int_0^t \|\nabla u\|_{H^1}^2 d\tau$, we will applying the regularity estimate
for the stationary Stokes equations in Lemma \ref{Lemma2.1}. More precisely,
\begin{equation*}
\|u\|_{H^2}+\|P\|_{H^1}\le C \|F\|_{L^2}(1+\|\nabla \rho\|_{L^q})^{\frac{q}{q-n}}\le C\|F\|_{L^2},
\end{equation*}
where
\begin{equation*}
\begin{aligned}
\|F\|_{L^2}& =\|-\rho u_t -\rho v \cdot \nabla u-{\rm div}f\|_{L^2}\\
           &\le C\|\sqrt{\rho}u_t\|_{L^2}+C\|\nabla u\|_{L^2}+\|{\rm div}f\|_{L^2}.
\end{aligned}
\end{equation*}
Then we have the following regularity estimate
\begin{equation}\label{3.11}
\|u\|_{H^2}+\|P\|_{H^1}\le C (\|{\rm div}f\|_{L^2}+\|\sqrt{\rho}u_t\|_{L^2}+\|\nabla u\|_{L^2}).
\end{equation}
Substituting \eqref{3.11} into \eqref{3.10} and choosing $\varepsilon$ small enough, we obtain
\begin{equation*}
\int |\nabla u|^2 dx+\int_0^t \int \rho |u_t|^2 \le C+C\int_0^t \|\nabla u\|_{L^2}^2d\tau,
\end{equation*}
which, if we exploit the Gronwall inequality, implies
\begin{equation}\label{3.12}
\underset{0 \le t \le T}{\sup}\|\nabla u\|_{L^2}^2 +\int_0^t \|\sqrt{\rho}u_t\|_{L^2}^2 d\tau\le C.
\end{equation}

\textrm{Step 3:}Differentiating $\eqref{1.1}_2$ with respect to $t$, multiplying by $u_t$ and integrating over $\Omega$, we have
\begin{equation}\label{3.13}
\begin{aligned}
& \quad  \frac{1}{2} \frac{d}{dt}\int \rho |u_t|^2dx+\frac{1}{C}\int |\nabla u_t|^2 dx\\
& \le C\int \left(|v| |\nabla \rho| |\nabla u| |\nabla u_t|+\rho |v| |u_t| |\nabla u_t|
                 + |v|^2 |\nabla \rho| |\nabla u| |u_t|\right.\\
&\left. \quad \quad \quad \quad +\rho |v_t| |\nabla u| |u_t|+|f_t||\nabla u_t|\right)dx
=\sum_{i=1}^5 I_{2i}.
\end{aligned}
\end{equation}
To estimate the terms $I_{2i}(1\le i \le 5)$, we will apply \eqref{3.11}, the Gagliardo-Nirenberg and H\"{o}lder inequalities repeatedly:
\begin{equation*}
\begin{aligned}
I_{21} &\le \|v\|_{L^\infty}\|\nabla \rho\|_{L^q}\|\nabla u\|_{L^{\frac{2q}{q-2}}}\|\nabla u_t\|_{L^2}\\
       &\le C(\varepsilon)\|v\|_{L^\infty}^2 \|\nabla \rho\|_{L^q}^2 \|\nabla u\|_{H^1}^2+\varepsilon\|\nabla u_t\|_{L^2}^2\\
       &\le C(\varepsilon)(1+\|{\rm div}f\|_{L^2}^2+\|\sqrt{\rho}u_t\|_{L^2}^2)+\varepsilon\|\nabla u_t\|_{L^2}^2,\\
I_{22} &\le C(\varepsilon)\|\rho\|_{L^\infty}\|v\|_{L^\infty}^2 \int \rho |u_t|^2dx+\varepsilon \int |\nabla u_t|^2dx\\
       &\le C(\varepsilon)\|\sqrt{\rho}u_t\|_{L^2}^2+\varepsilon\|\nabla u_t\|_{L^2}^2,\\
I_{23} &\le \|v\|_{L^\infty}^2\|\nabla \rho\|_{L^q}\|\nabla u\|_{L^{\frac{6q}{5q-6}}}\|u_t\|_{L^6}\\
       &\le C\|v\|_{L^\infty}^2\|\nabla \rho\|_{L^q}
            \|\nabla u\|_{L^2}^{\frac{3q+nq-3n}{6q}}\|\nabla u\|_{H^1}^{\frac{3q+3n-nq}{6q}}\|\nabla u_t\|_{L^2}\\
       &\le C(\varepsilon)(1+\|{\rm div}f\|_{L^2}^2+\|\sqrt{\rho}u_t\|_{L^2}^2)+\varepsilon\|\nabla u_t\|_{L^2}^2,\\
I_{24} &\le \|\rho\|_{L^\infty}\|v_t\|_{L^3}\|\nabla u\|_{L^2}\|u_t\|_{L^6}\\
       &\le C\|\rho\|_{L^\infty}\|\nabla v_t\|_{L^2}\|\nabla u\|_{L^2}\|\nabla u_t\|_{L^2}\\
       &\le C(\varepsilon)\|\nabla v_t\|_{L^2}^2++\varepsilon\|\nabla u_t\|_{L^2}^2,\\
I_{25} &\le C(\varepsilon)\|f_t\|_{L^2}^2+\varepsilon\|\nabla u_t\|_{L^2}^2.\\
\end{aligned}
\end{equation*}
For any fixed $\tau\in(0, t)$, substituting $I_{2i}(1\le i \le 5)$ into \eqref{3.13} and integrating over $(\tau, t)\subset[0, T]$ yield
\begin{equation*}
\begin{aligned}
&\quad \frac{1}{2}\int\rho |u_t|^2 dx+\frac{1}{2C}\int_\tau^t \int |\nabla u_t|^2 dxds \\
&\le \frac{1}{2}\int \rho(\tau)|u_t(\tau)|^2 dx
     +C\int_\tau^t\left(\|{\rm div}f\|_{L^2}^2+\|f_t\|_{L^2}^2+\|\nabla v_t\|_{L^2}^2\right)ds+\int_\tau^t\int \rho |u_t|^2dxds.
\end{aligned}
\end{equation*}
Thanks to the compatibility condition \eqref{3.2}, letting $\tau \rightarrow 0^+$ and applying the Gr\"{o}nwall inequality, it arrives at
\begin{equation}\label{3.14}
\underset{0\le t \le T}{\sup}\|\sqrt{\rho}u_t(t)\|_{L^2}^2+\int_0^T \|\nabla u_t\|_{L^2}^2dt \le C.
\end{equation}

\textrm{Step 4:} High order estimates. Indeed, combing \eqref{3.14} with \eqref{3.11}-\eqref{3.12} yields
\begin{equation}\label{3.15}
\|u\|_{H^2}+\|P\|_{H^1}\le C.
\end{equation}
Applying the stationary Stokes regularity estimate, i.e. \eqref{2.2}, we get
\begin{equation*}
\|u\|_{W^{2,r}}+\|P\|_{W^{1,r}}\le C\|F\|_{L^r}(1+\|\nabla \rho\|_{L^q})^{\frac{qr}{2(q-r)}}\le C\|F\|_{L^r},
\end{equation*}
where
\begin{equation*}
\begin{aligned}
\|F\|_{L^r}&=\|-\rho u_t-\rho v \cdot \nabla u -{\rm div}f\|_{L^r}\\
           &\le \|\rho\|_{L^\infty}\|u_t\|_{L^r}+\|\rho\|_{L^\infty}\|v\|_{L^\infty}\|\nabla u\|_{L^r}+\|{\rm div}f\|_{L^r}\\
           &\le C\left(\|\rho\|_{L^\infty}\|\nabla u_t\|_{L^2}+\|\rho\|_{L^\infty}\|v\|_{L^\infty}\|\nabla u\|_{H^1}
                        +\|{\rm div}f\|_{H^1}\right)\\
           &\le C(1+\|\nabla u_t\|_{L^2}+\|{\rm div}f\|_{H^1}).
\end{aligned}
\end{equation*}
Hence we obtain the following regularity estimate
\begin{equation}\label{3.16}
\|u\|_{W^{2,r}}+\|P\|_{W^{1,r}}\le C(1+\|\nabla u_t\|_{L^2}+\|{\rm div}f\|_{H^1}).
\end{equation}
Therefore, we complete the proof of lemma.
\end{proof}

After having the Lemmas \ref{lemma3.2}-\ref{lemma3.3} at hand, we turn to prove the Proposition \ref{Proposition3.1}.
We only sketch the proof here since it is a standard argument(c.f.\cite{Cho-Kim1}). Let $(\rho_0, u_0)$ be an initial data
satisfying the hypotheses of Proposition \ref{Proposition3.1}. For each $\delta \in(0,1)$, choose $\rho_0^\delta \in W^{2,q}$ and $\mu^\delta \in C^2[0, \infty)$ such that
\begin{equation*}
0<\delta \le \rho_0^\delta \le \rho_0+1, \quad \rho_0^\delta \rightarrow \rho_0 \ {\rm in} \ W^{1, q} \
{\rm and} \ \mu^\delta \rightarrow \mu \ {\rm in} \ C^1[0, \infty),
\end{equation*}
as $\delta \rightarrow 0$, and denote $(u_0^\delta, P_0^\delta)\in H_0^1 \times L^2$ a solution to the problem
\begin{equation*}
-{\rm div}(\mu^\delta(\rho_0^\delta)D(u_0^\delta))+\nabla P_0^\delta+{\rm div}f_0=\sqrt{\rho_0^\delta}g \quad {\rm and} \quad
{\rm div}u_0^\delta=0 \quad {\rm in} \ \Omega.
\end{equation*}
Then, applying the Lemma \ref{lemma3.3}, the corresponding solution $(\rho^\delta, u^\delta, P^\delta)$ satisfies the estimate
\begin{equation*}
\begin{aligned}
&\underset{0 \le t \le T}{\sup}\left(\|\rho^\delta\|_{W^{1,q}}+\|\rho^\delta_t\|_{L^q}+\|u^\delta\|_{H^2}+\|P^\delta\|_{H^1}
   +\|\sqrt{\rho^\delta}u^\delta_t\|_{L^2}\right)\\
&\quad \quad \quad +\int_0^T \left(\|u^\delta\|_{W^{2,r}}^2+\|P^\delta\|_{W^{1,r}}^2+\|\nabla u^\delta_t\|_{L^2}^2\right)dt \le C,
\end{aligned}
\end{equation*}
where $C$ independent of $\delta, \|\rho_0\|_{W^{2,q}}$ and $ \|\partial^2 \mu / \partial \mu^2\|_{C}$
and  $n<r<\min \{q, \frac{2n}{n-2}\}$.
We choose a subsequence of solutions $(\rho^\delta, u^\delta)$ which converge to a limit $(\rho, u)$ in a weak sense.
Therefore, it is a strong solution to the linearized problem satisfying the regularity estimates in Lemma \ref{lemma3.3}. Thus, we complete the proof of Proposition \ref{Proposition3.1}.

\subsection{Local existence for the original problem}

\quad In this section, we will prove a local existence result on strong solution with positive initial density to the original problem \eqref{1.1}-\eqref{1.4} at first. Furthermore, we derive some uniform bounds which are independent of the lower bounds of the initial density. Then, these uniform bounds will be used to prove the existence of strong solution with nonnegative initial
density in the last part of this section.

\begin{prop}\label{proposition3.4}
Assume that the data $(\rho_0,u_0,d_0)$ satisfies the regularity conditions
\begin{equation*}
\rho_0 \in W^{1,q}, \ u_0 \in H_0^1 \cap H^2, \ d_0 \in H^3,
\end{equation*}
for some $q$ with $n<q<\infty$ and the compatibility condition
\begin{equation}\label{3.17}
-{\rm div}(2 \mu(\rho_0)D(u_0))+\nabla P_0+{\rm div}(\nabla d_0 \odot \nabla d_0)=\sqrt{\rho_0}g
   \quad {\rm and} \quad {\rm div}u_0=0 \quad {\rm in}\ \Omega,
\end{equation}
for $(P_0, g)\in H^1\times L^2.$ Assume further that $\rho_0 \ge \delta$ in $\Omega$ for some constant $\delta>0.$
Then there exist a time $T_0\in(0, T)$ and a unique strong solution $(\rho, u, P, d)$ to the nonlinear problem
\eqref{1.1}-\eqref{1.4} such that
\begin{equation}\label{3.18}
\begin{aligned}
&\rho \in C([0, T_0]; W^{1, q}), \quad \rho_t \in C([0, T_0]; L^q),\\
&u\in C([0, T_0]; H_0^1 \cap H^2)\cap L^2(0, T_0; W^{2,r}),\\
&u_t \in L^2(0,T_0; H_0^1),\  \sqrt{\rho}u_t\in L^\infty(0, T_0; L^2),\\
&P\in C([0, T_0]; H^1)\cap L^2(0, T_0; W^{1,r}),\\
&d \in C([0, T_0]; H^3)\cap L^2(0, T_0; H^4),\ |d|=1 \ {\rm in}\ \overline{Q_{T_0}},\\
&d_t \in C([0, T_0]; H^1)\cap L^2(0, T_0; H^2), \ d_{tt}\in L^2(0, T_0; L^2),
\end{aligned}
\end{equation}
for some $r$ with $n<r<\min\{q, \frac{2n}{n-2}\}$.
\end{prop}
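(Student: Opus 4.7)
The proof strategy is a Picard-type iteration built on top of Proposition \ref{Proposition3.1}. Starting from $(u^0, d^0) \equiv (u_0, d_0)$, I would construct approximations $(\rho^{k+1}, u^{k+1}, P^{k+1}, d^{k+1})$ inductively by (i) transporting $\rho^{k+1}$ along $u^k$ via $\p_t \rho^{k+1} + u^k \cdot \nabla \rho^{k+1} = 0$ with datum $\rho_0$; (ii) invoking Proposition \ref{Proposition3.1} with $v = u^k$ and forcing $f = \nabla d^k \odot \nabla d^k$ to produce $(u^{k+1}, P^{k+1})$; and (iii) solving the linear parabolic problem
\[
\p_t d^{k+1} + u^k \cdot \nabla d^{k+1} - \Delta d^{k+1} = |\nabla d^k|^2 d^k, \quad \p d^{k+1}/\p\nu|_{\p\Omega}=0, \quad d^{k+1}(0)=d_0,
\]
by standard linear parabolic theory. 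Because $\rho_0 \ge \delta > 0$, the transport step preserves $\rho^{k+1} \ge \delta_k > 0$, so the linearized system fits the framework of \eqref{3.1} without the degeneracy that the limit argument later has to accommodate.

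First I would close uniform a priori bounds on a short interval $[0, T_0]$. Define
\[
\Psi_k(T) := \sup_{0 \le t \le T}\left(\|\rho^k\|_{W^{1,q}} + \|u^k\|_{H^2} + \|\sqrt{\rho^k}u^k_t\|_{L^2} + \|P^k\|_{H^1} + \|d^k\|_{H^3} + \|d^k_t\|_{H^1}\right)
\]
together with the natural $L^2_t$ norms of $u^k_{W^{2,r}}$, $P^k_{W^{1,r}}$, $\nabla u^k_t$, $d^k_{H^4}$, $d^k_{t,H^2}$, and $d^k_{tt, L^2}$. Applying Lemma \ref{lemma3.3} to step (ii) and standard parabolic regularity to step (iii), one verifies that the hypotheses of Proposition \ref{Proposition3.1} (in particular $f \in L^\infty H^1 \cap L^2 H^2$ and $f_t \in L^2 H^1$) propagate, since $d^k \in H^3$ together with $d^k_t \in H^1$ controls $\nabla d^k \odot \nabla d^k$ in exactly those spaces. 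A Gronwall closure on $\Psi_k$ then yields $\Psi_k(T_0) \le 2\Psi_0$ for all $k$, provided $T_0$ is chosen small depending only on the data.

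Next I would prove that the iterates form a Cauchy sequence in a weaker norm. Writing $\bar\rho^{k+1} = \rho^{k+1} - \rho^k$, $\bar u^{k+1}$, $\bar d^{k+1}$, the differences satisfy linear equations whose right-hand sides are bilinear in the previous differences and the uniformly bounded iterates. Energy estimates in $L^\infty_t L^q$ for $\bar\rho$ and $L^\infty_t L^2 \cap L^2_t H^1$ for $(\bar u, \bar d)$ produce a contraction after possibly shrinking $T_0$. Weak-$\ast$ compactness in the strong-regularity class combined with strong convergence in the weak norm then identifies all nonlinearities and yields a solution $(\rho,u,P,d)$ in the class \eqref{3.18}. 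The geometric constraint $|d|=1$ is recovered by setting $\phi = |d|^2 - 1$ and observing that it satisfies
\[
\phi_t + u \cdot \nabla \phi - \Delta \phi = 2|\nabla d|^2 \phi, \quad \p\phi/\p\nu|_{\p\Omega}=0, \quad \phi(0)=0,
\]
so that an $L^2$ energy estimate, using the $L^\infty$ bound on $|\nabla d|^2$ from $d \in H^3$ in dimension $n \le 3$, forces $\phi \equiv 0$.

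The main obstacle is the coupling between the regularity required by Proposition \ref{Proposition3.1} and the regularity of $d$: to treat $\nabla d \odot \nabla d$ as an admissible forcing one needs $d^k$ in $L^\infty H^3 \cap L^2 H^4$ and $d^k_t$ in $L^\infty H^1 \cap L^2 H^2$, and to obtain these parabolic bounds one must in turn control $|\nabla d^{k-1}|^2 d^{k-1}$ in $H^1$ and its time derivative in $L^2$, which ties the $d$-iteration tightly to the $u$-iteration through the transport term $u^k \cdot \nabla d^{k+1}$. Equally delicate is propagating the compatibility condition \eqref{3.17} along the iteration: each linear Stokes step needs a compatibility datum $(P^k_0, g^k) \in H^1 \times L^2$, and a consistent choice (keeping $u^k(0) = u_0$ and defining $P^k_0, g^k$ to reproduce the original compatibility condition with $f_0 = \nabla d_0 \odot \nabla d_0$) is what allows Proposition \ref{Proposition3.1} to be applied uniformly in $k$.
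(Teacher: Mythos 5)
Your proposal follows essentially the same route as the paper: a semi-decoupled iteration in which the density is transported by the previous velocity, the momentum/pressure pair is produced by Proposition \ref{Proposition3.1} with $v$ the previous velocity and $f=\nabla d\odot\nabla d$, the director is obtained from a linear parabolic problem, uniform bounds are closed on a short interval, and a contraction in weaker norms plus the standard argument for $\phi=|d|^2-1$ finishes the proof; the compatibility issue is handled exactly as you say, since $d^k(0)=d_0$ makes \eqref{3.17} reproduce \eqref{3.2} with $f_0=\nabla d_0\odot\nabla d_0$ and the same $g$.

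Two implementation points deserve attention. First, your contraction norm for the density difference, $L^\infty_t L^q$, does not close: the difference equation $\bar\rho^{k+1}_t+u^k\cdot\nabla\bar\rho^{k+1}+\bar u^k\cdot\nabla\rho^k=0$ forces you to estimate $\|\bar u^k\cdot\nabla\rho^k\|_{L^q}$, which with only $\nabla\rho^k\in L^q$ requires $\bar u^k\in L^\infty$, and that is not available from the weak difference norms $L^\infty_tL^2\cap L^2_tH^1$ (in particular for $n=3$). The paper instead measures $\bar\rho^{k+1}$ in $L^2$, pairing $\nabla\rho^k\in L^q$ ($q>n$) with $\bar u^k\in H^1\hookrightarrow L^{2q/(q-2)}$, which is the standard fix and is what you should use. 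Second, the uniform-in-$k$ bounds are not obtained by a plain Gronwall argument yielding $\Psi_k(T_0)\le 2\Psi_0$: the estimates produce a nonlinear integral inequality of the form $\Phi_K(t)\le C\exp[C\exp(C\int_0^t\Phi_K^N ds)]$, closed on a short time interval by a comparison/continuation argument (the paper cites Simon), and the terms involving the $(k-1)$-st iterate (e.g.\ $\|\nabla u_t^{k-1}\|_{L^2}$, $\|\nabla^2 d_t^{k-1}\|_{L^2}$, $\|\nabla^3 d^{k-1}\|_{L^2}$) must be absorbed with small coefficients and summed through the recursion via a geometric series to get bounds independent of $k$. These are fixable details rather than a different method, but as stated those two steps would not go through.
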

To prove the Proposition \ref{proposition3.4}, we first construct approximate solutions, as follows:\\
$(1)$ first define $u^0=0$ and $d^0=d_0$;\\
$(2)$ assuming that $u^{k-1}$ and $d^{k-1}$ was defined for $k\ge 1$, let $(\rho^k, u^k, d^k, P^k)$ be the unique
solution to the following initial boundary value problem
\begin{equation}\label{3.19}
\left\{
\begin{aligned}
&\rho_t^k+u^{k-1}\cdot \nabla \rho^k=0,\\
&\rho^k u^k_t+\rho^k (u^{k-1}\cdot \nabla)u^k- {\rm div}(2\mu(\rho^k)D(u^k))+\nabla P^k
=-{\rm div}(\nabla d^k \odot \nabla d^k ),\\
&{\rm div}u^k=0,\\
&d^k_t-\Delta d^k=|\nabla d^{k-1}|^2 d^{k-1}-(u^{k-1}\cdot \nabla)d^{k-1},
\end{aligned}
\right.
\end{equation}
with the initial and boundary conditions
\begin{equation}\label{3.20}
\left.(\rho^k, u^k, d^k)\right|_{t=0}=(\rho_0, u_0, d_0) \quad x\in \Omega,
\end{equation}
\begin{equation}\label{3.21}
(u^k, \frac{\partial d^k}{\partial \nu})=(0,0) \quad {\rm on}\ \partial \Omega,
\end{equation}
where $\nu$ is the unit outward normal vector to $\partial \Omega$.

\subsubsection{Uniform bounds}
\quad Thanks to the Proposition \ref{Proposition3.1} to $\eqref{3.19}_1-\eqref{3.19}_2$ and existence and uniqueness of the theory of parabolic equation to $\eqref{3.19}_4 $(c.f.\cite{Ladyzenskaja-Solonnikov-Ural'ceva}), it is easy to get the existence of a global strong solution $(\rho^k, u^k, P^k, d^k)$
with the regularity \eqref{3.18} to the linearized problem \eqref{3.19}-\eqref{3.21}.

From now on, we derive uniform bounds on the approximate solutions and then prove that the approximate solutions converge
to a strong solution of the original nonlinear problem. Let $K\ge 1$ be a fixed large integer, and define a function as
\begin{equation*}
\Phi_K(t)=\underset{1\le k\le K}{\max}\underset{0 \le s \le t}{\sup}
          \left(1+\|\nabla u^k(s)\|_{L^2}+\|\nabla d^k(s)\|_{H^2}+\|\nabla \rho^k(s)\|_{L^q}\right).
\end{equation*}
Observe then that
\begin{equation}\label{3.22}
\delta \le \rho^k \le C, \quad C^{-1} \le \mu^k \le C, \quad |\nabla \mu^k| \le C|\nabla \rho^k|.
\end{equation}
Then we will estimate each term of $\Phi_K(t)$ in terms of some integral of $\Phi_K(t)$.

\begin{lemm}\label{lemma3.5}
There exists a positive constant $N=N(n,q)$ such that
\begin{equation}\label{3.23}
\|\nabla u^k(t)\|_{L^2}^2+\int_0^t \|\sqrt{\rho^k_t}u^k (s)\|_{L^2}^2ds \le C+C\int_0^t \Phi_K(s)^N ds
\end{equation}
for all $k,\ 1\le k \le K.$
\end{lemm}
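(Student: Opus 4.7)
The plan is to mimic Step 2 of the proof of Lemma \ref{lemma3.3} by testing the momentum equation $\eqref{3.19}_2$ against $u^k_t$, producing an energy-type inequality for $\|\nabla u^k\|_{L^2}^2 + \int_0^t\|\sqrt{\rho^k}u^k_s\|_{L^2}^2\,ds$. The genuinely new feature, compared with Lemma \ref{lemma3.3}, is the director forcing $-\mathrm{div}(\nabla d^k\odot\nabla d^k)$; I will handle it by integrating by parts in time so that no unweighted norm of $u^k_t$ ever appears.

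First, I take the $L^2$ inner product of $\eqref{3.19}_2$ with $u^k_t$ and integrate on $\Omega\times(0,t)$. Since $\mathrm{div}\,u^k_t=0$ and $u^k_t|_{\partial\Omega}=0$, the pressure drops; for the viscous piece I use $\int 2\mu(\rho^k)D(u^k):D(u^k_t)=\tfrac{d}{dt}\int\mu(\rho^k)|D(u^k)|^2 - \int \mu'(\rho^k)\rho^k_t |D(u^k)|^2$ and substitute $\rho^k_t=-u^{k-1}\cdot\nabla\rho^k$ from $\eqref{3.19}_1$. Combined with \eqref{3.7} and Korn's inequality for divergence-free fields, this gives
\begin{equation*}
\begin{aligned}
\|\nabla u^k(t)\|_{L^2}^2 + \int_0^t\|\sqrt{\rho^k}u^k_s\|_{L^2}^2\,ds
&\le C + C\int_0^t\!\!\int |u^{k-1}||\nabla\rho^k||\nabla u^k|^2\,dxds \\
&\quad + C\int_0^t\!\!\int \rho^k|u^{k-1}||\nabla u^k||u^k_s|\,dxds \\
&\quad + \Big|\int_0^t\!\!\int \mathrm{div}(\nabla d^k\odot\nabla d^k)\cdot u^k_s\,dxds\Big|.
\end{aligned}
\end{equation*}

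Second, I bound the two classical cross terms exactly as $I_{13},I_{14}$ in Lemma \ref{lemma3.3}, with $u^{k-1}$ replacing $v$. Sobolev embedding gives $\|u^{k-1}\|_{L^\infty}\lesssim \|u^{k-1}\|_{H^2}$, and the stationary Stokes estimate \eqref{2.1} applied at the $(k-1)$-st level controls $\|u^{k-1}\|_{H^2}$ by $(1+\|\nabla\rho^{k-1}\|_{L^q})^{q/(q-n)}(\|\sqrt{\rho^{k-1}}u^{k-1}_t\|_{L^2}+\text{lower order})$, each factor being bounded by a power of $\Phi_K$. Hölder and Young then produce bounds of the form $\varepsilon\|\sqrt{\rho^k}u^k_s\|_{L^2}^2 + C\Phi_K(s)^{N_1}$ for an $N_1=N_1(n,q)$.

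Third, for the director forcing I integrate by parts in time:
\begin{equation*}
-\int_0^t\!\!\int \mathrm{div}(\nabla d^k\odot\nabla d^k)\cdot u^k_s \,dxds
= \Big[\!\int(\nabla d^k\odot \nabla d^k):\nabla u^k\Big]_0^t - \int_0^t\!\!\int \partial_s(\nabla d^k\odot\nabla d^k):\nabla u^k\,dxds.
\end{equation*}
The boundary piece at time $t$ is controlled by $\|\nabla d^k\|_{L^4}^2\|\nabla u^k\|_{L^2}\lesssim \Phi_K^3$ via $H^1\hookrightarrow L^4$ (valid for $n=2,3$). For the time integral I expand $\partial_s(\nabla d^k\odot\nabla d^k)=\nabla d^k_s\odot\nabla d^k + \nabla d^k\odot\nabla d^k_s$ and replace $d^k_s$ using $\eqref{3.19}_4$, namely $d^k_s=\Delta d^k+|\nabla d^{k-1}|^2d^{k-1}-u^{k-1}\cdot\nabla d^{k-1}$; all the resulting norms are polynomially controlled by $\Phi_K$ (using the $H^2$-bound for $u^{k-1}$ from the previous step and Gagliardo--Nirenberg on $\nabla u^k$ combined with the Stokes estimate). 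Applying Cauchy--Schwarz in time yields $\int_0^t \Phi_K(s)^{N_2}\,ds$ for some $N_2=N_2(n,q)$. Combining the three steps, choosing $\varepsilon$ small to absorb $\|\sqrt{\rho^k}u^k_s\|_{L^2}^2$ on the left, and setting $N=\max\{N_1,N_2\}$ yields \eqref{3.23}.

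The main obstacle is the director forcing: a naive bound $\|\mathrm{div}(\nabla d^k\odot\nabla d^k)\|_{L^2}\|u^k_s\|_{L^2}$ would cost a factor $\delta^{-1/2}$ from $\|u^k_s\|_{L^2}\le\delta^{-1/2}\|\sqrt{\rho^k}u^k_s\|_{L^2}$, which is incompatible with the ultimate $\delta$-independent bounds. The integration by parts in time circumvents this, at the cost of invoking the $d^k$-equation to re-express $d^k_s$. A secondary bookkeeping issue is tracking the power $N$: each invocation of the Stokes estimate \eqref{2.1} contributes a factor $(1+\Phi_K)^{q/(q-n)}$, and since the argument needs one such bootstrap for $\|u^{k-1}\|_{H^2}$ and another for $\|u^k\|_{H^2}$ (entering through $\|\nabla u^k\|_{L^{r}}$ in the Gagliardo--Nirenberg step), the final exponent $N$ is a polynomial in $q/(q-n)$.
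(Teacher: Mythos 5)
Your overall strategy (test with $u^k_t$, integrate the elastic forcing by parts in time so that only $\sqrt{\rho^k}u^k_t$ appears) is exactly the paper's, but your handling of the resulting time-boundary term breaks the required structure. After the integration by parts you must control $\int(\nabla d^k\odot\nabla d^k):\nabla u^k\,dx$ \emph{evaluated at time $t$}. You bound it by $\|\nabla d^k(t)\|_{L^4}^2\|\nabla u^k(t)\|_{L^2}\lesssim\Phi_K(t)^3$ and move it to the right-hand side; but $\Phi_K(t)^3$ is not inside a time integral, and since $\Phi_K(t)\ge 1+\|\nabla u^k(t)\|_{L^2}+\|\nabla d^k(t)\|_{H^2}$ it dominates the very quantity $\|\nabla u^k(t)\|_{L^2}^2$ you are trying to estimate, so it can neither be absorbed nor converted into the form $C+C\int_0^t\Phi_K^N\,ds$ that Lemma \ref{lemma3.5} asserts (and that the later integral inequality $\Phi_K\le C\exp[C\exp(C\int_0^t\Phi_K^N)]$ needs). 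The paper's missing ingredient here is a \emph{separate coercive estimate for $\|\nabla d^k(t)\|_{L^4}^4$}: applying $\nabla$ to $\eqref{3.19}_4$ and testing with $4|\nabla d^k|^2\nabla d^k$ gives \eqref{3.29}, $\int|\nabla d^k|^4dx\le C+C\int_0^t\Phi_K^6\,ds$; the indefinite term is kept on the left, Young's inequality $|\nabla u^k||\nabla d^k|^2\le\tfrac{1}{4C}|\nabla u^k|^2+C|\nabla d^k|^4$ is used, and a large multiple $C_*$ of \eqref{3.29} is added so that $\tfrac{1}{2C}|\nabla u^k|^2-|\nabla u^k||\nabla d^k|^2+C_*|\nabla d^k|^4$ is bounded below by $\tfrac{1}{4C}|\nabla u^k|^2+\tfrac{C_*}{2}|\nabla d^k|^4$. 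Without some such auxiliary $L^4$ estimate for $\nabla d^k$ (with right-hand side of the admissible form), your argument does not close.

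A second, related flaw is in your treatment of the convection and density-gradient terms: you invoke $\|u^{k-1}\|_{L^\infty}\lesssim\|u^{k-1}\|_{H^2}$ and then the Stokes estimate \eqref{2.1} at level $k-1$, claiming every factor, including $\|\sqrt{\rho^{k-1}}u^{k-1}_t\|_{L^2}$, is bounded by a power of $\Phi_K$. But $\Phi_K$ contains only $\|\nabla u^{k-1}\|_{L^2}$, $\|\nabla d^{k-1}\|_{H^2}$, $\|\nabla\rho^{k-1}\|_{L^q}$; the bound on $\|\sqrt{\rho^{k-1}}u^{k-1}_t\|_{L^2}$ is the content of Lemma \ref{lemma3.6}, which is proved \emph{after} and using Lemma \ref{lemma3.5}, so this step is circular as written (and there is no $(k-1)$-level term on the left to absorb it into, nor a recursion set up as in Lemma \ref{lemma3.6}). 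The paper avoids this entirely by estimating these terms with only $\|u^{k-1}\|_{L^6}\lesssim\|\nabla u^{k-1}\|_{L^2}\le\Phi_K$ and Gagliardo--Nirenberg interpolation on $\nabla u^k$, reserving the Stokes estimate \eqref{3.25} for the current iterate $u^k$, whose $\|\sqrt{\rho^k}u^k_t\|_{L^2}$ factor can be absorbed into the left-hand side by Young's inequality. You should rework these two points along those lines; the rest of your outline (the time integration by parts, substitution of $d^k_t$ from $\eqref{3.19}_4$, and $\varepsilon$-absorption) matches the paper's proof.
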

\begin{proof}
Multiplying $\eqref{3.19}_2$ by $u_t^k$, integrating by parts and making use of $\eqref{3.20}_1$, we have
\begin{equation*}
\begin{aligned}
&\quad \int \rho^k |u_t^k|^2 dx+\frac{d}{dt}\int \mu(\rho^k)|D(u^k)|^2dx\\
& =-\int \rho^k (u^{k-1}\cdot \nabla) u^k \cdot u_t^kdx- \int \mu' (u^{k-1} \cdot \nabla \rho^k) |D(u^k)|^2dx
  +\int \nabla d^k \odot \nabla d^k : \nabla u_t^k dx,
\end{aligned}
\end{equation*}
On account of the identity
\begin{equation*}
\int \nabla d^k \odot \nabla d^k : \nabla u_t^k dx
=\frac{d}{dt}\int \nabla d^k \odot \nabla d^k : \nabla u^k dx
-\int \nabla d_t^k \odot \nabla d^k : \nabla u^k+\nabla d_t^k \odot \nabla d_t^k : \nabla u^k dx.
\end{equation*}
We integrate over $(0, t)$ and apply \eqref{3.9} and $\eqref{3.19}_3$ to deduce that
\begin{equation}\label{3.24}
\begin{aligned}
& \quad \frac{1}{2C}\int |\nabla u^k|^2 dx-\int |\nabla d^k|^2|\nabla u^k|dx+\frac{1}{2}\int_0^t\int \rho^k |u_t^k|^2dxd\tau\\
& \le C+C\int_0^t \int
   \left(\rho^k |u^{k-1}|^2|\nabla u^k|^2+|u^{k-1}| |\nabla \rho^k| |\nabla u^k|+|\nabla d^k| |\nabla d_t^k| |\nabla u^k|\right)dxd\tau\\
&=C+\sum_{i=1}^3 I_{3i}.
\end{aligned}
\end{equation}
Applying the Gagliardo-Nirenberg, H\"{o}lder and Young inequalities repeatedly, it arrives at
\begin{equation*}
\begin{aligned}
I_{31}  & \le C\int_0^t \|u^{k-1}\|_{L^6}^2 \|\nabla u^k\|_{L^3}^2 d\tau
          \le C\int_0^t \|\nabla u^{k-1}\|_{L^2}^2 \|\nabla u^k\|_{L^2}^{\frac{6-n}{3}}\|\nabla u^k\|_{H^1}^{\frac{n}{3}}d\tau\\
        & \le\varepsilon \int_0^t \|\sqrt{\rho^k}u_t^k\|_{L^2}^2d\tau+C(\varepsilon)\int_0^t \Phi_K^{N_2}d\tau,\\
I_{32} & \le C\int_0^t \|\nabla \rho^k\|_{L^q} \|\nabla u^{k-1}\|_{L^2}\|\nabla u^k\|_{L^{\frac{12q}{5q-6}}}^2 d\tau\\
       & \le C\int_0^t \|\nabla \rho^k\|_{L^q} \|\nabla u^{k-1}\|_{L^2}
              \|\nabla u^k\|_{L^2}^{\frac{12q-n(q+6)}{6q}}\|\nabla u^k\|_{H^1}^{\frac{n(q+6)}{6q}}d\tau\\
      &\le  \varepsilon \int_0^t \|\sqrt{\rho^k}u_t^k\|_{L^2}^2d\tau+C(\varepsilon)\int_0^t \Phi_K^{N_3}d\tau,\\
I_{33} &\le  \int_0^t \|\nabla d^k\|_{L^6}\|\nabla d_t^k\|_{L^2}\|\nabla u^k\|_{L^3}d\tau
        \le  C\int \|\nabla d^k\|_{H^1} \|\nabla d_t^k\|_{L^2}
         \|\nabla u^k\|_{L^2}^{\frac{6-n}{3}} \|\nabla u^k\|_{H^1}^{\frac{n}{3}}d\tau\\
     & \le\varepsilon \int_0^t \|\sqrt{\rho^k}u_t^k\|_{L^2}^2d\tau+C(\varepsilon)\int_0^t \Phi_K^{N_4}d\tau\\
\end{aligned}
\end{equation*}
for some $N_i=N_i(n,q)>0(i=2,3,4),$\ where we have used the following regularity estimate
\begin{equation}\label{3.25}
\|u^k\|_{H^2}+\|P^k\|_{H^1}\le C(1+\|\sqrt{\rho^k}u_t^k\|_{L^2})\Phi_K^{N_1} \quad {\rm for \ some}\ N_1=N_1(n,q)>0.
\end{equation}
Substituting $I_{3i}(i=1,2,3)$ into \eqref{3.24} and choosing $\varepsilon$ small enough yield
\begin{equation}\label{3.26}
\begin{aligned}
\frac{1}{2C}\int |\nabla u^k|^2 dx-\int |\nabla d^k|^2 |\nabla u^k|dx+\frac{1}{4}\int_0^t\int \rho^k |u_t^k|^2dxd\tau
\le C+C\int_0^t \Phi^{N_{5}}_K d\tau
\end{aligned}
\end{equation}
for some $N_5=N_5(n, q)>0$.
In order to control the term $-\int |\nabla d^k|^2 |\nabla u^k|dx$ on left hand side of \eqref{3.26}, taking $\nabla$ operator to $\eqref{3.19}_4$, then we have
\begin{equation}\label{3.27}
\nabla d_t^k -\nabla \Delta d^k=\nabla \left[|\nabla d^{k-1}|^2 d^{k-1}-(u^{k-1}\cdot \nabla)d^{k-1}\right].
\end{equation}
Multiplying by $4|\nabla d^k|^2 \nabla d^k$, integrating (by parts)over $\Omega$ and exploiting the boundary condition
$\left.\frac{\partial d^k}{\partial \nu}\right|_{\partial \Omega}=0$, we get
\begin{equation}\label{3.28}
\begin{aligned}
&\quad \frac{d}{dt}\int |\nabla d^k|^4 dx+4\int |\nabla d^k|^2 |\Delta d^k|^2  dx\\
&\le C\int \left(|\nabla d^k|^3 |\nabla u^{k-1}| |\nabla d^{k-1}|+|\nabla d^k|^3 |u^{k-1}| |\nabla^2 d^{k-1}|
            +|\nabla d^k|^3 |\nabla d^{k-1}|^3\right.\\
&\quad \quad \quad \quad \left.+|\nabla d^k|^3 |\nabla d^{k-1}| |\nabla^2 d^{k-1}|+|\nabla d^k|^2|\nabla^2 d^k|^2\right)dx
 =\sum_{i=1}^5 I_{4i},
\end{aligned}
\end{equation}
where we have used the fact
\begin{equation*}
\begin{aligned}
-\int \nabla \Delta d^k \cdot 4|\nabla d^k|^2 \nabla d^k dxc
&=\int 4|\nabla d^k|^2 |\Delta d^k|^2 dx+8\int \partial_j \partial_j d^k \partial_l \partial_i d^k \partial_l d^k \partial_i d^k dx.
\end{aligned}
\end{equation*}
Applying the H\"{o}lder and Gagliardo-Nirenberg inequalities, we have
\begin{equation*}
\begin{aligned}
I_{41} &\le \|\nabla d^{k-1}\|_{L^\infty} \|\nabla d^k\|_{L^6}^3 \|\nabla u^{k-1}\|_{L^2}
        \le \|\nabla d^{k-1}\|_{H^2} \|\nabla d^k\|_{H^1}^3  \|\nabla u^{k-1}\|_{L^2}\le C\Phi_K^5,\\
I_{42} &\le \|\nabla d^k\|_{L^6}^3 \|u^{k-1}\|_{L^6} \|\nabla^2 d^{k-1}\|_{L^3}
         \le  \|\nabla d^k\|_{H^1}^3 \|\nabla u^{k-1}\|_{L^2} \|\nabla^2 d^{k-1}\|_{H^1}\le C\Phi_K^5,\\
I_{43} &\le \|\nabla d^k\|_{L^6}^3 \|\nabla d^{k-1}\|_{L^6}^3\le C\|\nabla d^k\|_{H^1}^3\|\nabla d^{k-2}\|_{H^1}^3
         \le C\Phi_K^6,\\
I_{44} &\le \|\nabla d^k\|_{L^6}^3 \|\nabla d^{k-1}\|_{L^6}\|\nabla^2 d^{k-1}\|_{L^3}
         \le \|\nabla d^k\|_{H^1}^3\|\nabla d^{k-1}\|_{H^1}\|\nabla^2 d^{k-1}\|_{H^1}\le C\Phi_K^5.\\
I_{45} &\le \|\nabla d^k\|_{L^\infty}^2 \|\nabla^2 d^k\|_{L^2}^2
         \le C\|\nabla d^k\|_{H^2}^2 \|\nabla^2 d^k\|_{L^2}^2 \le C\Phi_K^4.
\end{aligned}
\end{equation*}
Substituting $I_{4i}(i=1,2,3,4,5)$ into \eqref{3.28} and integrating over $(0,t)$, it arrives at
\begin{equation}\label{3.29}
\int |\nabla d^k|^4dx+4\int_0^t\int |\nabla d^k|^2|\Delta d^k|^2dxd\tau \le C+C\int_0^t \Phi_K^6(s)ds.
\end{equation}
Choosing a constant $C_{*}$ sufficiently large such that
\begin{equation*}
\frac{1}{2C}|\nabla u^k|^2-|\nabla u^k| |\nabla d^k|^2+C_{*} |\nabla d^k|^4
\ge \frac{1}{4C}|\nabla u^k|^2+\frac{C_{*}}{2}|\nabla d^k|^4,
\end{equation*}
then $\eqref{3.26}+\eqref{3.29}\times C_{*}$ yields
\begin{equation*}
\int |\nabla u^k|^2dx+\int_0^t \int \rho^k |u_t^k|^2dxd\tau \le C+C\int_0^t \Phi_K^{N_6} d\tau,
\end{equation*}
for some $N_6=N_6(n,q)>0$. Hence, we complete the proof of lemma.
\end{proof}

Next, we estimate the term $\|\sqrt{\rho^k}u_t^k\|_{L^2}$ and $\|\nabla u_t^k\|_{L^2}$ to guarantee the higher regularity.

\begin{lemm}\label{lemma3.6}
There exists a positive constant $N=N(n,q)$ such that
\begin{equation}\label{3.30}
\|\sqrt{\rho^k}u_t^k(t)\|_{L^2}^2+\int_0^t \|\nabla u_t^k\|_{L^2}^2 ds \le C\exp\left[C\int_0^t \Phi_K^N(s)ds\right]
\end{equation}
for any $k,\ 1\le k \le K.$
\end{lemm}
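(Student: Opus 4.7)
The plan is to mimic Step 3 of the proof of Lemma \ref{lemma3.3}, but keep careful track of the extra terms coming from the director source $-\mathrm{div}(\nabla d^k\odot \nabla d^k)$ and bound everything in terms of $\Phi_K$. First I would differentiate the momentum equation $\eqref{3.19}_2$ with respect to $t$, multiply the resulting identity by $u_t^k$, and integrate by parts over $\Omega$. Using the continuity equation $\eqref{3.19}_1$ to substitute $\rho_t^k=-u^{k-1}\cdot\nabla\rho^k$ and the divergence-free condition on $u^{k-1}$, this yields
\begin{equation*}
\frac{1}{2}\frac{d}{dt}\int\rho^k|u_t^k|^2\,dx+\int 2\mu(\rho^k)|D(u_t^k)|^2\,dx=J_1+J_2+J_3+J_4+J_5,
\end{equation*}
where $J_1$ collects terms $\int u^{k-1}\cdot\nabla\rho^k\,(|u_t^k|^2+(u^{k-1}\cdot\nabla)u^k\cdot u_t^k)\,dx$, $J_2=-\int\rho^k(u_t^{k-1}\cdot\nabla)u^k\cdot u_t^k\,dx$, $J_3=-\int\rho^k(u^{k-1}\cdot\nabla)u_t^k\cdot u_t^k\,dx$ (which vanishes), $J_4$ is the viscosity-derivative term $\int\mu'(\rho^k)\rho_t^k\,D(u^k):\nabla u_t^k\,dx$, and $J_5=2\int(\nabla d_t^k\odot\nabla d^k):\nabla u_t^k\,dx$ is the director source.

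Second, each $J_i$ must be bounded by $\varepsilon\|\nabla u_t^k\|_{L^2}^2+C(\varepsilon)\Phi_K^N(\|\sqrt{\rho^k}u_t^k\|_{L^2}^2+\|\nabla u_t^{k-1}\|_{L^2}^2+1)$, following exactly the Gagliardo–Nirenberg/Hölder/Young pattern used for $I_{21}$–$I_{25}$ in Lemma \ref{lemma3.3}. The only genuinely new term is $J_5$: here I would use $\|\nabla d^k\|_{L^\infty}\le C\|d^k\|_{H^3}\le C\Phi_K$ and Hölder to estimate $J_5\le \varepsilon\|\nabla u_t^k\|_{L^2}^2+C(\varepsilon)\Phi_K^{N}\|\nabla d_t^k\|_{L^2}^2$. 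The factor $\|\nabla d_t^k\|_{L^2}$ is controlled by differentiating $\eqref{3.19}_4$, namely $\nabla d_t^k=\nabla\Delta d^k+\nabla(|\nabla d^{k-1}|^2d^{k-1})-\nabla[(u^{k-1}\cdot\nabla)d^{k-1}]$, and then estimating in $L^2$ by $\|d^k\|_{H^3}$ plus polynomial combinations of $\|\nabla d^{k-1}\|_{H^2}$, $\|u^{k-1}\|_{H^2}$, both of which are dominated by $\Phi_K$; the Stokes regularity \eqref{3.25} handles $\|u^k\|_{H^2}$ whenever it appears. For $J_4$ I use $|\mu'|\le C$, $|\rho_t^k|\le|u^{k-1}||\nabla\rho^k|$, and the $W^{1,q}$–bound on $\rho^k$ against $\|D(u^k)\|_{L^{2q/(q-2)}}$, which is controlled by $\|u^k\|_{H^2}$ and hence by $(1+\|\sqrt{\rho^k}u_t^k\|_{L^2})\Phi_K^{N_1}$.

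Third, I absorb $\varepsilon\|\nabla u_t^k\|_{L^2}^2$ into the coercive viscous dissipation on the left (using \eqref{3.7} and Korn's inequality to replace $\int|D(u_t^k)|^2$ by a multiple of $\int|\nabla u_t^k|^2$). Integrating the resulting differential inequality over $(\tau,t)$, and then sending $\tau\to 0^+$, the initial value $\|\sqrt{\rho^k}u_t^k(\tau)\|_{L^2}^2$ is controlled by the compatibility condition \eqref{3.17} exactly as in \eqref{3.14}. Applying the Gronwall lemma to the integral inequality
\begin{equation*}
\|\sqrt{\rho^k}u_t^k(t)\|_{L^2}^2+\int_0^t\|\nabla u_t^k\|_{L^2}^2\,ds\le C+C\int_0^t\Phi_K^N(s)\bigl(1+\|\sqrt{\rho^k}u_t^k(s)\|_{L^2}^2\bigr)ds
\end{equation*}
yields the claim \eqref{3.30}.

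The main obstacle is the director term $J_5$, because a naive bound would introduce $\|\nabla d_t^k\|_{H^1}$ or force us to differentiate $d^k$ further. The remedy is to keep $\nabla d_t^k$ only in $L^2$ and read off its norm directly from $\eqref{3.19}_4$, trading time derivatives of $d^k$ for spatial regularity already packaged into $\Phi_K$ via $\|\nabla d^k\|_{H^2}$. A secondary technical point is ensuring that the estimate \eqref{3.30} is \emph{uniform} in the regularization parameter $\delta$; since every intermediate inequality uses only $\rho^k\le C$, $\mu\in C^1$, and $W^{1,q}$–regularity of $\rho^k$, no dependence on $\delta$ or $\|\partial^2\mu/\partial\rho^2\|_C$ enters, which is exactly what will be needed for the limiting argument at the end of Section 3.
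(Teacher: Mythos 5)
Your overall strategy (differentiate $\eqref{3.19}_2$ in time, test with $u_t^k$, control the director coupling by reading $\|\nabla d_t^k\|_{L^2}\le C\Phi_K^3$ directly from $\eqref{3.19}_4$, absorb $\varepsilon\|\nabla u_t^k\|_{L^2}^2$, use the compatibility condition as $\tau\to 0^+$, then Gronwall) is the same as the paper's. However, there is a genuine gap in how you treat the cross-iterate term $J_2=-\int\rho^k(u_t^{k-1}\cdot\nabla)u^k\cdot u_t^k\,dx$ (the paper's $I_{55}$). In your second step you bound it by $C(\varepsilon)\Phi_K^N\bigl(\dots+\|\nabla u_t^{k-1}\|_{L^2}^2\bigr)$, but in the final integral inequality to which you apply Gronwall the term $\int_0^t\|\nabla u_t^{k-1}\|_{L^2}^2\,ds$ has silently disappeared. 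This term cannot be dropped: $\Phi_K$ contains no information about $u_t^{k-1}$, so it must survive into the closing argument. Moreover, if it survives with a coefficient of size $C\Phi_K^N$ (as your stated bound gives), the argument does not close at all, because the estimate at level $k$ then depends on level $k-1$ with a large, non-summable factor. The paper's proof hinges on getting this term with an arbitrarily small \emph{constant} coefficient $\eta$ (via Young's inequality applied to the $u_t^{k-1}$ factor, e.g. $\rho^k|u_t^{k-1}||\nabla u^k||\nabla u_t^k|\le C(\varepsilon,\eta)\|\sqrt{\rho^k}u_t^k\|_{L^2}^2\Phi_K^4+\varepsilon\|\nabla u_t^k\|_{L^2}^2+\eta\|\nabla u_t^{k-1}\|_{L^2}^2$), and then running a recursion in $k$: choosing $\eta\le\tfrac12$, the resulting relation $\int_\tau^t\|\nabla u_t^k\|_{L^2}^2\le A_k+\tfrac12\int_\tau^t\|\nabla u_t^{k-1}\|_{L^2}^2$ is iterated and summed as a geometric series $\sum_i 2^{-(i-1)}$, which is what makes \eqref{3.30} uniform in $k\le K$. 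Without this small-coefficient splitting and the induction over $k$, the proposed Gronwall step does not yield the lemma.

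A secondary, fixable point: your treatment of the term with $\partial_t\rho^k$ ($J_4$, the paper's $I_{57}$) pairs $\|\nabla\rho^k\|_{L^q}$ with $\|D(u^k)\|_{L^{2q/(q-2)}}$, which forces the factor $u^{k-1}$ into $L^\infty$; controlling $\|u^{k-1}\|_{L^\infty}$ by $\|u^{k-1}\|_{H^2}$ reintroduces $\|\sqrt{\rho^{k-1}}u_t^{k-1}\|_{L^2}$ through \eqref{3.25}, i.e. yet another cross-iterate quantity. The paper instead keeps $u^{k-1}$ in $L^6$ (hence bounded by $\Phi_K$) and, for $n=3$, pushes the extra integrability onto $\nabla u^k$ through the $W^{2,r}$ estimate \eqref{3.33}, which produces only a fractional power of $\|\nabla u_t^k\|_{L^2}$ absorbable by Young at the same level $k$. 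You should adopt that route (or otherwise handle $3<q<6$), since your version as written either fails for those $q$ or reintroduces the very dependence on the $(k-1)$-st time derivative that the recursion is designed to tame.
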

\begin{proof}
Differentiating $\eqref{3.19}_2$ with respect to $t$, multiplying by $u_t^k$ and using $\eqref{3.19}_1$, then we get
\begin{equation}\label{3.31}
\begin{aligned}
&\quad \frac{1}{2}\frac{d}{dt}\int \rho^k |u_t^k|^2dx+\frac{1}{C}\int |\nabla u_t^k|^2 dx\\
&\le C \int \left(\rho^k|u^{k-1}| |u_t^k| |\nabla u_t^k| +\rho^k |u^{k-1}| |\nabla u^{k-1}| |u_t^k|
           +\rho^k |u^{k-1}|^2 |\nabla^2 u^k| |u_t^k|
           \right.\\
&\quad \quad \quad \left. + \rho^k |u^{k-1}|^2 |\nabla u^k| |\nabla u_t^k|+\rho^k |u_t^{k-1}| |\nabla u^k| |\nabla u_t^k|
           +|\nabla d^k| |\nabla d_t^k| |\nabla u_t^k|\right. \\
&\quad \quad \quad   \left. +|\partial_t \rho^k| |\nabla u^k| |\nabla u_t^k|\right)dx
=\sum_{i=1}^7 I_{5i}.
\end{aligned}
\end{equation}
To estimate the term $I_{5i}(1\le i \le 7)$, we make use of the H\"{o}lder, Gagliardo-Nirenberg and Young inequalities.
\begin{equation*}
\begin{aligned}
I_{51} & \le \|\rho^k\|_{L^\infty}^{\frac{1}{2}} \|u^{k-1}\|_{L^6}\|\sqrt{\rho^k}u_t^k\|_{L^3}\|\nabla u_t^k\|_{L^2}\\
       &  \le C\|\rho^k\|_{L^\infty}^{\frac{1}{2}} \|\nabla u^{k-1}\|_{L^2}
             \|\sqrt{\rho^k}u_t^k\|_{L^2}^{\frac{1}{2}}\|\sqrt{\rho^k}u_t^k\|_{L^6}^{\frac{1}{2}}\|\nabla u_t^k\|_{L^2}\\
       &\le C(\varepsilon)\|\sqrt{\rho^k}u_t^k\|_{L^2}^2 \Phi_K^4 +\varepsilon\|\nabla u_t^k\|_{L^2}^2,\\
I_{52} &\le \|\rho^k\|_{L^\infty}\|u^{k-1}\|_{L^6}\|\nabla u^{k-1}\|_{L^2}\|\nabla u^k\|_{L^6}\|u_t^k\|_{L^6}\\
       & \le C(\varepsilon)\|\nabla u^{k-1}\|_{L^2}^4 \|u^k\|_{H^2}^2+\varepsilon\|\nabla u_t^k\|_{L^2}^2\\
       &\le C(\varepsilon)(1+\|\sqrt{\rho^k}u_t^k\|_{L^2}^2) \Phi_K^{4+2N_1}+\varepsilon\|\nabla u_t^k\|_{L^2}^2,\\
I_{53} &\le \|\rho^k\|_{L^\infty} \|\nabla u^{k-1}\|_{L^2}^2 \|\nabla^2 u^k\|_{L^2} \|u_t^k\|_{L^6}\\
       & \le C(\varepsilon)\|\nabla u^{k-1}\|_{L^2}^4 \|u^k\|_{H^2}^2+2+\varepsilon\|\nabla u_t^k\|_{L^2}^2\\
       &\le C(\varepsilon)(1+\|\sqrt{\rho^k}u_t^k\|_{L^2}^2) \Phi_K^{4+2N_1}+\varepsilon\|\nabla u_t^k\|_{L^2}^2,\\
I_{54} &\le \|\rho^k\|_{L^\infty} \|\nabla u^{k-1}\|_{L^2}^2 \|\nabla u^k\|_{H^1} \|\nabla u_t^k\|_{L^2}\\
       & \le C(\varepsilon)\|\nabla u^{k-1}\|_{L^2}^4 \|u^k\|_{H^2}^2+2+\varepsilon\|\nabla u_t^k\|_{L^2}^2\\
       &\le C(\varepsilon)(1+\|\sqrt{\rho^k}u_t^k\|_{L^2}^2) \Phi_K^{4+2N_1}+\varepsilon\|\nabla u_t^k\|_{L^2}^2,\\
I_{55} &\le \|\rho^k\|_{L^\infty}^{\frac{1}{2}}  \|u_t^{k-1}\|_{L^6} \|\nabla u^k\|_{L^2} \|\sqrt{\rho^k} u_t^k\|_{L^3}\\
       & \le C\|\nabla u_t^{k-1}\|_{L^2} \|\nabla u^k\|_{L^2}
             \|\sqrt{\rho^k} u_t^k\|_{L^2}^{\frac{1}{2}}\|\sqrt{\rho^k} u_t^k\|_{L^2}^{\frac{1}{2}}\\
       &\le C(\varepsilon,\eta)\|\sqrt{\rho^k}u_t^k\|_{L^2}^2 \Phi_K^4 +\varepsilon\|\nabla u_t^k\|_{L^2}^2
               +\eta\|\nabla u_t^{k-1}\|_{L^2}^2,\\
I_{56} &\le C(\varepsilon)\int |\nabla d^k|^2 |\nabla d^k_t|^2 dx+\varepsilon\|\nabla u_t^k\|_{L^2}^2\\
       & \le C(\varepsilon)\|\nabla d^k\|_{L^\infty}^2\|\nabla d_t^k\|_{L^2}^2+\varepsilon\|\nabla u_t^k\|_{L^2}^2\\
       &\le C(\varepsilon)\|\nabla d^k\|_{H^2}^2\|\nabla d_t^k\|_{L^2}^2+\varepsilon\|\nabla u_t^k\|_{L^2}^2,
\end{aligned}
\end{equation*}
where we have used \eqref{3.25}.
In order to control the term $\|\nabla d_t^k\|_{L^2}$, applying the $L^2-$estimate to the \eqref{3.27}, we obtain
\begin{equation}\label{3.32}
\begin{aligned}
&\quad \|\nabla d_t^k\|_{L^2}\le \|\nabla\left(\Delta d^k+|\nabla d^{k-1}|^2 d^{k-1}-u^{k-1}\cdot \nabla d^{k-1}\right)\|_{L^2}\\
&\le C\left(\|\nabla^3 d^k\|_{L^2}+\|\nabla d^{k-1}\|_{L^6}\|\nabla^2 d^{k-1}\|_{L^3}+\|\nabla d^{k-1}\|_{L^6}^3
            \right.\\
&\quad \quad \quad \left.+\|\nabla d^{k-1}\|_{L^\infty}\|\nabla u^{k-1}\|_{L^2}+\|u^{k-1}\|_{L^6}\|\nabla^2 d^{k-1}\|_{L^3}\right)\\
&\le  C\left(\|\nabla^3 d^k\|_{L^2}+\|\nabla d^{k-1}\|_{H^1}\|\nabla^2 d^{k-1}\|_{H^1}+\|\nabla d^{k-1}\|_{H^1}^3
            \right.\\
&\quad \quad \quad \left.+\|\nabla d^{k-1}\|_{H^2}\|\nabla u^{k-1}\|_{L^2}+\|u^{k-1}\|_{H^1}\|\nabla^2 d^{k-1}\|_{H^1}\right)\\
& \le C\Phi_K^3.
\end{aligned}
\end{equation}
Substituting  \eqref{3.32} into $I_{56} $ to deduce that
\begin{equation*}
I_{56} \le C(\varepsilon)\Phi_K^8+\varepsilon\|\nabla u_t^k\|_{L^2}^2.
\end{equation*}
Since the term $I_{57}$ is somewhat complicated, we will deal with it as follows:
If $n=2$, then
\begin{equation*}
\begin{aligned}
I_{57} &\le \int |u^{k-1}| |\nabla \rho^k| |\nabla u^k| |\nabla u_t^k| dx\\
     &\le \|\nabla \rho^k\|_{L^q}\left\| |u^{k-1}| |\nabla u^k| \right\|_{L^{\frac{2q}{q-2}}}\|\nabla u_t^k \|_{L^2}\\
    &\le\|\nabla \rho^k\|_{L^q} \|\nabla u^{k-1}\|_{L^2} \|\nabla u^k\|_{H^1}\|\nabla u_t^k \|_{L^2}\\
    &\le C(\varepsilon)(1+\|\sqrt{\rho^k}u_t^k\|_{L^2}^2) \Phi_K^{4+2N_1}+\varepsilon\|\nabla u_t^k\|_{L^2}^2,\\
\end{aligned}
\end{equation*}
If $n=3$, then
\begin{equation*}
\begin{aligned}
I_{57} &\le \|\nabla \rho^k\|_{L^q}\|u^{k-1}\|_{L^6} \|\nabla u^k\|_{L^{\frac{3q}{q-3}}} \|\nabla u_t^k\|_{L^2}\\
    &\le \|\nabla \rho^k\|_{L^q}\|\nabla u^{k-1}\|_{L^2}
      \|\nabla u^k\|_{L^2}^{\frac{2q-6}{3q}} \|\nabla u^k\|_{L^\infty}^{\frac{q+6}{3q}}\|\nabla u_t^k\|_{L^2}\\
    &\le C(\varepsilon)\Phi_K^{N_8}+\varepsilon\|\nabla u_t^k\|_{L^2}^2,
\end{aligned}
\end{equation*}
where we have used the regularity estimate
\begin{equation}\label{3.33}
\|u^k\|_{W^{2,r}}+\|P^k\|_{W^{1,r}}\le C(1+\|\nabla u_t^k\|_{L^2})\Phi_K^{N_7} \quad {\rm for \ some}\ N_7=N_7(n,q)>0.
\end{equation}
Substituting $I_{5i}(1\le i\le 7)$ into \eqref{3.31} and choosing $\varepsilon$ small enough, we get
\begin{equation*}
\frac{1}{2}\frac{d}{dt}\int \rho^k |u_t^k|^2 dx+\frac{1}{2C}\int |\nabla u_t^k|^2 dx
\le (1+\|\sqrt{\rho^k}u_t^k\|_{L^2}^2) \Phi_K^{N_9} +\eta\|\nabla u_t^{k-1}\|_{L^2}^2.
\end{equation*}
Fixing $\tau $ in $(0, T)$ and integrating over $(\tau, t)\subset(0, T)$, we have
\begin{equation*}
\begin{aligned}
&\quad \|\sqrt{\rho^k}u_t^k(t)\|_{L^2}^2+\int_\tau^t \|\nabla u_t^k\|_{L^2}^2 ds\\
&\le C\|\sqrt{\rho^k}u_t^k(\tau)\|_{L^2}^2+C\int_\tau^t(1+\|\sqrt{\rho^k}u_t^k\|_{L^2}^2)\Phi_K^{N_9}ds
+\frac{1}{2}\int_\tau^t \|\nabla u_t ^{k-1}\|_{L^2}^2 ds.
\end{aligned}
\end{equation*}
From the recursive relation of $\|\nabla u_t^k\|_{L^2}$, we get
\begin{equation*}
\begin{aligned}
\int_\tau^t \|\nabla u_t^k\|_{L^2}^2 ds
&\le C \left(\sum_{i=1}^{k}\frac{1}{2^{i-1}}\right)
          \left(\|\sqrt{\rho^k}u_t^k(\tau)\|_{L^2}^2
                   +\int_\tau^t (1+\|\sqrt{\rho^k}u_t^k\|_{L^2}^2)\Phi_K^{N_9}ds\right)\\
&\le 2C\left(\|\sqrt{\rho^k}u_t^k(\tau)\|_{L^2}^2
                   +\int_\tau^t (1+\|\sqrt{\rho^k}u_t^k\|_{L^2}^2)\Phi_K^{N_9}ds\right).
\end{aligned}
\end{equation*}
Hence, we have the following estimate
\begin{equation*}
\|\sqrt{\rho^k}u_t^k(t)\|_{L^2}^2+\int_\tau^t \|\nabla u_t^k\|_{L^2}^2 ds
\le 2C\left(\|\sqrt{\rho^k}u_t^k(\tau)\|_{L^2}^2
                   +\int_\tau^t (1+\|\sqrt{\rho^k}u_t^k\|_{L^2}^2)\Phi_K^{N_9}ds\right).
\end{equation*}
Thanks to the compatibility condition \eqref{3.17}, making use of the Gr\"{o}nwall  inequality, it arrives at
\begin{equation*}
\|\sqrt{\rho^k}u_t^k(t)\|_{L^2}^2+\int_0^t \|\nabla u_t^k\|_{L^2}^2 ds \le C\exp\left[C\int_0^t \Phi_K^{N_9}(s)ds\right],
\end{equation*}
which completes the proof of the lemma.
\end{proof}

As a corollary of Lemma \ref{lemma3.6},  we can obtain the following estimate immediately.

\begin{lemm}\label{lemma3.7}
There exists a positive constant $N=N(n,q)$ such that
\begin{equation}\label{3.34}
\|\nabla \rho^k(t)\|_{L^q}\le C \exp\left[C\exp(C\int_0^t \Phi_K^N(s)ds)\right]
\end{equation}
for any $k,\ 1\le k \le K.$
\end{lemm}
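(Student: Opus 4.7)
The plan is to derive the $L^q$ estimate for $\nabla\rho^k$ by applying the gradient operator to the transport equation $\eqref{3.19}_1$ and running a Gr\"onwall argument whose driving coefficient is $\|\nabla u^{k-1}\|_{L^\infty}$; this coefficient will then be controlled via the elliptic regularity estimate \eqref{3.33} and Lemma \ref{lemma3.6}.

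First I would differentiate $\eqref{3.19}_1$ in $x_i$ to get
$$\partial_t(\partial_i\rho^k)+(u^{k-1}\cdot\nabla)(\partial_i\rho^k)+(\partial_i u^{k-1})\cdot\nabla\rho^k=0.$$
Multiplying by $q|\nabla\rho^k|^{q-2}\partial_i\rho^k$, summing on $i$, integrating over $\Omega$, and using ${\rm div}\,u^{k-1}=0$ together with $u^{k-1}|_{\partial\Omega}=0$ to kill the convection term, one obtains
$$\frac{d}{dt}\|\nabla\rho^k\|_{L^q}^{q}\le C\|\nabla u^{k-1}\|_{L^\infty}\|\nabla\rho^k\|_{L^q}^{q},$$
so that Gr\"onwall's inequality yields
$$\|\nabla\rho^k(t)\|_{L^q}\le \|\nabla\rho_0\|_{L^q}\exp\left(C\int_0^t\|\nabla u^{k-1}(s)\|_{L^\infty}\,ds\right).$$

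Next I would bound $\|\nabla u^{k-1}\|_{L^\infty}$. Since $r>n$, the Sobolev embedding gives $\|\nabla u^{k-1}\|_{L^\infty}\le C\|u^{k-1}\|_{W^{2,r}}$, and then the regularity estimate \eqref{3.33} together with the Cauchy--Schwarz inequality produce
$$\int_0^t\|\nabla u^{k-1}\|_{L^\infty}\,ds\le C\left(\int_0^t(1+\|\nabla u_t^{k-1}\|_{L^2}^{2})\,ds\right)^{1/2}\left(\int_0^t\Phi_K^{2N_7}\,ds\right)^{1/2}.$$
Invoking Lemma \ref{lemma3.6} to estimate $\int_0^t\|\nabla u_t^{k-1}\|_{L^2}^{2}\,ds\le C\exp(C\int_0^t\Phi_K^{N}\,ds)$ and choosing $N$ large enough to absorb $2N_7$ (so that $\int_0^t\Phi_K^{2N_7}\,ds\le C(1+\int_0^t\Phi_K^{N}\,ds)$), I conclude
$$\int_0^t\|\nabla u^{k-1}(s)\|_{L^\infty}\,ds\le C\exp\left(C\int_0^t\Phi_K^{N}(s)\,ds\right).$$
Plugging this into the Gr\"onwall bound for $\|\nabla\rho^k\|_{L^q}$ and using $\|\nabla\rho_0\|_{L^q}\le C$ immediately yields the desired double-exponential estimate \eqref{3.34}.

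The main point to get right is the double-exponential structure: the Gr\"onwall factor for the transport equation for $\nabla\rho^k$ needs one full time integral of $\|\nabla u^{k-1}\|_{L^\infty}$, and the cheapest way to bound that quantity passes through $\|\nabla u_t^{k-1}\|_{L^2(0,t;L^2)}$, which Lemma \ref{lemma3.6} already controls only by a single exponential in $\int_0^t\Phi_K^{N}\,ds$. Composing these two exponential dependences is what forces the $\exp[C\exp(\cdots)]$ form of the bound. Aside from this, the only care required is to enlarge the exponent $N=N(n,q)$ at each step so that a single value absorbs $N_1$, $N_7$, $N_9$ and all the auxiliary polynomial factors picked up from Sobolev embedding and Cauchy--Schwarz.
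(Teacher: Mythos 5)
Your proposal is correct and follows exactly the route the paper intends: the paper states Lemma \ref{lemma3.7} as an immediate corollary of Lemma \ref{lemma3.6}, the underlying argument being the transport estimate $\|\nabla\rho^k(t)\|_{L^q}\le\|\nabla\rho_0\|_{L^q}\exp\bigl(C\int_0^t\|u^{k-1}\|_{W^{2,r}}\,ds\bigr)$ already derived in Step 1 of Lemma \ref{lemma3.3}, combined with the $W^{2,r}$ regularity bound \eqref{3.33} and the single-exponential control of $\int_0^t\|\nabla u_t^{k-1}\|_{L^2}^2\,ds$ from Lemma \ref{lemma3.6}. Your Gr\"onwall computation, the Sobolev embedding $W^{2,r}\hookrightarrow W^{1,\infty}$ for $r>n$, the Cauchy--Schwarz step, and the enlargement of $N$ to absorb the auxiliary exponents reproduce this argument faithfully and yield the stated double-exponential bound.
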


Now we turn to the estimate the second term $\|\nabla d^k\|_{H^2}$ in $\Phi_K$. Indeed, we should obtain the
following estimate first.

\begin{lemm}\label{lemma3.8}
There exists a positive constant $N=N(n,q)$ such that
\begin{equation}\label{3.35}
\|\nabla d^k(t)\|_{H^1}^2+\int_0^t \|\nabla d_t^k\|_{L^2}^2 ds \le C +C\int_0^t \Phi_K^N ds
\end{equation}
for any $k,\ 1\le k \le K.$
\end{lemm}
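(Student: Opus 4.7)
The plan is to derive two energy identities for the linear parabolic problem $\eqref{3.19}_4$ with source $f^{k-1}:=|\nabla d^{k-1}|^2 d^{k-1}-(u^{k-1}\cdot\nabla)d^{k-1}$, and then combine them with standard elliptic regularity for the Neumann problem to convert an $L^2$-bound on $\Delta d^k$ into an $L^2$-bound on $\nabla^2 d^k$. Throughout I use the Sobolev embeddings $H^1\hookrightarrow L^6$ and $H^2\hookrightarrow L^\infty$ (valid in dimension $n\le 3$), the regularity estimate \eqref{3.25}, and the $L^\infty$-control $\|d^{k-1}\|_{L^\infty}\le C(1+\Phi_K)$ inherited from $d^{k-1}\in H^2$.

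First I would test $d_t^k-\Delta d^k=f^{k-1}$ against $-\Delta d^k$ and integrate by parts, using $\partial d^k/\partial\nu|_{\partial\Omega}=0$, to reach
\[\tfrac{1}{2}\tfrac{d}{dt}\|\nabla d^k\|_{L^2}^2+\|\Delta d^k\|_{L^2}^2\le \tfrac{1}{2}\|\Delta d^k\|_{L^2}^2+C\|f^{k-1}\|_{L^2}^2.\]
An application of H\"older and Gagliardo--Nirenberg gives $\|f^{k-1}\|_{L^2}^2\le C\Phi_K^N$, and integrating in time yields $\|\nabla d^k(t)\|_{L^2}^2+\int_0^t\|\Delta d^k\|_{L^2}^2\,ds\le C+C\int_0^t\Phi_K^N\,ds$.

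Next, to obtain a sup-in-time bound on $\|\Delta d^k\|_{L^2}$ together with the $L^2_tL^2_x$-bound on $\nabla d_t^k$ demanded by \eqref{3.35}, I apply $\nabla$ to $\eqref{3.19}_4$ and test the resulting identity with $\nabla d_t^k$. Differentiating the Neumann condition in time shows $\nabla d_t^k\cdot\nu=0$ on $\partial\Omega$, so the boundary contribution in $-\int\nabla\Delta d^k\cdot\nabla d_t^k\,dx$ vanishes and this integral reduces to $\tfrac{1}{2}\tfrac{d}{dt}\|\Delta d^k\|_{L^2}^2$. Young's inequality then produces $\tfrac{d}{dt}\|\Delta d^k\|_{L^2}^2+\|\nabla d_t^k\|_{L^2}^2\le \|\nabla f^{k-1}\|_{L^2}^2$. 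Expanding $\nabla f^{k-1}$ and controlling each piece (the representative terms are $\|\nabla d^{k-1}\|_{L^\infty}\|\nabla^2 d^{k-1}\|_{L^2}$, $\|\nabla d^{k-1}\|_{L^6}^3$, $\|\nabla u^{k-1}\|_{L^3}\|\nabla d^{k-1}\|_{L^6}$ and $\|u^{k-1}\|_{L^\infty}\|\nabla^2 d^{k-1}\|_{L^2}$) by finite powers of $\Phi_K$ via the embeddings above and \eqref{3.25} yields $\|\nabla f^{k-1}\|_{L^2}^2\le C\Phi_K^N$.

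Summing the two estimates and invoking the elliptic regularity $\|\nabla^2 d^k\|_{L^2}\le C(\|\Delta d^k\|_{L^2}+\|\nabla d^k\|_{L^2})$ for the Neumann Laplacian in smooth domains upgrades control of $\|\Delta d^k\|_{L^2}$ to control of $\|\nabla^2 d^k\|_{L^2}$ and thus delivers \eqref{3.35}. The main technical hurdle is the second step: one has to verify that the boundary contribution in the integration by parts genuinely vanishes (which is precisely where the Neumann condition on $d^k$ transfers to $d_t^k$), and then carry out the somewhat lengthy Sobolev/Gagliardo--Nirenberg bookkeeping required to bound $\|\nabla f^{k-1}\|_{L^2}^2$ by a single power $\Phi_K^N$.
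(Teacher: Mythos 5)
Your proposal follows the paper's proof essentially step for step: first test $\eqref{3.19}_4$ against $\Delta d^k$ to get the $L^2_tL^2_x$ bound on $\Delta d^k$ and the sup bound on $\|\nabla d^k\|_{L^2}$, then apply $\nabla$ to the equation and test with $\nabla d_t^k$ (the boundary term vanishing because $\partial_\nu d_t^k=0$ follows from time-differentiating the Neumann condition, exactly as the paper implicitly uses), and finally upgrade $\|\Delta d^k\|_{L^2}$ to $\|\nabla^2 d^k\|_{L^2}$ by elliptic regularity. This is the same decomposition and the same energy identities as in the paper.

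One bookkeeping point needs repair, because it affects the precise form of the bound in \eqref{3.35}. Two of your representative splittings for $\|\nabla f^{k-1}\|_{L^2}$, namely $\|\nabla u^{k-1}\|_{L^3}\|\nabla d^{k-1}\|_{L^6}$ and $\|u^{k-1}\|_{L^\infty}\|\nabla^2 d^{k-1}\|_{L^2}$, are not controlled by a power of $\Phi_K$ alone: $\Phi_K$ only contains $\|\nabla u^{k-1}\|_{L^2}$, so $\|\nabla u^{k-1}\|_{L^3}$ and $\|u^{k-1}\|_{L^\infty}$ require $\|u^{k-1}\|_{H^2}$, which via \eqref{3.25} brings in $\|\sqrt{\rho^{k-1}}u_t^{k-1}\|_{L^2}$; inserting that (through Lemma \ref{lemma3.6}) would only give an exponential-type bound as in Lemma \ref{lemma3.9}, not the stated $C+C\int_0^t\Phi_K^N\,ds$. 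The fix is the splitting the paper uses: estimate $\nabla u^{k-1}\cdot\nabla d^{k-1}$ by $\|\nabla d^{k-1}\|_{L^\infty}\|\nabla u^{k-1}\|_{L^2}\le C\|\nabla d^{k-1}\|_{H^2}\|\nabla u^{k-1}\|_{L^2}$ and $u^{k-1}\cdot\nabla^2 d^{k-1}$ by $\|u^{k-1}\|_{L^6}\|\nabla^2 d^{k-1}\|_{L^3}\le C\|\nabla u^{k-1}\|_{L^2}\|\nabla^2 d^{k-1}\|_{H^1}$, so that only quantities appearing in $\Phi_K$ occur and $\|\nabla f^{k-1}\|_{L^2}^2\le C\Phi_K^6$ follows; with this adjustment your argument coincides with the paper's and yields \eqref{3.35} as stated.
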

\begin{proof}
\textrm{Step 1:} Multiplying $\eqref{3.19}_4$ by $\Delta d^k$ and integrating (by parts) over $\Omega$, we get
\begin{equation*}
\quad \frac{1}{2}\frac{d}{dt}\int |\nabla d^k|^2 dx+\int |\Delta d^k|^2 dx\\
=\int \left[(u^{k-1} \cdot \nabla)d^{k-1}-|\nabla d^{k-1}|^2d^{k-1}\right]\cdot \Delta d^k dx.
\end{equation*}
Applying the H\"{o}lder and Gagliardo-Nirenberg inequalities, it arrives at
\begin{equation*}
\begin{aligned}
&\quad \frac{1}{2}\frac{d}{dt}\int |\nabla d^k|^2 dx+\int |\Delta d^k|^2 dx\\
&\le \int (|u^{k-1}| |\nabla d^{k-1}| |\nabla^2 d^k|+ |\nabla d^{k-1}|^2 |d^{k-1}| |\nabla^2 d^k|)dx\\
&\le \|u^{k-1}\|_{L^6} \|\nabla d^{k-1}\|_{L^3} \|\nabla^2 d^k\|_{L^2}+
     \|\nabla d^{k-1}\|_{L^3}^2 \|\nabla^2 d^{k-1}\|_{L^3}
\le C\Phi_K^3.
\end{aligned}
\end{equation*}
Integrating the proceeding inequality over $(0, t)$, we have
\begin{equation}\label{3.36}
\int |\nabla d^k|^2 dx+\int_0^t \int |\Delta d^k|^2 dxd\tau \le C+C\int_0^t \Phi_K^3d\tau.
\end{equation}

\textrm{Step 2:}Multiplying \eqref{3.27} by $\nabla d_t^k$ and integrating (by parts) over $\Omega$ yield
\begin{equation*}
\begin{aligned}
&\quad \frac{1}{2}\frac{d}{dt}\int |\Delta d^k|^2 dx+\int |\nabla d_t^k|^2 dx\\
&\le C(\varepsilon)\int \left(|\nabla u^{k-1}|^2 |\nabla d^{k-1}|^2+|u^{k-1}|^2 |\nabla^2 d^{k-1}|^2
                   +|\nabla d^{k-1}|^6+|\nabla d^{k-1}|^2 |\nabla^2 d^{k-1}|^2\right)dx\\
&\quad \quad +\varepsilon\int |\nabla d_t^k|^2 dx,
\end{aligned}
\end{equation*}
where we have used the Cauchy inequality.
Choosing $\varepsilon$ small enough, integrating over $(0, t)$ and applying the H\"{o}lder, Sobolev inequality, it arrives at
\begin{equation*}
\begin{aligned}
&\quad \int |\Delta d^k(t)|^2 dx+\int_0^t \int |\nabla d_t^k|^2dxds\\
&\le C+C\int_0^t \int \left( |\nabla u^{k-1}|^2 |\nabla d^{k-1}|^2+|u^{k-1}|^2 |\nabla^2 d^{k-1}|^2
                   +|\nabla d^{k-1}|^6+|\nabla d^{k-1}|^2 |\nabla^2 d^{k-1}|^2\right)dxds\\
&\le C+C\int_0^t \left(\|\nabla d^{k-1}\|_{H^2}^2 \|\nabla u^{k-1}\|_{L^2}^2
        +\|\nabla u^{k-1}\|_{L^2}^2\|\nabla^2 d^{k-1}\|_{H^1}^2
        +\|\nabla d^{k-1}\|_{H^1}^6\right.\\
&\quad \left.+\|\nabla d^{k-1}\|_{H^2}^2\|\nabla^2 d^{k-1}\|_{L^2}^2\right)ds\\
&\le C+C\int_0^t \Phi_K^6 ds.
\end{aligned}
\end{equation*}
Hence, if applying the elliptic regularity, we obtain
\begin{equation*}
\int |\nabla^2 d^k(t)|^2 dx+\int_0^t \int |\nabla d_t^k|^2dxds\\
\le C+C\int_0^t \Phi_K^6 ds,
\end{equation*}
which, together with \eqref{3.36}, complete the proof of this lemma.
\end{proof}

Now, we can derive the high order estimate for $d^k$.
\begin{lemm}\label{lemma3.9}
There exists a positive constant $N=N(n,q)$ such that
\begin{equation}\label{3.37}
\|\nabla^3 d^k(t)\|_{L^2}^2+\int_0^t \|\nabla^2 d_t^k\|_{L^2}^2ds \le C\exp\left[C\int_0^t \Phi_K^N ds\right]
\end{equation}
for all $k, 1\le k \le K$.
\end{lemm}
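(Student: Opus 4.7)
The plan is to differentiate the director equation $\eqref{3.19}_4$ in $t$, obtaining
$$d^k_{tt}-\Delta d^k_t=(F^{k-1})_t,\qquad F^{k-1}:=|\nabla d^{k-1}|^2 d^{k-1}-(u^{k-1}\cdot\nabla)d^{k-1},$$
then test against $-\Delta d^k_t$. Because the Neumann condition $\partial_\nu d^k = 0$ is preserved in time, giving $\partial_\nu d^k_t=0$ on $\partial\Omega$, the left-hand side integrates by parts cleanly to produce
$$\frac{1}{2}\frac{d}{dt}\int|\nabla d^k_t|^2\,dx+\int|\Delta d^k_t|^2\,dx=-\int (F^{k-1})_t\cdot\Delta d^k_t\,dx.$$
This is the natural analogue of Step~3 in Lemma~\ref{lemma3.3}, adapted to the parabolic director equation, and it produces exactly the two quantities $\|\nabla d^k_t\|_{L^2}^2$ and $\int\|\Delta d^k_t\|_{L^2}^2$ asked for in \eqref{3.37}.

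The right-hand side splits into four types of terms coming from
$$(F^{k-1})_t=2(\nabla d^{k-1}\!\cdot\!\nabla d^{k-1}_t)\,d^{k-1}+|\nabla d^{k-1}|^2 d^{k-1}_t-u^{k-1}_t\!\cdot\!\nabla d^{k-1}-u^{k-1}\!\cdot\!\nabla d^{k-1}_t.$$
Using $|d^{k-1}|=1$ together with H\"older, Gagliardo--Nirenberg, Poincar\'e, and the embedding $H^1_0\hookrightarrow L^6$, each of the four $L^2$-norms can be bounded by products of $\|\nabla d^{k-1}\|_{H^2}$, $\|\nabla u^{k-1}\|_{L^2}$, $\|\nabla d^{k-1}_t\|_{L^2}$, and $\|\nabla u^{k-1}_t\|_{L^2}$. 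Absorbing $\varepsilon\|\Delta d^k_t\|_{L^2}^2$ into the left side, the resulting differential inequality has schematic form
$$\frac{d}{dt}\|\nabla d^k_t\|_{L^2}^2+\|\Delta d^k_t\|_{L^2}^2\le C\Phi_K^{N}\bigl(1+\|\nabla d^{k-1}_t\|_{L^2}^2+\|\nabla u^{k-1}_t\|_{L^2}^2\bigr).$$

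To close the argument, integrate over $(\tau,t)$ and let $\tau\to 0^+$; the initial value $\|\nabla d^k_t(0)\|_{L^2}$ is controlled via the equation evaluated at $t=0$, since $d_0\in H^3$ and $u_0\in H^2$ make $\nabla\Delta d_0+\nabla F^{k-1}(0)\in L^2$. The integrated forcing terms $\int_0^t\|\nabla u^{k-1}_t\|_{L^2}^2\,ds$ and $\int_0^t\|\nabla d^{k-1}_t\|_{L^2}^2\,ds$ are already controlled by Lemmas~\ref{lemma3.6} and \ref{lemma3.8}, so Gr\"onwall's inequality yields
$$\|\nabla d^k_t(t)\|_{L^2}^2+\int_0^t\|\Delta d^k_t\|_{L^2}^2\,ds\le C\exp\Bigl[C\int_0^t\Phi_K^{N}(s)\,ds\Bigr].$$
Finally, the $H^3$-bound on $d^k$ follows by standard Neumann elliptic regularity applied to $-\Delta d^k=F^{k-1}-d^k_t$: one has $\|d^k\|_{H^3}\le C(\|F^{k-1}\|_{H^1}+\|d^k_t\|_{H^1}+\|d^k\|_{H^1})$, and $\|F^{k-1}\|_{H^1}$ is controlled by $\Phi_K^{N}$ by the $H^2$-bound of Lemma~\ref{lemma3.8} together with the bound \eqref{3.25} on $u^{k-1}$.

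The main technical obstacle is the term $u^{k-1}_t\cdot\nabla d^{k-1}$ in $(F^{k-1})_t$: $u^{k-1}_t$ is only bounded in $L^2(0,T;H^1_0)$ and not pointwise in time, so one cannot simply estimate pointwise and must route this contribution through the Gr\"onwall exponent via Lemma~\ref{lemma3.6}; this is also the reason we integrate in time rather than multiply by $d^k_{tt}$. Care with the Neumann boundary trace in the integration by parts and with the power $N=N(n,q)$ tracked through the Sobolev embeddings completes the argument.
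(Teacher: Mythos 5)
Your route (differentiate $\eqref{3.19}_4$ in time, test with $-\Delta d_t^k$, then recover the $H^3$-bound of $d^k$ by Neumann elliptic regularity applied to $-\Delta d^k=F^{k-1}-d_t^k$) is genuinely different from the paper's, which applies $\nabla$ to the equation and tests with $\nabla\Delta d_t^k$ so that $\|\nabla\Delta d^k\|_{L^2}$ is estimated directly. The difficulty is that your closing steps do not actually deliver the inequality \eqref{3.37} in the form claimed. The right-hand side of \eqref{3.37} must depend on $\Phi_K$ only through its time integral: the whole purpose of Lemmas \ref{lemma3.5}--\ref{lemma3.9} is to produce $\Phi_K(t)\le C\exp\bigl[C\exp\bigl(C\int_0^t\Phi_K^N\,ds\bigr)\bigr]$, to which the short-time argument (Simon's lemma) applies. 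Your final elliptic step bounds $\|\nabla^3 d^k(t)\|_{L^2}$ by $\|F^{k-1}(t)\|_{H^1}+\|d_t^k(t)\|_{H^1}+\dots$, and $\|F^{k-1}(t)\|_{H^1}$ contains the top-order terms $\nabla d^{k-1}\cdot\nabla^2 d^{k-1}$ and $u^{k-1}\cdot\nabla^2 d^{k-1}$, whose $L^2$-norms require $\|\nabla d^{k-1}(t)\|_{H^2}$, i.e.\ $\|\nabla^3 d^{k-1}(t)\|_{L^2}$, at the \emph{current} time. Saying ``$\|F^{k-1}\|_{H^1}$ is controlled by $\Phi_K^N$'' therefore yields only $\|\nabla^3 d^k(t)\|_{L^2}^2\le C\Phi_K(t)^{2N}+C\exp\bigl[C\int_0^t\Phi_K^N\,ds\bigr]$, and when summed into $\Phi_K$ this gives the vacuous statement $\Phi_K(t)\le C\Phi_K(t)^{N'}+\dots$. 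The paper avoids exactly this trap: in its estimates $I_{71}$--$I_{74}$ the dangerous $(k-1)$-terms are split by Young's inequality into $\eta\|\nabla^3 d^{k-1}\|_{L^2}^2$ plus quantities already bounded in integral form by \eqref{3.23} and \eqref{3.35}, and the resulting recursion in $k$ (with coefficient $\tfrac12$) is iterated as a geometric series. Your proposal contains no analogue of this recursion, and without it \eqref{3.37} does not follow.

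A second, related gap is on the parabolic side. In $(F^{k-1})_t$ the terms $u^{k-1}\cdot\nabla d_t^{k-1}$ and $(\nabla d^{k-1}\cdot\nabla d_t^{k-1})d^{k-1}$, when estimated in $L^2$ via H\"older--Sobolev, bring in $\|\nabla d_t^{k-1}\|_{L^3}$ or $\|\nabla d_t^{k-1}\|_{L^6}$ and hence $\|\nabla^2 d_t^{k-1}\|_{L^2}$; Lemma \ref{lemma3.8} only controls $\int_0^t\|\nabla d_t^{k-1}\|_{L^2}^2\,ds$, so the assertion that the forcing is ``already controlled by Lemmas \ref{lemma3.6} and \ref{lemma3.8}'' is not correct for these terms. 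One must either absorb $\delta\int_0^t\|\nabla^2 d_t^{k-1}\|_{L^2}^2\,ds$ and again iterate in $k$ (as the paper does), or replace these factors by the cruder pointwise bound \eqref{3.32} and keep the resulting powers of $\Phi_K$ inside the time integral. The parts of your argument that are sound are the treatment of $u_t^{k-1}\cdot\nabla d^{k-1}$ through Lemma \ref{lemma3.6}, the preservation of the Neumann condition under $\partial_t$, the control of the initial value $\|\nabla d_t^k(0)\|_{L^2}$ from the equation at $t=0$, and the passage from $\int\|\Delta d_t^k\|_{L^2}^2$ to $\int\|\nabla^2 d_t^k\|_{L^2}^2$; but the missing $k$-recursion with small parameters $\eta,\delta$ is the essential device, and without it the argument does not close.
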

\begin{proof}
Taking $\nabla$ operator to $\eqref{3.19}_4$, multiplying by $\nabla \Delta d_t^k$ and integrating over $\Omega$,
we have
\begin{equation}\label{3.38}
\begin{aligned}
&\quad \frac{1}{2}\frac{d}{dt}\int |\nabla\Delta d^k|^2 dx+\int |\Delta d_t^k|^2 dx\\
&=\frac{d}{dt}\int \nabla (u^{k-1}\cdot \nabla d^{k-1}-|\nabla d^{k-1}|^2 d^{k-1})\cdot \nabla \Delta d^k dx\\
& \quad \quad -\int \frac{\partial }{\partial t}\left[\nabla(u^{k-1}\cdot \nabla d^{k-1})-\nabla(|\nabla d^{k-1}|^2d^{k-1})\right]\cdot \nabla \Delta d^k dx.
\end{aligned}
\end{equation}
Now we need to estimate the second term of right hand side of \eqref{3.38} as follows:
\begin{equation}\label{3.39}
\begin{aligned}
&\quad -\int \frac{\partial }{\partial t}\left[\nabla(u^{k-1}\cdot \nabla d^{k-1})-\nabla(|\nabla d^{k-1}|^2d^{k-1})\right]\cdot \nabla \Delta d^k dx\\
&\le C\int \left(|\nabla u_t^{k-1}| |\nabla d^{k-1}|+|\nabla u^{k-1}| |\nabla d_t^{k-1}|
      +|u_t^{k-1}| |\nabla^2 d^{k-1}|+|u^{k-1}| |\nabla^2 d_t^{k-1}| \right.\\
&\left.\quad \quad \quad \quad +|\nabla d_t^{k-1}| |\nabla^2 d^{k-1}|+|\nabla d^{k-1}| |\nabla^2 d_t^{k-1}|
        +|\nabla d^{k-1}| |\nabla^2 d^{k-1}| |d_t^{k-1}|\right.\\
&\left.\quad \quad \quad \quad  +|\nabla d^{k-1}|^2 |\nabla d_t^{k-1}|\right)|\nabla \Delta d^k|dx
=\sum_{i=1}^8 I_{6i}.
\end{aligned}
\end{equation}
To estimate each term $I_{6i}(1\le i \le 8)$, applying H\"{o}lder, Gagliardo-Nirenberg and Young inequalities, we obtain
\begin{equation*}
\begin{aligned}
I_{61} &\le \|\nabla d^{k-1}\|_{L^\infty} \|\nabla u_t^{k-1}\|_{L^2} \|\nabla^3 d^k\|_{L^2}
      \le C\|\nabla d^{k-1}\|_{H^2} \|\nabla u_t^{k-1}\|_{L^2} \|\nabla^3 d^k\|_{L^2}\\
     &\le C\Phi_K^4+C\|\nabla u_t^{k-1}\|_{L^2}^2,\\
I_{62} &\le \|\nabla u^{k-1}\|_{L^3} \|\nabla d_t^{k-1}\|_{L^6}  \|\nabla^3 d^k\|_{L^2}
      \le C\|\nabla u^{k-1}\|_{H^1} \|\nabla^2 d_t^{k-1}\|_{L^2} \|\nabla^3 d^k\|_{L^2}\\
     &\le C(\delta)\|\nabla u^{k-1}\|_{H^1}^2 \|\nabla^3 d^k\|_{L^2}^2+\delta\|\nabla^2 d_t^{k-1}\|_{L^2}^2\\
     &\le C(\delta)(1+\|\sqrt{\rho^k}u_t^k\|_{L^2}^2)\Phi_K^{2N_1+2}+\delta\|\nabla^2 d_t^{k-1}\|_{L^2}^2,\\
I_{63}&\le \|u_t^{k-1}\|_{L^6} \|\nabla^2 d^{k-1}\|_{L^3} \|\nabla^3 d^k\|_{L^2}
      \le C\|\nabla u_t^{k-1}\|_{L^2} \|\nabla^2 d^{k-1}\|_{H^1} \|\nabla^3 d^k\|_{L^2}\\
     &\le C\Phi_K^4+C\|\nabla u_t^{k-1}\|_{L^2}^2,\\
I_{64}&\le \|u^{k-1}\|_{L^\infty} \|\nabla^2 d_t^{k-1}\|_{L^2} \|\nabla^3 d^k\|_{L^2}
      \le C\|u^{k-1}\|_{H^2}\|\nabla^3 d^k\|_{L^2}\|\nabla^2 d_t^{k-1}\|_{L^2}\\
     &\le C(\delta)\|u^{k-1}\|_{H^2}^2 \|\nabla^3 d^k\|_{L^2}^2+\delta \|\nabla^2 d_t^{k-1}\|_{L^2}^2\\
     &\le C(\delta)(1+\|\sqrt{\rho^k}u_t^k\|_{L^2}^2)\Phi_K^{2N_1+2}+\delta \|\nabla^2 d_t^{k-1}\|_{L^2}^2,\\
I_{65} &\le \|\nabla d_t^{k-1}\|_{L^6} \|\nabla^2 d^{k-1}\|_{L^3} \|\nabla^3 d^k\|_{L^2}
      \le C\|\nabla^2 d_t^{k-1}\|_{L^2} \|\nabla^2 d^{k-1}\|_{H^1} \|\nabla^3 d^k\|_{L^2}\\
     &\le C(\delta)\Phi_K^4+\delta \|\nabla^2 d_t^{k-1}\|_{L^2}^2,\\
I_{66}&\le \|\nabla d^{k-1}\|_{L^\infty} \|\nabla^2 d_t^{k-1}\|_{L^2} \|\nabla^3 d^k\|_{L^2}
      \le C\|\nabla d^{k-1}\|_{H^2} \|\nabla^3 d\|_{L^2} \|\nabla^2 d_t^{k-1}\|_{L^2}\\
     &\le C(\delta)\Phi_K^4+\delta \|\nabla^2 d_t^{k-1}\|_{L^2}^2,\\
I_{67} &\le \|\nabla d^{k-1}\|_{L^\infty} \|\nabla^2 d^{k-1}\|_{L^3} \|d_t^{k-1}\|_{L^6}\|\nabla^3 d^k\|_{L^2}\\
     &\le C\|\nabla d^{k-1}\|_{H^2} \|\nabla^2 d^{k-1}\|_{H^1} \|d_t^{k-1}\|_{H^1}\|\nabla^3 d^k\|_{L^2}
      \le C\Phi_K^6,\\
I_{68} &\le \|\nabla d^{k-1}\|_{L^6}^2 \|\nabla d_t^{k-1}\|_{L^6} \|\nabla^3 d^k\|_{L^2}
      \le \|\nabla d^{k-1}\|_{H^1}^2 \|\nabla^2 d_t^{k-1}\|_{L^2} \|\nabla^3 d^k\|_{L^2}\\
     &\le C(\delta)\Phi_K^6+\delta \|\nabla^2 d_t^{k-1}\|_{L^2}^2,\\
\end{aligned}
\end{equation*}
where we have used \eqref{3.25}, \eqref{3.32} and \eqref{3.33}.
Substituting $I_{6i}(1\le i\le 8)$ into \eqref{3.39}, integrating \eqref{3.38} over $(0, t)$
and applying \eqref{3.30}, then we have
\begin{equation}\label{3.40}
\begin{aligned}
&\quad \frac{1}{2}\int |\nabla^3 d^k|^2dx+\int_0^t\int |\nabla^2 d_t^k|^2 dxds\\
&\le \int \left(|\nabla u^{k-1}| |\nabla d^{k-1}|+ |u^{k-1}| |\nabla^2 d^{k-1}|+ |\nabla d^{k-1}| |\nabla^2 d^{k-1}| + |\nabla d^{k-1}|^3\right)|\nabla \Delta d^k|dx\\
&\quad \quad \quad +C\exp\left[C\int_0^t \Phi_K^{N_{10}} ds\right]+\delta \int_0^t \|\nabla^2 d_t^{k-1}\|_{L^2}^2 ds\\
&= \sum_{i=1}^{4} I_{7i}+C\exp\left[C\int_0^t \Phi_K^{N_{10}} ds\right]+\delta \int_0^t \|\nabla^2 d_t^{k-1}\|_{L^2}^2 ds.\\
\end{aligned}
\end{equation}
By H\"{o}lder, Gagliardo-Nirenberg and Young inequalities, we obtain
\begin{equation*}
\begin{aligned}
I_{71} &\le  \|\nabla d^{k-1}\|_{L^\infty}  \|\nabla u^{k-1}\|_{L^2}  \|\nabla^3 d^k\|_{L^2}
        \le C\|\nabla d^{k-1}\|_{L^2}^\frac{4-n}{4}\|\nabla d^{k-1}\|_{H^2}^{\frac{n}{4}}
              \|\nabla u^{k-1}\|_{L^2}  \|\nabla^3 d^k\|_{L^2}\\
       &\le C\|\nabla d^{k-1}\|_{H^1}\|\nabla u^{k-1}\|_{L^2}  \|\nabla^3 d^k\|_{L^2}
               +C\|\nabla d^{k-1}\|_{L^2}^\frac{4-n}{4} \|\nabla^3 d^{k-1}\|_{L^2}^\frac{n}{4}
               \|\nabla u^{k-1}\|_{L^2} \|\nabla^3 d^k\|_{L^2}\\
       &\le \varepsilon\|\nabla^3 d^k\|_{L^2}^2+\eta \|\nabla^3 d^{k-1}\|_{L^2}^2
              +C(\varepsilon,\eta)\left(\|\nabla d^{k-1}\|_{H^1}^2 \|\nabla u^{k-1}\|_{L^2}^2
                 +\|\nabla d^{k-1}\|_{L^2}^2 \|\nabla u^{k-1}\|_{L^2}^\frac{8}{4-n}\right),\\
\end{aligned}
\end{equation*}
\begin{equation*}
\begin{aligned}
I_{72} &\le \|u^{k-1}\|_{L^6} \|\nabla^2 d^{k-1}\|_{L^3}\|\nabla^3 d^k\|_{L^2}\\
       &\le \|\nabla u^{k-1}\|_{L^2} \|\nabla^2 d^{k-1}\|_{L^2} \|\nabla^3 d^k\|_{L^2}
               +C\|\nabla u^{k-1}\|_{L^2} \|\nabla^2 d^{k-1}\|_{L^2}^\frac{6-n}{6}
               \|\nabla^3 d^{k-1}\|_{L^2}^\frac{n}{6}  \|\nabla^3 d^k\|_{L^2}\\
       &\le \varepsilon\|\nabla^3 d^k\|_{L^2}^2+\eta \|\nabla^3 d^{k-1}\|_{L^2}^2
               +C(\varepsilon,\eta)\left(\|\nabla^2 d^{k-1}\|_{L^2}^2 \|\nabla u^{k-1}\|_{L^2}^2
                 +\|\nabla^2 d^{k-1}\|_{L^2}^2 \|\nabla u^{k-1}\|_{L^2}^{\frac{12}{6-n}}\right),\\
I_{73} &\le \|\nabla d^{k-1}\|_{L^6} \|\nabla^2 d^{k-1}\|_{L^3} \|\nabla^3 d^k\|_{L^2}
        \le  C\|\nabla d^{k-1}\|_{H^1} \|\nabla^2 d^{k-1}\|_{L^2}^\frac{6-n}{6}\|\nabla^2 d^{k-1}\|_{H^1}^\frac{n}{6}
             \|\nabla^3 d^k\|_{L^2}\\
       &\le C \|\nabla d^{k-1}\|_{H^1}^2 \|\nabla^3 d^k\|_{L^2}
              +C\|\nabla d^{k-1}\|_{H^1}^\frac{12-n}{6} \|\nabla^3 d^{k-1}\|_{L^2}^\frac{n}{6} \|\nabla^3 d^k\|_{L^2}\\
       &\le \varepsilon\|\nabla^3 d^k\|_{L^2}^2+\eta \|\nabla^3 d^{k-1}\|_{L^2}^2
              +C(\varepsilon,\eta)\left(\|\nabla d^{k-1}\|_{H^1}^4+\|\nabla d^{k-1}\|_{H^1}^{\frac{2(12-n)}{6-n}} \right),\\
I_{74} &\le \|\nabla d^{k-1}\|_{L^6}^3 \|\nabla^3 d^k\|_{L^2}
        \le   \|\nabla d^{k-1}\|_{H^1}^3 \|\nabla^3 d^k\|_{L^2}\\
       &\le \varepsilon\|\nabla^3 d^k\|_{L^2}^2+C(\varepsilon)\|\nabla d^{k-1}\|_{H^1}^6.
\end{aligned}
\end{equation*}
Substituting $I_{7i}(1\le i\le 4)$ into \eqref{3.40}, applying \eqref{3.23} and \eqref{3.35} and choosing $\varepsilon$ small enough, we obtain
\begin{equation*}
\begin{aligned}
&\quad \frac{1}{4}\int |\nabla^3 d^k|^2 dx+\int_0^t \int |\nabla^2 d_t^k|^2 dxds\\
&\le\eta\|\nabla^3 d^{k-1}\|_{L^2}^2+\delta \int_0^t \int |\nabla^2 d_t^{k-1}|^2 dxds
   +C(\delta, \eta)\exp\left[C\int_0^t \Phi_K^{N_{11}} ds\right].
\end{aligned}
\end{equation*}
Choosing $\delta$ and $\eta$ suitably small, we have
\begin{equation*}
\|\nabla^3 d^k\|_{L^2}^2+\int_0^t \|\nabla^2 d_t^k\|_{L^2}^2 ds
\le \frac{1}{2}\left(\|\nabla^3 d^{k-1}\|_{L^2}^2+\int_0^t \|\nabla^2 d_t^{k-1}\|_{L^2}^2 ds \right)
+C\exp\left[C\int_0^t \Phi_K^{N_{11}} ds\right].
\end{equation*}
By recursive relation, it arrives at
\begin{equation*}
\|\nabla^3 d^k\|_{L^2}^2+\int_0^t \|\nabla^2 d_t^k\|_{L^2}^2 ds
\le C\left(\sum_{i=1}^{k}\frac{1}{2^{i-1}}\right)\exp\left[C\int_0^t \Phi_K^{N_{11}} ds\right],
\end{equation*}
which completes the proof.
\end{proof}
Now, from Lemmas \ref{lemma3.5}-\ref{lemma3.9}, we conclude that
\begin{equation*}
\Phi_K(t)\le C \exp \left[C\exp\left(C\int_0^t \Phi_K^N ds\right)\right].
\end{equation*}
for some $N=N(n,q)>0$.
Thanks to this integral inequality, we can easily show that there exists a time $T_0\in (0, T)$(See Lemma $6$ \cite{Simon}), depending only on the parameters of $C$ such that
\begin{equation*}
\underset{0 \le t \le T_0}{\sup}\Phi_K(t) \le C.
\end{equation*}
Therefore, using the previous Lemmas \ref{lemma3.5}-\ref{lemma3.9} and other three estimates, we can derive the following
uniform bounds:
\begin{equation*}
\begin{aligned}
& \underset{0 \le t \le T_0}{\sup}\left(\|\rho^k\|_{W^{1,q}}+\|\rho^k_t\|_{L^q}+\|\sqrt{\rho^k}u_t^k\|_{L^2}
+\|u^k\|_{H^2}+\|P^k\|_{H^1}+\|d^k\|_{H^3}+\|\nabla d_t^k\|_{L^2}^2\right)\\
&\quad +\int_0^{T_0} \left(\|u^k\|_{W^{2,r}}^2+\|P^k\|_{W^{1,r}}^2+\|\nabla^4 d^k\|_{L^2}^2
+\|\nabla^2 d^k_t\|_{L^2}^2+\|d_{tt}\|_{L^2}^2\right)dt \le C,
\end{aligned}
\end{equation*}
for all $k \ge 1$.

\subsubsection{Convergence}
\quad We next show that the whole sequence $(\rho^k, u^k, d^k)$ of approximate solution converges to a solution to
the original problem \eqref{1.1}-\eqref{1.4} in a strong sense. To prove this, let us define:
\begin{equation*}
\bar{\rho}^{k+1}=\rho^{k+1}-\rho^k, \quad \bar{u}^{k+1}=u^{k+1}-u^{k} \quad \text{and} \quad \bar{d}^{k+1}=d^{k+1}-d^{k}.
\end{equation*}
\textrm{Step 1:}  It follows from the linearized momentum equation $\eqref{3.19}_2$ that
\begin{equation}\label{3.41}
\begin{aligned}
&\quad \rho^{k+1}\bar{u}^{k+1}_t+\rho^{k+1}u^k \cdot \nabla \bar{u}^{k+1}
       -{\rm div}[2 \mu(\rho^{k+1})D(\bar{u}^{k+1})]+\nabla(P^{k+1}-P^k)\\
&=-\bar{\rho}^{k+1} u_t^k-\bar{\rho}^{k+1} u^k \cdot \nabla u^k-\rho^k \bar{u}^k \cdot \nabla u^k
  +{\rm div}\left[2(\mu(\rho^{k+1})-\mu(\rho^k))D(u^k)\right]\\
&\quad -{\rm div}(\nabla \bar{d}^{k+1} \odot \nabla d^{k+1}
   +\nabla d^k \odot \nabla \bar{d}^{k+1}).
\end{aligned}
\end{equation}
Hence multiplying \eqref{3.41} by $\bar{u}^{k+1}$ and integrating (by parts) over $\Omega$, we get
\begin{equation}\label{3.42}
\begin{aligned}
&\quad \frac{1}{2}\frac{d}{dt}\int \rho^{k+1}|\bar{u}^{k+1}|^2dx+\frac{1}{C}\int |\nabla \bar{u}^{k+1}|^2 dx\\
&=\int\left\{-\bar{\rho}^{k+1} u_t^k-\bar{\rho}^{k+1} u^k \cdot \nabla u^k-\rho^k \bar{u}^k \cdot \nabla u^k
  +{\rm div}\left[2(\mu(\rho^{k+1})-\mu(\rho^k))D(u^k)\right]\right.\\
&\quad \quad \quad \left. -{\rm div}(\nabla \bar{d}^{k+1} \odot \nabla d^{k+1}
   +\nabla d^k \odot \nabla \bar{d}^{k+1})\right\} \cdot \bar{u}^{k+1}dx
=\sum_{i=1}^4 I_{8i}.
\end{aligned}
\end{equation}
To estimate $I_{8i}(1\le i\le 4)$, using H\"{o}lder, Sobolev and Young inequalities, we obtain
\begin{equation*}
\begin{aligned}
I_{81}
&\le \|\bar{\rho}^{k+1}\|_{L^2} \|u_t^k-u^k \cdot \nabla u^k\|_{L^3} \|\bar{u}^{k+1}\|_{L^6}
 \le C\|\bar{\rho}^{k+1}\|_{L^2}\|u_t^k-u^k \cdot \nabla u^k\|_{L^3} \|\nabla \bar{u}^{k+1}\|_{L^2}\\
&\le C(\varepsilon)\|\bar{\rho}^{k+1}\|_{L^2}^2\|u_t^k-u^k \cdot \nabla u^k\|_{L^3}^2
      +\varepsilon \|\nabla \bar{u}^{k+1}\|_{L^2}^2,\\
I_{82}
&\le \|\rho^k\|_{L^\infty}^{\frac{1}{2}} \|\sqrt{\rho^k}\bar{u}^k\|_{L^2} \|\nabla u^k\|_{L^3}
     \|\bar{u}^{k+1}\|_{L^6}
 \le \|\rho^k\|_{L^\infty}^{\frac{1}{2}} \|\sqrt{\rho^k}\bar{u}^k\|_{L^2} \|\nabla u^k\|_{L^3}
     \|\nabla \bar{u}^{k+1}\|_{L^2}\\
&\le  C(\varepsilon)\|\rho^k\|_{L^\infty} \|\sqrt{\rho^k}\bar{u}^k\|_{L^2}^2 \|\nabla u^k\|_{L^3}^2
      +\varepsilon \|\nabla \bar{u}^{k+1}\|_{L^2}^2,\\
I_{83}
&\le C\int |\bar{\rho}^{k+1}| |\nabla u^k| |\nabla \bar{u}^{k+1}| dx
 \le C(\varepsilon)\int |\bar{\rho}^{k+1}|^2 |\nabla u^k|^2 dx+\varepsilon\int |\nabla \bar{u}^{k+1}|^2dx\\
&\le C(\varepsilon)\|\nabla u^k\|_{W^{1,r}}^2 \|\bar{\rho}^{k+1}\|_{L^2}^2
               +\varepsilon \|\nabla \bar{u}^{k+1}\|_{L^2}^2,\\
I_{84}
&\le \int (|\nabla d^{k+1}|+|\nabla d^k|)|\nabla \bar{d}^{k+1}| |\nabla \bar{u}^{k+1}|dx\\
& \le C(\varepsilon)\int (|\nabla d^{k+1}|^2+|\nabla d^k|^2)|\nabla \bar{d}^{k+1}|^2 dx
  +\varepsilon \int |\nabla \bar{u}^{k+1}|^2 dx\\
&\le C(\varepsilon)(\|\nabla d^{k+1}\|_{H^2}^2+\|\nabla d^{k}\|_{H^2}^2)\|\nabla \bar{d}^{k+1}\|_{L^2}^2+
  +\varepsilon \int |\nabla \bar{u}^{k+1}|^2 dx.\\
\end{aligned}
\end{equation*}
Substituting $I_{8i}(1\le i\le 4)$ into \eqref{3.42} and choosing $\varepsilon$ small enough, it arrives at
\begin{equation}\label{3.43}
\begin{aligned}
&\quad \frac{1}{2}\frac{d}{dt}\int \rho^{k+1}|\bar{u}^{k+1}|^2dx+\frac{1}{2C}\int |\nabla \bar{u}^{k+1}|^2 dx\\
&\le C(\|u_t^k-u^k \cdot \nabla u^k\|_{L^3}^2+\|\nabla u^k\|_{W^{1,r}}^2)\|\bar{\rho}^{k+1}\|_{L^2}^2
    +C\|\rho^k\|_{L^\infty}\|\nabla u^k\|_{L^3}^2 \|\sqrt{\rho^k}\bar{u}^k\|_{L^2}^2\\
&\quad \quad +C(\|\nabla d^{k+1}\|_{H^2}^2+\|\nabla d^{k}\|_{H^2}^2)\|\nabla \bar{d}^{k+1}\|_{L^2}^2.
\end{aligned}
\end{equation}

\textrm{Step 2:} Since we have
$\bar{\rho}_t^{k+1}+u^k \cdot \nabla \bar{\rho}^{k+1}+\bar{u}^k \cdot \nabla \rho^k=0,$ then
multiplying by $\bar{\rho}^{k+1}$ and integrating over $\Omega$ yield
\begin{equation}\label{3.44}
\begin{aligned}
&\quad \frac{1}{2}\frac{d}{dt}\int |\bar{\rho}^{k+1}|^2 dx
  =-\int u^k \cdot \nabla\left(\frac{1}{2}|\bar{\rho}^{k+1}|^2\right)
    - \int \bar{u}^k \cdot \nabla \rho^k \cdot \bar{\rho}^{k+1}dx\\
&\le C\int |\bar{\rho}^{k+1}|^2 |\nabla u^k| dx
  +\|\nabla \rho^k\|_{L^q}\|\bar{\rho}^{k+1}\|_{L^2}\|\bar{u}^k\|_{L^{\frac{2q}{q-2}}}\\
&\le C\|\nabla u^k\|_{L^\infty}\|\bar{\rho}^{k+1}\|_{L^2}^2
  +\|\nabla \rho^k\|_{L^q}\|\bar{\rho}^{k+1}\|_{L^2}\|\nabla \bar{u}^k\|_{L^2}\\
&\le C(\delta)(\|\nabla u^k\|_{W^{1,r}}+\|\nabla \rho^k\|_{L^q}^2)\|\bar{\rho}^{k+1}\|_{L^2}^2
  +\delta \|\nabla \bar{u}^k\|_{L^2}^2,
\end{aligned}
\end{equation}
where we have used Young and Sobolev inequalities.

\textrm{Step 3:} since we have
\begin{equation}\label{3.45}
\bar{d}^{k+1}_t-\Delta \bar{d}^{k+1}
=\nabla \bar{d}^k \cdot (\nabla d^k+\nabla d^{k-1})\cdot d^k+|\nabla d^{k-1}|^2 \bar{d}^k
 -(\bar{u}^k \cdot \nabla) d^k-(u^{k-1}\cdot \nabla)\bar{d}^k.
\end{equation}
Multiplying \eqref{3.45} by $-\Delta \bar{d}^{k+1}$ and integrating (by parts) over $\Omega$, we obtain
\begin{equation}\label{3.46}
\begin{aligned}
&\quad \frac{1}{2}\frac{d}{dt}\int|\nabla \bar{d}^{k+1}|^2 dx+\int |\Delta \bar{d}^{k+1}|^2dx\\
&=\int \left[\nabla \bar{d}^{k+1}: (\nabla d^k+\nabla d^{k-1})d+|\nabla d^{k-1}|^2 \bar{d}^k
   -(\bar{u}^k \cdot)d^k-(u^{k-1 \cdot \nabla})\bar{d}^k\right]\cdot (-\Delta \bar{d}^{k+1})dx\\
&=\sum_{i=1}^4 I_{9i}.
\end{aligned}
\end{equation}
To estimate $I_{9i}(1 \le i\le 4)$, using H\"{o}lder, Young and Sobolev inequalities, we have
\begin{equation*}
\begin{aligned}
I_{91}
&\le \int |\nabla \bar{d}^k| (|\nabla d^k|+|\nabla d^{k-1}|)|d^k| |\Delta \bar{d}^{k+1}| dx\\
&\le C(\varepsilon)\int |\nabla \bar{d}^k|^2(|\nabla d^k|+|\nabla d^{k-1}|)^2 dx
     +\varepsilon\|\Delta \bar{d}^{k+1}\|_{L^2}^2\\
&\le C(\varepsilon)(\|\nabla d^k\|_{L^\infty}^2+\|\nabla d^{k-1}\|_{L^\infty}^2)\|\nabla \bar{d}^k\|_{L^2}^2
     +\varepsilon\|\Delta \bar{d}^{k+1}\|_{L^2}^2,\\
I_{92}
&\le C(\varepsilon)\int |\nabla d^{k-1}|^4 |\bar{d}^k|^2dx+\varepsilon\|\Delta \bar{d}^{k+1}\|_{L^2}^2\\
&\le C(\varepsilon)\|\nabla d^{k-1}\|_{L^6}^4 \|\bar{d}^k\|_{L^6}^2+\varepsilon\|\Delta \bar{d}^{k+1}\|_{L^2}^2\\
&\le  C(\varepsilon)\|\nabla d^{k-1}\|_{H^1}^4\|\nabla \bar{d}^k\|_{L^2}^2+\varepsilon\|\Delta \bar{d}^{k+1}\|_{L^2}^2,\\
I_{93}
&=-\int \partial_j \bar{u}_i^k \partial_i d^k \partial_j \bar{d}^{k+1}
  +\bar{u}_i^k \partial_i \partial_j d^k \partial_j \bar{d}^{k+1}dx\\
&\le C\int (|\nabla \bar{u}^k| |\nabla d^k| |\nabla \bar{d}^{k+1}| + |\bar{u}^k| |\nabla^2 d^k| |\nabla \bar{d}^{k+1}|)dx\\
&\le C\|\nabla d^k\|_{L^\infty} \|\nabla \bar{u}^k\|_{L^2} \|\nabla \bar{d}^{k+1}\|_{L^2}
     + C\| \bar{u}^k\|_{L^6}  \|\nabla^2 d^k\|_{L^3} \|\nabla \bar{d}^{k+1}\|_{L^2}\\
&\le C(\delta)\|\nabla d^k\|_{H^2}^2 \|\nabla \bar{d}^{k+1}\|_{L^2}^2
     +C(\delta)\|\nabla^2 d^k\|_{L^6}^2 \|\nabla \bar{d}^{k+1}\|_{L^2}^2
     +\delta \|\nabla \bar{u}^k\|_{L^2}^2\\
&\le  C(\delta)\|\nabla d^k\|_{H^2}^2 \|\nabla \bar{d}^{k+1}\|_{L^2}^2 +\delta \|\nabla \bar{u}^k\|_{L^2}^2,\\
I_{94}
&\le C(\varepsilon)\int |u^{k-1}|^2 |\nabla \bar{d}^k|^2 dx+\varepsilon\|\Delta \bar{d}^{k+1}\|_{L^2}^2\\
&\le C(\varepsilon)\|u^{k-1}\|_{H^2}^2 \|\nabla \bar{d}^k\|_{L^2}^2 dx+\varepsilon\|\Delta \bar{d}^{k+1}\|_{L^2}^2.\\
\end{aligned}
\end{equation*}
Substituting $I_{9i}(1 \le i\le 4)$ into \eqref{3.46} and choosing $\varepsilon$ small enough, we get
\begin{equation}\label{3.47}
\begin{aligned}
&\quad \frac{1}{2}\frac{d}{dt}\int|\nabla \bar{d}^{k+1}|^2 dx+\int |\Delta \bar{d}^{k+1}|^2dx\\
&\le C(\|\nabla d^k\|_{H^2}^2+\|\nabla d^{k-1}\|_{H^2}^2 +\|\nabla d^{k-1}\|_{H^1}^2+\|u^{k-1}\|_{H^2}^2)
       \|\nabla \bar{d}^k\|_{L^2}^2\\
&\quad \quad +C(\delta)\|\nabla d^k\|_{H^2}^2 \|\nabla \bar{d}^{k+1}\|_{L^2}^2
       +\delta \|\nabla \bar{u}^k\|_{L^2}^2.
\end{aligned}
\end{equation}
Denoting
\begin{equation*}
\begin{aligned}
& \varphi^k(t)=\|\bar{\rho}^k(t)\|_{L^2}^2+\|\sqrt{\rho^k}\bar{u}^k\|_{L^2}^2+\|\nabla \bar{d}^k\|_{L^2}^2,\\
& \psi^k(t)=\|\nabla \bar{u}^k(t)\|_{L^2}^2+\|\Delta \bar{d}^k(t)\|_{L^2}^2,\\
& F^k(t)=\|u_t^k-u^k \cdot \nabla u^k\|_{L^3}^2+\|\nabla u^k\|_{W^{1,r}}^2+\|\nabla \rho^k\|_{L^q}^2
        +\|\nabla d^k\|_{H^2}^2+\|\nabla d^{k+1}\|_{H^2}^2,\\
\end{aligned}
\end{equation*}
and
\begin{equation*}
\begin{aligned}
& G^k(t)=\|\rho^k\|_{L^\infty}^2\|\nabla u^k\|_{L^3}^2+\|\nabla d^k\|_{H^2}^2+\|\nabla^2 d^{k-1}\|_{L^2}^2+
        \|\nabla d^{k-1}\|_{H^1}^4+\|\nabla d^{k+1}\|_{H^2}^2.
\end{aligned}
\end{equation*}
Then $\eqref{3.43}+\eqref{3.44}+\eqref{3.47}$ yields
\begin{equation*}
\varphi^{k+1}(t)+\int_0^t \psi^{k+1}(s)ds
\le \int_0^t \left[C\varphi^k(s)+\delta \psi^k(s)\right]ds+\int_0^t CF^k(s)\varphi^{k+1}(s)ds,
\end{equation*}
which implies, by virtue of Gr\"{o}nwall inequality, that
\begin{equation*}
\varphi^{k+1}(t)+\int_0^t \psi^{k+1}(s)ds
\le C\exp(C_\delta t)\int_0^t \left[C\varphi^k(s)+\delta \psi^k(s)\right]ds.
\end{equation*}
Choosing $\delta>0$ and $T_{1}$ so small that $8C(T_1+\delta)<1$ and $\exp(C_\delta T_1)<2$, then
\begin{equation*}
\varphi^{k+1}(t)+\int_0^t \psi^{k+1}(s)ds
\le \frac{1}{4}\left[\varphi^{k}(t)+\int_0^t \psi^{k}(s)ds\right],
\end{equation*}
for all $1\le k \le K.$ By iteration, we have
\begin{equation*}
\varphi^{k+1}(t)+\int_0^t \psi^{k+1}(s)ds
\le \frac{1}{4^k}\left[\varphi^{1}(t)+\int_0^t \psi^{1}(s)ds\right], \quad k \ge 0.
\end{equation*}
Together with the Poincar\'{e} inequality and elliptic estimate, we get
\begin{equation*}
\begin{aligned}
&\|\bar{\rho}^{k+1}\|_{L^\infty(0,T;L^2)}+\|\sqrt{\rho^{k+1}}\bar{u}^{k+1}\|_{L^\infty(0,T;L^2)}
+\|\bar{d}^{k+1}\|_{L^\infty(0,T;H^1)}\\
&\quad +\|\bar{u}^{k+1}\|_{L^2(0,T;H^1)}
+\|\bar{d}^{k+1}\|_{L^2(0,T;H^2)}\le \frac{C}{2^k}.
\end{aligned}
\end{equation*}
Therefore, we get
\begin{equation*}
\begin{aligned}
& \sum_{k=1}^\infty \|\bar{\rho}^{k+1}\|_{L^\infty(0,T;L^2)}<\infty,\\
& \sum_{k=1}^\infty \|\bar{u}^{k+1}\|_{L^2(0,T;H^1)}<\infty,\\
& \sum_{k=1}^\infty (\|\bar{d}^{k+1}\|_{L^\infty(0,T;H^1)}+\|\bar{d}^{k+1}\|_{L^2(0,T;H^2)})<\infty,\\
\end{aligned}
\end{equation*}
which means
\begin{equation*}
\begin{aligned}
& \rho^k \rightarrow \rho^1+\sum_{i=2}^\infty \bar{\rho}^i \quad {\text in} \quad L^\infty(0,T;L^2),\\
& u^k \rightarrow u^1+\sum_{i=2}^\infty \bar{u}^i         \quad {\text in} \quad L^2(0,T;H^1),\\
& d^k \rightarrow d^1+\sum_{i=2}^\infty \bar{d}^i         \quad {\text in} \quad L^\infty(0,T;H^1)\cap L^2(0,T;H^2),\\
\end{aligned}
\end{equation*}
as $k \rightarrow \infty.$

\subsubsection{Conclusion}
\quad Now it is a simple matter to check that $(\rho, u, d)$ is a weak solution to the original problem \eqref{1.1}-\eqref{1.4}
with positive initial density. Then, by virtue of the lower semi-continuity of norms, we deduce from the uniform bound that
$(\rho, u, P, d)$ satisfies the following regularity estimate:
\begin{equation*}
\begin{aligned}
& \underset{0 \le t \le T_0}{\sup}\left(\|\rho\|_{W^{1,q}}+\|\rho_t\|_{L^q}+\|\sqrt{\rho}u_t\|_{L^2}
+\|u\|_{H^2}+\|P\|_{H^1}+\|d\|_{H^3}+\|\nabla d_t\|_{L^2}^2\right)\\
&\quad +\int_0^{T_0} \left(\|u\|_{W^{2,r}}^2+\|P\|_{W^{1,r}}^2+\|\nabla^4 d\|_{L^2}^2
+\|\nabla^2 d_t\|_{L^2}^2+\|d_{tt}\|_{L^2}^2\right)dt \le C.
\end{aligned}
\end{equation*}

\subsection{Proof of Theorem \ref{Theorem1.1}}
\quad Let $(\rho_0, u_0, d_0)$ satisfies the assumptions in Theorem \ref{Theorem1.1}. For each $\delta>0$, let
$\rho_0^\delta=\rho_0+\delta$, satisfying
\begin{center}
$\rho_0^\delta \rightarrow \rho_0$ in $W^{1,.q}$ as $\delta \rightarrow 0^+$,
\end{center}
and $d_0^\delta=d_0$. Suppose $(u^\delta, P^\delta)\in H_0^1 \times L^2$ is a solution to the problem
\begin{equation*}
-{\rm div}(2\mu(\rho_0^\delta)D(u_0^\delta))+\nabla P_0^\delta+{\rm div}(\nabla d_0^\delta \odot \nabla d_0^\delta)=\sqrt{\rho_0^\delta}g \ {\rm and } \ {\rm div}u_0^\delta=0
\ {\rm in} \ \Omega.
\end{equation*}
Then by the regularity estimate, it is easy to get
\begin{center}
$u_0^\delta \in H_0^1 \cap H^2$ \quad and \quad $u_0^\delta \rightarrow u_0$ in $H^2$ as $\delta \rightarrow 0^+.$
\end{center}
Then, by Proposition \ref{proposition3.4}, there exist a time $T_0 \in (0, T)$ and a unique strong solution
$(\rho^\delta, u^\delta, P^\delta, d^\delta)$ in $[0, T_0]\times \Omega$ to the problem with the initial data
replaced by  $(\rho^\delta_0, u^\delta_0, d^\delta_0)$. Note that $(\rho^\delta, u^\delta, P^\delta, d^\delta)$
will satisfies the regularity estimate where $C$ independent of the parameter $\delta$. Hence, let
$\delta \rightarrow 0^+$, it is a simple matter to see $(\rho, u, P, d)$ is a strong solution and have the following
regularity estimate
\begin{equation*}
\begin{aligned}
& \underset{0 \le t \le T_0}{\sup}\left(\|\rho\|_{W^{1,q}}+\|\rho_t\|_{L^q}+\|\sqrt{\rho}u_t\|_{L^2}
+\|u\|_{H^2}+\|P\|_{H^1}+\|d\|_{H^3}+\|\nabla d_t\|_{L^2}^2\right)\\
&\quad +\int_0^{T_0} \left(\|u\|_{W^{2,r}}^2+\|P\|_{W^{1,r}}^2+\|\nabla^4 d\|_{L^2}^2
+\|\nabla^2 d_t\|_{L^2}^2+\|d_{tt}\|_{L^2}^2\right)dt \le C.
\end{aligned}
\end{equation*}
Therefore, we complete the proof of Theorem \ref{Theorem1.1}.

\section{Proof of Theorem \ref{blowupcriterion1}}
\quad In this section, we will give the proof of Theorem \ref{Theorem1.1} by contradiction. More precisely, let $0 < T^* <\infty$ be the maximum time for the existence of strong solution $(\rho, u, P, d)$ to \eqref{1.1}-\eqref{1.4}. Suppose that \eqref{1.7}
were false, that is
\begin{equation}\label{4.1}
M_0 \triangleq \underset{T \rightarrow T^*}{\lim}\left(\|\nabla \rho\|_{L^\infty(0,T;L^q)}
    +\|u\|_{L^{s_1}(0,T;L_w^{r_1})}+\|\nabla d\|_{L^{s_2}(0,T;L_w^{r_2})}\right)< \infty.
\end{equation}
Under the condition \eqref{4.1}, one will extend existence time of the strong solution to \eqref{1.1}-\eqref{1.4} beyond $T^*$,
which contradicts with the definition of maximum existence time.

\begin{lemm}\label{lemma4.1}
For any $0\le T < T^*$, it holds that
\begin{equation}\label{4.2}
\underset{0 \le t \le T}{\sup}
\left(\|\rho\|_{L^m}+\|\sqrt{\rho}u\|_{L^2}^2+\|\nabla d\|_{L^2}^2\right)+\int_0^T \int \left(|\nabla d|^2+\left||\nabla d|^2d+\Delta d\right|^2\right)dxds \le C,
\end{equation}
where $1\le m \le \infty$ and in what follows, $C$ denotes generic constants depending only on $\Omega, M_0,  T^*$ and the initial data.
\end{lemm}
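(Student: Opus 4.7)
The plan is to derive this as the standard basic energy identity for the density-dependent Ericksen–Leslie system, which in fact does not require the non-blowup assumption \eqref{4.1} at all — the estimate \eqref{4.1} will only be invoked later for higher-order bounds. (I note that the dissipation term written as $|\nabla d|^2$ on the left of \eqref{4.2} appears to be a typo for $|\nabla u|^2$; what the energy identity naturally produces is the viscous dissipation in $u$.)

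First, for the density bounds, I would use that $\mathrm{div}\, u = 0$ turns \eqref{1.1}$_1$ into the pure transport equation $\rho_t + u\cdot\nabla\rho = 0$. The method of characteristics then gives $\|\rho(t)\|_{L^m} = \|\rho_0\|_{L^m}$ for every $m\in[1,\infty]$ and every $t\in[0,T]$, which handles the $\|\rho\|_{L^m}$ part for free.

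For the $(u,d)$ part, I would perform the classical Ericksen–Leslie cancellation. Multiplying \eqref{1.1}$_2$ by $u$, integrating by parts using $\mathrm{div}\,u=0$ and $u|_{\partial\Omega}=0$, and using the identity $\mathrm{div}(\nabla d\odot\nabla d) = \Delta d\cdot\nabla d + \tfrac12\nabla|\nabla d|^2$, gives
\begin{equation*}
\frac{d}{dt}\int \frac{\rho|u|^2}{2}\,dx + \int 2\mu(\rho)|D(u)|^2\,dx = -\int (u\cdot\nabla d)\cdot\Delta d\,dx.
\end{equation*}
On the other hand, multiplying \eqref{1.1}$_4$ by $-\Delta d$ and integrating by parts using the Neumann condition $\partial_\nu d=0$ yields
\begin{equation*}
\frac{d}{dt}\int \frac{|\nabla d|^2}{2}\,dx + \int |\Delta d|^2\,dx - \int |\nabla d|^2 d\cdot\Delta d\,dx = \int (u\cdot\nabla d)\cdot\Delta d\,dx.
\end{equation*}
Here the key geometric input is $|d|=1$, which implies $d\cdot\Delta d = -|\nabla d|^2$ and hence $|\Delta d + |\nabla d|^2 d|^2 = |\Delta d|^2 - |\nabla d|^4$; this lets me rewrite the $d$-identity as
\begin{equation*}
\frac{d}{dt}\int \frac{|\nabla d|^2}{2}\,dx + \int \bigl|\Delta d + |\nabla d|^2 d\bigr|^2\,dx = \int (u\cdot\nabla d)\cdot\Delta d\,dx.
\end{equation*}

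Adding the two identities, the transport–stretching cross term $\int(u\cdot\nabla d)\cdot\Delta d\,dx$ cancels exactly, leaving
\begin{equation*}
\frac{d}{dt}\int\!\!\left(\tfrac{\rho|u|^2}{2}+\tfrac{|\nabla d|^2}{2}\right)dx + \int 2\mu(\rho)|D(u)|^2\,dx + \int\bigl|\Delta d+|\nabla d|^2 d\bigr|^2\,dx = 0.
\end{equation*}
Integrating in time, using the lower bound $\mu(\rho)\ge C^{-1}$ from \eqref{3.7}, and Korn's inequality $\int|D(u)|^2\ge \tfrac12\int|\nabla u|^2$ (valid for divergence-free $u\in H_0^1$), closes the estimate \eqref{4.2}.

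The only subtlety I would need to verify is the preservation of the constraint $|d(t,\cdot)|\equiv 1$ under the flow, since the cancellation relies on it. Dotting \eqref{1.1}$_4$ with $d$ gives $\tfrac12(|d|^2)_t + \tfrac12 u\cdot\nabla|d|^2 = \theta\,\Delta d\cdot d + \theta|\nabla d|^2|d|^2$, and rewriting via $\Delta d\cdot d = \tfrac12\Delta|d|^2 - |\nabla d|^2$ shows that $w:=|d|^2-1$ satisfies a linear parabolic equation with Neumann boundary data $\partial_\nu w = 0$ and zero initial data, so $w\equiv 0$ by uniqueness. This is the main (mild) obstacle; everything else is a direct energy computation plus Korn and characteristics.
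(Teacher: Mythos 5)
Your proposal is correct and is essentially the paper's own argument: $L^m$-conservation of $\rho$ from the transport equation, the energy identity for $u$ tested with $u$, the $d$-equation tested so that the cross term $\int(u\cdot\nabla d)\cdot\Delta d\,dx$ cancels, and the constraint $|d|=1$ used to produce the dissipation $\int\left||\nabla d|^2d+\Delta d\right|^2dx$ (the paper multiplies \eqref{1.1}$_4$ by $\Delta d+|\nabla d|^2 d$ and uses $(d_t+u\cdot\nabla d)\cdot d=0$, which is the same bookkeeping as your $d\cdot\Delta d=-|\nabla d|^2$). Only minor remarks: your intermediate display has a sign slip (the term should enter as $+\int|\nabla d|^2\, d\cdot\Delta d\,dx$, which with $d\cdot\Delta d=-|\nabla d|^2$ gives exactly your final rewritten identity), and your observation that the $|\nabla d|^2$ in \eqref{4.2} is effectively the viscous dissipation $|\nabla u|^2$ is consistent with what the paper's proof actually produces.
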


\begin{proof}
Multiplying $\eqref{1.1}_1$ by $m\rho^{m-1}(1\le m \le \infty)$ and integrating the resulting equation over $\Omega$,
then it is easy to deduce that
\begin{equation*}
\|\rho(t)\|_{L^m}=\|\rho_0\|_{L^m}  \quad  (1\le m \le \infty).
\end{equation*}
Multiplying $\eqref{1.1}_2$ by $u$ and integrating (by parts) over $\Omega$, it is easy to deduce
\begin{equation}\label{4.3}
\frac{1}{2} \frac{d}{dt}\int \rho |u|^2dx+\frac{1}{C}\int |\nabla u|^2 dx\le -\int (u \cdot \nabla)d \cdot \Delta d \ dx.
\end{equation}
Multiplying $\eqref{1.1}_4$ by $\Delta d+ |\nabla d|^2 d$ and integrating (by parts) over $\Omega$, we obtain
\begin{equation}\label{4.4}
\frac{1}{2} \frac{d}{dt}\int |\nabla d|^2 dx+\int \left||\nabla d|^2 d+\Delta d\right|^2dx
=\int (u\cdot \nabla)d \cdot \Delta d dx+\int(d_t+u\cdot \nabla d)|\nabla d|^2 d \ dx.
\end{equation}
By virtue of $|d|=1,$ we have the fact
\begin{equation}\label{4.5}
(d_t+u\cdot \nabla d)|\nabla d|^2 d=0.
\end{equation}
Substituting \eqref{4.5} into \eqref{4.4}, it arrives at
\begin{equation*}
\frac{1}{2} \frac{d}{dt}\int |\nabla d|^2 dx+\int \left||\nabla d|^2 d+\Delta d\right|^2dx
=\int (u\cdot \nabla)d \cdot \Delta d dx,
\end{equation*}
which, together with \eqref{4.3}, gives
\begin{equation}\label{4.6}
\frac{1}{2} \frac{d}{dt}\int (\rho |u|^2 +|\nabla d|^2 )dx+\frac{1}{C}\int |\nabla u|^2 dx
+\int \left||\nabla d|^2 d+\Delta d\right|^2 dx \le 0.
\end{equation}
Integrating \eqref{4.6} over $(0, t)$ yields
\begin{equation*}
\frac{1}{2}\int (\rho |u|^2+|\nabla d|^2)dx+\int_0^t \int
 \left(\frac{1}{C}|\nabla u|^2+\left||\nabla d|^2d+\Delta d\right|^2\right)dxd\tau \le
 \frac{1}{2}\int \left(\rho_0 |u_0|^2+|\nabla d_0|^2\right)dx,
\end{equation*}
which completes the proof.
\end{proof}
Now, we give the estimate for $\|\nabla u\|_{L^2}$ and $\|\nabla^2 d\|_{L^2}$.
\begin{lemm}\label{lemma4.2}
Under the condition \eqref{4.1}, it holds that for $0\le T <T^*$,
\begin{equation}\label{4.7}
\begin{aligned}
&\underset{0 \le t \le T}{\sup}\left(\|\nabla u\|_{L^2}^2+\|\nabla d\|_{L^4}^4+\|\nabla^2 d\|_{L^2}^2\right)\\
&\quad +\int_0^T \int \left(|\sqrt{\rho}\dot{u}|^2+|\nabla d|^2 |\nabla^2 d|^2 +|\nabla^3 d|^2+|\nabla d_t|^2\right)dxd\tau \le C.
\end{aligned}
\end{equation}
\end{lemm}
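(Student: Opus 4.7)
The target estimate has the flavour of a ``first order'' energy identity for a strong solution: $\|\nabla u\|_{L^2}^2$ and $\|\nabla^2 d\|_{L^2}^2$ in the sup, together with $\|\sqrt{\rho}\dot u\|_{L^2}^2$ and $\|\nabla^3 d\|_{L^2}^2$ in $L^2_t$. The natural pair of multipliers is therefore the material derivative $\dot u=u_t+u\cdot\nabla u$ for the momentum equation and a higher–order test (based on $-\Delta d$, $\Delta^2 d$ and $|\nabla d|^2\nabla d$) for the direction equation. The blow–up hypothesis \eqref{4.1} enters only through the Serrin–Lorentz norms of $u$ and $\nabla d$, which will be absorbed by Lemma~\ref{lemma2.3}.

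\textbf{Step 1: velocity identity.} Test $\eqref{1.1}_2$ with $\dot u$. Using $\text{div}\,u=0$ and the continuity equation in the form $\mu_t+u\cdot\nabla\mu=0$, one obtains the basic cancellation
\begin{equation*}
\int 2\mu(\rho)D(u):\nabla\dot u\,dx=\frac{d}{dt}\int\mu(\rho)|D(u)|^2\,dx+\int 2\mu(\rho)D(u):E\,dx,
\end{equation*}
with $E_{ij}=\tfrac12(\partial_i u_k\partial_k u_j+\partial_j u_k\partial_k u_i)$. The pressure contributes $-\int P\,\partial_i u_k\partial_k u_i\,dx$ (because $\text{div}\,\dot u=-\partial_i u_k\partial_k u_i$), and the nematic stress contributes $-\int\text{div}(\nabla d\odot\nabla d)\cdot\dot u\,dx$. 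The resulting identity reads
\begin{equation*}
\frac{d}{dt}\!\int\!\mu(\rho)|D(u)|^2dx+\!\int\!\rho|\dot u|^2dx=\!\int\! P\,\partial_i u_k\partial_k u_i\,dx-\!\int\! 2\mu D(u){:}E\,dx-\!\int\!\text{div}(\nabla d\odot\nabla d)\cdot\dot u\,dx,
\end{equation*}
so the left–hand side furnishes precisely the two desired $u$–quantities.

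\textbf{Step 2: Stokes regularity.} Since $\rho\dot u+\nabla P-\text{div}(2\mu D(u))=-\text{div}(\nabla d\odot\nabla d)$, Lemma~\ref{Lemma2.1} yields
\begin{equation*}
\|u\|_{H^2}+\|P\|_{H^1}\le C\bigl(\|\sqrt\rho\dot u\|_{L^2}+\|\nabla d\cdot\Delta d\|_{L^2}\bigr)\bigl(1+\|\nabla\rho\|_{L^q}\bigr)^{q/(q-n)},
\end{equation*}
and under \eqref{4.1} the last factor is bounded. This is the device used to upgrade $L^2$–control of $\sqrt\rho\dot u$ to $H^2$–control of $u$ and $H^1$–control of $P$, needed to close all cubic RHS terms of the previous identity.

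\textbf{Step 3: controlling the RHS via Lemma~\ref{lemma2.3}.} Each term on the RHS of the identity from Step 1 is of the form $\int|f||g|^2\,dx$ with $f\in L^{r_i}_w$ (either $f=u$ or $f=\nabla d$) and $g\in H^1$. For example, $\int 2\mu D(u):E\,dx\le C\|\nabla u\|_{L^2}\|\,|u|\,|\nabla u|\,\|_{L^2}$ after one integration by parts is not quite right; the actual trick is first to integrate by parts to expose a factor of $\nabla u$ against $u\otimes\nabla u$, and then bound $\|\,|u|\,|\nabla u|\,\|_{L^2}^2$ by Lemma~\ref{lemma2.3} as
\begin{equation*}
\|\,|u|\,|\nabla u|\,\|_{L^2}^{2}\le\varepsilon\|\nabla u\|_{H^1}^2+C(\varepsilon)\|u\|_{L^{r_1}_w}^{2r_1/(r_1-n)}\|\nabla u\|_{L^2}^{2}.
\end{equation*}
The coupling integral $-\int\text{div}(\nabla d\odot\nabla d)\cdot\dot u\,dx$ is treated identically after writing $\text{div}(\nabla d\odot\nabla d)=\Delta d\,\nabla d+\tfrac12\nabla|\nabla d|^2$ and integrating by parts; the term with $\nabla d$ is then estimated by Lemma~\ref{lemma2.3} with $f=\nabla d\in L^{r_2}_w$. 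The $\varepsilon\|u\|_{H^2}^2$ piece is absorbed through Step 2, and the factor $\|u\|_{L^{r_1}_w}^{2r_1/(r_1-n)}$ belongs to $L^1_t$ precisely under \eqref{1.8}; analogously for $\nabla d$.

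\textbf{Step 4: direction equation.} I would test $\eqref{1.1}_4$ successively with (i) $-\Delta d$, to get $\tfrac12\tfrac{d}{dt}\|\nabla d\|_{L^2}^2+\|\Delta d\|_{L^2}^2$; (ii) the gradient trick from Lemma~\ref{lemma3.5}, namely apply $\nabla$ and test with $4|\nabla d|^2\nabla d$, producing $\tfrac{d}{dt}\|\nabla d\|_{L^4}^4+4\int|\nabla d|^2|\Delta d|^2\,dx$; (iii) $\Delta^2 d$ (i.e.\ apply $\Delta$ and test with $\Delta d$), to obtain $\tfrac{d}{dt}\|\Delta d\|_{L^2}^2+\|\nabla\Delta d\|_{L^2}^2$, which through elliptic regularity with the Neumann condition delivers $\|\nabla^2 d\|_{L^2}^2$ and $\|\nabla^3 d\|_{L^2}^2$; (iv) finally test with $-\Delta d_t$ to produce $\int|\nabla d_t|^2\,dx$. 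Each RHS contains products $u\cdot\nabla d$, $|\nabla d|^2 d$ or their derivatives, all of which are controlled by the same Serrin–Lorentz mechanism of Step 3, using either $\|u\|_{L^{r_1}_w}$ (together with $\|\nabla u\|_{L^2}$ and Step 2) or $\|\nabla d\|_{L^{r_2}_w}$ against $H^1$ norms of $\nabla d$ and $\Delta d$.

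\textbf{Step 5: closing the argument.} Adding a suitable positive combination of the identities of Steps 1 and 4, one arrives at
\begin{equation*}
\frac{d}{dt}\Psi(t)+\Phi(t)\le C\bigl(1+\|u\|_{L^{r_1}_w}^{2r_1/(r_1-n)}+\|\nabla d\|_{L^{r_2}_w}^{2r_2/(r_2-n)}\bigr)\Psi(t)+\text{l.o.t.},
\end{equation*}
with $\Psi=\|\nabla u\|_{L^2}^2+\|\nabla d\|_{L^4}^4+\|\nabla^2 d\|_{L^2}^2$ and $\Phi$ the sum of the four desired $L^2_t$ terms; by \eqref{1.8} and \eqref{4.1} the coefficient on $\Psi$ is in $L^1(0,T^*)$, and Gr\"onwall finishes the proof.

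\textbf{Main obstacle.} The delicate point is the coupled absorption: Step~3 must distribute the available ``small'' factor $\varepsilon\|u\|_{H^2}^2+\varepsilon\|\nabla d\|_{H^2}^2$ simultaneously among the cubic $u$–terms, the pressure term $\int P\,\partial_i u_k\partial_k u_i$ and the nematic coupling, all while reserving room for the $\nabla^3 d$ and $\nabla d_t$ terms coming from Step~4. Without the density–dependent bound on $\|\nabla\rho\|_{L^q}$ and without Lemma~\ref{lemma2.3}'s characteristic $L^{r}_w$–to–$H^1$ interpolation, none of the terms arising from $\mu'(\rho)u\cdot\nabla\rho\,|D(u)|^2$ (produced when one tries to move $\mu_t$) nor $|u||\nabla d||\Delta d|$ would be absorbable in dimension three. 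Handling both effects jointly while keeping the coefficient integrable in time is the crux of the proof.
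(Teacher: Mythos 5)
Your overall architecture is the right one and, in Steps 2--5, essentially coincides with the paper's proof: Stokes regularity as in \eqref{4.10} to convert $\|\sqrt{\rho}\dot u\|_{L^2}+\||\nabla d||\nabla^2 d|\|_{L^2}$ into $\|u\|_{H^2}+\|P\|_{H^1}$, Lemma \ref{lemma2.3} to exploit the Lorentz norms in \eqref{4.1}, the $\|\nabla d\|_{L^4}^4$ estimate obtained by applying $\nabla$ to $\eqref{1.1}_4$ and testing with $4|\nabla d|^2\nabla d$, the $\nabla\Delta d$ and $\nabla d_t$ multipliers for the remaining $d$-quantities, and a Gr\"onwall argument with an $L^1_t$ coefficient. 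The genuine gap is in your Step 1/Step 3 treatment of the elastic coupling. Testing the momentum equation with $\dot u$ instead of $u_t$ leaves you with $-\int{\rm div}(\nabla d\odot\nabla d)\cdot\dot u\,dx$, and after writing ${\rm div}(\nabla d\odot\nabla d)=\Delta d\,\nabla d+\tfrac12\nabla|\nabla d|^2$ the piece $\int(\nabla d\cdot\Delta d)\cdot\dot u\,dx$ contains $\dot u$ with no $\sqrt{\rho}$ weight. Since vacuum is allowed, $\|\dot u\|_{L^2}$ is not controlled by $\|\sqrt{\rho}\dot u\|_{L^2}$, and neither $\nabla\dot u$ nor $\nabla u_t$ is part of the dissipation at this first-order stage, so you can neither apply Cauchy--Schwarz nor integrate by parts to cast this term in the form $f\cdot g$ with $g\in H^1$ that Lemma \ref{lemma2.3} requires; the claim that it is ``treated identically'' is therefore unsubstantiated.

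The paper resolves exactly this point by testing with $u_t$ (so ${\rm div}\,u_t=0$ also kills the pressure) and moving the time derivative: $\int\nabla d\odot\nabla d:\nabla u_t=\frac{d}{dt}\int\nabla d\odot\nabla d:\nabla u-\int(\nabla d_t\odot\nabla d+\nabla d\odot\nabla d_t):\nabla u$, see \eqref{4.8}. This produces an indefinite cross term $-\int\nabla d\odot\nabla d:\nabla u\,dx$ in the energy, which is then dominated by adding $C_{**}$ times the $L^4$ estimate \eqref{4.18}, while the $\nabla d_t$ terms are absorbed by the $\int|\nabla d_t|^2$ dissipation coming from \eqref{4.20}; your Step 5 ``suitable positive combination'' does not identify this compensation, but it is the mechanism that makes the scheme close. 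Note also that your $\dot u$ identity reintroduces the pressure term $\int P\,\partial_iu_k\partial_ku_i\,dx$ and the cubic term $\int 2\mu D(u):E\,dx\sim\int|\nabla u|^3dx$, which you only sketch how to handle (they require further integrations by parts to expose a factor of $u$ before Lemma \ref{lemma2.3} applies), whereas with the $u_t$ multiplier neither term appears. As written, your momentum step either leaves the coupling term unestimated or forces you back onto the paper's $u_t$ route.
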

\begin{proof}

\textrm{Step 1:}
Multiplying $\eqref{1.1}_1$ by $u_t$ and integrating (by parts) over $\Omega$, we have
\begin{equation}\label{4.8}
\begin{aligned}
&\quad \frac{d}{dt}\int \mu(\rho) |D(u)|^2 dx-\frac{d}{dt}\int \nabla d \odot \nabla d : \nabla u dx+\int \rho |\dot{u}|^2 dx\\
&=-\int \rho \dot{u}\cdot (u\cdot \nabla u)+\mu' (u \cdot \nabla \rho)|D(u)|^2+ \nabla d_t \odot \nabla d: \nabla u
+ \nabla d \odot \nabla d_t :\nabla u dx.
\end{aligned}
\end{equation}
For any $0 \le s <t \le T,$ integrating over $(s, t)$ and applying the Cauchy inequality yield
\begin{equation*}
\begin{aligned}
&\quad \frac{1}{2C}\int |\nabla u|^2 dx- \int \nabla d \odot \nabla d : \nabla u dx+\int_s^t \int \rho |\dot{u}|^2dxd\tau\\
&\le C\int |\nabla u|^2 (s)dx+\int |\nabla d|^2 |\nabla u|(s)dx
+C(\varepsilon, \delta)\int_s^t \int \left(\rho |u|^2 |\nabla u|^2+|u| |\nabla \rho| |\nabla u|^2\right.\\
&\quad \quad \left.+|\nabla d|^2 |\nabla u|^2\right)dxd\tau
+\varepsilon\int_s^t \int \rho |\dot{u}|^2 dxd\tau+\delta \int_s^t \int |\nabla d_t|^2 dxd\tau.
\end{aligned}
\end{equation*}
Choosing $\varepsilon=\frac{1}{4}$, we obtain
\begin{equation}\label{4.9}
\begin{aligned}
&\quad \frac{1}{2C}\int |\nabla u|^2dx- \int \nabla d \odot \nabla d : \nabla u dx+\frac{3}{4}\int_s^t \int \rho |\dot{u}|^2dxd\tau\\
&\le C\int (|\nabla u|^2+|\nabla d|^4)(s)dx
+C(\delta)\int_s^t \int \left(\rho|u|^2 |\nabla u|^2+|u| |\nabla \rho| |\nabla u|^2\right.\\
&\quad \quad
\left.+|\nabla d|^2 |\nabla u|^2\right)dxd\tau+\delta \int_s^t \int |\nabla d_t|^2 dxd\tau.
\end{aligned}
\end{equation}
Estimate for the term $\int_s^t \int |u| |\nabla \rho| |\nabla u|^2 dxd\tau.$ Indeed, by Cauchy inequality, H\"{o}leder inequality, and Sobolev inequality, we obtain
\begin{equation*}
\begin{aligned}
&\quad \int_s^t \int |u| |\nabla \rho| |\nabla u|^2 dxd\tau
\le C(\varepsilon)\int_s^t \int|u|^2 |\nabla u|^2dxd\tau+\varepsilon \int_s^t \int|\nabla \rho|^2 |\nabla u|^2dxd\tau\\
&\le C(\varepsilon)\int_s^t \int|u|^2 |\nabla u|^2dxd\tau
+\varepsilon \int_s^t \left\||\nabla \rho|^2\right\|_{L^\frac{q}{2}} \left\||\nabla u|^2\right\|_{L^\frac{q}{q-2}}d\tau\\
&\le C(\varepsilon)\int_s^t \int|u|^2 |\nabla u|^2dxd\tau
+\varepsilon \int_s^t \|\nabla u\|_{H^1}^2 d\tau.
\end{aligned}
\end{equation*}
In order to control the term $\int_s^t \|\nabla u\|_{H^1}^2 d\tau$, by virtue of the Lemma \ref{Lemma2.1}, we have
\begin{equation}\label{4.10}
\begin{aligned}
\|u\|_{H^2}+\|P\|_{H^1}
& \le C \|F\|_{L^2}(1+\|\nabla \rho\|_{L^q})^{\frac{q}{q-n}}\\
& \le C \|-\rho u_t -\rho u \cdot \nabla u- {\rm div}(\nabla d \odot \nabla d)\|_{L^2}\\
& \le C(\|\sqrt{\rho}\dot{u}\|_{L^2}+\left\| |\nabla d| |\nabla^2 d|\right\|_{L^2}).
\end{aligned}
\end{equation}
Hence we get the estimate
\begin{equation}\label{4.11}
\begin{aligned}
&\quad \int_s^t \int |u| |\nabla \rho| |\nabla u|^2 dxd\tau\\
&\le C(\varepsilon)\int_s^t \int|u|^2 |\nabla u|^2dxd\tau
+\varepsilon \int_s^t (\|\sqrt{\rho}\dot{u}\|_{L^2}^2+\left\| |\nabla d| |\nabla^2 d|\right\|_{L^2}^2)d\tau.
\end{aligned}
\end{equation}
Substituting \eqref{4.11} into \eqref{4.9} and choosing $\varepsilon=\frac{1}{4}$, it arrives at
\begin{equation}\label{4.12}
\begin{aligned}
&\quad \frac{1}{2C}\int |\nabla u|^2dx- \int \nabla d \odot \nabla d : \nabla u dx+\frac{1}{2}\int_s^t \int \rho |\dot{u}|^2dxd\tau\\
&\le C\int (|\nabla u|^2+|\nabla d|^4)(s)dx
+C(\delta)\int_s^t \int \left(|u|^2 |\nabla u|^2+|\nabla d|^2 |\nabla u|^2\right. \\
&\quad \quad \left.+ |\nabla d|^2 |\nabla^2 d|^2\right)dxd\tau+\delta \int_s^t \int |\nabla d_t|^2 dxd\tau.
\end{aligned}
\end{equation}

\textrm{Step 2:} Taking $\nabla $ operator to $\eqref{1.1}_4$, then we have
\begin{equation}\label{4.13}
\nabla d_t -\nabla \Delta d=-\nabla (u\cdot \nabla d)+\nabla (|\nabla d|^2 d).
\end{equation}
Multiplying \eqref{4.13} by $4|\nabla d|^2 \nabla d$ and integrating (by parts) over $\Omega$, we obtain
\begin{equation}\label{4.14}
\begin{aligned}
&\quad \frac{d}{dt}\int |\nabla d|^4dx+4\int |\nabla d|^2 |\nabla^2 d|^2dx+2\int |\nabla(|\nabla d|^2)|^2 dx\\
&=2\int_{\partial \Omega} |\nabla d|^2<\nabla (|\nabla d|^2), \nu>d\sigma
+4\int \nabla(|\nabla d|^2 d):|\nabla d|^2 \nabla d \ dx\\
&\quad \quad -4\int \nabla (u \cdot \nabla d):|\nabla d|^2 \nabla d \ dx=\sum_{i=1}^3 {I\!I}_{1i},
\end{aligned}
\end{equation}
where $\nu $ is the unite outward normal vector to $\partial \Omega$.
To estimate ${I\!I}_{11}=2\int_{\partial \Omega} |\nabla d|^2<\nabla (|\nabla d|^2), \nu>d\sigma$. Indeed, applying the
Sobolev embedding inequality $W^{1,1}(\Omega)\hookrightarrow L^1(\partial \Omega)$, it is easy to get
\begin{equation}\label{4.15}
\begin{aligned}
{I\!I}_{11}
& \le 4 \int_{\partial \Omega} |\nabla d|^3 |\nabla^2 d| d\sigma
  \le C \left\| |\nabla d|^3 |\nabla^2 d| \right\|_{W^{1,1}(\Omega)}\\
& \le C\int ( |\nabla d|^3 |\nabla^2 d|+ |\nabla d|^2 |\nabla^2 d|^2+|\nabla d|^3 |\nabla^3 d|)dx\\
& \le C(\eta)\int (|\nabla d|^2 |\nabla^2 d|^2+|\nabla d|^4+|\nabla d|^6)dx+\eta \int |\nabla^3 d|^2 dx.\\
\end{aligned}
\end{equation}
To estimate ${I\!I}_{12}=4\int \nabla(|\nabla d|^2 d):|\nabla d|^2 \nabla d \ dx.$ Indeed, since $|d|=1$, we have
$d \cdot \nabla d=0.$ Then, we get
\begin{equation*}
\nabla(|\nabla d|^2 d):|\nabla d|^2 \nabla d
=(\nabla(|\nabla d|^2)d+|\nabla d|^2 \nabla d):|\nabla d|^2 \nabla d=|\nabla d|^6.
\end{equation*}
Hence, it arrives at
\begin{equation}\label{4.16}
{I\!I}_{12}=4\int \nabla(|\nabla d|^2 d):|\nabla d|^2 \nabla d \ dx=4\int |\nabla d|^6 dx.
\end{equation}
By the Cauchy inequality, we have
\begin{equation}\label{4.17}
\begin{aligned}
{I\!I}_{13}&=-4\int \nabla (u \cdot \nabla d):|\nabla d|^2 \nabla d \ dx\\
     &\le \int (|\nabla u| |\nabla d|^4+ |u| |\nabla d|^3 |\nabla^2 d|)dx\\
     &\le C\int (|\nabla d|^2 |\nabla u|^2 +|\nabla d|^6 +|u|^2 |\nabla^2 d|^2) dx.
\end{aligned}
\end{equation}
Substituting \eqref{4.15}-\eqref{4.17} into \eqref{4.14}, choosing $\varepsilon$ small enough, and integrating
over $(s, t)$, it arrives at
\begin{equation}\label{4.18}
\begin{aligned}
&\quad \int |\nabla d|^4 dx+4\int_s^t \int |\nabla d|^2 |\nabla^2 d|^2dx d\tau+2\int_s^t \int |\nabla (|\nabla d|^2)|^2dxd\tau\\
&\le \int |\nabla d|^4(s)dx+C(\eta)\int_s^t \int\left(|\nabla d|^4+|\nabla d|^6+|\nabla d|^2 |\nabla^2 d|^2+|\nabla d|^2 |\nabla u|^2\right.\\
&\quad \quad \left.+ |u|^2 |\nabla^2 d|^2\right)dxd\tau +\eta \int_s^t \int |\nabla^3 d|^2 dxd\tau.
\end{aligned}
\end{equation}

\textrm{Step 3:}Multiplying \eqref{4.13} by $\nabla \Delta d$, integrating (by parts) over $\Omega$ and applying
Young inequality, we obtain
\begin{equation*}
\begin{aligned}
&\quad \frac{1}{2}\frac{d}{dt}\int |\Delta d|^2dx+\int |\nabla \Delta d|^2dx\\
&\le \int (|\nabla u| |\nabla d|+ |u| |\nabla^2 d| + |\nabla d|^3 + |\nabla d| |\nabla^2 d|)|\nabla \Delta d|dx\\
&\le C(\varepsilon)\int (|\nabla u|^2 |\nabla d|^2+|u|^2 |\nabla^2 d|^2+|\nabla d|^6 +|\nabla d|^2 |\nabla^2 d|^2)dx
+\varepsilon\int |\nabla \Delta d|^2 dx.
\end{aligned}
\end{equation*}
Choosing $\varepsilon=\frac{1}{2}$ and integrating over $(s, t)$, we get
\begin{equation}\label{4.19}
\begin{aligned}
&\quad \int |\Delta d|^2dx+\int_s^t\int |\nabla \Delta d|^2dxd\tau\\
&\le 2\int |\Delta d|^2(s)dx +C\int_s^t\int (|\nabla d|^2 |\nabla^2 d|^2+|\nabla d|^2|\nabla u|^2+|u|^2 |\nabla^2 d|^2+|\nabla d|^6)dxd\tau.
\end{aligned}
\end{equation}

\textrm{Step 4:}Multiplying \eqref{4.13} by $\nabla d_t$, integrating (by parts) over $\Omega$ and applying
Young inequality, we obtain
\begin{equation*}
\begin{aligned}
&\quad \frac{1}{2}\frac{d}{dt}\int |\Delta d|^2dx+\int | \nabla d_t|^2dx
  =\int [\nabla(|\nabla d|^2 d)-\nabla(u\cdot \nabla d)]\cdot \nabla d_t dx\\
&\le C(\varepsilon)\int (|\nabla u|^2 |\nabla d|^2+|u|^2 |\nabla^2 d|^2+|\nabla d|^6 +|\nabla d|^2 |\nabla^2 d|^2)dx
+\varepsilon\int |\nabla d_t|^2 dx.
\end{aligned}
\end{equation*}
Choosing $\varepsilon=\frac{1}{2}$ and integrating over $(s, t)$, we get
\begin{equation}\label{4.20}
\begin{aligned}
&\quad \int |\Delta d|^2dx+\int_s^t\int |\nabla d_t|^2dxd\tau\\
&\le 2\int |\Delta d|^2(s)dx +C\int_s^t\int (|\nabla d|^2 |\nabla^2 d|^2+|\nabla u|^2 |\nabla d|^2+|u|^2 |\nabla^2 d|^2+|\nabla d|^6)dxd\tau.
\end{aligned}
\end{equation}
Then, choosing $\delta$ small enough and some constant $C_{**}$ suitably large such that
\begin{equation*}
\frac{1}{2C}|\nabla u|^2-\nabla d \odot \nabla d :\nabla u+C_{**} |\nabla d|^4 \ge \frac{1}{4C}(|\nabla u|^2+|\nabla d|^4),
\end{equation*}
then $\eqref{4.12}+\eqref{4.18}\times C_{**}+\eqref{4.19}+\eqref{4.20} $ and choosing $\eta$ and $\delta$ small enough yield
\begin{equation}\label{4.21}
\begin{aligned}
&\quad \int(|\nabla u|^2+|\nabla^2 d|^2+|\nabla d|^4)(t)dx
    +\int_s^t\int(\rho |\dot{u}|^2+|\nabla d|^2 |\nabla^2 d|^2+|\nabla(|\nabla d|^2)|^2\left.\right.\\
&\quad \quad +|\nabla^3 d|^2+|\nabla d_t|^2)dxd\tau\\
&\le C+C\int(|\nabla u|^2+|\nabla^2 d|^2+|\nabla d|^4)(s)dx
     +C\int_s^t \int(1+|\nabla^2 d|^2+|\nabla d|^4\\
&\quad \quad +|u|^2 |\nabla u|^2+|\nabla d|^2 |\nabla^2 d|^2
     +|\nabla d|^2 |\nabla u|^2+|u|^2 |\nabla^2 d|^2+|\nabla d|^6 )dxd\tau.
\end{aligned}
\end{equation}
Choosing $s=0$, then we obtain
\begin{equation}\label{4.22}
\begin{aligned}
&\quad \int(|\nabla u|^2+|\nabla^2 d|^2+|\nabla d|^4)(t)dx
    +\int_0^t\int\left(\rho |\dot{u}|^2+|\nabla d|^2 |\nabla^2 d|^2+|\nabla(|\nabla d|^2)|^2\right.\\
&\quad \quad \left.+|\nabla^3 d|^2+|\nabla d_t|^2\right)dxd\tau\\
&\le C+C\int_0^t \int\left(1+|\nabla^2 d|^2+|\nabla d|^4+|\nabla d|^2 |\nabla^2 d|^2+|u|^2 |\nabla u|^2
  +|u|^2 |\nabla^2 d|^2\right.\\
&\quad \quad \left.+|\nabla d|^2 |\nabla u|^2+|\nabla d|^6 \right)dxd\tau\\
&=C+C\int_0^t \int(1+|\nabla^2 d|^2+|\nabla d|^4)dxd\tau+\sum_{i=1}^5 {I\!I}_{2i}.
\end{aligned}
\end{equation}
Applying Lemma \ref{lemma2.3} to ${I\!I}_{2i}(1\le i\le 5)$ repeatedly, then it arrives at
\begin{equation*}
\begin{aligned}
{I\!I}_{21}&\le \varepsilon\|\nabla^2 d\|_{H^1}^2
                +C(\varepsilon)\|\nabla d\|_{L^{r_2}_w}^{\frac{2r_2}{r_2-3}}\|\nabla^2 d\|_{L^2}^2,\\
{I\!I}_{22}&\le \varepsilon\int_0^t \|\nabla u\|_{H^1}^2 d\tau
           +C(\varepsilon)\int_0^t \|u\|_{L_w^{r_1}}^\frac{2 r_1}{r_1-3}\|\nabla u\|_{L^2}^2 d\tau\\
       &\le \varepsilon\int_0^t (\|\sqrt{\rho}\dot{u}\|_{L^2}^2+\left\| |\nabla d| |\nabla^2 d|\right\|_{L^2}^2) d\tau
           +C(\varepsilon)\int_0^t \|u\|_{L_w^{r_1}}^\frac{2 r_1}{r_1-3}\|\nabla u\|_{L^2}^2 d\tau,\\
       &\le \varepsilon\int_0^t (\|\sqrt{\rho}\dot{u}\|_{L^2}^2+\|\nabla^2 d\|_{H^1}^2) d\tau
           +C(\varepsilon)\int_0^t \left(\|\nabla d\|_{L^{r_2}_w}^{\frac{2r_2}{r_2-3}}\|\nabla^2 d\|_{L^2}^2
                +\|u\|_{L_w^{r_1}}^\frac{2 r_1}{r_1-3}\|\nabla u\|_{L^2}^2 \right)d\tau,\\
{I\!I}_{23}&\le \varepsilon\int_0^t \|\nabla^2 d\|_{H^1}^2 d\tau
           +C(\varepsilon)\int_0^t \|u\|_{L_w^{r_1}}^\frac{2 r_1}{r_1-3}\|\nabla^2 d\|_{L^2}^2 d\tau,\\
{I\!I}_{24}&\le \varepsilon\int_0^t \|\nabla u\|_{H^1}^2 d\tau
           +C(\varepsilon)\int_0^t \|\nabla d\|_{L_w^{r_2}}^\frac{2 r_2}{r_2-3}\|\nabla u\|_{L^2}^2 d\tau\\
       &\le \varepsilon\int_0^t (\|\sqrt{\rho}\dot{u}\|_{L^2}^2+\|\nabla^2 d\|_{H^1}^2) d\tau
           +C(\varepsilon)\int_0^t \|\nabla d\|_{L_w^{r_2}}^\frac{2 r_2}{r_2-3}
                  \left(\|\nabla^2 d\|_{L^2}^2+\|\nabla u\|_{L^2}^2\right)d\tau,\\
{I\!I}_{25}&\le \varepsilon\int_0^t \||\nabla d|^2\|_{H^1}^2 d\tau
           +C(\varepsilon)\int_0^t \|\nabla d\|_{L_w^{r_2}}^\frac{2 r_2}{r_2-3}\||\nabla d|^2\|_{L^2}^2 d\tau\\
      &\le \varepsilon\int_0^t \left(\|\nabla d\|_{L^4}^4+\|\nabla(|\nabla d|^2)\|_{L^2}^2\right) d\tau
           +C(\varepsilon)\int_0^t \|\nabla d\|_{L_w^{r_2}}^\frac{2 r_2}{r_2-3}\|\nabla d\|_{L^4}^4 d\tau.\\
\end{aligned}
\end{equation*}
Substituting ${I\!I}_{2i}(1\le i \le 5)$ into \eqref{4.22} and choosing $\varepsilon$ small enough, then it is easy to deduce that
\begin{equation*}
\begin{aligned}
&\quad \int(|\nabla u|^2+|\nabla^2 d|^2+|\nabla d|^4)(t)dx
    +\int_0^t\int\left(\rho |\dot{u}|^2+|\nabla d|^2 |\nabla^2 d|^2+|\nabla(|\nabla d|^2)|^2\right.\\
&\quad \quad \left.+|\nabla^3 d|^2+|\nabla d_t|^2\right)dxd\tau\\
&\le C+C\int_0^t \left(1+\|u\|_{L_w^{r_1}}^\frac{2r_1}{r_1-3}+\|\nabla d\|_{L_w^{r_2}}^\frac{2r_2}{r_2-3}\right)
                     \left(1+\|\nabla u\|_{L^2}^2+\|\nabla^2 d\|_{L^2}^2+\|\nabla d\|_{L^4}^4\right)d\tau,
\end{aligned}
\end{equation*}
which, applying \eqref{4.1} and Gr\"{o}nwall inequality, completes the proof.
\end{proof}

As a corollary of Lemma \ref{lemma4.2}, it is a direct result from $\eqref{1.1}_4$. More precisely,
\begin{coro}\label{lemma4.3}
Under the condition \eqref{4.1}, it holds that for $0 \le T<T^*$,
\begin{equation}\label{4.23}
\underset{0 \le t \le T}{\sup} \|d_t\|_{L^2}+\int_0^T \| u \|_{H^2}^2 d\tau \le C.
\end{equation}
\end{coro}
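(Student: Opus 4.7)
The plan is to treat the two bounds separately, reading one directly from $\eqref{1.1}_4$ and the other from the Stokes regularity estimate \eqref{4.10}, then inserting the uniform controls already secured in Lemma \ref{lemma4.2}.

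\textbf{Step 1: bound on $\|d_t\|_{L^2}$.} First I would rewrite $\eqref{1.1}_4$ as
\begin{equation*}
d_t = \Delta d + |\nabla d|^2 d - u\cdot\nabla d,
\end{equation*}
take the $L^2$ norm, and use the triangle inequality. The term $\|\Delta d\|_{L^2}$ is controlled by $\sup_{[0,T]}\|\nabla^2 d\|_{L^2}\le C$ from \eqref{4.7}. For the second term I would exploit $|d|=1$ to write $\||\nabla d|^2 d\|_{L^2}=\|\nabla d\|_{L^4}^2$, which is again controlled by \eqref{4.7}. For the convective term I would apply H\"older and Sobolev: in dimension $n=3$,
\begin{equation*}
\|u\cdot\nabla d\|_{L^2}\le \|u\|_{L^6}\|\nabla d\|_{L^3}\le C\|\nabla u\|_{L^2}\|\nabla d\|_{H^1},
\end{equation*}
so this term is uniformly bounded by \eqref{4.7} as well. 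Taking the supremum in $t$ yields the desired bound on $d_t$.

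\textbf{Step 2: bound on $\int_0^T\|u\|_{H^2}^2\,d\tau$.} I would apply the stationary Stokes regularity estimate established in \eqref{4.10}, namely
\begin{equation*}
\|u\|_{H^2}+\|P\|_{H^1}\le C\bigl(\|\sqrt{\rho}\dot u\|_{L^2}+\||\nabla d|\,|\nabla^2 d|\|_{L^2}\bigr),
\end{equation*}
square both sides and integrate over $(0,T)$. The right-hand side then becomes
\begin{equation*}
C\int_0^T\bigl(\|\sqrt{\rho}\dot u\|_{L^2}^2+\||\nabla d|\,|\nabla^2 d|\|_{L^2}^2\bigr)\,d\tau,
\end{equation*}
and both of these time integrals are already finite by \eqref{4.7} (the first directly, the second as the space-time integral of $|\nabla d|^2|\nabla^2 d|^2$). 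This gives the claimed $L^2_t H^2_x$ control of $u$.

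\textbf{Expected difficulty.} There is essentially no obstacle here: both estimates reduce to invoking bounds already contained in Lemma \ref{lemma4.2} after a one-line manipulation (the evolution equation for $d$ in Step 1, the Stokes estimate in Step 2). The only minor point to watch is the dimension-dependent use of Sobolev embeddings in controlling $u\cdot\nabla d$; since the result is stated for $n=3$ the embeddings $H^1\hookrightarrow L^6$ and $H^1\hookrightarrow L^3$ are more than sufficient (and the $n=2$ case is easier).
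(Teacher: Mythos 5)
Your proposal is correct and follows essentially the route the paper intends: the paper states \eqref{4.23} as a direct consequence of $\eqref{1.1}_4$ and Lemma \ref{lemma4.2}, which is exactly your Step 1 (reading $\|d_t\|_{L^2}$ off the equation using $\sup\|\nabla^2 d\|_{L^2}$, $\sup\|\nabla d\|_{L^4}$ and the convective term via Sobolev) and Step 2 (squaring the Stokes estimate \eqref{4.10} and integrating, using the time integrals of $\|\sqrt{\rho}\dot u\|_{L^2}^2$ and $\||\nabla d||\nabla^2 d|\|_{L^2}^2$ from \eqref{4.7}).
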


Now, we give the second important estimate$-$norm of $\|\sqrt{\rho}u_t\|_{L^2}$ and $\|\nabla^3 d\|_{L^2}$.
\begin{lemm}\label{lemma4.4}
Under the condition \eqref{4.1}, it holds that for $0 \le T<T^*$,
\begin{equation}\label{4.24}
\underset{0 \le t \le T}{\sup}\left(\|\sqrt{\rho}u_t\|_{L^2}^2+\|\nabla^3 d\|_{L^2}^2\right)
+\int_0^t (\|\nabla u_t\|_{L^2}^2+\|\nabla^2 d_t\|_{L^2}^2) d\tau \le C.
\end{equation}
\end{lemm}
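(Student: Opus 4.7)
The plan is to derive two coupled differential inequalities — one for $\|\sqrt{\rho}u_t\|_{L^2}^2+\int_0^t\|\nabla u_t\|_{L^2}^2\,d\tau$ and one for $\|\nabla^3 d\|_{L^2}^2+\int_0^t\|\nabla^2 d_t\|_{L^2}^2\,d\tau$ — and then close them via Gr\"onwall's inequality, using the fact that \eqref{4.1} together with the bounds already collected in Lemma \ref{lemma4.1}, Lemma \ref{lemma4.2} and Corollary \ref{lemma4.3} make the resulting weight integrable on $[0,T^\ast)$.

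For the velocity part I would differentiate $\eqref{1.1}_2$ in $t$ and test by $u_t$. Using $\rho_t=-u\cdot\nabla\rho$ from $\eqref{1.1}_1$ and $\eqref{1.1}_3$, the Korn-type coercivity $\int 2\mu(\rho)|D(u_t)|^2\,dx\gtrsim\|\nabla u_t\|_{L^2}^2$ from \eqref{3.7}, and repeated integration by parts, I obtain
\begin{equation*}
\tfrac{1}{2}\tfrac{d}{dt}\!\int\rho|u_t|^2\,dx+\tfrac{1}{C}\!\int|\nabla u_t|^2\,dx\le \mathrm{RHS},
\end{equation*}
where the right-hand side collects convective contributions $\int\rho|u||u_t||\nabla u_t|$ and $\int\rho|u_t||\nabla u||u_t|$, the viscosity-perturbation contribution $\int|\mu'(\rho)||u||\nabla\rho||D(u)||D(u_t)|$, and the director-coupling contribution $\int(|\nabla d||\nabla d_t|+|\nabla d_t|^2)|\nabla u_t|$. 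For the director part I would imitate the scheme of Lemma \ref{lemma3.9}: apply $\nabla$ to $\eqref{1.1}_4$, test by $\nabla\Delta d_t$, and integrate by parts. Using $\partial d/\partial\nu|_{\partial\Omega}=0$, this yields
\begin{equation*}
\tfrac{1}{2}\tfrac{d}{dt}\!\int|\nabla\Delta d|^2\,dx+\int|\Delta d_t|^2\,dx=\int\nabla\bigl(|\nabla d|^2 d-u\cdot\nabla d\bigr)\cdot\nabla\Delta d_t\,dx,
\end{equation*}
and expanding the right-hand side produces the terms $|\nabla u||\nabla d||\nabla\Delta d_t|$, $|u||\nabla^2 d||\nabla\Delta d_t|$, $|\nabla d||\nabla^2 d||\nabla\Delta d_t|$ and $|\nabla d|^3|\nabla\Delta d_t|$. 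Elliptic regularity on $\Omega$ then converts $\|\nabla\Delta d\|_{L^2}\leftrightarrow\|\nabla^3 d\|_{L^2}$ and $\|\Delta d_t\|_{L^2}\leftrightarrow\|\nabla^2 d_t\|_{L^2}$ at the cost of lower-order terms already controlled by Lemma \ref{lemma4.2}.

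The technical heart of the argument is the bookkeeping of the remainders via Lemma \ref{lemma2.3} and the Stokes regularity estimate \eqref{4.10}. For every rough factor that is either $u$ or $\nabla d$ I invoke
\begin{equation*}
\|f\,g\|_{L^2}^{2}\le\varepsilon\|g\|_{H^1}^{2}+C(\varepsilon)\|f\|_{L^{r_i}_w}^{\frac{2r_i}{r_i-3}}\|g\|_{L^2}^{2},\qquad i=1,2,
\end{equation*}
so the offending $L^\infty$-like norms are replaced by $\|u\|_{L_w^{r_1}}^{s_1}$ and $\|\nabla d\|_{L_w^{r_2}}^{s_2}$ — precisely the quantities assumed integrable in \eqref{4.1} — while the $H^1$ part is either absorbed into $\|\nabla u_t\|_{L^2}^2+\|\Delta d_t\|_{L^2}^2$ or, after one application of \eqref{4.10}, into $\|\sqrt{\rho}\dot u\|_{L^2}^{2}+\||\nabla d||\nabla^2 d|\|_{L^2}^{2}$, both already bounded by Lemma \ref{lemma4.2}.

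Summing the two energy identities, choosing $\varepsilon$ small, and setting $\Phi(t):=\|\sqrt{\rho}u_t(t)\|_{L^2}^{2}+\|\nabla^3 d(t)\|_{L^2}^{2}$, I expect the closure
\begin{equation*}
\tfrac{d}{dt}\Phi(t)+\|\nabla u_t\|_{L^2}^{2}+\|\nabla^2 d_t\|_{L^2}^{2}\le C\bigl(1+\|u\|_{L_w^{r_1}}^{s_1}+\|\nabla d\|_{L_w^{r_2}}^{s_2}\bigr)\bigl(1+\Phi(t)\bigr),
\end{equation*}
to which Gr\"onwall applies directly; the initial datum $\Phi(0)$ is finite by the compatibility condition \eqref{1.6}. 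The main obstacles will be the $\nabla\rho$-term on the velocity side, whose treatment requires peeling off $\nabla\rho\in L^q$, pushing the remaining factor into $L^{2q/(q-2)}$ by interpolation, and then invoking \eqref{4.10} to upgrade $\|u\|_{H^2}$ to $\|\sqrt{\rho}\dot u\|_{L^2}$ before absorbing an $\varepsilon\|\nabla u_t\|_{L^2}^{2}$; and the $u\cdot\nabla^2 d_t$-style term on the director side, where Lemma \ref{lemma2.3} must be applied to $u$ rather than to the derivative of $d$ so as to produce the correct Serrin-type weight $\|u\|_{L_w^{r_1}}^{s_1}$ instead of a forbidden time-integrable quantity.
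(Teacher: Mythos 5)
Your overall architecture (differentiate the momentum equation in $t$ and test with $u_t$; apply $\nabla$ to the director equation and test with $\nabla\Delta d_t$; add the two inequalities and apply Gr\"onwall) matches the paper, but the director half has a genuine gap as written. After testing with $\nabla\Delta d_t$, the only dissipation you gain on the left is $\|\Delta d_t\|_{L^2}^2$ (equivalently $\|\nabla^2 d_t\|_{L^2}^2$ by elliptic regularity), yet every right-hand term you list --- $|\nabla u||\nabla d||\nabla\Delta d_t|$, $|u||\nabla^2 d||\nabla\Delta d_t|$, $|\nabla d||\nabla^2 d||\nabla\Delta d_t|$, $|\nabla d|^3|\nabla\Delta d_t|$ --- is paired against the third-order quantity $\nabla\Delta d_t$, which is controlled neither by the left-hand side nor by any earlier estimate (bounds on $\nabla^4 d$ and $d_{tt}$ only appear afterwards, in Lemma \ref{lemma4.5}); no choice of $\varepsilon$ absorbs it, and trading $\nabla\Delta d_t$ for $\Delta d_t$ by a further spatial integration by parts creates a boundary integral $\int_{\partial\Omega}\partial_\nu\bigl(|\nabla d|^2 d-u\cdot\nabla d\bigr)\cdot\Delta d_t\,d\sigma$ that the Neumann condition does not kill. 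The missing idea is precisely the device of Lemma \ref{lemma3.9} that you say you are imitating, repeated in \eqref{4.34}--\eqref{4.36}: integrate by parts in \emph{time}, writing $\int\nabla\bigl(u\cdot\nabla d-|\nabla d|^2 d\bigr)\cdot\nabla\Delta d_t\,dx=\frac{d}{dt}\int\nabla\bigl(\cdots\bigr)\cdot\nabla\Delta d\,dx-\int\nabla\partial_t\bigl(\cdots\bigr)\cdot\nabla\Delta d\,dx$, so that everything is paired with $\nabla\Delta d$ instead; the time-boundary term is then absorbed by $\varepsilon\|\nabla^3 d\|_{L^2}^2$ (the terms ${I\!I}_{5i}$), while the terms where $\partial_t$ falls on the nonlinearity (the ${I\!I}_{4i}$) produce the weight $(1+\|\sqrt{\rho}u_t\|_{L^2}^2+\|\nabla^3 d\|_{L^2}^2)\|\nabla^3 d\|_{L^2}^2$ plus small multiples of $\|\nabla u_t\|_{L^2}^2$, $\|\nabla^2 d_t\|_{L^2}^2$ and $\|\nabla d_t\|_{H^1}^2$, leading to \eqref{4.38}.

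Two secondary points. First, the Serrin norms of \eqref{4.1} and Lemma \ref{lemma2.3} are not needed (and are not used by the paper) at this stage: Lemma \ref{lemma4.2} already supplies $\int_0^T\|\nabla^3 d\|_{L^2}^2\,d\tau\le C$, and the paper closes the sum of \eqref{4.33} and \eqref{4.38} by Gr\"onwall against the integrable weight $1+\|\nabla^3 d\|_{L^2}^2$. Moreover, your blanket use of Lemma \ref{lemma2.3} is delicate for factors involving $u_t$: the lemma leaves $\|g\|_{L^2}^2$ on the right, and with vacuum $\|u_t\|_{L^2}$ is not dominated by $\|\sqrt{\rho}u_t\|_{L^2}$ (Poincar\'e gives only $\|u_t\|_{L^2}\le C\|\nabla u_t\|_{L^2}$, which then cannot be absorbed); the paper instead uses the weighted interpolation $\|\sqrt{\rho}u_t\|_{L^3}\le\|\sqrt{\rho}u_t\|_{L^2}^{1/2}\|\sqrt{\rho}u_t\|_{L^6}^{1/2}$. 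Second, your treatment of the $\nabla\rho$-term via $\|u\|_{H^2}$ and \eqref{4.10} can be made to work, but note the paper's three-dimensional argument \eqref{4.30}--\eqref{4.32} interpolates $\|\nabla u\|_{L^{3q/(q-3)}}$ between $L^2$ and $L^\infty$ and invokes the $W^{2,r}$ Stokes estimate \eqref{4.31}, since the exponent $3q/(q-3)$ can exceed the Sobolev range of $H^1$ when $q$ is close to $3$.
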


\begin{proof}
\textrm{Step 1:} Differentiating $\eqref{1.1}_2$ with respect to $t$, we get
\begin{equation}\label{4.25}
\begin{aligned}
&\quad \rho u_{tt}+\rho u\cdot \nabla u_t-{\rm div}(2\mu(\rho)D(u_t))+\nabla P_t\\
&=-\rho_t u_t-\rho_t u\cdot \nabla u-\rho u_t \cdot \nabla u+{\rm div}(2\mu' \rho_t D(u))
-{\rm div}(\nabla d_t \odot \nabla d+\nabla d \odot \nabla d_t).
\end{aligned}
\end{equation}
Multiplying \eqref{4.25} by $u_t$ and integrating (by parts) over $\Omega$, it arrives at
\begin{equation}\label{4.26}
\begin{aligned}
&\quad \frac{1}{2}\frac{d}{dt}\int \rho |u_t|^2dx+\frac{1}{C}\int |\nabla u_t|^2 dx\\
&\le C \int \left(\rho |u| |u_t| |\nabla u_t|+\rho |u| |\nabla u|^2 |u_t|+\rho |u|^2 |\nabla^2 u| |u_t|+\rho |u|^2 |\nabla u| |\nabla u_t|\right.\\
&\quad \quad \quad \left.+\rho |u_t|^2 |\nabla u|+|\nabla d| |\nabla d_t| |\nabla u_t|
                     +|u| |\nabla \rho| |\nabla u| |\nabla u_t|\right)dx=\sum_{i=1}^{7}{I\!I}_{3i}.
\end{aligned}
\end{equation}
Using \eqref{4.7}, H\"{o}lder, interpolation, Sobolev, and Young inequalities repeatedly, we get
\begin{equation*}
\begin{aligned}
{I\!I}_{31}&\le \|\sqrt{\rho}\|_{L^\infty} \|u\|_{L^6} \|\sqrt{\rho}u_t\|_{L^3} \|\nabla u_t\|_{L^2}
       \le C \|\nabla u\|_{L^2} \|\sqrt{\rho}u_t\|_{L^2}^\frac{1}{2} \|\sqrt{\rho}u_t\|_{L^6}^\frac{1}{2}\|\nabla u_t\|_{L^2}\\
      &\le C \|\sqrt{\rho}u_t\|_{L^2}^\frac{1}{2} \|\nabla u_t\|_{L^2}^\frac{3}{2}
       \le C(\varepsilon) \|\sqrt{\rho}u_t\|_{L^2}^2+\varepsilon \|\nabla u_t\|_{L^2}^2,\\
{I\!I}_{32}&\le \|\rho \|_{L^\infty} \|u\|_{L^6} \|\nabla u\|_{L^3}^2 \| u_t\|_{L^6}
       \le C \|\nabla u\|_{L^2}^2\|\nabla u\|_{L^6}\|\nabla u_t\|_{L^2}\\
      &\le C(\varepsilon) \|\nabla u\|_{H^1}^2+\varepsilon \|\nabla u_t\|_{L^2}^2,\\
{I\!I}_{33}&\le \|\rho\|_{L^\infty} \|u\|_{L^6} \|\nabla^2 u\|_{L^2} \| u_t\|_{L^6}
       \le C\|\nabla u\|_{L^2} \|\nabla^2 u\|_{L^2} \|\nabla u_t\|_{L^2}\\
      &\le C(\varepsilon) \|\nabla^2 u\|_{L^2}^2+\varepsilon \|\nabla u_t\|_{L^2}^2,\\
\end{aligned}
\end{equation*}
\begin{equation*}
\begin{aligned}
{I\!I}_{34}&\le \|\rho\|_{L^\infty} \|u\|_{L^6}^2 \|\nabla u\|_{L^6} \|\nabla u_t\|_{L^2}
       \le C\|\nabla u\|_{L^2}^2 \|\nabla u\|_{H^1} \|\nabla u_t\|_{L^2}\\
      &\le C(\varepsilon) \|\nabla u\|_{H^1}^2+\varepsilon \|\nabla u_t\|_{L^2}^2,\\
{I\!I}_{35}&\le  \|\sqrt{\rho}\|_{L^\infty} \|u_t\|_{L^6} \|\sqrt{\rho}u_t\|_{L^3} \|\nabla u\|_{L^2}
       \le C \|\nabla u_t\|_{L^2} \|\sqrt{\rho}u_t\|_{L^2}^\frac{1}{2}\|\sqrt{\rho}u_t\|_{L^6}^\frac{1}{2}\\
      &\le C \|\sqrt{\rho}u_t\|_{L^2}^\frac{1}{2} \|\nabla u_t\|_{L^2}^\frac{3}{2}
       \le C(\varepsilon) \|\sqrt{\rho}u_t\|_{L^2}^2+\varepsilon \|\nabla u_t\|_{L^2}^2,\\
{I\!I}_{36}&\le \|\nabla d\|_{L^6} \|\nabla d_t\|_{L^3} \|\nabla u_t\|_{L^2}
       \le C\|\nabla d\|_{H^1} \|\nabla d_t\|_{L^2}^\frac{1}{2} \|\nabla d_t\|_{L^6}^\frac{1}{2}\|\nabla u_t\|_{L^2}\\
       &\le C(\varepsilon,\eta)\|\nabla d_t\|_{L^2}^2+\eta\|\nabla d_t\|_{H^1}^2+\varepsilon\|\nabla u_t\|_{L^2}^2.
\end{aligned}
\end{equation*}
Substituting ${I\!I}_{3i}(1\le i\le 6)$ into \eqref{4.26}, then we have
\begin{equation}\label{4.27}
\begin{aligned}
&\quad \frac{1}{2} \frac{d}{dt}\int \rho |u_t|^2dx+\frac{1}{C}\int |\nabla u_t|^2 dx\\
&\le C(\varepsilon)(\|\sqrt{\rho}u_t\|_{L^2}^2+\|\nabla u\|_{H^1}^2)
     +C(\varepsilon,\eta)\|\nabla d_t\|_{L^2}^2+\eta\|\nabla d_t\|_{H^1}^2+\varepsilon\|\nabla u_t\|_{L^2}^2\\
&\quad +C\int |u| |\nabla \rho| |\nabla u| |\nabla u_t| dx.
\end{aligned}
\end{equation}
Using \eqref{4.10},\eqref{4.7} and interpolation inequality, it arrives at
\begin{equation*}
\begin{aligned}
\|u\|_{H^2}+\|P\|_{H^1}
&\le C(\|\sqrt{\rho}u_t\|_{L^2}+\| |u| |\nabla u|\|_{L^2}+\| |\nabla d| |\nabla^2 d| \|_{L^2}\\
&\le C(\|\sqrt{\rho}u_t\|_{L^2}+\|u\|_{L^6}\|\nabla u\|_{L^3}+\|\nabla d\|_{L^\infty}\|\nabla^2 d \|_{L^2})\\
&\le C(\|\sqrt{\rho}u_t\|_{L^2}+\|\nabla u\|_{L^2}^{\frac{1}{2}}\|\nabla u\|_{H^1}^{\frac{1}{2}}
       +\|\nabla d\|_{H^2})\\
&\le C(\|\sqrt{\rho}u_t\|_{L^2}+\|\nabla d\|_{H^2}+1)+\frac{1}{2}\|\nabla u\|_{H^1},\\
\end{aligned}
\end{equation*}
which implies
\begin{equation}\label{4.28}
\|u\|_{H^2}+\|P\|_{H^1} \le  C(\|\sqrt{\rho}u_t\|_{L^2}+\|\nabla d\|_{H^2}+1).
\end{equation}
Combining \eqref{4.27} with \eqref{4.28}, it arrives at
\begin{equation}\label{4.29}
\begin{aligned}
&\quad \frac{1}{2}\frac{d}{dt}\int \rho |u_t|^2dx+\frac{1}{C}\int |\nabla u_t|^2 dx\\
&\le C(\varepsilon)(\|\sqrt{\rho}u_t\|_{L^2}^2+\|\nabla^3 d\|_{L^2}^2+1)
     +C(\varepsilon,\eta)\|\nabla d_t\|_{L^2}^2+\eta\|\nabla d_t\|_{H^1}^2+\varepsilon\|\nabla u_t\|_{L^2}^2\\
&\quad +C\int |u| |\nabla \rho| |\nabla u| |\nabla u_t| dx.
\end{aligned}
\end{equation}
To estimate the term $\int |u| |\nabla \rho| |\nabla u| |\nabla u_t| dx.$ Indeed, by using \eqref{4.1}, H\"{o}lder,
Gagliardo-Nirenberg and Young inequalities, we obtain
\begin{equation}\label{4.30}
\begin{aligned}
\int |u| |\nabla \rho| |\nabla u| |\nabla u_t| dx
& \le \|u\|_{L^6} \|\nabla \rho\|_{L^q} \|\nabla u\|_{L^\frac{3q}{q-3}} \|\nabla u_t\|_{L^2}\\
& \le C\|\nabla u\|_{L^2} \|\nabla \rho\|_{L^q}
       \|\nabla u\|_{L^2}^{\frac{2q-6}{3q}} \|\nabla u\|_{L^\infty}^{\frac{q+6}{3q}} \|\nabla u_t\|_{L^2}\\
&\le C\|\nabla u\|_{W^{1,r}}^{\frac{q+6}{3q}} \|\nabla u_t\|_{L^2}.
\end{aligned}
\end{equation}
By Lemma \ref{Lemma2.1}, it is easy to deduce that
\begin{equation}\label{4.31}
\|u\|_{W^{2,r}}+\|P\|_{W^{1,r}} \le C(1+\|\nabla u_t\|_{L^2}+\|\nabla^3 d\|_{L^2}^2).
\end{equation}
By Young inequality, Combining \eqref{4.30} with \eqref{4.31} yields
\begin{equation}\label{4.32}
\begin{aligned}
\int |u| |\nabla \rho| |\nabla u| |\nabla u_t| dx
&\le C(1+\|\nabla u_t\|_{L^2}+\|\nabla^3 d\|_{L^2}^2)^{\frac{q+6}{3q}} \|\nabla u_t\|_{L^2}\\
&\le C(\varepsilon)(1+\|\nabla^3 d\|_{L^2}^4)+\varepsilon\|\nabla u_t\|_{L^2}^2.
\end{aligned}
\end{equation}
Substituting \eqref{4.32} into \eqref{4.29} and choosing $\varepsilon$ small enough, we obtain
\begin{equation*}
\begin{aligned}
&\quad \frac{1}{2}\frac{d}{dt}\int \rho |u_t|^2dx+\frac{1}{C}\int |\nabla u_t|^2 dx\\
&\le C(1+\|\sqrt{\rho}u_t\|_{L^2}^2+\|\nabla^3 d\|_{L^2}^2+\|\nabla^3 d\|_{L^2}^4)
     +C(\eta)\|\nabla d_t\|_{L^2}^2+\eta\|\nabla d_t\|_{H^1}^2.
\end{aligned}
\end{equation*}
Thanks to the compatibility condition and \eqref{4.7}, we get
\begin{equation}\label{4.33}
\begin{aligned}
&\quad \|\sqrt{\rho}u_t\|_{L^2}^2+\int_0^t \|\nabla u_t\|_{L^2}^2 d\tau\\
&\le C(\eta)+C\int_0^t(1+\|\sqrt{\rho}u_t\|_{L^2}^2+\|\nabla^3 d\|_{L^2}^2+\|\nabla^3 d\|_{L^2}^4)d\tau
     +\eta\int_0^t \|\nabla^2 d_t\|_{L^2}^2 d\tau.
\end{aligned}
\end{equation}

\textrm{Step 2:} In order to the control the term $\int_0^t \|\nabla^3 d\|_{L^2}^4 d\tau$, we should derive some estimate for
$\|\nabla^3 d\|_{L^2}^2$. More precisely, multiplying \eqref{4.13} by $\nabla \Delta d_t$ and integrating (by parts) over
$\Omega$, we obtain
\begin{equation}\label{4.34}
\begin{aligned}
&\quad \frac{1}{2}\frac{d}{dt}\int |\nabla \Delta d|^2 dx+\int |\Delta d_t|^2 dx\\
&= \frac{d}{dt}\int \nabla (u \cdot \nabla d-|\nabla d|^2 d)\cdot \nabla \Delta d \ dx
   -\int \frac{\partial}{\partial t}\left[\nabla(u\cdot \nabla d-|\nabla d|^2 d)\right] \cdot \nabla \Delta d \ dx.
\end{aligned}
\end{equation}
To give the second term of the left hand side of \eqref{4.34} first. Indeed, it is easy to deduce that
\begin{equation}\label{4.35}
\begin{aligned}
&\quad -\int \frac{\partial}{\partial t}\left[\nabla(u\cdot \nabla d-|\nabla d|^2 d)\right] \cdot \nabla \Delta d \ dx\\
&\le C\int\left(|\nabla u_t| |\nabla d|+|\nabla u| |\nabla d_t|+|u_t| |\nabla^2 d|
                 +|u| |\nabla^2 d_t|+|\nabla d_t| |\nabla^2 d|+|\nabla d| |\nabla^2 d_t|\right.\\
&\quad \quad \quad \quad  \left.+|\nabla d| |\nabla^2 d||d_t|+|\nabla d|^2 |\nabla d_t|\right)|\nabla \Delta d|dx
=\sum_{i=1}^8 {I\!I}_{4i}.
\end{aligned}
\end{equation}
By using \eqref{4.7}, \eqref{4.10}, H\"{o}lder, Sobolev and Young inequalities repeatedly, we obtain
\begin{equation*}
\begin{aligned}
{I\!I}_{41}&\le C\|\nabla d\|_{L^\infty} \|\nabla u_t\|_{L^2} \|\nabla \Delta d\|_{L^2}
       \le C\|\nabla d\|_{H^2} \|\nabla^3 d\|_{L^2}\|\nabla u_t\|_{L^2} \\
      &\le C(\delta)(\|\nabla^3 d\|_{L^2}^2+1)\|\nabla^3 d\|_{L^2}^2+\delta \|\nabla u_t\|_{L^2}^2,\\
{I\!I}_{42}&\le C\|\nabla u\|_{L^3}\|\nabla d_t\|_{L^6}\|\nabla \Delta d\|_{L^2}
        \le C\|\nabla u\|_{H^1}\|\nabla^3 d\|_{L^2}\|\nabla d_t\|_{H^1}\\
      &\le C(\varepsilon)\|\nabla u\|_{H^1}^2\|\nabla^3 d\|_{L^2}^2+\varepsilon\|\nabla d_t\|_{H^1}^2\\
      &\le C(\varepsilon)(\|\sqrt{\rho}u_t\|_{L^2}^2+\|\nabla^3 d\|_{L^2}^2+1)\|\nabla^3 d\|_{L^2}^2
           +\varepsilon\|\nabla d_t\|_{H^1}^2,\\
{I\!I}_{43}&\le \|u_t\|_{L^6} \|\nabla^2 d\|_{L^3}\|\nabla \Delta d\|_{L^2}
       \le \|\nabla u_t\|_{L^2} \|\nabla^2 d\|_{H^1}\|\nabla^3 d\|_{L^2}\\
      &\le C(\delta)(1+\|\nabla^3 d\|_{L^2}^2)\|\nabla^3 d\|_{L^2}^2+\delta \|\nabla u_t\|_{L^2}^2,\\
\end{aligned}
\end{equation*}
\begin{equation*}
\begin{aligned}
{I\!I}_{44}&\le C\|u\|_{L^\infty} \|\nabla^2 d_t\|_{L^2}\|\nabla \Delta d\|_{L^2}
       \le C\|u\|_{H^2} \|\nabla^3 d\|_{L^2} \|\nabla^2 d_t\|_{L^2}\\
      &\le (\varepsilon)(\|\sqrt{\rho}u_t\|_{L^2}^2+\|\nabla^3 d\|_{L^2}^2+1)\|\nabla^3 d\|_{L^2}^2
           +\varepsilon\|\nabla^2 d_t\|_{L^2}^2,\\
{I\!I}_{45}&\le C\|\nabla d_t\|_{L^6} \|\nabla^2 d\|_{L^3}\|\nabla \Delta d\|_{L^2}
       \le C\|\nabla d_t\|_{H^1} \|\nabla^2 d\|_{H^1}\|\nabla^3 d\|_{L^2}\\
      &\le C(\varepsilon)(1+\|\nabla^3 d\|_{L^2}^2)\|\nabla^3 d\|_{L^2}^2+\varepsilon \|\nabla d_t\|_{H^1}^2,\\
{I\!I}_{46}&\le C\|\nabla d\|_{L^6} \|\nabla^2 d\|_{L^6} \| d_t\|_{L^6}\|\nabla \Delta d\|_{L^2}
       \le C\|\nabla d\|_{H^1} \|\nabla^2 d\|_{H^1}\| d_t\|_{H^1}\|\nabla^3 d\|_{L^2}\\
      &\le C(\varepsilon)(1+\|\nabla^3 d\|_{L^2}^2)\|\nabla^3 d\|_{L^2}^2+\varepsilon( \|\nabla d_t\|_{L^2}^2+1),\\
{I\!I}_{47}&\le C\|\nabla d\|_{L^\infty} \|\nabla^2 d_t\|_{L^2}\|\nabla \Delta d\|_{L^2}
       \le C\|\nabla d\|_{H^2} \|\nabla^3 d\|_{L^2} \|\nabla^2 d_t\|_{L^2}\\
      &\le C(\varepsilon)(1+\|\nabla^3 d\|_{L^2}^2)\|\nabla^3 d\|_{L^2}^2+\varepsilon \|\nabla^2 d_t\|_{L^2}^2,\\
{I\!I}_{48}&\le C\|\nabla d\|_{L^6}^2 \|\nabla d_t\|_{L^6}\|\nabla \Delta d\|_{L^2}
       \le C\|\nabla d\|_{H^1}^2 \| \nabla d_t\|_{H^1}\|\nabla^3 d\|_{L^2}\\
      &\le C(\varepsilon)\|\nabla^3 d\|_{L^2}^2+\varepsilon \|\nabla d_t\|_{H^1}^2.\\
\end{aligned}
\end{equation*}
Substituting ${I\!I}_{4i}(1\le i \le 8)$ into \eqref{4.35}, then we obtain
\begin{equation}\label{4.36}
\begin{aligned}
&\quad -\int \frac{\partial}{\partial t}\left[\nabla(u\cdot \nabla d-|\nabla d|^2 d)\right] \cdot \nabla \Delta d \ dx\\
&\le C(\varepsilon, \delta)(1+\|\sqrt{\rho}u_t\|_{L^2}^2+\|\nabla^3 d\|_{L^2}^2)\|\nabla^3 d\|_{L^2}^2
       +\varepsilon \|\nabla d_t\|_{H^1}^2+\delta \|\nabla u_t\|_{L^2}^2.\\
\end{aligned}
\end{equation}
Substituting \eqref{4.36} into \eqref{4.34} and integrating the resulting inequality over $(0,t)$, it arrives at
\begin{equation*}
\begin{aligned}
&\quad \frac{1}{2}\int |\nabla^3 d|^2 dx+\int_0^t \int |\nabla^2 d_t|^2 dxd\tau\\
&\le C +C(\varepsilon, \delta)\int_0^t(1+\|\sqrt{\rho}u_t\|_{L^2}^2+\|\nabla^3 d\|_{L^2}^2)\|\nabla^3 d\|_{L^2}^2d\tau
       +\varepsilon \int_0^t\|\nabla^2 d_t\|_{L^2}^2d\tau \\
&\quad \quad +\delta \int_0^t \|\nabla u_t\|_{L^2}^2 d\tau
       +C\int(|\nabla u| |\nabla d|+|u| |\nabla^2 d|+|\nabla d| |\nabla^2 d|+|\nabla d|^3)|\nabla \Delta d|dx.
\end{aligned}
\end{equation*}
Choosing $\varepsilon=\frac{1}{2}$, we obtain
\begin{equation}\label{4.37}
\begin{aligned}
&\quad \int |\nabla^3 d|^2 dx+\int_0^t \int |\nabla^2 d_t|^2 dxd\tau\\
&\le C +C(\delta)\int_0^t(1+\|\sqrt{\rho}u_t\|_{L^2}^2+\|\nabla^3 d\|_{L^2}^2)\|\nabla^3 d\|_{L^2}^2d\tau
       +\delta \int_0^t \|\nabla u_t\|_{L^2}^2 d\tau\\
&\quad \quad  +C\int(|\nabla u| |\nabla d|+|u| |\nabla^2 d|+|\nabla d| |\nabla^2 d|+|\nabla d|^3)|\nabla \Delta d|dx\\
&=  C +C(\delta)\int_0^t(1+\|\sqrt{\rho}u_t\|_{L^2}^2+\|\nabla^3 d\|_{L^2}^2)\|\nabla^3 d\|_{L^2}^2d\tau
       +\delta \int_0^t \|\nabla u_t\|_{L^2}^2 d\tau+\sum_{i=1}^4 {I\!I}_{5i}.
\end{aligned}
\end{equation}
By using the H\"{o}lder, Gagliardo-Nirenberg and Young inequalities repeatedly, we get
\begin{equation*}
\begin{aligned}
{I\!I}_{51} &\le C\|\nabla d\|_{L^\infty} \|\nabla u\|_{L^2} \|\nabla \Delta d\|_{L^2}
        \le C\|\nabla d\|_{L^2}^\frac{1}{4} \|\nabla d\|_{H^2}^\frac{3}{4}\|\nabla u\|_{L^2} \|\nabla \Delta d\|_{L^2}\\
       &\le C(1+\|\nabla^3 d\|_{L^2})^\frac{3}{4} \|\nabla^3 d\|_{L^2}
        \le C(\varepsilon)+\varepsilon\|\nabla^3 d\|_{L^2}^2,\\
{I\!I}_{52} &\le C\|u\|_{L^6} \|\nabla^2 d\|_{L^3} \|\nabla \Delta d\|_{L^2}
        \le C\|\nabla u\|_{L^2} \|\nabla^2 d\|_{L^2}^\frac{1}{2} \|\nabla^2 d\|_{H^1}^\frac{1}{2}\|\nabla \Delta d\|_{L^2}\\
       &\le C(1+\|\nabla^3 d\|_{L^2})^\frac{1}{2} \|\nabla^3 d\|_{L^2}
        \le C(\varepsilon)+\varepsilon\|\nabla^3 d\|_{L^2}^2,\\
\end{aligned}
\end{equation*}
\begin{equation*}
\begin{aligned}
{I\!I}_{53} &\le C\|\nabla d\|_{L^6} \|\nabla^2 d\|_{L^3} \|\nabla \Delta d\|_{L^2}
        \le C\|\nabla d\|_{H^1} \|\nabla^2 d\|_{L^2}^\frac{1}{2} \|\nabla^2 d\|_{H^1}^\frac{1}{2}\|\nabla \Delta d\|_{L^2}\\
       &\le C(1+\|\nabla^3 d\|_{L^2})^\frac{1}{2} \|\nabla^3 d\|_{L^2}
        \le C(\varepsilon)+\varepsilon\|\nabla^3 d\|_{L^2}^2,\\
{I\!I}_{54} &\le C\|\nabla d\|_{L^6}^3  \|\nabla \Delta d\|_{L^2}
        \le C\|\nabla d\|_{H^1}^3 \|\nabla^3 d\|_{L^2}
        \le C(\varepsilon)+\varepsilon\|\nabla^3 d\|_{L^2}^2.
\end{aligned}
\end{equation*}
Substituting ${I\!I}_{5i}(1 \le i \le 4)$ into \eqref{4.37} and  choosing $\varepsilon$ small enough, it arrives at
\begin{equation}\label{4.38}
\begin{aligned}
&\quad \|\nabla^3 d\|_{L^2}^2+\int_0^t \|\nabla^2 d_t\|_{L^2}^2d\tau\\
&\le  C +C(\delta)\int_0^t(1+\|\sqrt{\rho}u_t\|_{L^2}^2+\|\nabla^3 d\|_{L^2}^2)\|\nabla^3 d\|_{L^2}^2d\tau
       +\delta \int_0^t \|\nabla u_t\|_{L^2}^2 d\tau.
\end{aligned}
\end{equation}
Adding \eqref{4.38} to \eqref{4.33} and choosing $\delta$ and $\eta$ suitably small, we obtain
\begin{equation*}
\begin{aligned}
&\quad (\|\sqrt{\rho}u_t\|_{L^2}^2+\|\nabla^3 d\|_{L^2}^2)+\int_0^t (\|\nabla u_t\|_{L^2}^2+\|\nabla^2 d_t\|_{L^2}^2) d\tau\\
&\le C+C\int_0^t(1+\|\sqrt{\rho}u_t\|_{L^2}^2+\|\nabla^3 d\|_{L^2}^2)(1+\|\nabla^3 d\|_{L^2}^2)d\tau,
\end{aligned}
\end{equation*}
which, together with Gr\"{o}nwall inequality and \eqref{4.7}, completes the proof.
\end{proof}

Finally, we derive the high order estimate for the strong solution $(\rho, u, P, d)$.

\begin{lemm}\label{lemma4.5}
Under the condition \eqref{4.1}, it holds that for $0 \le T<T^*$,
\begin{equation}\label{4.39}
\begin{aligned}
&\underset{0 \le t \le T}{\sup}(\|\rho_t\|_{L^q}+\|u\|_{H^2}+\|P\|_{H^1}+\|\nabla d_t\|_{L^2})\\
&\quad +\int_0^T \left(\|u\|_{W^{2,r}}^2+\|P\|_{W^{1,r}}^2+\|d_{tt}\|_{L^2}^2+\|\nabla^4 d\|_{L^2}^2\right)dt\le C.
\end{aligned}
\end{equation}
\end{lemm}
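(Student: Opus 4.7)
The plan is to derive each of the quantities in the lemma by combining the estimates already established in Lemmas \ref{lemma4.1}--\ref{lemma4.4} with elliptic regularity and direct manipulation of the system \eqref{1.1}. The key observation is that the hard work (high-order energy identities and Gr\"onwall loops) has already been completed in Lemma \ref{lemma4.4}, so what remains is essentially bookkeeping: reading off consequences of \eqref{4.28}, \eqref{4.31}, and the equations themselves.

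For the sup-in-time bounds, $\|u\|_{H^2}+\|P\|_{H^1}$ follows immediately from the stationary Stokes estimate \eqref{4.28}, since $\|\sqrt{\rho}u_t\|_{L^2}$ and $\|\nabla d\|_{H^2}$ are uniformly controlled by Lemma \ref{lemma4.4}. For $\|\rho_t\|_{L^q}$, the continuity equation gives $\rho_t=-u\cdot\nabla\rho$, so H\"older's inequality together with the embedding $H^2\hookrightarrow L^\infty$ (valid for $n\le 3$) yields $\|\rho_t\|_{L^q}\le C\|u\|_{H^2}\|\nabla\rho\|_{L^q}$, uniformly bounded under \eqref{4.1}. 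For $\|\nabla d_t\|_{L^2}$, I would take the gradient of $\eqref{1.1}_4$ to obtain $\nabla d_t=\nabla\Delta d+\nabla(|\nabla d|^2 d-u\cdot\nabla d)$ and estimate each product by the sup bounds on $\|d\|_{H^3}$ and $\|u\|_{H^2}$ (essentially a repetition of the argument yielding \eqref{3.32}).

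For the time-integrated bounds, $\int_0^T(\|u\|_{W^{2,r}}^2+\|P\|_{W^{1,r}}^2)d\tau$ follows by squaring and integrating \eqref{4.31}, whose right-hand side is $L^2$-in-time via $\int_0^T\|\nabla u_t\|_{L^2}^2 d\tau<\infty$ and the sup bound on $\|\nabla^3 d\|_{L^2}$. For $\int_0^T\|d_{tt}\|_{L^2}^2 d\tau$, I differentiate $\eqref{1.1}_4$ in $t$ to obtain
\[
d_{tt}=\Delta d_t+2(\nabla d\cdot\nabla d_t)d+|\nabla d|^2 d_t-u_t\cdot\nabla d-u\cdot\nabla d_t.
\]
The dominant terms $\|\Delta d_t\|_{L^2}^2$ and $\|u_t\cdot\nabla d\|_{L^2}^2\le C\|\nabla u_t\|_{L^2}^2\|\nabla d\|_{H^1}^2$ are both integrable in time by Lemma \ref{lemma4.4}, while the remaining terms are products of quantities already bounded uniformly. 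Finally, for $\int_0^T\|\nabla^4 d\|_{L^2}^2 d\tau$, I would apply elliptic regularity with the Neumann condition $\partial_\nu d|_{\partial\Omega}=0$ to the identity $-\Delta d=-d_t+|\nabla d|^2 d-u\cdot\nabla d$, obtaining $\|d\|_{H^4}\le C(\|\Delta d\|_{H^2}+\|d\|_{L^2})$; the term $\|d_t\|_{H^2}^2$ is integrable thanks to Corollary \ref{lemma4.3}, the sup bound on $\|\nabla d_t\|_{L^2}$ just obtained, and $\int_0^T\|\nabla^2 d_t\|_{L^2}^2 d\tau<\infty$, while the $H^2$-norm of the nonlinearity is uniformly bounded via the algebra property of $H^2$ in dimension $n\le 3$ together with the sup bounds on $u\in H^2$ and $d\in H^3$.

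The main obstacle I anticipate is the careful handling of the $H^2$-norm of $u\cdot\nabla d$ in the last step; the resolution is to expand via the product rule and apply Gagliardo--Nirenberg (or invoke the Banach algebra structure of $H^2$), so that each factor lives in a space already controlled by Lemmas \ref{lemma4.1}--\ref{lemma4.4}. No new Gr\"onwall argument should be required, since the bootstrap structure here is linear: each quantity is estimated directly in terms of lower-order bounds previously established.
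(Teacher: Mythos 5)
Your proposal is correct and follows essentially the same route as the paper: the sup bounds on $\|u\|_{H^2}+\|P\|_{H^1}$, $\|\rho_t\|_{L^q}$ and $\|\nabla d_t\|_{L^2}$ come from \eqref{4.28}, the continuity equation with $H^2\hookrightarrow L^\infty$, and the gradient of $\eqref{1.1}_4$, while the time-integrated bounds come from squaring \eqref{4.31}, the time-differentiated equation for $d_{tt}$, and elliptic (Neumann) regularity reducing $\|\nabla^4 d\|_{L^2}$ to $C(\|\nabla^2 d_t\|_{L^2}+1)$, exactly as in the paper's proof. The only cosmetic differences (estimating $\|d_{tt}\|_{L^2}^2$ in time rather than the paper's $L^1$-in-time version, and invoking the $H^2$ algebra instead of expanding $\nabla^2(u\cdot\nabla d)$ term by term) do not change the argument.
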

\begin{proof}
By \eqref{4.28} and \eqref{4.31}, it is easy to deduce
\begin{equation}\label{4.40}
\|u\|_{H^2}+\|P\|_{H^1}+\int_0^T \left(\|u\|_{W^{2,r}}^2+\|P\|_{W^{1,r}}^2\right)dt\le C,
\end{equation}
which, together with $\eqref{1.1}_1$, yields
\begin{equation}\label{4.41}
\|\rho_t\|_{L^q}\le \|u\|_{L^\infty}\|\nabla \rho\|_{L^q}\le C\|u\|_{H^2}\|\nabla \rho\|_{L^q} \le C.
\end{equation}
By \eqref{4.13}, \eqref{4.25}, \eqref{4.40} and Sobolev inequality, we obtain
\begin{equation}\label{4.42}
\begin{aligned}
&\quad \|\nabla d_t\|_{L^2}
=\|\nabla \Delta d+\nabla (|\nabla d|^2 d-u \cdot \nabla d)\|_{L^2}\\
& \le C(\|\nabla^3 d\|_{L^2}+\||\nabla u| |\nabla d|\|_{L^2}+\||u| |\nabla^2 d|\|_{L^2}
         +\|\nabla d\|_{L^6}^3+\||\nabla d| |\nabla^2 d|\|_{L^2})\\
& \le C(\|\nabla^3 d\|_{L^2}+ \|\nabla u\|_{L^2}\|\nabla d\|_{H^2}+\|u\|_{H^2}\|\nabla^2 d\|_{L^2}
        +\|\nabla d\|_{H^1}^3+\|\nabla d\|_{H^2}\|\nabla^2 d\|_{L^2})\\
&\le C.
\end{aligned}
\end{equation}
Differentiating $\eqref{1.1}_4$ with respect to $t$, it arrives at
\begin{equation}\label{4.43}
d_{tt}-\Delta d_t=(|\nabla d|^2 d-u \cdot \nabla d)_t.
\end{equation}
Taking $L^2$ estimate to \eqref{4.43}, by virtue of \eqref{4.24} and \eqref{4.40}, we obtain
\begin{equation}\label{4.44}
\begin{aligned}
&\quad \int_0^T \|d_{tt}\|_{L^2} dt\\
&\le C\int_0^T(\|\Delta d_t\|_{L^2}+\| |\nabla d_t| |\nabla d| \|_{L^2}+ \| |\nabla d|^2 |d_t|\|_{L^2}
               +\| |u_t| |\nabla d|\|_{L^2}+\| |u| |\nabla d_t|\|_{L^2})dt\\
& \le C\int_0^T\left(\|\nabla^2 d_t\|_{L^2}+\|\nabla d_t\|_{L^2}\|\nabla d\|_{H^2}+\|d_t\|_{L^2}\|\nabla d\|_{H^2}^2
               +\|\nabla u_t\|_{L^2}\|\nabla d\|_{H^1}\right.\\
& \quad \quad \quad \quad \left.+\|u\|_{H^2}\|\nabla d_t\|_{L^2}\right)dt\\
& \le C\int_0^T(\|\nabla^2 d_t\|_{L^2}+\|\nabla u_t\|_{L^2}+1)dt\le C.
\end{aligned}
\end{equation}
Taking $\nabla $ operator to $\eqref{1.1}_1$, we get
\begin{equation}\label{4.45}
-\nabla^2 \Delta d=\nabla^2(|\nabla d|^2 d- u\cdot \nabla d -d_t).
\end{equation}
Applying $H^4$ estimate to $d$ with boundary $\left.\frac{\partial d}{\partial \nu}\right|_{\partial \Omega}=0$ and \eqref{4.45}
we get
\begin{equation}\label{4.46}
\begin{aligned}
&\quad \|\nabla^4 d\|_{L^2} \le C(\|\nabla^2 \Delta d\|_{L^2}+\|\nabla^2 d\|_{H^1})\\
&\le C (\|\nabla^2(|\nabla d|^2 d- u\cdot \nabla d -d_t)\|_{L^2}+1)\\
&\le C \left(\|\nabla^2 d_t|\|_{L^2}+\||\nabla^2 d|^2\|_{L^2}+\||\nabla d|^2 |\nabla^2 d|\|_{L^2}
             +\| |\nabla^2 u| |\nabla d|\|_{L^2}\right.\\
&\quad \quad \left.  +\||\nabla u| |\nabla^2 d|\|_{L^2}+\| |u| |\nabla^3 d|\|_{L^2}\right)\\
&\le C\left(\|\nabla^2 d_t|\|_{L^2}+\|\nabla^2 d\|_{L^4}^2+\|\nabla d\|_{H^2}^2 \|\nabla^2 d\|_{L^2}
        +\|\nabla d\|_{H^2}\|\nabla^2 u\|_{L^2}\right.\\
&\quad \quad \left.+\|\nabla u\|_{H^1}\|\nabla^2 d\|_{H^1}+\|u\|_{H^2}\|\nabla^3 d\|_{L^2}\right)\\
&\le C(\|\nabla^2 d_t|\|_{L^2}+1),
\end{aligned}
\end{equation}
which means
\begin{equation*}
\int_0^T \|\nabla^4 d\|_{L^2}^2 dt \le C,
\end{equation*}
which completes the proof.
\end{proof}

After having the Lemmas \ref{lemma4.1}-\ref{lemma4.5} at hand, it is easy to apply the Theorem \ref{Theorem1.1} to extend
the strong solution $(\rho, u, P, d)$ beyond time $T^*$. Therefore, we complete the proof of Theorem \ref{blowupcriterion1}.

\section{Proof of Theorem \ref{blowupcriterion3}}
\quad In this section, we will also give the proof of Theorem \ref{blowupcriterion3} by contradiction. Assume $0 < T^* <\infty$ to be the maximum time for the existence of strong solution $(\rho, u, P, d)$ to \eqref{1.1}-\eqref{1.4}. Suppose that \eqref{1.10}
were false, that is
\begin{equation}\label{5.1}
M_1 \triangleq \underset{T \rightarrow T^*}{\lim}\left(\|\nabla \rho\|_{L^\infty(0,T;L^q)}
   +\|\nabla d\|_{L^{s}(0,T;L_w^{r})}\right)< \infty.
\end{equation}

\begin{lemm}\label{lemma5.1}
Under the condition \eqref{5.1}, it holds that for $0\le T< T^*$,
\begin{equation}\label{5.2}
\int_0^T \int |\nabla^2 d|^2 dxd\tau \le C,
\end{equation}
where $C$ denotes generic constants depending only on $\Omega, M_1,  T^*$ and the initial data.
\end{lemm}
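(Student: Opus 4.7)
The plan is to use the basic energy estimate (which is the $n=2$ analog of Lemma \ref{lemma4.1}) to get $\|\nabla d\|_{L^\infty(0,T;L^2)} \le C$ and $\||\nabla d|^2 d+\Delta d\|_{L^2(0,T;L^2)} \le C$, and then leverage the two-dimensional Lemma \ref{lemma2.3} together with the Serrin-type condition \eqref{5.1} to close the estimate on $\|\nabla^2 d\|_{L^2(0,T;L^2)}$.

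First I would redo the computation leading to \eqref{4.6} (which is dimension-free) and obtain the bounds
\begin{equation*}
\sup_{0\le t\le T}\|\nabla d(t)\|_{L^2}^2 + \int_0^T \||\nabla d|^2 d+\Delta d\|_{L^2}^2\, d\tau \le C.
\end{equation*}
Since $|d|=1$, the triangle inequality gives $|\Delta d|^2 \le 2||\nabla d|^2 d+\Delta d|^2 + 2|\nabla d|^4$, so that
\begin{equation*}
\int_0^T \|\Delta d\|_{L^2}^2\, d\tau \le C + 2\int_0^T \|\nabla d\|_{L^4}^4\, d\tau,
\end{equation*}
and elliptic regularity with Neumann boundary condition upgrades the $\|\Delta d\|_{L^2}$ bound into a $\|\nabla^2 d\|_{L^2}$ bound. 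The task therefore reduces to controlling $\int_0^T\|\nabla d\|_{L^4}^4\,d\tau$.

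The main step is to apply Lemma \ref{lemma2.3} in dimension $n=2$ with $f=g=\nabla d$ to obtain
\begin{equation*}
\|\nabla d\|_{L^4}^4 = \||\nabla d|^2\|_{L^2}^2 \le \varepsilon\|\nabla d\|_{H^1}^2 + C(\varepsilon)\|\nabla d\|_{L^r_w}^{\frac{2r}{r-2}}\|\nabla d\|_{L^2}^2.
\end{equation*}
Using $\|\nabla d\|_{L^2}^2 \le C$ and $\|\nabla d\|_{H^1}^2 \le C + \|\nabla^2 d\|_{L^2}^2$, integration in time yields
\begin{equation*}
\int_0^T\|\nabla d\|_{L^4}^4\,d\tau \le C\varepsilon\int_0^T\|\nabla^2 d\|_{L^2}^2\,d\tau + C(\varepsilon)\int_0^T\|\nabla d\|_{L^r_w}^{\frac{2r}{r-2}}\,d\tau + C.
\end{equation*}
The key structural point is that condition \eqref{1.12}, namely $2/s+2/r\le 1$, is exactly equivalent to $\tfrac{2r}{r-2}\le s$, which is what is needed to dominate the last integral via Hölder in time:
\begin{equation*}
\int_0^T\|\nabla d\|_{L^r_w}^{\frac{2r}{r-2}}\,d\tau \le T^{1-\frac{2r}{(r-2)s}}\Bigl(\int_0^T\|\nabla d\|_{L^r_w}^{s}\,d\tau\Bigr)^{\frac{2r}{(r-2)s}} \le C(M_1,T^*).
\end{equation*}

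Combining these ingredients yields an inequality of the form
\begin{equation*}
\int_0^T\|\nabla^2 d\|_{L^2}^2\,d\tau \le C + C\varepsilon\int_0^T\|\nabla^2 d\|_{L^2}^2\,d\tau,
\end{equation*}
and choosing $\varepsilon$ sufficiently small absorbs the last term into the left-hand side, proving \eqref{5.2}. The only delicate point is the matching of exponents in the Hölder step, which is precisely dictated by \eqref{1.12}; no Gr\"onwall argument is needed here because the $\|\nabla d\|_{L^2}$ bound from the energy identity is a priori, so the feedback only enters through the $L^\infty(0,T;L^2)$ norm of $\nabla d$ which is already controlled.
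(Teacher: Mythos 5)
Your proposal is correct and follows essentially the same route as the paper: basic energy estimate, reduction of $\|\Delta d\|_{L^2}^2$ to $\|\nabla d\|_{L^4}^4$ plus the tension field $\Delta d+|\nabla d|^2 d$, Lemma \ref{lemma2.3} with $f=g=\nabla d$, Neumann elliptic regularity, and absorption using $\frac{2r}{r-2}\le s$ from \eqref{1.12}. The only cosmetic differences are that the paper uses the exact pointwise identity $|\Delta d|^2=|\nabla d|^4+\bigl|\Delta d+|\nabla d|^2 d\bigr|^2$ instead of your triangle inequality, and bounds $\|\nabla d\|_{L_w^r}^{2r/(r-2)}$ by $1+\|\nabla d\|_{L_w^r}^{s}$ pointwise in time rather than via your H\"older-in-time step.
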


\begin{proof}
By virtue of $|d|=1$, we have $d \cdot \Delta d=|\nabla d|^2$. Then, by \eqref{4.2}, we obtain
\begin{equation}\label{5.3}
\begin{aligned}
\int_0^T \int |\Delta d|^2 dxd\tau
&=\int_0^T \int |\nabla d|^4 dx d\tau+\int_0^T \int \left|\Delta d+|\nabla d|^2 d\right|^2 dxd\tau\\
&\le \int_0^T \int |\nabla d|^4 dx \tau+C.
\end{aligned}
\end{equation}
By Lemma \ref{lemma2.3} and \eqref{4.2} it is easy to deduce
\begin{equation*}
\begin{aligned}
\int |\nabla d|^4 dx
& =\int |\nabla d|^2 |\nabla d|^2 dx
  \le \varepsilon \|\nabla d\|_{H^1}^2
    +C(\varepsilon)\|\nabla d\|_{L_w^r}^\frac{2r}{r-2}\|\nabla d\|_{L^2}^2\\
&\le \varepsilon \|\nabla^2 d\|_{L^2}^2
    +C(\varepsilon)(1+\|\nabla d\|_{L_w^r}^\frac{2r}{r-2}).\\
\end{aligned}
\end{equation*}
Applying the elliptic regularity with Neumann boundary, it arrives at
\begin{equation*}
\begin{aligned}
\int_0^T \|\nabla^2 d\|_{L^2}^2d\tau
& \le C\int_0^T (\|\Delta d\|_{L^2}^2+\|d\|_{H^1}^2)d\tau\\
& \le \varepsilon\int_0^T \|\nabla^2 d\|_{L^2}^2 d\tau+C(\varepsilon)\int_0^T (1+\|\nabla d\|_{L_w^r}^\frac{2r}{r-2})d\tau\\
& \le  \varepsilon\int_0^T \|\nabla^2 d\|_{L^2}^2 d\tau+C(\varepsilon)\int_0^T (1+\|\nabla d\|_{L_w^r}^s)d\tau\\
& \le   \varepsilon\int_0^T \|\nabla^2 d\|_{L^2}^2 d\tau+C,
\end{aligned}
\end{equation*}
where, $r$ and $s$ satisfy the condition \eqref{1.11}. Choosing $\varepsilon=\frac{1}{2},$  we get
\begin{equation*}
\int_0^T \int |\nabla^2 d|^2 dxd\tau \le C,
\end{equation*}
which completes the proof.
\end{proof}

\begin{lemm}\label{lemma5.2}
Under the condition \eqref{5.1}, it holds that for $0 \le T <T^*$,
\begin{equation}\label{5.4}
\begin{aligned}
&\underset{0 \le t \le T}{\sup}\left(\|\nabla u\|_{L^2}^2+\|\nabla d\|_{L^4}^4+\|\nabla^2 d\|_{L^2}^2\right)\\
&+\int_0^T \int \left(|\sqrt{\rho}\dot{u}|^2+|\nabla d|^2 |\nabla^2 d|^2 +|\nabla^3 d|^2+|\nabla d_t|^2\right)dxd\tau < \infty.
\end{aligned}
\end{equation}
\end{lemm}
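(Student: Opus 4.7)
The proof proceeds in the spirit of Lemma 4.2 but in two dimensions and, crucially, without any Lorentz-space information on $u$. Testing $(1.1)_2$ against $u_t$, then applying $\nabla$ to $(1.1)_4$ and pairing it successively with $4|\nabla d|^2\nabla d$, $\nabla\Delta d$ and $\nabla d_t$, and combining the four identities with suitable positive multipliers (as in the passage from (4.12), (4.18)--(4.20) to (4.22)), one obtains the two-dimensional analog
\begin{equation*}
\begin{aligned}
&\int\bigl(|\nabla u|^2+|\nabla^2 d|^2+|\nabla d|^4\bigr)(t)\,dx
+\int_0^t\!\!\int\bigl(\rho|\dot u|^2+|\nabla d|^2|\nabla^2 d|^2+|\nabla^3 d|^2+|\nabla d_t|^2\bigr)\,dxd\tau\\
&\le C+C\!\int_0^t\!\!\int\bigl(1+|\nabla^2 d|^2+|\nabla d|^4+|u|^2|\nabla u|^2+|u|^2|\nabla^2 d|^2+|\nabla d|^2|\nabla u|^2+|\nabla d|^6\bigr)dxd\tau,
\end{aligned}
\end{equation*}
with the density-gradient term $\int|u||\nabla\rho||\nabla u|^2$ treated exactly as in (4.11) using $\|\nabla\rho\|_{L^\infty(L^q)}\le M_1$. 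Throughout, stray $\|\nabla u\|_{H^1}^2$ pieces are absorbed via the Stokes estimate $\|u\|_{H^2}+\|P\|_{H^1}\le C\bigl(\|\sqrt\rho\dot u\|_{L^2}+\||\nabla d||\nabla^2 d|\|_{L^2}\bigr)$ from Lemma \ref{Lemma2.1}.

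The director-type integrands $|\nabla d|^2|\nabla u|^2$, $|\nabla d|^2|\nabla^2 d|^2$ and $|\nabla d|^6$ are then handled by Lemma \ref{lemma2.3} with $n=2$ and $f=\nabla d$. For instance
\begin{equation*}
\int|\nabla d|^2|\nabla u|^2\,dx\le\varepsilon\|\nabla u\|_{H^1}^2+C(\varepsilon)\,\|\nabla d\|_{L^r_w}^{\frac{2r}{r-2}}\|\nabla u\|_{L^2}^2,
\end{equation*}
and the scaling condition $\frac{2}{s}+\frac{2}{r}\le 1$ gives $\frac{2r}{r-2}\le s$, so by H\"older the Lorentz factor lies in $L^1(0,T)$ and is controlled by $\|\nabla d\|_{L^s(L^r_w)}\le M_1$. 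The remaining two director terms are estimated in the same way.

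The genuinely two-dimensional step is the treatment of the convective terms $|u|^2|\nabla u|^2$ and $|u|^2|\nabla^2 d|^2$, for which (5.1) provides no control. Here I would exploit $u\in H_0^1$ together with Poincar\'e to get $\|u\|_{L^p}\le C\|\nabla u\|_{L^2}$ for every finite $p$, and use the 2D Gagliardo--Nirenberg inequality $\|v\|_{L^4}^2\le C\|v\|_{L^2}\|\nabla v\|_{L^2}$ to obtain
\begin{equation*}
\int|u|^2|\nabla u|^2\,dx\le C\|\nabla u\|_{L^2}^3\|u\|_{H^2},\qquad \int|u|^2|\nabla^2 d|^2\,dx\le C\|\nabla u\|_{L^2}^2\|\nabla^2 d\|_{L^2}\|\nabla^2 d\|_{H^1},
\end{equation*}
absorbing $\|u\|_{H^2}$ and $\|\nabla^2 d\|_{H^1}$ into the dissipation via Lemma \ref{Lemma2.1} and standard elliptic regularity. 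The surviving prefactors $\|\nabla u\|_{L^2}^4$ and $\|\nabla u\|_{L^2}^2\|\nabla^2 d\|_{L^2}^2$ are rewritten as $\|\nabla u\|_{L^2}^2\cdot\Psi(\tau)$ (respectively $\|\nabla^2 d\|_{L^2}^2\cdot\Psi(\tau)$), where $\Psi(t):=\|\nabla u\|_{L^2}^2+\|\nabla^2 d\|_{L^2}^2+\|\nabla d\|_{L^4}^4$; note that $\|\nabla u\|_{L^2}^2$ lies in $L^1(0,T)$ by Lemma \ref{lemma4.1} and $\|\nabla^2 d\|_{L^2}^2$ in $L^1(0,T)$ by Lemma \ref{lemma5.1}. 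At the borderline scaling (essentially $r=\infty$) a factor $\|\nabla d\|_{L^\infty}^2$ appears and is controlled by the logarithmic Sobolev inequality (Lemma \ref{lemma2.4}, applied to $f=\nabla d$), producing an extra $\log(e+\Psi)$ multiplied by an $L^1(0,T)$ function.

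Collecting every estimate yields an inequality of logarithmic Gronwall form
\begin{equation*}
\Psi(t)+\int_0^t\Phi(\tau)\,d\tau\le C+C\int_0^t h(\tau)\,\Psi(\tau)\,\log\bigl(e+\Psi(\tau)\bigr)d\tau,\qquad h\in L^1(0,T),
\end{equation*}
where $\Phi$ is the dissipation appearing in (5.4); a standard logarithmic Gronwall argument then delivers $\Psi\in L^\infty(0,T)$ and $\Phi\in L^1(0,T)$, which is exactly (5.4). The main obstacle is precisely the absence of any $u$-norm in (5.1): it forces all velocity control to be extracted from the basic energy identity of Lemma \ref{lemma4.1} through sharp 2D Sobolev inequalities, and makes Lemma \ref{lemma2.4} indispensable for closing the estimates at the critical scaling $\frac{2}{s}+\frac{2}{r}\le 1$.
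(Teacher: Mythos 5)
Your reduction breaks down exactly at the convective terms $\int|u|^2|\nabla u|^2dx$ and $\int|u|^2|\nabla^2 d|^2dx$, which is precisely where \eqref{5.1} gives no information. Your own Gagliardo--Nirenberg bounds read $\int|u|^2|\nabla u|^2dx\le C\|\nabla u\|_{L^2}^3\|u\|_{H^2}$ and $\int|u|^2|\nabla^2 d|^2dx\le C\|\nabla u\|_{L^2}^2\|\nabla^2 d\|_{L^2}\|\nabla^2 d\|_{H^1}$; after Young's inequality against the dissipation (through the Stokes estimate of Lemma \ref{Lemma2.1} and elliptic regularity) the surviving terms are $C\|\nabla u\|_{L^2}^6$ and $C\|\nabla u\|_{L^2}^4\|\nabla^2 d\|_{L^2}^2$, not the degree-four quantities you list: Young squares the factor $\|\nabla u\|_{L^2}^3$. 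Writing $h=\|\nabla u\|_{L^2}^2$ (or $\|\nabla^2 d\|_{L^2}^2$), which is in $L^1(0,T)$ by Lemma \ref{lemma4.1} (resp.\ Lemma \ref{lemma5.1}), these leftovers are of the form $h\,\Psi^2$, i.e.\ \emph{quadratic} in $\Psi$, so the inequality you actually obtain is $\Psi'\lesssim h\Psi^2+\dots$, not $\Psi'\lesssim h\Psi\log(e+\Psi)$, and the logarithmic Gronwall lemma you invoke does not apply. A quadratic Gronwall inequality with an $L^1$ coefficient cannot be closed globally, and the obvious repair of working on $(s,T)$ with $s$ near $T^*$ fails naively, because the smallness of $\int_s^{T^*}h\,d\tau$ that it requires depends on $\Psi(s)$, which is exactly the quantity one cannot control as $s\to T^*$.

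The paper closes this point differently, and the difference is essential. In \eqref{5.5} the factors $\|u\|_{L^\infty}^2$ and $\|\nabla d\|_{L^\infty}^2$ are kept as coefficients; Gronwall gives $\Phi(T)\le C\Phi(s)\exp\bigl[C\int_s^T(1+\|u\|_{L^\infty}^2+\|\nabla d\|_{L^\infty}^2)d\tau\bigr]$, and then the \emph{space-time} logarithmic inequality of Lemma \ref{lemma2.4} is applied to both $u$ and $\nabla d$ on $(s,T)$, with $\|u\|_{L^2(s,T;W^{1,3})}$ and $\|\nabla d\|_{L^2(s,T;W^{1,3})}$ controlled by the dissipation, hence by $\Phi(T)$ itself (cf.\ \eqref{5.8}--\eqref{5.9}). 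This produces $\Phi(T)\le C\Phi(s)\Phi(T)^{C(\|u\|_{L^2(s,T;H^1)}^2+\|\nabla d\|_{L^2(s,T;H^1)}^2)}$, and since by \eqref{4.2} and Lemma \ref{lemma5.1} the exponent tends to $0$ as $s\to T^*$ uniformly in $T$, one fixes $s$ so that it is at most $\tfrac12$ and concludes $\Phi(T)\le C\Phi(s)^2$; the needed smallness is independent of the size of $\Phi(s)$, which is what legitimizes the absorption (and is why the final bound depends on $s$, as the Remark after the lemma notes). Your proposal uses Lemma \ref{lemma2.4} only for a borderline $\|\nabla d\|_{L^\infty}$ factor and thereby misses its essential role, namely controlling $\|u\|_{L^\infty}$. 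Your Lorentz-space treatment of $|\nabla d|^2|\nabla u|^2$, $|\nabla d|^2|\nabla^2 d|^2$ and $|\nabla d|^6$ via Lemma \ref{lemma2.3} is correct (it mirrors Section 4), but it does not repair the velocity terms, so as written the proof has a genuine gap.
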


\begin{proof}
By \eqref{4.21}, it is easy to deduce, for any $0 \le s< t\le T,$
\begin{equation}\label{5.5}
\begin{aligned}
&\quad \int(|\nabla u|^2+|\nabla^2 d|^2+|\nabla d|^4)(t)dx
    +\int_s^t\int\left(\rho |\dot{u}|^2+|\nabla d|^2 |\nabla^2 d|^2+|\nabla(|\nabla d|^2)|^2\right.\\
&\quad \quad \left.+|\nabla^3 d|^2+|\nabla d_t|^2\right)dxd\tau\\
&\le C+C\int(|\nabla u|^2+|\nabla^2 d|^2+|\nabla d|^4)(s)dx+C\int_s^t (1+\|u\|_{L^\infty}^2+\|\nabla d\|_{L^\infty}^2) \\
&\quad \quad \times(1+\|\nabla u\|_{L^2}^2+\|\nabla^2 d\|_{L^2}^2+\|\nabla d\|_{L^4}^4)d\tau.
\end{aligned}
\end{equation}
Let
\begin{equation*}
\begin{aligned}
&A(t)=e+\int(|\nabla u|^2+|\nabla^2 d|^2+|\nabla d|^4)(t)dx
    +\int_0^t\int\left(\rho |\dot{u}|^2+|\nabla d|^2 |\nabla^2 d|^2+|\nabla(|\nabla d|^2)|^2\right.\\
&\quad \quad \quad \left. +|\nabla^3 d|^2+|\nabla d_t|^2\right)dxd\tau,
\end{aligned}
\end{equation*}
 then we deduce from \eqref{5.5} that
\begin{equation*}
A(t) \le CA(s)+C\int_0^t (1+\|u\|_{L^\infty}^2+\|\nabla d\|_{L^\infty}^2)A(\tau)d\tau,
\end{equation*}
which, together with the Gr\"{o}nwall inequality, gives directly that
\begin{equation*}
A(t) \le CA(s)\exp\left[C\int_s^t (1+\|u\|_{L^\infty}^2+\|\nabla d\|_{L^\infty}^2)d\tau\right].
\end{equation*}
Let
\begin{equation*}
\begin{aligned}
&\Phi(t)=e+\underset{0 \le \tau \le t}{\sup}(\|\nabla u\|_{L^2}^2+\|\nabla^2 d\|_{L^2}^2+\|\nabla d\|_{L^4}^4)
     +\int_0^t\int\left(\rho |\dot{u}|^2+|\nabla d|^2 |\nabla^2 d|^2+|\nabla(|\nabla d|^2)|^2\right.\\
&\quad \quad \quad \left.+|\nabla^3 d|^2+|\nabla d_t|^2\right)dxd\tau,
\end{aligned}
\end{equation*}
then we have
\begin{equation}\label{5.6}
\Phi(T) \le C\Phi(s)\exp\left[C\int_s^T (1+\|u\|_{L^\infty}^2+\|\nabla d\|_{L^\infty}^2)d\tau\right].
\end{equation}
Now we control the term $\int_s^T (1+\|u\|_{L^\infty}^2+\|\nabla d\|_{L^\infty}^2)d\tau$. Indeed, by the Lemma \ref{lemma2.4}, we have
\begin{equation}\label{5.7}
\begin{aligned}
& \quad C\int_s^T (1+\|u\|_{L^\infty}^2+\|\nabla d\|_{L^\infty}^2)d\tau\\
& \le C\left[(T-s)+\left(\|u\|_{L^2(s,T; H^1)}^2+\|\nabla d\|_{L^2(s,T; H^1)}^2\right)
        \left(\ln\left(e+\|u\|_{L^2(s,T; W^{1,3})}\right) \right.\right.\\
&\quad \quad \quad \left.\left.+\ln\left(e+\|\nabla d\|_{L^2(s,T; W^{1,3})}\right)\right)\right].
\end{aligned}
\end{equation}
Applying the Sobolev inequality and regularity estimate, it arrives at
\begin{equation}\label{5.8}
\|u\|_{W^{1,3}}^2 \le C\|u\|_{W^{2,2}}^2 \le C(\|\sqrt{\rho}\dot{u}\|_{L^2}^2+\||\nabla d| |\nabla^2 d|\|_{L^2}^2),
\end{equation}
\begin{equation}\label{5.9}
\|\nabla d\|_{W^{1,3}}^2 \le C\|\nabla d\|_{W^{2,2}}^2 \le C(1+\|\nabla^2 d\|_{L^2}^2+\|\nabla^3 d\|_{L^2}^2).
\end{equation}
Substituting \eqref{5.8}-\eqref{5.9} into \eqref{5.7} yields
\begin{equation*}
C\int_s^T (1+\|u\|_{L^\infty}^2+\|\nabla d\|_{L^\infty}^2)d\tau
\le C+\ln [C\Phi(T)]^{C\left(\|u\|_{L^2(s,T; H^1)}^2+\|\nabla d\|_{L^2(s,T; H^1)}^2\right)},
\end{equation*}
which, together with \eqref{5.6}, gives directly that
\begin{equation*}
\Phi(T) \le C\Phi(s)\Phi(T)^{C\left(\|u\|_{L^2(s,T; H^1)}^2+\|\nabla d\|_{L^2(s,T; H^1)}^2\right)}.
\end{equation*}
Thanks to \eqref{4.2} and \eqref{5.2}, we can choose $s$ closes enough to $T^*$ such that
\begin{equation*}
\underset{T\rightarrow T^*}{\lim}{C\left(\|u\|_{L^2(s,T; H^1)}^2+\|\nabla d\|_{L^2(s,T; H^1)}^2\right)} \le \frac{1}{2},
\end{equation*}
which, means
\begin{equation*}
\Phi(T)\le C\Phi(s)^2 <\infty.
\end{equation*}
 Thus, we complete the proof.
\end{proof}

\begin{rema}
Unfortunately, we cannot derive the bound, just depending on the initial data, for \eqref{5.4} owing to the technique used here. However, the bound is unform with respect to time in \eqref{5.4} since $s$, which closed enough to $T^*$, is fixed in process of the proof for Lemma \ref{lemma5.2}. Thus, we can rewrite \eqref{5.4} as
\begin{equation*}
\begin{aligned}
&\underset{t\in [0,T]}{\sup}\int \left(|\nabla u|^2+ |\nabla^2 d|^2+|\nabla d|^4\right)dx\\
&+\int_0^T \int \left(\rho |\dot{u}|^2+|\nabla d|^2|\nabla^2 d|^2+|\nabla d_t|^2+|\nabla^3 d|^2\right)dxdt\le C(s),
\end{aligned}
\end{equation*}
where and in what follows, $C(s)$ denotes generic constants depending not only on $\Omega, M_1, T^*$ and the initial data, but also on the data that is fixed on time $s$.
\end{rema}

Similar to Lemma \ref{lemma4.3}, as a corollary of Lemma \eqref{lemma4.2}, we have the following estimate.

\begin{coro}\label{lemma5.4}
Under the condition \eqref{5.1}, it holds that for $0 \le T<T^*$,
\begin{equation}\label{5.10}
\underset{0 \le t \le T}{\sup} \|d_t\|_{L^2}+\int_0^T \| u \|_{H^2}^2 d\tau \le C(s).
\end{equation}
\end{coro}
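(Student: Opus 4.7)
The plan is to treat this as a direct consequence of Lemma~\ref{lemma5.2}, exactly mirroring how Corollary~\ref{lemma4.3} followed from Lemma~\ref{lemma4.2} in the three-dimensional case. The two quantities on the left of \eqref{5.10} are controlled by different mechanisms: $\|d_t\|_{L^2}$ via the orientation equation $\eqref{1.1}_4$, and $\|u\|_{H^2}$ via the stationary Stokes regularity estimate in Lemma~\ref{Lemma2.1}.

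For the first term, I would rearrange $\eqref{1.1}_4$ to write $d_t=\Delta d+|\nabla d|^2 d-u\cdot\nabla d$, then apply the triangle inequality in $L^2$:
\begin{equation*}
\|d_t\|_{L^2}\le \|\Delta d\|_{L^2}+\bigl\||\nabla d|^2\bigr\|_{L^2}+\|u\cdot\nabla d\|_{L^2}.
\end{equation*}
The first piece is controlled by $\|\nabla^2 d\|_{L^2}$, which is bounded uniformly in $t$ by Lemma~\ref{lemma5.2}. The second piece equals $\|\nabla d\|_{L^4}^2$, also uniformly bounded by Lemma~\ref{lemma5.2}. For the third piece, in dimension two I would use the H\"older bound $\|u\cdot\nabla d\|_{L^2}\le \|u\|_{L^4}\|\nabla d\|_{L^4}$ together with the 2D Sobolev embedding $\|u\|_{L^4}\le C\|u\|_{H^1}$ and the basic energy bound from Lemma~\ref{lemma4.1} on $\|\nabla u\|_{L^2}$ (combined with Poincar\'e, since $u|_{\partial\Omega}=0$). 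Each factor is then under control.

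For the time integral of $\|u\|_{H^2}^2$, I would invoke estimate \eqref{4.10} (which is a special case of Lemma~\ref{Lemma2.1} applied to the momentum equation viewed as a Stokes system with right-hand side $-\rho\dot u-\operatorname{div}(\nabla d\odot\nabla d)$), giving
\begin{equation*}
\|u\|_{H^2}+\|P\|_{H^1}\le C\bigl(\|\sqrt{\rho}\dot u\|_{L^2}+\bigl\||\nabla d|\,|\nabla^2 d|\bigr\|_{L^2}\bigr).
\end{equation*}
Squaring and integrating over $(0,T)$ reduces the claim to the bounds
\begin{equation*}
\int_0^T\|\sqrt{\rho}\dot u\|_{L^2}^2\,d\tau+\int_0^T\bigl\||\nabla d|\,|\nabla^2 d|\bigr\|_{L^2}^2\,d\tau\le C(s),
\end{equation*}
both of which are already established in \eqref{5.4}. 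Combining these two estimates delivers \eqref{5.10}.

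I do not anticipate a substantive obstacle here; everything is a bookkeeping exercise once Lemma~\ref{lemma5.2} is in hand. The only mild subtlety is keeping track of the constant $C(s)$ rather than a constant depending only on the initial data — as noted in the remark following Lemma~\ref{lemma5.2}, the bounds on $\sqrt{\rho}\dot u$ and $|\nabla d||\nabla^2 d|$ in $L^2_tL^2_x$ are valid only with the $s$-dependent constant, and this dependence is simply inherited by the bound on $\|u\|_{L^2(0,T;H^2)}^2$ and hence by the right-hand side of \eqref{5.10}.
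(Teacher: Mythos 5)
Your argument is correct and is essentially the paper's own (the paper states the corollary as a direct consequence of $\eqref{1.1}_4$ together with the Stokes regularity estimate \eqref{4.10} and the bounds of Lemma \ref{lemma5.2}, exactly as you do, including the point that the constant is $C(s)$ rather than one depending only on the data). No gaps.
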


Now, we derive the high order estimate $\|\sqrt{\rho}u_t\|_{L^2}$ and $\|\nabla^3 d\|_{L^2}$.

\begin{lemm}\label{lemma5.5}
Under the condition \eqref{5.1}, it holds that for $0 \le T <T^*$,
\begin{equation}\label{5.11}
\underset{0 \le t \le T}{\sup}(\|\sqrt{\rho}u_t\|_{L^2}^2+\|\nabla^3 d\|_{L^2}^2)
+\int_0^t(\|\nabla u_t\|_{L^2}^2+ \|\nabla^2 d_t\|_{L^2}^2)d\tau
\le  C(s).
\end{equation}
\end{lemm}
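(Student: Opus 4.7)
The plan is to mimic the strategy of Lemma \ref{lemma4.4}, adapted to dimension two, producing two coupled differential inequalities for $\|\sqrt{\rho}u_t\|_{L^2}^2$ and $\|\nabla^3 d\|_{L^2}^2$ that can be closed via a Gr\"onwall argument. All constants will inherit the $C(s)$ dependence already present in Lemma \ref{lemma5.2}, and the estimates from Lemma \ref{lemma5.2} and Corollary \ref{lemma5.4} will be invoked freely.

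First, I would differentiate the momentum equation $(\ref{1.1})_2$ in time and test against $u_t$. After integration by parts this yields
\begin{equation*}
\tfrac{1}{2}\tfrac{d}{dt}\int \rho|u_t|^2 dx + \tfrac{1}{C}\int |\nabla u_t|^2 dx \le \sum_{i=1}^{7}{I\!I}_{3i},
\end{equation*}
with the same seven terms appearing in (\ref{4.26}). Each is bounded using H\"older, the 2D Gagliardo--Nirenberg inequality (so $\|f\|_{L^p} \lesssim \|f\|_{L^2}^{2/p}\|\nabla f\|_{H^0}^{1-2/p}$ for any $p<\infty$) and Young, absorbing terms with small coefficients against $\|\nabla u_t\|_{L^2}^2$ and $\|\nabla d_t\|_{H^1}^2$. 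The Stokes regularity bound (\ref{4.28}), namely $\|u\|_{H^2}+\|P\|_{H^1} \le C(\|\sqrt{\rho}u_t\|_{L^2}+\|\nabla d\|_{H^2}+1)$, is used to convert $\|\nabla u\|_{H^1}$ factors into controllable quantities. The delicate term $\int |u||\nabla\rho||\nabla u||\nabla u_t|\, dx$ requires H\"older with $\|u\|_{L^{2q/(q-2)}}$ (bounded via the 2D Sobolev embedding $H^1\hookrightarrow L^{2q/(q-2)}$), $\|\nabla\rho\|_{L^q}$, $\|\nabla u\|_{L^\infty}$ and $\|\nabla u_t\|_{L^2}$; then the $W^{2,r}$ regularity in Lemma \ref{Lemma2.1} (cf.\ (\ref{4.31})) replaces $\|\nabla u\|_{L^\infty}$ by something controlled by $\|\nabla u_t\|_{L^2}+\|\nabla^3 d\|_{L^2}+1$, producing a term of the form $C(\varepsilon)(1+\|\nabla^3 d\|_{L^2}^{4})+\varepsilon\|\nabla u_t\|_{L^2}^2$ after Young's inequality.

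Second, I would apply $\nabla$ to the $d$-equation and test against $\nabla\Delta d_t$, exactly as in (\ref{4.34})--(\ref{4.36}). The resulting eight terms ${I\!I}_{4i}$ are bounded using 2D Sobolev embeddings and the previously established bounds from Lemma \ref{lemma5.2}: they generate factors of $(1+\|\sqrt{\rho}u_t\|_{L^2}^2+\|\nabla^3 d\|_{L^2}^2)\|\nabla^3 d\|_{L^2}^2$ together with absorbable terms $\varepsilon\|\nabla u_t\|_{L^2}^2$ and $\varepsilon\|\nabla^2 d_t\|_{L^2}^2$, plus the lower-order terms ${I\!I}_{5i}$ from (\ref{4.37}) handled by H\"older/Gagliardo--Nirenberg/Young. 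Integrating in $t$ and choosing small absorbing parameters yields an analog of (\ref{4.38}).

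Third, I would add the two resulting inequalities with suitably small $\varepsilon,\delta,\eta$ to obtain
\begin{equation*}
\tfrac{d}{dt}(\|\sqrt{\rho}u_t\|_{L^2}^2+\|\nabla^3 d\|_{L^2}^2)+\|\nabla u_t\|_{L^2}^2+\|\nabla^2 d_t\|_{L^2}^2 \le C\bigl(1+\|\sqrt{\rho}u_t\|_{L^2}^2+\|\nabla^3 d\|_{L^2}^2\bigr)\bigl(1+\|\nabla^3 d\|_{L^2}^2\bigr),
\end{equation*}
and conclude via Gr\"onwall, using the compatibility condition to control $\|\sqrt{\rho}u_t(0)\|_{L^2}$ and Lemma \ref{lemma5.2} for the time-integrability of $\|\nabla^3 d\|_{L^2}^2$.

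The main obstacle will be the term involving $\nabla\rho$: one must be careful with 2D Sobolev exponents and with the way the $W^{2,r}$ regularity for $u$ (which itself contains $\|\nabla u_t\|_{L^2}$) feeds back into the estimate, so that absorption on the left-hand side is possible without generating uncontrolled powers of $\|\nabla^3 d\|_{L^2}$ that would break the Gr\"onwall step. A secondary subtlety is ensuring that the boundary terms generated when integrating by parts in the $\nabla\Delta d_t$ test are controlled using $\partial d/\partial\nu|_{\partial\Omega}=0$, just as was done in (\ref{4.15}).
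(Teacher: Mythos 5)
Your overall architecture is the same as the paper's: reuse the differentiated momentum estimate (the analogue of \eqref{4.29}), reuse the third-order estimate \eqref{4.38} for $d$ verbatim, add the two and close with Gr\"onwall, using the integrability of $\|\nabla^3 d\|_{L^2}^2$ from Lemma \ref{lemma5.2} and the compatibility condition for the initial value of $\|\sqrt{\rho}u_t\|_{L^2}$. That part is fine.

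The genuine gap is in your treatment of the critical term $\int |u|\,|\nabla\rho|\,|\nabla u|\,|\nabla u_t|\,dx$. With your H\"older split $\|u\|_{L^{2q/(q-2)}}\|\nabla\rho\|_{L^q}\|\nabla u\|_{L^\infty}\|\nabla u_t\|_{L^2}$ the exponents force the \emph{full} power of $\|\nabla u\|_{L^\infty}$, and you then bound it through the $W^{2,r}$ Stokes estimate \eqref{4.31}, which itself contains $\|\nabla u_t\|_{L^2}$ to the first power. The product is then bounded below by $C\|\nabla u_t\|_{L^2}^2$ with a constant $C$ of order one (it carries $\|\nabla u\|_{L^2}$ and $\|\nabla\rho\|_{L^q}$), and this cannot be absorbed into the dissipation $\frac{1}{C}\|\nabla u_t\|_{L^2}^2$; Young's inequality does not produce your claimed bound $C(\varepsilon)(1+\|\nabla^3 d\|_{L^2}^4)+\varepsilon\|\nabla u_t\|_{L^2}^2$, because the total power of $\|\nabla u_t\|_{L^2}$ on the right is exactly $2$, not strictly less than $2$ as it was in the three-dimensional computation \eqref{4.30}--\eqref{4.32} (there the interpolation left only the fractional power $(q+6)/(3q)<1$ of $\|u\|_{W^{2,r}}$). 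In two dimensions the correct and in fact simpler route, which is what the paper does in \eqref{5.13}, is to keep $\nabla u$ in a \emph{finite} Lebesgue space: estimate the term by $\|u\|_{L^{\frac{2r}{r-2}}}\|\nabla u\|_{L^{\frac{rq}{q-r}}}\|\nabla\rho\|_{L^q}\|\nabla u_t\|_{L^2}\le C\|\nabla u\|_{L^2}\|\nabla u\|_{H^1}\|\nabla\rho\|_{L^q}\|\nabla u_t\|_{L^2}$, using the two-dimensional embedding $H^1\hookrightarrow L^p$ for every finite $p$, and then control $\|\nabla u\|_{H^1}$ by the $H^2$ Stokes estimate \eqref{4.28}, which involves only $\|\sqrt{\rho}u_t\|_{L^2}$ and $\|\nabla d\|_{H^2}$ and hence feeds no $\|\nabla u_t\|_{L^2}$ back into the right-hand side. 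This yields $C(\|\sqrt{\rho}u_t\|_{L^2}^2+\|\nabla^3 d\|_{L^2}^2+1)+\varepsilon\|\nabla u_t\|_{L^2}^2$, avoids the quartic term $\|\nabla^3 d\|_{L^2}^4$ altogether, and the Gr\"onwall step then closes exactly as you describe. So your plan is repairable, but the specific estimate you wrote for the $\nabla\rho$ term is the step that fails, and the repair is to drop the $L^\infty$/$W^{2,r}$ detour rather than to refine it.
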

\begin{proof}
Recall from \eqref{4.29}, we have the following estimate
\begin{equation}\label{5.12}
\begin{aligned}
&\quad \frac{1}{2}\frac{d}{dt}\int \rho |u_t|^2dx+\frac{1}{C}\int |\nabla u_t|^2 dx\\
&\le C(\varepsilon)(\|\sqrt{\rho}u_t\|_{L^2}^2+\|\nabla^3 d\|_{L^2}^2+1)
     +C(\varepsilon,\eta)\|\nabla d_t\|_{L^2}^2+\eta\|\nabla d_t\|_{H^1}^2+\varepsilon\|\nabla u_t\|_{L^2}^2\\
&\quad \quad +C\int |u| |\nabla \rho| |\nabla u| |\nabla u_t| dx.
\end{aligned}
\end{equation}
By \eqref{5.1}, H\"{o}lder, Sobolev ang Young inequalities, we obtain
\begin{equation}\label{5.13}
\begin{aligned}
\int |u| |\nabla \rho| |\nabla u| |\nabla u_t| dx
&\le \|u\|_{L^{\frac{2r}{r-2}}} \|\nabla u\|_{L^{\frac{rq}{q-r}}} \|\nabla \rho\|_{L^q} \|\nabla u_t\|_{L^2}\\
&\le C \|\nabla u\|_{L^2} \|\nabla u\|_{H^1} \|\nabla \rho\|_{L^q} \|\nabla u_t\|_{L^2}\\
&\le C(\varepsilon)  \|\nabla u\|_{H^1}^2+\varepsilon\|\nabla u_t\|_{L^2}^2\\
&\le C(\|\sqrt{\rho}u\|_{L^2}^2+\|\nabla^3 d\|_{L^2}^2+1)+\varepsilon\|\nabla u_t\|_{L^2}^2.\\
\end{aligned}
\end{equation}
Substituting \eqref{5.13} into \eqref{5.12}, we obtain
\begin{equation}\label{5.14}
\begin{aligned}
&\quad \frac{1}{2}\frac{d}{dt}\int \rho |u_t|^2dx+\frac{1}{C}\int |\nabla u_t|^2 dx\\
&\le C(\varepsilon)(\|\sqrt{\rho}u_t\|_{L^2}^2+\|\nabla^3 d\|_{L^2}^2+1)
     +C(\varepsilon,\eta)\|\nabla d_t\|_{L^2}^2+\eta\|\nabla d_t\|_{H^1}^2+\varepsilon\|\nabla u_t\|_{L^2}^2.
\end{aligned}
\end{equation}
Thanks to the compatibility condition, after choosing $\varepsilon$ small enough, we get
\begin{equation}\label{5.15}
\begin{aligned}
&\quad \|\sqrt{\rho}u_t\|_{L^2}^2+\int_0^t \|\nabla u_t\|_{L^2}^2 d\tau\\
&\le C(\eta)+C\int_0^t (1+\|\sqrt{\rho}u_t\|_{L^2}^2+\|\nabla^3 d\|_{L^2}^2) d\tau+\eta \int_0^t \|\nabla^2 d_t\|_{L^2}^2  d\tau.
\end{aligned}
\end{equation}
Recall from \eqref{4.38}, we have the estimate
\begin{equation}\label{5.16}
\begin{aligned}
&\quad \|\nabla^3 d\|_{L^2}^2+\int_0^t \|\nabla^2 d_t\|_{L^2}^2d\tau\\
&\le  C +C(\delta)\int_0^t(1+\|\sqrt{\rho}u_t\|_{L^2}^2+\|\nabla^3 d\|_{L^2}^2)\|\nabla^3 d\|_{L^2}^2d\tau
       +\delta \int_0^t \|\nabla u_t\|_{L^2}^2 d\tau.
\end{aligned}
\end{equation}
Adding \eqref{5.15} to \eqref{5.16} and choosing $\varepsilon$ and $\delta$ small enough, we have
\begin{equation*}
\begin{aligned}
&\quad (\|\sqrt{\rho}u_t\|_{L^2}^2+\|\nabla^3 d\|_{L^2}^2)
+\int_0^t(\|\nabla u_t\|_{L^2}^2+ \|\nabla^2 d_t\|_{L^2}^2)d\tau\\
&\le  C +C\int_0^t(1+\|\sqrt{\rho}u_t\|_{L^2}^2+\|\nabla^3 d\|_{L^2}^2)(1+\|\nabla^3 d\|_{L^2}^2)d\tau,
\end{aligned}
\end{equation*}
which, together with the Gr\"{o}nwall inequality, complete the proof of the lemma.
\end{proof}

As a corollary of Lemma \ref{lemma5.5}, it is easy to deduce the following estimate
\begin{equation*}
\begin{aligned}
&\underset{0 \le t \le T}{\sup}(\|\rho_t\|_{L^q}+\|u\|_{H^2}+\|P\|_{H^1}+\|\nabla d_t\|_{L^2})\\
&+\int_0^T \left(\|u\|_{W^{2,r}}^2+\|P\|_{W^{1,r}}^2+\|d_{tt}\|_{L^2}^2+\|\nabla^4 d\|_{L^2}^2\right)dt\le C(s).
\end{aligned}
\end{equation*}

Therefore, having all the estimates  at hand, it is easy to extend the strong solution beyond time $T^*$. Thus, we complete the proof of Theorem \ref{blowupcriterion3}.
\section{Proof of Corollary \ref{blowupcriterion5}}
\quad In this section, we will give the proof Corollary \ref{blowupcriterion5}. Indeed, let $(\rho, u, P, d)$ be the solution of \eqref{1.1}-\eqref{1.4}, we will derive some maximal principle for the direction field.

\begin{lemm}
For some given constant $\underline{d}_{0i}(i=1, 2)$, then we have the following maximal principle:\\
{\rm (i)}If $0 \le \underline{d}_{0i} \le d_{0i}\le 1$, then $0 \le \underline{d}_{0i} \le d_{i}\le 1$ for any $i=1,2;$\\
{\rm (ii)}If $-1\le d_{0i}\le -\underline{d}_{0i} \le 0 $, then $-1\le d_{i}\le -\underline{d}_{0i} \le 0$ for any $i=1,2.$\\
\end{lemm}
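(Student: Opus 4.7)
The plan is to derive the maximum principle componentwise by extracting from $\eqref{1.1}_{4}$ the scalar parabolic equation
\[
\partial_{t} d_{i} + u\cdot\nabla d_{i} - \Delta d_{i} = |\nabla d|^{2} d_{i},
\]
and testing it against appropriate truncations of $d_{i}$. The Dirichlet condition $u|_{\partial\Omega}=0$ and the Neumann condition $\partial d_{i}/\partial\nu|_{\partial\Omega}=0$, together with $\mathrm{div}\,u=0$, will make all convection and boundary contributions disappear after integration by parts.

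For case (i) I split the lower bound $\underline{d}_{0i}\le d_{i}$ into two stages. First I multiply the equation by $-(d_{i})_{-}$ (the negative part) and integrate over $\Omega$; the convection term vanishes by the divergence-free condition and the zero trace of $u$, the diffusion term yields the good dissipation $\|\nabla(d_{i})_{-}\|_{L^{2}}^{2}$ (using $\nabla(d_{i})_{-}=-\mathbf{1}_{\{d_{i}<0\}}\nabla d_{i}$), and the reaction term becomes $\int|\nabla d|^{2}(d_{i})_{-}^{2}\,dx\le \|\nabla d\|_{L^{\infty}}^{2}\|(d_{i})_{-}\|_{L^{2}}^{2}$ since $d_{i}=-(d_{i})_{-}$ on $\{d_{i}<0\}$. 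The regularity $d\in C([0,T_{0}];H^{3})$ from Theorem \ref{Theorem1.1} embeds into $L^{\infty}(0,T_{0};W^{1,\infty})$ in two dimensions, so $\|\nabla d\|_{L^{\infty}}^{2}$ is integrable in time; combined with $(d_{0i})_{-}\equiv 0$ (from $d_{0i}\ge\underline{d}_{0i}\ge 0$), Gr\"onwall's inequality forces $(d_{i})_{-}\equiv 0$, i.e.\ $d_{i}\ge 0$ throughout.

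With the nonnegativity of $d_{i}$ in hand, I repeat the argument with $w:=(\underline{d}_{0i}-d_{i})_{+}$ as test function. The identical computation yields
\[
\tfrac{1}{2}\tfrac{d}{dt}\|w\|_{L^{2}}^{2}+\|\nabla w\|_{L^{2}}^{2}=-\int w\,|\nabla d|^{2}\,d_{i}\,dx.
\]
The crucial observation is the sign: on $\{w>0\}$ one has $d_{i}<\underline{d}_{0i}$ and simultaneously $d_{i}\ge 0$ by the previous step, so the right-hand side is non-positive, and $w(0)\equiv 0$ then forces $w\equiv 0$. This gives $d_{i}\ge\underline{d}_{0i}$. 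The upper bound $d_{i}\le 1$ is immediate from $|d|=1$, which is preserved by the evolution and is asserted in Theorem \ref{Theorem1.1}. Case (ii) then follows by symmetry: $\tilde d:=-d$ solves the same equation $\tilde d_{t}+u\cdot\nabla\tilde d=\Delta\tilde d+|\nabla\tilde d|^{2}\tilde d$, and the hypothesis $-1\le d_{0i}\le-\underline{d}_{0i}\le 0$ becomes precisely the hypothesis of case (i) for $\tilde d$.

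The only technical subtlety I anticipate is justifying that $(d_{i})_{-}$ and $(\underline{d}_{0i}-d_{i})_{+}$ are admissible test functions in the weak formulation; this is the standard Stampacchia chain rule, which requires $d\in L^{2}(0,T_{0};H^{2})$ and $d_{t}\in L^{2}(0,T_{0};L^{2})$, both of which are comfortably provided by Theorem \ref{Theorem1.1}.
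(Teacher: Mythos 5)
Your proof is correct, and it reaches the statement by a somewhat different route than the paper. The paper does not reprove part (i) at all: it cites \cite{Jiang-Jiang-Wang} for (i) and establishes (ii) in a single step, setting $V_i=d_i+\underline{d}_{0i}$, testing the scalar equation with $V_i^+$, discarding the extra term $-\underline{d}_{0i}\int|\nabla d|^2V_i^+\,dx\le 0$ thanks to $\underline{d}_{0i}\ge 0$, and closing with Gr\"onwall via $\exp\bigl(C\int_0^t\|\nabla d\|_{L^\infty}^2\,d\tau\bigr)$. You instead prove (i) from scratch in two stages (first $d_i\ge 0$ via the negative part and Gr\"onwall, then $d_i\ge\underline{d}_{0i}$ via $(\underline{d}_{0i}-d_i)_+$ and a sign argument), take $d_i\le 1$ from $|d|=1$, and obtain (ii) by the reflection $\tilde d=-d$, which is legitimate because the transport--heat-flow equation and $|\nabla d|$ are invariant under $d\mapsto -d$ and the argument is purely componentwise. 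The underlying mechanism is the same Stampacchia truncation-energy estimate with the same regularity input ($d\in C([0,T];H^3)\hookrightarrow L^\infty(0,T;W^{1,\infty})$ and the chain rule for truncations), so both proofs need exactly the same hypotheses; what your version buys is self-containedness (no appeal to the external reference for (i)) and a clean symmetry reduction for (ii), at the cost of an extra stage that is not strictly necessary: testing directly with $(\underline{d}_{0i}-d_i)_+$ gives $\tfrac12\tfrac{d}{dt}\|(\underline{d}_{0i}-d_i)_+\|_{L^2}^2+\|\nabla(\underline{d}_{0i}-d_i)_+\|_{L^2}^2\le\|\nabla d\|_{L^\infty}^2\|(\underline{d}_{0i}-d_i)_+\|_{L^2}^2$ after dropping $-\underline{d}_{0i}\int|\nabla d|^2(\underline{d}_{0i}-d_i)_+\,dx\le 0$, exactly as in the paper's treatment of (ii), so your first stage could be omitted.
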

\begin{proof}
Since {\rm (i)} has been proved in \cite{Jiang-Jiang-Wang}, we only give the proof of {\rm (ii)}. Indeed, letting $V_i=d_i+\underline{d}_{0i}$ and $V^+_i=\max\{V_i, 0\}\ge 0,$ then we obtain
\begin{equation}\label{6.1}
\partial_t V_i-\Delta V_i=|\nabla d|^2 (V_i-\underline{d}_{0i})-u \cdot \nabla V_i.
\end{equation}
Multiplying \eqref{6.1} by $V_i^+$ and using the Neumann boundary condition
$\left.\frac{\partial d}{\partial \nu}\right|_{\partial \Omega}=0$, we get that
\begin{equation*}
\begin{aligned}
&\quad \frac{d}{dt}\frac{1}{2}\|V_i^+\|_{L^2}^2+\|\nabla V_i^+\|_{L^2}^2
  =\int \left[|\nabla d|^2 (V_i-\underline{d}_{0i})-u \cdot \nabla V_i\right]\cdot V_i^+ dx\\
&=\int |\nabla d|^2 |V_i^+|^2 dx-\int |\nabla d|^2 \underline{d}_{0i}V^+_i dx +\frac{1}{2}\int {\rm div}u |V_i^+|^2 dx\\
&\le\int |\nabla d|^2 |V_i^+|^2 dx \le \|\nabla d\|_{L^\infty}^2 \|V_i^+\|_{L^2}^2.
\end{aligned}
\end{equation*}
Hence, if we applying Gr\"{o}nwall inequality to the above inequality, we get
\begin{equation*}
\|V_i^+(t)\|_{L^2}^2 \le \|V_i^+(0)\|_{L^2}^2 \exp\left(C\int_0^t \|\nabla d\|_{L^\infty}^2 d\tau\right)=0,
\end{equation*}
which implies $-1 \le d_i \le -\underline{d}_{0i}\le 0.$
\end{proof}

Before starting our main result in this section, we recall the elliptic estimate
\begin{equation}\label{6.2}
\|\nabla^2 f\|_{L^2}^2 \le C_1 (\|\Delta f\|_{L^2}^2+\|f\|_{H^1}^2),
\end{equation}
and the Gagliardo-Nirenberg inequality
\begin{equation}\label{6.3}
\|\nabla f\|_{L^4}^4 \le C_2 (\|\nabla^2 f\|_{L^2}^2\|f\|_{L^\infty}^2+\|f\|_{L^\infty}^4),
\end{equation}
where the constants $C_1$ and $C_2$ independent of the function $f$.

\begin{lemm}
For any $i(i=1,2),$ if the $i-$th component of initial direction field $d_{0i}$ satisfies the condition
\begin{equation}\label{6.4}
0 \le \underline{d}_{0i} \le d_{0i} \le 1  \quad {\rm or} \quad -1\le d_{0i} \le -\underline{d}_{0i}\le 0,
\end{equation}
where $\underline{d}_{0i}$ is a constant and is defined by
\begin{equation}\label{6.5}
\underline{d}_{0i}>1-\frac{1}{2C_1 C_2},
\end{equation}
where the constant $C_1$ and $C_2$ are defined in \eqref{6.2} and \eqref{6.3} respectively.
Then we have the following estimate
\begin{equation}\label{6.6}
\int_0^T \|\nabla d\|_{L^4}^4 dt \le C.
\end{equation}
\end{lemm}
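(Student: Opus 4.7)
The plan is to exploit the maximum principle from the preceding lemma, which propagates the geometric condition in time: for each $i$, there is a constant $c_i \in \{\underline{d}_{0i},\,-\underline{d}_{0i}\}$ so that the shifted component $\tilde{d}_i := d_i - c_i$ obeys $\|\tilde{d}_i\|_{L^\infty} \leq \varepsilon_i := 1 - \underline{d}_{0i}$, with $\varepsilon_i < \tfrac{1}{2 C_1 C_2}$ by \eqref{6.5}. Since $\nabla d_i = \nabla \tilde{d}_i$ and $\nabla^2 d_i = \nabla^2 \tilde{d}_i$, the smallness of $\tilde{d}_i$ in $L^\infty$ can be fed into the two-dimensional Gagliardo-Nirenberg inequality \eqref{6.3} to turn an $L^4$ estimate of $\nabla d$ into one dominated by $\varepsilon_i^2 \|\nabla^2 d_i\|_{L^2}^2$ plus harmless lower-order terms. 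This reduces the estimate of $\int |\nabla d|^4 dx$ to a small multiple of itself, which can then be absorbed on the left-hand side.

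Concretely, I would first apply \eqref{6.3} to each $\tilde{d}_i$ to obtain
\begin{equation*}
\|\nabla d_i\|_{L^4}^4 \leq C_2\bigl(\varepsilon_i^2 \|\nabla^2 d_i\|_{L^2}^2 + \varepsilon_i^4\bigr),
\end{equation*}
combine with the elliptic estimate \eqref{6.2}, sum over $i=1,2$, and use the elementary bound $|\nabla d|^4 \lesssim \sum_i |\nabla d_i|^4$ to deduce
\begin{equation*}
\int |\nabla d|^4 dx \leq K\, C_1 C_2\, \varepsilon^2 \|\Delta d\|_{L^2}^2 + C,
\end{equation*}
where $\varepsilon = \max_i \varepsilon_i$, $K$ is an explicit combinatorial constant, and the constant $C$ absorbs $C_2 \varepsilon^4$ together with the term $C_1 C_2 \varepsilon^2 \|d\|_{H^1}^2$, which is uniformly bounded thanks to $\|\nabla d\|_{L^\infty(0,T;L^2)} \leq C$ from \eqref{4.2} and $|d| = 1$. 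Next, the constraint $|d| = 1$ yields $d \cdot \Delta d = -|\nabla d|^2$, whence expansion gives
\begin{equation*}
\bigl\| |\nabla d|^2 d + \Delta d \bigr\|_{L^2}^2 = \|\Delta d\|_{L^2}^2 + 2\!\int |\nabla d|^2 (d \cdot \Delta d)\,dx + \int |\nabla d|^4 dx = \|\Delta d\|_{L^2}^2 - \int |\nabla d|^4 dx,
\end{equation*}
so that $\|\Delta d\|_{L^2}^2 = \bigl\| |\nabla d|^2 d + \Delta d \bigr\|_{L^2}^2 + \int |\nabla d|^4 dx$. Substituting this back, and using the strict smallness $K C_1 C_2 \varepsilon^2 < 1$ guaranteed by the threshold in \eqref{6.5}, I absorb $\int |\nabla d|^4 dx$ to the left. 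Integrating in $t$ and invoking $\int_0^T \bigl\| |\nabla d|^2 d + \Delta d \bigr\|_{L^2}^2 dt \leq C$ from \eqref{4.2} then yields \eqref{6.6}.

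The main technical point is the bookkeeping of constants: the threshold $\underline{d}_{0i} > 1 - \tfrac{1}{2 C_1 C_2}$ must be tuned precisely so that, after the combinatorial factor $K$ produced by passing between $\sum_i \|\nabla d_i\|_{L^4}^4$ and $\|\nabla d\|_{L^4}^4$, the coefficient in front of $\int |\nabla d|^4 dx$ in the absorption step remains strictly less than one. Once this smallness is secured, the remainder of the proof is a direct computation relying only on the global energy bound of Lemma \ref{lemma4.1} together with the two elementary inequalities \eqref{6.2}-\eqref{6.3}; no further estimate on $u$ or on higher regularity of $d$ is required.
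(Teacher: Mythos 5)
Your argument has a genuine gap at its very first step: you claim that \emph{for each} $i=1,2$ there is a constant $c_i\in\{\underline d_{0i},-\underline d_{0i}\}$ with $\|d_i-c_i\|_{L^\infty}\le \varepsilon_i=1-\underline d_{0i}$. The hypothesis \eqref{6.4} is imposed on a \emph{single} component $i$ (indeed, since $|d_0|=1$, it cannot hold for both components once $\underline d_{0i}>1/\sqrt 2$, which is the relevant regime for \eqref{6.5}), and the maximum principle of the preceding lemma propagates the band $[\underline d_{0i},1]$ (or $[-1,-\underline d_{0i}]$) only for that component. For the other component $d_j$ the only available control comes from $|d|=1$, namely $|d_j|\le\sqrt{1-\underline d_{0i}^2}\le\sqrt{2\varepsilon_i}$, and no constant shift can improve this to size $\varepsilon_i$ (nothing prevents $d_j$ from sweeping the whole interval $[-\sqrt{2\varepsilon_i},\sqrt{2\varepsilon_i}]$). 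Consequently your Gagliardo--Nirenberg step for that component produces the factor $2\varepsilon_i\|\nabla^2 d_j\|_{L^2}^2$ rather than $\varepsilon_i^2\|\nabla^2 d_j\|_{L^2}^2$, and after the factor $2$ from $|\nabla d|^4\le 2\sum_i|\nabla d_i|^4$ your absorption constant is of size $4C_1C_2\varepsilon_i$, which the threshold \eqref{6.5} (i.e. $\varepsilon_i<\tfrac1{2C_1C_2}$) does not make smaller than one. So the proposal, once the unjustified $L^\infty$ bound is repaired in the only way available, proves the lemma only under a strictly stronger condition such as $\underline d_{0i}>1-\tfrac{1}{4C_1C_2}$, not under \eqref{6.5} as stated.

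The paper sidesteps this by applying \eqref{6.2}--\eqref{6.3} to the full vector $f=d-e_2$ (or $d+e_2$), not to the components separately: the constraint $|d|=1$ gives the pointwise identity $|d-e_2|^2=2(1-d_2)$, hence $\|d-e_2\|_{L^\infty}^2\le 2(1-\underline d_{02})$, and the absorption (performed on $\|\nabla^2 d\|_{L^2}^2$ after inserting $\|\Delta d\|_{L^2}^2=\||\nabla d|^2d+\Delta d\|_{L^2}^2+\|\nabla d\|_{L^4}^4$, the same identity you use) has coefficient exactly $2C_1C_2(1-\underline d_{02})<1$ under \eqref{6.5}, with no combinatorial loss. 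The rest of your outline (the $|d|=1$ identity, the use of the energy bound \eqref{4.2} to integrate in time, the final application of \eqref{6.3}) matches the paper; the missing idea is precisely this vector shift $d\mp e_i$, which is what makes the constant $2C_1C_2$ in \eqref{6.5} the right one.
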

\begin{proof}We only give the proof for the case $i=2$.\\
Indeed, if $0 \le \underline{d}_{02} \le d_{02} \le 1$, then applying the maximal principle, we obtain
\begin{equation}
0 \le \underline{d}_{02} \le d_{2} \le 1 \quad {\rm for \ any  } \ t>0 \ {\rm  and \ any} \  x \in \Omega.
\end{equation}
By virtue of $|d|=1,$ then
\begin{equation}\label{6.8}
\|d-e_2\|_{L^\infty}^2 \le 2(1-\underline{d}_{02}),
\end{equation}
where $e_2=(0,1).$ Substituting \eqref{6.4}-\eqref{6.5} into  \eqref{6.8}, we get
\begin{equation}\label{6.9}
C_1 C_2 \|d-e_2\|_{L^\infty}^2<1.
\end{equation}
Combining \eqref{6.2} with \eqref{6.3}, we get
\begin{equation}\label{6.10}
\begin{aligned}
&\quad \|\nabla^2 d\|_{L^2}^2 \le C_1 (\|\Delta d\|_{L^2}^2+\|d\|_{H^1}^2)\\
&\le C_1 (\|\nabla d\|_{L^4}^4+\| \Delta d+|\nabla d|^2 d\|_{L^2}^2+\|d\|_{H^1}^2)\\
&\le C_1 C_2 \|d-e_2\|_{L^\infty}^2\|\nabla^2 d\|_{L^2}^2
     +C_1 \left(C_2\|d-e_2\|_{L^\infty}^4+\|\Delta d+|\nabla d|^2 d\|_{L^2}^2+\|d\|_{H^1}^2\right),
\end{aligned}
\end{equation}
which, together with \eqref{6.8}, gives
\begin{equation}\label{6.11}
\|\nabla^2 d\|_{L^2}^2 \le C\left(\|d-e_2\|_{L^\infty}^4+\|\Delta d+|\nabla d|^2 d\|_{L^2}^2+\|d\|_{H^1}^2\right).
\end{equation}
In view of the basic energy inequality \eqref{4.2}, we have
\begin{equation*}
\int_0^T \|\nabla^2 d\|_{L^2}^2 dt \le C,
\end{equation*}
which, together with \eqref{6.3}, implies
\begin{equation}\label{6.12}
\int_0^T \|\nabla d\|_{L^4}^4 dt \le C.
\end{equation}
As for the case $-1\le d_{0i} \le -\underline{d}_{0i}\le 0$, it is simple fact if we replaced the function
$d-e_2$ by $d+e_2$ in \eqref{6.8}-\eqref{6.11} respectively. Hence, we complete the proof.
\end{proof}

Choosing $r=s=4$ in \eqref{1.11}, if the maximal existence of time $T^*<\infty$, we have
\begin{equation*}
\underset{T \rightarrow T^*}{\lim}\left(\|\nabla \rho\|_{L^\infty(0,T;L^q)}+\|\nabla d\|_{L^{4}(0,T;L_w^{4})}\right)=\infty,
\end{equation*}
which, together with \eqref{6.12}, gives immediately
\begin{equation*}
\underset{T \rightarrow T^*}{\lim}\|\nabla \rho\|_{L^\infty(0,T;L^q)}=\infty.
\end{equation*}
Therefore, we complete the proof of Corollary \ref{blowupcriterion5}.\\

\noindent\textbf{Acknowledgements}\quad
Qiang Tao's research was supported by the NSF(Grant No.11171060) and the NSF(Grant No.11301345).
Zheng-an Yao's research was supported in part by NNSFC(Grant No.11271381) and China 973 Program(Grant No. 2011CB808002).

\phantomsection
\addcontentsline{toc}{section}{\refname}

\end{document}